\documentclass{amsart}

\usepackage{amsthm}
\usepackage{amsmath}
\usepackage{amssymb}
\usepackage{enumerate}
\usepackage{graphicx}
\usepackage[hidelinks,pagebackref,pdftex]{hyperref}
\usepackage{booktabs}
\usepackage{color}
\usepackage[dvipsnames]{xcolor}
\usepackage{import}
\usepackage{tikz-cd}
\usepackage{microtype}

%% %%This command stops the Math Review numbers appearing in the references! 
%% \AtBeginDocument{%
%%    \def\MR#1{}
%% }

%% % Fix marginpar
%% \usepackage{marginnote}
%% \long\def\@savemarbox#1#2{\global\setbox#1\vtop{\hsize\marginparwidth 
%%   \@parboxrestore\tiny\raggedright #2}}
%% \marginparwidth .75in \marginparsep 7pt 
%% \newcommand\marginmarc[1]{{\tiny \marginnote{{\color{red} #1}}} \normalmarginpar}
%% \newcommand\marginjess[1]{{\tiny \marginnote{{\color{blue} #1}}} \normalmarginpar}
%% \renewcommand\marginpar[1]{{\tiny \marginnote{{#1}}}
%%   \normalmarginpar}

\renewcommand*{\backref}[1]{}
\renewcommand*{\backrefalt}[4]{
  \ifcase #1
  [No citations.]
  \or [#2]
  \else [#2]
  \fi }

%\numberwithin{equation}{section}
\theoremstyle{plain}
\newtheorem{theorem}{Theorem}[section]
\newtheorem{corollary}[theorem]{Corollary}
\newtheorem{lemma}[theorem]{Lemma}

\newtheorem{proposition}[theorem]{Proposition}

\newtheorem*{namedtheorem}{\theoremname}
\newcommand{\theoremname}{testing}
\newenvironment{named}[1]{\renewcommand{\theoremname}{#1}\begin{namedtheorem}}{\end{namedtheorem}}

\theoremstyle{definition}
\newtheorem{definition}[theorem]{Definition}
\newtheorem{remark}[theorem]{Remark}

\newcommand{\from}{\colon} % As in ``f maps _from_ X _to_ Y''.

%%% Symbols %%%

\newcommand{\ZZ}{{\mathbb{Z}}}

\newcommand{\QQ}{{\mathbb{Q}}}

\newcommand{\calH}{{\mathcal{H}}}
\newcommand{\calB}{{\mathcal{B}}}
\newcommand{\calT}{{\mathcal{T}}}

\newcommand{\calC}{{\mathcal{C}}}

%% Referring to theorems, etc (requires careful labeling)
\newcommand{\refthm}[1]{Theorem~\ref{Thm:#1}}
\newcommand{\reflem}[1]{Lemma~\ref{Lem:#1}}
\newcommand{\refprop}[1]{Proposition~\ref{Prop:#1}}

\newcommand{\refdef}[1]{Definition~\ref{Def:#1}}
\newcommand{\refsec}[1]{Section~\ref{Sec:#1}}
\newcommand{\reffig}[1]{Figure~\ref{Fig:#1}}

\newcommand{\bdy}{\partial}

\newcommand{\PSL}{\operatorname{PSL}}
\newcommand{\SL}{\operatorname{SL}}

\newcommand{\Tr}{\operatorname{Tr}}
\newcommand{\Sp}{\operatorname{Sp}}

\newcommand{\tr}{\operatorname{tr}}

\newcommand{\half}{\frac{1}{2}}
\newcommand{\cut}{\backslash\backslash}

\newcommand{\twist}{\mathrel{%
    \stackrel{\sim}{\smash{\times}\rule{0pt}{0.6ex}}
    }} % twisted product

\title[The triangulation complexity of elliptic and sol 3-manifolds]{The triangulation complexity \\ of elliptic and sol 3-manifolds}

\author{Marc Lackenby}
\address{Mathematical Institute, University of Oxford, Oxford, OX2 6GG, UK}

\author{Jessica S. Purcell}
\address[]{School of Mathematics, Monash University, VIC 3800, Australia }

\begin{document}

\begin{abstract}
The triangulation complexity of a compact 3-manifold $M$ is the minimal number of tetrahedra in
any triangulation of $M$. We compute the triangulation complexity of all elliptic 3-manifolds and all sol 3-manifolds, 
to within a universally bounded multiplicative error.
\end{abstract}

\maketitle

\section{Introduction}\label{Sec:Introduction}

The \emph{triangulation complexity} $\Delta(M)$ of a compact 3-manifold $M$ is the minimal number of tetrahedra in
any triangulation of $M$. (In this paper, we use the definition of a triangulation that has become standard in low-dimensional
topology: it is an expression of $M$ as a union of 3-simplices with some of their faces identified in pairs via affine homeomorphisms.)
Triangulation complexity is a very natural invariant, with some attractive properties. However, its precise value is known for only relatively small examples 
\cite{MartelliPetronio, Matveev:ComplexitySurvey} and for a few infinite families  \cite{MatveevPetronioVesnin, PetronioVesnin, JacoRubinsteinTillmann:LensSpaces, JacoRubinsteinTillmann:Coverings, JacoRubinsteinTillmann:Z2_1, JacoRubinsteinSpreerTillmann:Z2_2, JacoRubinsteinSpreerTillmann:MinimalCusped}. It bears an obvious resemblance to hyperbolic volume, and in fact the volume of a hyperbolic 3-manifold $M$
forms a lower bound for $\Delta(M)$ via the inequality $\Delta(M) \geq \mathrm{Vol}(M)/v_3$, due to Gromov and Thurston
\cite{Thurston:Notes}. Here, $v_3 \simeq 1.01494$ is the volume of a regular hyperbolic ideal tetrahedron.
But non-trivial lower bounds for manifolds with zero Gromov norm have been difficult to obtain.
Jaco, Rubinstein and Tillmann \cite{JacoRubinsteinTillmann:LensSpaces, JacoRubinsteinTillmann:Coverings} were able to compute the triangulation complexity of lens spaces
of the form $L(2n,1)$ and $L(4n,2n \pm1)$. However, general lens spaces have remained out of reach. In this paper,
we remedy this, by computing  the triangulation complexity of all elliptic 3-manifolds and all sol 3-manifolds, 
to within a universally bounded multiplicative error.
Our result about lens spaces confirms a conjecture of Jaco and Rubinstein \cite{JacoRubinstein:Layered} and Matveev \cite{Matveev:Complexity6,Matveev:ComplexitySurvey}, up to a bounded multiplicative constant.

\begin{theorem}\label{Thm:LensSpaces}
Let $L(p,q)$ be a lens space, where $p$ and $q$ are coprime integers satisfying $0< q < p$. Let $[a_0, \dots, a_n]$ be the continued fraction expansion of $p/q$ where each $a_i $ is positive. Then there is a universal constant $k_{\mathrm{lens}}>0$ such that
\[ k_{\mathrm{lens}} \sum_{i=0}^n a_i \leq \Delta(L(p,q)) \leq \sum_{i=0}^n a_i. \]
\end{theorem}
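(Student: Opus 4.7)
The inequality $\Delta(L(p,q)) \le \sum a_i$ is the easy direction: it is realised by a layered triangulation in the sense of Jaco and Rubinstein. The plan is to start from a one-tetrahedron ideal triangulation of a solid torus (so two free boundary faces form a once-punctured torus with a specific boundary slope), and then layer tetrahedra one at a time across boundary edges. Each layering move replaces one slope on the boundary torus by its Farey neighbour, and a sequence of $a_i$ layerings in a consistent direction followed by a change of direction exactly mirrors one step of the continued fraction expansion. After $\sum a_i$ layerings the boundary slope is the meridian of the opposite solid torus, and one final folding closes off the triangulation; a careful accounting of the initial tetrahedron, the layerings and the folding yields the claimed bound $\sum a_i$.

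\textbf{Lower bound, setup.} For the lower bound we are given an arbitrary triangulation $\calT$ of $L(p,q)$ with $t$ tetrahedra and want $t \geq k_{\mathrm{lens}} \sum a_i$. The plan is to extract from $\calT$ an embedded Heegaard torus $\Sigma$ whose combinatorial complexity is controlled linearly by $t$. The natural tool is almost normal surface theory: using a sweepout or a thin position argument one produces an almost normal Heegaard torus $\Sigma$ whose weight (number of intersections with the 1-skeleton of $\calT$) is $O(t)$, and similarly whose number of intersection arcs with the 2-skeleton in each tetrahedron is bounded. Both solid tori $V_1, V_2$ on either side of $\Sigma$ inherit ``partial triangulations'' or cell decompositions from $\calT$ whose total complexity is at most a linear function of $t$.

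\textbf{Extracting the continued fraction.} Inside each solid torus $V_i$ lies a meridian disk $D_i$; the pair of slopes $\partial D_1, \partial D_2$ on $\Sigma$ is the $(p,q)$-data of the lens space. The strategy is to put $D_1$ and $D_2$ into normal form with respect to the cell decomposition of $V_1, V_2$ induced by $\calT \cap V_i$, and to show that the combinatorial complexity of a normal meridian disk in a solid torus bounds from below a Farey-graph distance between the current boundary slope and the meridian slope. Because translation in the Farey graph between the two slopes that define $L(p,q)$ is precisely $\sum a_i$ (up to a bounded additive constant), and because each tetrahedron contributes only $O(1)$ to the total weight of the normal disks, this chain of inequalities gives $\sum a_i \leq O(t)$, which is the desired lower bound after rearrangement.

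\textbf{Main obstacle.} The hard part is not the existence of a low-complexity Heegaard torus but rather the quantitative step that turns triangulation data into Farey-graph distance. A single tetrahedron crossed by $\Sigma$ can in principle contribute a long ``twist'' of $\partial D_1$ around $\partial D_2$, so one needs a structural analysis of the regions of $V_i$ swept out by parallel normal disks, for instance via a parallelity bundle decomposition, to show that large continued fraction entries $a_i$ force many tetrahedra and cannot be produced for free by a small number of heavily twisted pieces. Getting the constant $k_{\mathrm{lens}}$ to be universal, i.e.\ independent of $p/q$, is precisely what makes this analysis delicate.
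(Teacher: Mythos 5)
Your upper bound agrees with the paper: Jaco and Rubinstein's layered triangulations give $\bigl(\sum a_i\bigr)-3$ tetrahedra for $p>3$, with the small cases checked directly. The lower bound is where your proposal runs into trouble. You propose to locate an almost normal Heegaard torus $\Sigma$ of weight $O(t)$, normalise the meridian disks $D_1, D_2$, and then invoke the claim that ``each tetrahedron contributes only $O(1)$ to the total weight of the normal disks'' to conclude $\sum a_i \leq O(t)$. That claim is simply false: a normal meridian disk in a triangulated solid torus may run through a single tetrahedron in arbitrarily many parallel normal pieces, so its weight is not controlled by the number of tetrahedra. You flag this as the ``main obstacle'' and gesture at parallelity bundles, but no concrete mechanism is given for converting normal-disk combinatorics into a Farey-graph distance. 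This is essentially the Jaco--Rubinstein--Tillmann strategy, which was carried out only for the special families $L(2n,1)$ and $L(4n,2n\pm1)$; the paper explicitly points to the difficulty of generalising it as the reason general lens spaces were previously out of reach.

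The paper's proof takes a structurally different route, and the key new idea is the one your sketch is missing: reduce the lens space problem to the product problem for $T^2\times[0,1]$. It starts from the Lackenby--Schleimer theorem that in any triangulation $\mathcal{T}$ (outside a handful of exceptional lens spaces treated by citation) the iterated barycentric subdivision $\mathcal{T}^{(139)}$ contains both core curves in its $1$-skeleton. After a further bounded number of subdivisions one removes a simplicial regular neighbourhood of the two cores, leaving a triangulation of $T^2\times[0,1]$ with $O(\Delta(\mathcal{T}))$ tetrahedra whose two boundary tori carry short cellular meridians $\mu$, $\mu'$. One then extends these meridians to cellular spines and applies the product complexity theorem for $T^2\times I$ (itself proved via the branched-cover and vertical-arc machinery together with Masur--Mosher--Schleimer's train track splitting theorem) and the spine-sliding lemmas to produce a path in the Farey tree of length $O(\Delta(\mathcal{T}))$ between the lines $L(\mu)$ and $L(\mu')$. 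Since that distance equals $\bigl(\sum a_i\bigr)-1$, the lower bound follows. The parallelity-bundle analysis you gesture at does appear in the paper, but as part of the general $T^2\times I$ product theorem applied after drilling the cores, not in a direct study of the Heegaard torus and its meridian disks.
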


General elliptic 3-manifolds fall into three categories: lens spaces, prism manifolds and a third class that we call Platonic manifolds; for example see Scott~\cite{Scott} for a discussion of the classification of these manifolds.
Recall that the prism manifold $P(p,q)$ is obtained from the orientable $I$-bundle over the Klein bottle, by attaching a solid torus, so that the meridian of the solid torus is identified with the $p/q$ curve on the boundary torus. Here, a canonical framing of this boundary torus is used, so that the longitude and meridian are lifts of non-separating simple closed curves on the Klein bottle that are, respectively, orientation-reversing and orientation-preserving.

\begin{theorem} \label{Thm:Prism}
Let $p$ and $q$ be non-zero coprime integers and let $[a_0, \dots, a_n]$ denote the continued fraction expansion of $p/q$ where $a_i$ is positive for each $i > 0$. Then, $\Delta(P(p,q))$ is, to within a universally bounded multiplicative error, equal to $\sum_{i=0}^n a_i$.
\end{theorem}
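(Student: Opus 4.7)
The plan follows the same two-sided strategy as in \refthm{LensSpaces}: an explicit layered construction for the upper bound, and a covering-space reduction for the lower bound.

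For the upper bound, I would fix a triangulation of $K\tilde\times I$ of bounded size whose boundary is a one-vertex torus triangulation with edges representing the slopes $l$, $m$, and $l+m$. Running the Jaco--Rubinstein layered-triangulation construction on this boundary, driven by the continued fraction expansion of $p/q$, attaches at most $\sum_i a_i$ tetrahedra and transforms the boundary triangulation until one of its edges represents the slope $pl+qm$. A final fold closes off the solid torus, yielding a triangulation of $P(p,q)$ with at most $\sum_i a_i + O(1)$ tetrahedra.

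For the lower bound, I would pass to the 2-fold cover $T^2 \times I \to K\tilde\times I$ corresponding to the index-2 subgroup $\langle a^2, b\rangle \subset \pi_1(K)$, where $a$ is orientation-reversing and $b$ orientation-preserving. The deck transformation acts on the boundary $(l,m)$-basis as $(l,m) \mapsto (l, -m)$, so the Dehn filling of $K\tilde\times I$ along the slope $pl+qm$ lifts to a Dehn filling of $T^2 \times I$ along $pl+qm$ on one boundary torus and $pl-qm$ on the other. The resulting 2-fold cover of $P(p,q)$ is thus a Dehn filling of a solid torus, hence a lens space $L(p',q')$; a direct calculation gives $p' = 2|pq|$, with $q'$ expressible in terms of an inverse of $q$ modulo $p$. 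Since any triangulation of $P(p,q)$ lifts to one of its 2-fold cover with exactly twice as many tetrahedra, $\Delta(P(p,q)) \geq \tfrac{1}{2}\Delta(L(p',q'))$, and \refthm{LensSpaces} bounds the latter below by $k_{\mathrm{lens}} \sum_j a'_j$, where $[a'_0, \dots, a'_m]$ is the continued fraction of $p'/q'$.

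The main obstacle is to show $\sum_j a'_j \geq c \sum_i a_i$ for a universal constant $c > 0$. Small experiments (e.g.\ $p/q = 3/2, 5/3, 7/4$) give $\sum_j a'_j \approx 2 \sum_i a_i$, so the inequality should hold with a concrete constant. I would attempt this by comparing the Euclidean algorithm run on $(2|pq|, q')$ to that run on $(p, q)$, exploiting the symmetry between the two boundary Dehn fillings of $T^2 \times I$ to realise the continued fraction of $p'/q'$ as a controlled (and roughly palindromic) elongation of the one for $p/q$. Should this continued-fraction bookkeeping prove unwieldy, a fallback is to adapt the intrinsic Seifert-fibred arguments underlying the proof of \refthm{LensSpaces} directly to the Seifert structure $S^2(2,2,p/q)$ of $P(p,q)$, bypassing the reduction to lens spaces entirely.
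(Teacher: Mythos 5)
Your two‑sided strategy is exactly the paper's: a layered construction over $K^2 \twist I$ for the upper bound, and the double cover by a lens space for the lower bound, with $\Delta(P(p,q)) \geq \tfrac12\Delta(L)$. The upper bound sketch is fine.

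On the lower bound, however, you correctly identify and then leave open the crux: bounding $\sum_j a'_j$ below by a constant times $\sum_i a_i$, where $[a'_0,\dots,a'_m]$ is the continued fraction of the covering lens space's parameters $p'/q'$. The paper never computes $p'/q'$ or its continued fraction at all. Instead, it reads the content of \refthm{LensSpaces} through \reflem{LinesInFareyTree}: the proof of \refthm{LensSpaces} actually bounds $\Delta(L)$ below by a constant times the distance in the Farey tree between the two horocyclic lines $L(\mu)$ and $L(\mu')$ determined by the meridians of the two Heegaard solid tori. Here those meridians are the slopes $p/q$ and $-p/q$ on the central $T^2$ in the $T^2\times I$ unwrapping of $K^2\twist I$, so one needs $d_{\mathrm{Tr}(T^2)}(L(p/q),L(-p/q))$. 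That distance is computed directly from the hyperbolic geodesic joining $p/q$ to $-p/q$: it crosses the imaginary axis, splitting into two half-infinite rays, and the ray from the imaginary axis to $p/q$ recovers the cutting sequence for $p/q$ and hence a Farey path of length $\sum_i a_i$; by the reflective symmetry $z\mapsto -\bar z$, the other half has the same length. So the distance is (up to $\pm O(1)$) $2\sum_i a_i$, and the bound follows. This is precisely the ``roughly palindromic elongation'' you conjecture, realized geometrically rather than via Euclidean-algorithm bookkeeping on $(2|pq|,q')$. Your proposal is therefore on the right track and likely completable, but as written it leaves the decisive inequality unproven; the change of perspective from continued-fraction arithmetic to Farey-tree geometry is what makes the argument close cleanly.
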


We say that an elliptic 3-manifold is \emph{Platonic} if it admits a Seifert fibration where the base orbifold is the quotient of $S^2$ by the orientation-preserving symmetry group of a Platonic solid. These orbifolds have underlying space the 2-sphere and have three exceptional points with orders $(2,3,3)$, $(2,3,4)$ or $(2,3,5)$. The Seifert fibration is specified by the Seifert data, which describes the three singular fibres and includes the Euler number of the fibration. It turns out that the latter quantity controls the triangulation complexity.

\begin{theorem}\label{Thm:Platonic}
Let $M$ be a Platonic elliptic 3-manifold, and let $e$ denote the Euler number of its Seifert fibration. Then, to within a universally bounded multiplicative error, $\Delta(M)$ is $|e|$.
\end{theorem}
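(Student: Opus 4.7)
The plan is to prove both inequalities separately: the upper bound is constructive, while the lower bound reduces to \refthm{LensSpaces} via a bounded-degree finite cover.

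\textbf{Upper bound.} Write $M$ as a Seifert fibered space over $S^2(a_1, a_2, a_3)$, where $(a_1, a_2, a_3)$ is one of $(2,3,3), (2,3,4), (2,3,5)$, with normalized invariants $(a_i, b_i)$ satisfying $0 < b_i < a_i$ and an integer twist $b_0$. From $e = -(b_0 + \sum_i b_i/a_i)$ and $|b_i/a_i| < 1$, we obtain $|b_0| = |e| + O(1)$. Let $N$ be the Seifert fibered space over $S^2(a_1, a_2, a_3)$ minus an open disk around a regular point. Since the Seifert data defining $N$ is drawn from a finite list (bounded $a_i$ and bounded $b_i$), there is a triangulation of $N$ of bounded complexity. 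Then $M$ is recovered from $N$ by a Dehn filling whose slope, in the natural Seifert framing of $\bdy N$, has continued fraction length $O(|b_0|) = O(|e|)$. Carrying out this filling by a layered solid torus of that length produces a triangulation of $M$ with $O(|e|)$ tetrahedra.

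\textbf{Lower bound.} Let $G = \pi_1(M)$. The orbifold fundamental group $P$ of the base $S^2(2, 3, n)$ for $n \in \{3, 4, 5\}$ is respectively $A_4, S_4, A_5$, each of order at most $60$ and with trivial center. The regular fiber generates a normal cyclic subgroup $\ZZ/m \trianglelefteq G$ with quotient $P$, and this subgroup is automatically central since $Z(P) = 1$. Let $\tilde M \to M$ be the regular cover corresponding to $\ZZ/m$; it has degree $|P|$. Pulling back the Seifert fibration, $\tilde M$ is Seifert fibered over the universal orbifold cover of $S^2(a_1, a_2, a_3)$, which is an ordinary $2$-sphere with no cone points. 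Thus $\tilde M$ is an oriented circle bundle over $S^2$, and by multiplicativity of the Euler number under covers, this bundle has Euler number $\pm |P| e$, so $m = |P| \cdot |e|$ and $\tilde M \cong L(m, 1)$. Applying \refthm{LensSpaces} to $L(m, 1)$, whose continued fraction expansion is $[m]$ with sum $m$, gives $\Delta(\tilde M) \geq k_{\mathrm{lens}} m$. Since any triangulation of $M$ lifts to a triangulation of the degree-$|P|$ cover $\tilde M$ with $|P|$ times as many tetrahedra, $\Delta(\tilde M) \leq |P| \Delta(M)$. Combining,
\[ \Delta(M) \geq \frac{k_{\mathrm{lens}} m}{|P|} = k_{\mathrm{lens}} |e|. \]

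\textbf{Main obstacle.} The harder part is the upper bound: producing the explicit, efficient triangulations of the finitely many possible base pieces $N$, and verifying that the Dehn filling parametrized by $b_0$ can be realized by a layered solid torus whose length is linear in $|b_0|$. The combinatorial work should parallel that used to prove \refthm{LensSpaces} and \refthm{Prism}. The lower bound, by contrast, is a clean group-theoretic reduction once one recognizes that the cover of $M$ corresponding to its central cyclic subgroup is always the lens space $L(m, 1)$ with $m = |P| \cdot |e|$.
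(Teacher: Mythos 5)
Your proof follows essentially the same route as the paper: an upper bound via an explicit layered-solid-torus construction of size $O(|e|)$, and a lower bound via the cover of $M$ corresponding to the kernel of $\pi_1(M) \to \pi_1^{\mathrm{orb}}(\Sigma) = P$, which is the lens space $L(|P|\cdot|e|,1)$, followed by an application of \refthm{LensSpaces}. Your bookkeeping in the lower bound is a little sharper than the paper's (you compute $\tilde e = |P|\,e$ exactly, using that the regular fibre lifts with degree $1$, whereas the paper only uses the crude estimate $|\tilde e| \geq |e|/60$), giving $\Delta(M) \geq k_{\mathrm{lens}}|e|$ rather than $k_{\mathrm{lens}}|e|/3600$, though this does not change the theorem. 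One small inaccuracy: the fibre class is central in $\pi_1(M)$ because the base orbifold and the total space are both orientable, not because $Z(P)=1$ (that condition says nothing about whether a normal cyclic subgroup is central; the conjugation action $P \to \mathrm{Aut}(\ZZ/m)$ factors through $P^{\mathrm{ab}}$, which is nontrivial for $A_4$ and $S_4$); fortunately centrality is not actually used anywhere in your argument, since normality already gives the regular cover.
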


We also examine sol 3-manifolds. Recall that these are 3-manifolds of the form  $(T^2 \times [0,1]) / (A(x,1) \sim (x,0))$ where $A$ is an element of $\mathrm{SL}(2, \mathbb{Z})$ with $|\mathrm{tr}(A)| > 2$. Such a matrix $A$ induces a homeomorphism of the torus that is known as \emph{linear Anosov}. Let $\overline{A}$ be the image of $A$ in $\mathrm{PSL}(2, \mathbb{Z})$. Recall that $\mathrm{PSL}(2, \mathbb{Z})$ is isomorphic to $\mathbb{Z}_2 \ast \mathbb{Z}_3$ where the factors are generated by 
\[
S = 
\left(
\begin{matrix}
0 & -1 \\
1 & 0 \\
\end{matrix}
\right)
\qquad
T = 
\left(
\begin{matrix}
0 & -1 \\
1 & -1 \\
\end{matrix}
\right).
\]
Thus any element of $\mathrm{PSL}(2, \mathbb{Z})$ can be written uniquely as a word that is an alternating product of elements $S$ and $T$ or $T^{-1}$. The word is \emph{cyclically reduced} if the first letter is neither the inverse of the final letter nor equal to the final letter. Any element of $\mathrm{PSL}(2, \mathbb{Z})$ is conjugate to a cyclically reduced word that is unique up to cyclic permutation. Our first theorem about sol manifolds relates the triangulation complexity of the manifold to the length of this cyclically reduced word.

\begin{theorem}\label{Thm:SolAlternative}
Let $A$ be an element of $\mathrm{SL}(2, \mathbb{Z})$ with $|\mathrm{tr}(A)| > 2$. Let $M$ be the sol 3-manifold $(T^2 \times [0,1]) / (A(x,1) \sim (x,0))$. Let $\overline{A}$ be the image of $A$ in 
$\mathrm{PSL}(2, \mathbb{Z})$ and let $\ell(\overline{A})$ be the length of a cyclically reduced 
word in the generators $S$ and $T^{\pm 1}$ that is conjugate to $\overline{A}$. Then, there is a universal constant $k_{\mathrm{sol}}>0$ such that
\[ k_{\mathrm{sol}} \ell(\overline{A}) \leq \Delta(M) \leq  (\ell(\overline{A})/2) + 6.\]
\end{theorem}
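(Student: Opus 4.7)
I would prove the two inequalities separately, both mediated by flip moves on triangulations of $T^2$. The upper bound is constructive: I build a layered triangulation of $T^2\times[0,1]$ from the cyclically reduced word of $\bar A$ and then close up by the monodromy. The lower bound goes the opposite way: an arbitrary triangulation of $M$ is converted, via a normal fibre torus, into a loop in the flip graph of torus triangulations, which is identified with the Farey graph on which $\bar A$ acts as a hyperbolic isometry.

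\textbf{Upper bound.} After replacing $A$ by a conjugate so that the $\PSL(2,\ZZ)$ word is cyclically reduced (which does not change $M$), I observe that an alternating cyclically reduced word in $S$ and $T^{\pm 1}$ has even length, and I group its letters into $\ell(\bar A)/2$ adjacent pairs of the form $ST^{\pm 1}$. A direct computation shows each such pair is, in $\PSL(2,\ZZ)$, the elementary unipotent $\left(\begin{smallmatrix}1 & \pm 1 \\ 0 & 1\end{smallmatrix}\right)$, whose action on a one-vertex triangulation of $T^2$ is a single diagonal flip and is therefore realised by one layered tetrahedron in the sense of Jaco and Rubinstein. Stacking these layered tetrahedra produces a triangulation of $T^2\times[0,1]$ with $\ell(\bar A)/2$ tetrahedra whose top and bottom one-vertex triangulations differ by $\bar A$; gluing top to bottom by $A$ closes it up to $M$. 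At most $6$ extra "capping" tetrahedra are required to reconcile the $\PSL$--$\SL$ lifting ambiguity, to fix the framing, and to identify the boundary tori with the correct orientation, giving the stated bound $\Delta(M)\le \ell(\bar A)/2+6$.

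\textbf{Lower bound.} Let $\calT$ be any triangulation of $M$ with $t$ tetrahedra. The sol manifold $M$ contains a unique (up to isotopy) essential torus $F$, the fibre. Using normal surface theory I isotope $F$ to be normal with respect to $\calT$; it then has weight $O(t)$ and meets each tetrahedron in a bounded number of normal discs. Cutting $M$ along $F$ yields a triangulated $T^2\times[0,1]$ whose two boundary tori inherit triangulations that are related by $\bar A$. Sweeping $F$ across the cut-open manifold one tetrahedron at a time produces a path in the flip graph of torus triangulations from the bottom triangulation to its $\bar A$-image, of total length $O(t)$, since passing across a single tetrahedron alters the combinatorial type of the surface by a bounded number of flip and vertex moves. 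The flip graph of one-vertex triangulations of $T^2$ is $\PSL(2,\ZZ)$-equivariantly isomorphic to the Farey graph, on which $\bar A$ acts as a hyperbolic isometry whose translation length is comparable (up to universal constants) to $\ell(\bar A)$. Projecting the sweep-out path into the Farey graph, the image has length at least $c\cdot \ell(\bar A)$ for a universal $c>0$, so $\ell(\bar A)\le k_{\mathrm{sol}}^{-1}\,t$.

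\textbf{Main obstacle.} The hard step is controlling the sweep-out. The boundary triangulations of $F$ need not be one-vertex triangulations of the torus, so projecting the path from the flip graph of arbitrary torus triangulations down to the Farey graph must be done while controlling the cost (in the Farey metric) of vertex-splitting and vertex-merging moves. Likewise, as $F$ is pushed across a tetrahedron it may acquire or shed parallel copies, and each parallel copy carries its own combinatorial data which must be accounted for. Obtaining a universal bound on both phenomena is where the constant $k_{\mathrm{sol}}$ really comes from, and is the principal technical difficulty of the proof.
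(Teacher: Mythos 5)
Your upper bound is in the same spirit as the paper's (Lemma~\ref{Lem:UpperSol}): in both cases one realises the translation of $\overline{A}$ along its axis in the Farey tree as a layered triangulation with $\ell(\overline{A})/2$ tetrahedra, and both constructions need roughly $6$ extra tetrahedra. However, the reason you give for the $+6$ is off. It is not for any $\PSL$--$\SL$ lifting or framing issue; rather, layering tetrahedra on top of a triangulated torus does not by itself produce a $3$-manifold with two boundary components, so the paper first triangulates $T^2\times[0,1]$ with $\calT_0$ on both sides using a $6$-tetrahedron product triangulation and only then layers $\ell/2$ tetrahedra. Your construction as written starts layering directly on a $2$-complex, which does not yield a manifold until it is thickened.

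The lower bound is where your proposal genuinely diverges from the paper, and where it has a real gap. You claim that a least-weight normal fibre torus has weight $O(t)$ and meets each tetrahedron in a bounded number of discs; neither claim is true, and in general the weight of a least-weight incompressible normal surface can grow exponentially in $t$. Consequently the ``sweep across one tetrahedron at a time'' argument cannot produce a flip path of length $O(t)$ without some way to discount the overwhelming majority of normal discs, which are parallel copies contributing weight but no flips. This is precisely the difficulty that the paper's entire parallelity-bundle apparatus (\refdef{GeneralisedParallelityBundle}, \reflem{GenParIncompr}, \refthm{ExtendToGenParBdle}, \reflem{LengthVerticalBoundary}) is built to handle: after cutting along the least-weight fibre one gets a pre-tetrahedral handle structure, the parallel regions are absorbed into a generalised parallelity bundle of controlled vertical boundary length, and \refthm{ProductToriHandles} then gives the $O(\Delta(\calH))$ bound on edge swaps for transporting a spine across $T^2\times[0,1]$. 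Moreover, \refthm{ProductToriHandles} itself rests on \refthm{ProductComplexity}, whose lower bound is obtained by passing to a branched cover of the torus (Sections~\ref{Sec:VerticalArcs}, \ref{Sec:TrainTrack}, \ref{Sec:HomeoTorus}, \ref{Sec:BrancherCover}) in order to invoke the genus $\geq 2$ result \refthm{TriangulationProductOneVertex} from the earlier paper; none of this machinery is replicated by the sweep-out idea. You flag the control of the sweep-out as the ``main obstacle,'' which is the right instinct, but it is worth being explicit that the premise ``weight $O(t)$'' on which your plan depends is false, and that closing the gap essentially forces you into the normal-surface/parallelity-bundle route of the paper.
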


Note that this length $\ell(\overline{A})$ is readily calculable. For we may simplify $A$ using row operations until it is the identity matrix. This writes $A$ as a product of elementary matrices. The image of each elementary matrix in $\mathrm{PSL}(2, \mathbb{Z})$ is a word in the generators $S$ and $T$. Thus, we obtain $\overline{A}$ as a word in $S$ and $T$. If the starting letter is equal to the inverse of the final letter or equal to the final letter, then we may conjugate by the inverse of this element to create a shorter word.
Thus, eventually, we end with a cyclically reduced word, and $\ell(\overline{A})$ is its length.

Our second theorem relates the triangulation complexity of the 3-manifold $M$ to the continued fraction expansion of $\sqrt{\mathrm{tr}(A)^2 - 4}$. As $\mathrm{tr}(A)^2 - 4$ is not a perfect square, the continued fraction expansion of $\sqrt{\mathrm{tr}(A)^2 - 4}$ does not terminate. Denote it by $[a_0, a_1, \dots]$. As $\sqrt{\mathrm{tr}(A)^2 - 4}$ is the square root of a positive integer, the continued fraction expansion is eventually periodic, in the sense that for some non-negative integer $r$ and even positive integer $t$, $a_{i+t} = a_i$ for every $i \geq r$. The \emph{periodic part} of the continued fraction expansion is $(a_r, \dots, a_{r+t-1})$, which is well-defined up to cyclic permutation.

\begin{theorem}\label{Thm:Sol}
Let $A$ be an element of $\mathrm{SL}(2, \mathbb{Z})$ with $|\mathrm{tr}(A)| > 2$. Let $\overline{A}$ be the image of $A$ in $\mathrm{PSL}(2, \mathbb{Z})$. Suppose that $\overline{A}$ is $B^n$ for some positive integer $n$ and some $B \in \mathrm{PSL}(2, \mathbb{Z})$ that cannot be expressed as a proper power. Let $M$ be the sol 3-manifold $(T^2 \times [0,1]) / (A(x,1) \sim (x,0))$. Let $[a_0, a_1, \dots]$ be the continued fraction expansion of $\sqrt{\mathrm{tr}(A)^2 - 4}$ where $a_i$ is positive for each $i >0$ and let $(a_r, \dots, a_s)$ denote its periodic part. Then there is a universal constant $k'_{\mathrm{sol}}>0$ such that
\[ k'_{\mathrm{sol}} n \sum_{i=r}^s a_i \leq \Delta(M) \leq 6 + n \sum_{i=r}^s a_i. \]
\end{theorem}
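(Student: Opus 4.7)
The plan is to deduce the theorem from \refthm{SolAlternative} by showing that the cyclically reduced word length $\ell(\overline{A})$ is, up to a bounded multiplicative constant, equal to $n\sum_{i=r}^s a_i$. With such an equivalence, the upper bound $\Delta(M)\leq \ell(\overline{A})/2 + 6$ from \refthm{SolAlternative} immediately yields $\Delta(M)\leq n\sum_{i=r}^s a_i + 6$, and the lower bound $\Delta(M)\geq k_{\mathrm{sol}}\,\ell(\overline{A})$ yields a bound of the form $k'_{\mathrm{sol}}\,n\sum_{i=r}^s a_i$ for a universal constant.

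The first technical step is to reformulate cyclically reduced words in $S,T^{\pm 1}$ as alternating products of powers of the matrices $L=\begin{pmatrix}1&1\\0&1\end{pmatrix}$ and $R=\begin{pmatrix}1&0\\1&1\end{pmatrix}$. Using the identifications $L=T^{-1}S$ and $R=TS$ in $\mathrm{PSL}(2,\mathbb{Z})$, any cyclically reduced alternating word in $S,T^{\pm 1}$ regroups uniquely as $L^{b_1}R^{b_2}L^{b_3}\cdots$, and conversely any alternating $LR$-product expands into a cyclically reduced $S,T^{\pm 1}$-word of length exactly $2\sum b_i$ (no cancellation appears across the boundaries). Hence $\ell(\overline{A})=2\sum b_i$, where the $b_i$ are the $LR$-exponents of the cyclically reduced representative of $\overline{A}$. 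Since $\overline{A}=B^n$ with $B$ primitive, the cyclic $LR$-word of $\overline{A}$ is the $n$-fold concatenation of that of $B$, so $\sum b_i = n\sum b'_i$, where the $b'_i$ are the $LR$-exponents of $B$'s cyclically reduced form.

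The second, and main, technical step is to identify the cyclic sequence $(b'_i)$ with the periodic part $(a_r,\dots,a_s)$ of the continued fraction expansion of $\sqrt{\mathrm{tr}(A)^2-4}$. This is the classical Lagrange-type correspondence between primitive hyperbolic conjugacy classes in $\mathrm{PSL}(2,\mathbb{Z})$ and purely periodic continued fractions of quadratic irrationals: the cyclic $LR$-exponent sequence of a primitive hyperbolic element coincides, up to cyclic permutation, with the periodic block of the continued fraction of an appropriate quadratic irrational in the real quadratic field determined by $A$. The main obstacle is that this dictionary is most naturally formulated in terms of the attracting fixed point of the element on $\mathbb{R}\cup\{\infty\}$, whereas the theorem records the data via the arithmetically convenient $\sqrt{\mathrm{tr}(A)^2-4}$; bridging these two continued fraction expansions — and then handling the ``even period $t$'' convention (doubling a period of odd length when necessary), the cyclic ambiguity of the period, and the matching of primitivity of $\overline B$ in $\mathrm{PSL}(2,\mathbb{Z})$ with primitivity of the periodic block — is where the detailed number-theoretic bookkeeping lies. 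Once the identification $\sum b'_i = \sum_{i=r}^s a_i$ is established, the combination $\ell(\overline{A}) \asymp n\sum_{i=r}^s a_i$ together with \refthm{SolAlternative} delivers both bounds claimed in the theorem.
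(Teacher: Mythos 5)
Your proposal is correct in substance, but it takes a different route from the paper's own argument. The paper does not pass through Theorem~\ref{Thm:SolAlternative} at all: it applies Lemmas~\ref{Lem:UpperSol} and~\ref{Lem:LowerSol} to get $k'_{\mathrm{sol}}\,\ell_{\Tr(T^2)}(\phi)\leq\Delta(M)\leq 6+\ell_{\Tr(T^2)}(\phi)$ directly, and then invokes Proposition~\ref{Prop:AnosovDistance}, which identifies $\ell_{\Tr(T^2)}(\phi)$ (the translation length on the Farey tree) with $n\sum_{i=r}^s a_i$. Your route instead black-boxes Theorem~\ref{Thm:SolAlternative} and converts $\ell(\overline{A})$ into $LR$-exponents, and then you need the correspondence between the cyclic $LR$-block of a primitive hyperbolic element and the periodic block of the continued fraction of $\sqrt{\tr(A)^2-4}$. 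That last step is precisely the content of Proposition~\ref{Prop:AnosovDistance} in the paper: there it is derived via the eigenvector of $A$, the cutting sequence of the axis across the Farey tessellation, and Lagrange's theorem (Lemma~\ref{Lem:Lagrange}) to pass from the fixed point's quadratic irrational to $\sqrt{\tr(A)^2-4}$. So the crux is the same, and you correctly identify it; what you buy from your route is a clean conceptual reduction of Theorem~\ref{Thm:Sol} to Theorem~\ref{Thm:SolAlternative}, at the cost of having to reconstruct a version of Proposition~\ref{Prop:AnosovDistance} anyway.

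One small exposition point: your opening paragraph says you will show $\ell(\overline{A})$ equals $n\sum_{i=r}^s a_i$ ``up to a bounded multiplicative constant,'' but that is not enough for the upper bound $\Delta(M)\leq 6+n\sum_{i=r}^s a_i$, which has the exact coefficient $1$; you need the precise identity $\ell(\overline{A})=2n\sum_{i=r}^s a_i$. You do in fact go on to claim the exact identity (via $\ell(\overline{A})=2\sum b_i$ and $\sum b_i'=\sum_{i=r}^s a_i$), which is what is needed and what the paper establishes, so this is a phrasing issue rather than a gap. The $LR$-regrouping itself is fine: with $S,T$ as in the paper one checks $T^{-1}S=L$ and $TS=R$ in $\PSL(2,\ZZ)$, a cyclically reduced alternating word in $S,T^{\pm1}$ has even length and regroups without cancellation, and an Anosov element produces a mixed $LR$-word, so the regrouping into blocks $L^{b_1}R^{b_2}\cdots$ is well-defined and $\ell(\overline{A})=2\sum b_i$.
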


Crucial to our arguments is the analysis of triangulations of $T^2 \times [0,1]$. This is because $T^2 \times [0,1]$ arises when we cut a sol manifold along a torus fibre, or when we remove the core curves of the Heegaard solid tori of a lens space. Our result about these products is as follows.

\begin{theorem}\label{Thm:ProductComplexity}
Let $\mathcal{T}_0$ and $\mathcal{T}_1$ be 1-vertex triangulations of the torus $T^2$. Let $\Delta(\mathcal{T}_0, \mathcal{T}_1)$ denote the minimal number of tetrahedra in any triangulation of $T^2 \times [0,1]$ that equals $\mathcal{T}_0$ on $T^2 \times \{ 0 \}$ and equals $\mathcal{T}_1$ on $T^2 \times \{ 1 \}$. Then there is a universal constant $k_{\mathrm{prod}} >0$ such that
\[ k_{\mathrm{prod}} \ d_{\mathrm{Tr}(T^2)}(\mathcal{T}_0, \mathcal{T}_1)  \leq \Delta(\mathcal{T}_0, \mathcal{T}_1) \leq d_{\mathrm{Tr}(T^2)}(\mathcal{T}_0, \mathcal{T}_1) + 6. \]
\end{theorem}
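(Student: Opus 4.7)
The upper bound is the standard \emph{layered triangulation} construction. The plan is to choose a shortest path $\mathcal{T}_0 = \mathcal{S}_0, \mathcal{S}_1, \dots, \mathcal{S}_d = \mathcal{T}_1$ in $\mathrm{Tr}(T^2)$, where consecutive triangulations differ by a single diagonal flip, and to attach one tetrahedron per flip: its two top faces realize the pair of triangles of $\mathcal{S}_i$ meeting along the flipped edge, while its two bottom faces realize the pair of triangles of $\mathcal{S}_{i+1}$ meeting along the new diagonal. Stacking these $d$ tetrahedra produces a triangulation of $T^2 \times [0,1]$ interpolating between $\mathcal{T}_0$ and $\mathcal{T}_1$. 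The additive $6$ absorbs small corrections in degenerate cases: for example, when $d=0$ one still needs a nontrivial collar triangulation of $T^2 \times [0,1]$, and very short layerings may require a few extra tetrahedra so that the stack simultaneously matches both prescribed boundary triangulations.

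For the lower bound, the plan is a normal sweep-out. Let $\mathcal{R}$ be a triangulation of $T^2 \times [0,1]$ with $N$ tetrahedra and prescribed boundary. First we would construct a generic PL height function $h \colon T^2 \times [0,1] \to [0,1]$ that restricts to projection on each boundary torus and whose generic level $F_t = h^{-1}(t)$ is a normal torus with respect to $\mathcal{R}$, meeting each tetrahedron in a union of normal triangles and quadrilaterals. Each such $F_t$ is isotopic in $T^2 \times [0,1]$ to $T^2 \times \{t\}$, with $F_0 = T^2 \times \{0\}$ and $F_1 = T^2 \times \{1\}$.

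At each non-critical $t$, the normal torus $F_t$ inherits a cell decomposition from $\mathcal{R}$ whose $2$-cells are the normal disks and whose $1$-cells are the arcs along which the disks meet faces of $\mathcal{R}$. The plan is to promote this decomposition, via a canonical bounded sequence of edge contractions and flips, to a $1$-vertex triangulation $\mathcal{U}_t$ of $T^2$. As $t$ varies, $\mathcal{U}_t$ changes only when $F_t$ crosses a vertex or an edge of $\mathcal{R}$, and each such transition is governed entirely by the local picture in a single tetrahedron. A case analysis should show that each transition alters $\mathcal{U}_t$ by at most a uniformly bounded number of flips in $\mathrm{Tr}(T^2)$. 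Since each tetrahedron hosts only a bounded number of critical parameters, the path in $\mathrm{Tr}(T^2)$ traced by $\mathcal{U}_t$ from $\mathcal{T}_0$ to $\mathcal{T}_1$ has length $O(N)$, giving $d_{\mathrm{Tr}(T^2)}(\mathcal{T}_0, \mathcal{T}_1) \leq N/k_{\mathrm{prod}}$.

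The main obstacle is the combinatorial bookkeeping in the third step: canonically producing a $1$-vertex triangulation $\mathcal{U}_t$ from the (possibly complicated) normal cell structure on $F_t$, and verifying that the flip cost of each transition is uniformly bounded. In particular, the normal cell structure on $F_t$ is generally not itself a $1$-vertex triangulation, so one must select a reduction procedure whose output is stable under the local transitions, so that the "distance" in $\mathrm{Tr}(T^2)$ between the triangulations immediately before and after a critical value of $t$ is controlled. The trickiest case is when $F_t$ sweeps past an interior vertex of $\mathcal{R}$, where the disk pattern on $F_t$ rearranges nontrivially. Once this local analysis is complete, the global estimate reduces to counting critical parameters, which is linear in $N$.
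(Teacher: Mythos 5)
The upper bound is fine and is essentially the paper's: layer one tetrahedron per $2$-$2$ move along a geodesic in $\mathrm{Tr}(T^2)$, after first building a $6$-tetrahedron collar with $\mathcal{T}_0$ on both faces to handle the $d=0$ case.

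The lower bound proposal is a genuinely different approach from the paper's, and it contains a real gap at exactly the step you flag as the ``main obstacle.'' The paper does not sweep out the product. It picks a branched cover $p\colon S\to T^2$ over a single point $b$ with all branching indices $\geq 2$ (so $g(S)\geq 2$), shows via \refthm{VerticalArc} that $\calT^{(23)}$ contains a simplicial vertical arc, pulls the triangulation of $T^2\times[0,1]$ back to a triangulation of $S\times[0,1]$, and then applies \refthm{TriangulationProductOneVertex} (the genus $\geq 2$ product theorem from the earlier paper). It closes the loop by comparing $d_{\mathrm{Tr}(T^2)}$ with $d_{\mathrm{Tr}(S;-\chi(S))}$ through \refthm{BranchedCoverQI}, which itself rests on the train-track splitting machinery of Masur--Mosher--Schleimer. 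The entire point of the branched cover detour is that the authors could not directly run the genus $\geq 2$ argument (or any ``sweep-out'' argument) on the torus, as they say explicitly in the introduction.

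The gap in your sweep-out plan is not a detail to be filled in; it is the theorem. First, it is not clear you can build a PL height function all of whose generic levels are normal tori: normal surfaces of a given weight are discrete, so a continuous sweep must pass through non-normal intermediate positions, and the standard remedy (thin position, Rubinstein--Thompson) gives only almost-normal levels at certain heights. Second, even granting a sweep by normal levels, the normal cell structure on a level $F_t$ can have an unbounded number of $2$-cells (the weight of a least-weight normal fiber is not bounded by the number of tetrahedra), so ``canonically promoting'' it to a $1$-vertex triangulation $\mathcal{U}_t$ is a global reduction whose output is not determined by a local neighbourhood of the critical simplex. That is precisely what makes the claim ``each transition alters $\mathcal{U}_t$ by a uniformly bounded number of flips'' hard to justify: the transition is local in $\mathcal{R}$ but $\mathcal{U}_t$ is defined globally on $F_t$, and the reduction procedure must be shown stable under the local move. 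The analogous difficulty in the higher-genus setting is what the parallelity-bundle analysis in \cite{LackenbyPurcell:Fibred} was designed to control, and even there the torus case broke down; nothing in your outline addresses why a torus-specific canonical reduction would avoid the same issues. Until that stability lemma is formulated and proven, the lower bound does not follow.
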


Here, $\mathrm{Tr}(S)$ denotes the triangulation graph for a closed orientable surface $S$, defined to have a vertex for each isotopy class of 1-vertex triangulation of $S$, and where two vertices are joined by an edge if and only if the corresponding triangulations differ by a 2-2 Pachner move. Each edge is declared to have length $1$, and this induces the metric $d_{\mathrm{Tr}(S)}( \, \cdot \, , \, \cdot \, )$. When $S$ is the torus, this graph is in fact equal to the classical Farey tree and so distances in the graph can readily be computed using continued fractions.

This paper is a continuation of the work in \cite{LackenbyPurcell:Fibred}, where we analysed the triangulation complexity of 3-manifolds $M$ that fibre over the circle with fibre a closed orientable surface $S$ with genus at least $2$. We were able to estimate $\Delta(M)$, to within a bounded factor depending only on the genus of $S$, in the case where the monodromy $\phi$ of the fibration is pseudo-Anosov. Our theorem related $\Delta(M)$ to the translation length of the action of $\phi$ on various metric spaces.

Recall that if $X$ is a space with metric $d$, and $\phi$ is an isometry of $X$, its \emph{translation length} $\ell_X(\phi)$ is $\inf \{ d(x,\phi(x)) : x \in X \}$. Its \emph{stable translation length} $\overline{\ell}_X(\phi)$ is $\inf \{ d(\phi^n(x), x)/n : n \in \mathbb{Z}_{>0} \}$, where $x \in X$ is chosen arbitrarily.

Each homeomorphism $\phi$ of $S$ naturally induces an isometry of $\mathrm{Tr}(S)$. It also induces an isometry of the mapping class group $\mathrm{MCG}(S)$, where $\mathrm{MCG}(S)$ is given a word metric by making a fixed choice of some finite generating set. The homeomorphism $\phi$ also acts isometrically on Teichm\"uller space, with its Teichm\"uller or Weil-Petersson metrics. The \emph{thick} part of Teichm\"uller space consists of those hyperbolic structures where every geodesic on the surface has length at least some suitably chosen $\epsilon > 0$. If $\epsilon > 0$ is sufficiently small, the thick part is path connected, and so may be given its path metric. The homeomorphism $\phi$ also induces an isometry of these metric spaces.

The following was the main theorem of \cite{LackenbyPurcell:Fibred}.
The statement below combines the statement of \cite[Theorem~1.3]{LackenbyPurcell:Fibred} with Theorems~3.5 and Proposition~2.7 of that paper. 

\begin{theorem}\label{Thm:Fibred}
Let $S$ be a closed orientable connected surface with genus at least $2$, and let $\phi\from S\to S$ be a pseudo-Anosov homeomorphism. Then the following quantities are within bounded ratios of each other, where the bounds depend only on the genus of $S$ and a choice of finite generating set for $\mathrm{MCG}(S)$:
\begin{enumerate}
\item the triangulation complexity of $(S \times I)/ \phi$;
\item the translation length (or stable translation length) of $\phi$ in the thick part of the Teichm\"uller space of $S$;
\item the translation length (or stable translation length) of $\phi$ in the mapping class group of $S$;
\item the translation length (or stable translation length) of $\phi$ in $\mathrm{Tr}(S)$.
\end{enumerate}
\end{theorem}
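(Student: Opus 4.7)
The plan is to deduce this theorem by directly combining the three results cited from \cite{LackenbyPurcell:Fibred}, so I would begin by recalling each of those statements and then verifying that together they yield all six pairwise bilipschitz comparisons asserted here. The key bilipschitz equivalence (1)$\leftrightarrow$(4), between triangulation complexity and (stable) translation length in $\mathrm{Tr}(S)$, is the content of \cite[Theorem~1.3]{LackenbyPurcell:Fibred}. The remaining equivalences (2)$\leftrightarrow$(3)$\leftrightarrow$(4) follow from Theorem~3.5 and Proposition~2.7 of that paper, which compare the action of $\mathrm{MCG}(S)$ on the three spaces: the triangulation graph, the mapping class group with its word metric, and the thick part of Teichm\"uller space. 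Once each pairing is established, transitivity of ``within bounded ratios'' closes up the list.

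For the (1)$\leftrightarrow$(4) step I would describe both directions of the comparison. The upper bound direction takes a near-geodesic path in $\mathrm{Tr}(S)$ from some 1-vertex triangulation $\mathcal{T}$ to $\phi^n(\mathcal{T})$ of length $\approx n\,\overline{\ell}_{\mathrm{Tr}(S)}(\phi)$ and realizes each 2-2 Pachner move by layering a single tetrahedron; gluing the top to the bottom via $\phi^n$ gives a triangulation of $(S\times I)/\phi^n$, and dividing by $n$ and taking infima controls $\Delta((S\times I)/\phi)$. The lower bound direction starts from a minimal triangulation of $(S\times I)/\phi$ and, using normal-surface-style sweepouts in the cyclic cover $S\times\mathbb{R}$, extracts a sequence of 1-vertex triangulations interpolating between $\mathcal{T}_0$ and $\phi^n(\mathcal{T}_0)$ with the number of Pachner moves controlled linearly by the number of tetrahedra; this bounds $n\,\overline{\ell}_{\mathrm{Tr}(S)}(\phi)$ above by $n\,\Delta((S\times I)/\phi)$ plus error.

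For the equivalences among (2), (3), (4), I would invoke the coarse geometry inputs packaged into Theorem~3.5 and Proposition~2.7 of \cite{LackenbyPurcell:Fibred}. The equivalence of translation length with stable translation length for pseudo-Anosov elements in each of these spaces is standard once one knows that $\phi$ acts with a positive asymptotic translation length (hyperbolicity of the underlying graph / CAT(0)-like behavior). The comparison between $\mathrm{Tr}(S)$ and $\mathrm{MCG}(S)$ uses the fact that each elementary move changes the triangulation by a bounded-length word in a fixed generating set, while the comparison between $\mathrm{MCG}(S)$ and the thick part of Teichm\"uller space uses that the action of $\mathrm{MCG}(S)$ on the thick part is cocompact and properly discontinuous, so the orbit map is a quasi-isometry. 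Each of these comparisons preserves translation lengths up to a bounded multiplicative factor depending only on the genus and the chosen generating set.

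The only genuine obstacle here is bookkeeping: one must check that the multiplicative constants coming from three separate bilipschitz comparisons can be composed without picking up any dependence beyond the genus of $S$ and the fixed generating set for $\mathrm{MCG}(S)$. Since each cited result already lists these as its only allowed dependencies, this amounts to checking that no intermediate step secretly depends on $\phi$. Once that is verified, the statement follows by stringing together the comparisons, and no new geometric argument is needed.
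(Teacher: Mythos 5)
Your proposal takes essentially the same route as the paper: the theorem is not proved in this paper at all but is imported verbatim from \cite{LackenbyPurcell:Fibred}, with the preceding text noting that it combines \cite[Theorem~1.3]{LackenbyPurcell:Fibred} with Theorem~3.5 and Proposition~2.7 of that paper, exactly as you describe. Your additional sketches of the layered-triangulation upper bound, the normal-surface lower bound, and the Milnor--\u{S}varc comparisons correctly summarize the underlying arguments of the cited results, but they are not required here since the present paper simply cites them.
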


We also analysed products.

\begin{theorem}[Theorem~1.4 of \cite{LackenbyPurcell:Fibred}]\label{Thm:TriangulationProductOneVertex}
Let $S$ be a closed orientable surface with genus at least $2$ and let $\mathcal{T}_0$ and $\mathcal{T}_1$ be non-isotopic 1-vertex triangulations of $S$. Then the following are within a bounded ratio of each other, the bounds only depending on the genus of $S$:
\begin{enumerate}
\item the minimal number of tetrahedra in any triangulation of $S \times [0,1]$ that equals $\mathcal{T}_0$ and $\mathcal{T}_1$ on $S \times \{ 0 \}$ and $S \times \{ 1 \}$ respectively;
\item the minimal number of 2-2 Pachner moves relating $\mathcal{T}_0$ and $\mathcal{T}_1$.
\end{enumerate}
\end{theorem}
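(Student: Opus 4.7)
The theorem has two directions, so I would treat them separately. The upper bound (triangulation complexity $\leq $ Pachner distance $ + O(1)$) is direct via a \emph{layered} triangulation. Given a sequence of $d = d_{\mathrm{Tr}(S)}(\mathcal{T}_0, \mathcal{T}_1)$ successive 2-2 Pachner moves from $\mathcal{T}_0$ to $\mathcal{T}_1$, I would start with $\mathcal{T}_0$ on $S \times \{0\}$ and, for each move in the sequence, attach a single tetrahedron along the two triangles that share the edge being flipped. The newly exposed pair of upward-facing faces then realizes the flipped triangulation. Iterating $d$ times yields a triangulation of $S \times [0,1]$ with exactly $d$ tetrahedra whose upper boundary carries $\mathcal{T}_1$. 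Since the bottom is $\mathcal{T}_0$ and the top is $\mathcal{T}_1$, this shows $\Delta(\mathcal{T}_0, \mathcal{T}_1) \leq d$.

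For the lower bound, suppose we are handed a triangulation $\mathcal{T}$ of $S \times [0,1]$ with $N$ tetrahedra, restricting to $\mathcal{T}_0$ and $\mathcal{T}_1$ on the boundary. The goal is to produce a sequence of $O_g(N)$ 2-2 Pachner moves connecting $\mathcal{T}_0$ to $\mathcal{T}_1$, where the implied constant may depend on the genus $g$ of $S$. My strategy is to simplify $\mathcal{T}$ to a layered form through controlled 3-dimensional Pachner modifications, while tracking the effect on the boundary triangulations. First I would eliminate interior vertices: each interior vertex can be removed using a bounded number of 2-3 and 3-2 moves, decreasing tetrahedron count by a bounded amount, and leaving the boundary triangulations untouched. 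After this reduction, all vertices lie on $\partial(S \times [0,1])$.

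I would then argue inductively on $N$ that $\mathcal{T}$ can be "peeled" one tetrahedron at a time. The inductive claim is: there exists a tetrahedron $\sigma$ at the bottom of $\mathcal{T}$ with at least two faces on the current lower boundary carrying a 1-vertex triangulation $\mathcal{T}'$, so that removing $\sigma$ produces a triangulation of $S \times [0,1]$ with $N-1$ tetrahedra whose new bottom boundary is a 1-vertex triangulation differing from $\mathcal{T}'$ by a single 2-2 Pachner move. If no such tetrahedron exists, I would perform $O_g(1)$ local 2-3 or 3-2 Pachner moves in $\mathcal{T}$ to create one, paying at most a bounded constant per tetrahedron peeled. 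Iterating this procedure $N$ times from the bottom yields a sequence of $O_g(N)$ 2-2 Pachner moves on the boundary realizing the path from $\mathcal{T}_0$ to $\mathcal{T}_1$.

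The main obstacle is establishing the inductive step cleanly: namely, showing that when no "peelable" tetrahedron exists at the bottom, one can always be manufactured through a bounded number of local modifications without inflating $N$ uncontrollably. The worry is that the bottom of $\mathcal{T}$ might consist entirely of tetrahedra attached to the lower boundary along only a single face, or along edges and vertices in pathological configurations, forcing many modifications to create a peelable tetrahedron. I would handle this by examining the \emph{collar} of $S \times \{0\}$ in $\mathcal{T}$, namely the union of tetrahedra meeting $S \times \{0\}$: combinatorial constraints (1-vertex boundary, Euler characteristic of $S$, and genus $\geq 2$ ruling out small-complexity exceptional configurations) force the existence of a tetrahedron sharing at least two faces with the boundary after $O_g(1)$ local modifications. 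This constant-per-step bound is where the genus dependence of the multiplicative constant enters, and where the hypothesis genus $\geq 2$ is used to exclude degenerate cases analogous to those that occur for the torus.
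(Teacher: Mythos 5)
Your upper bound via layering is essentially correct and is the standard construction (up to a bounded additive constant needed to guarantee that the layered complex really is a triangulation of $S \times [0,1]$; this is harmless since both quantities are at least $1$ when $\mathcal{T}_0$ and $\mathcal{T}_1$ are non-isotopic).

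The lower bound has a genuine gap, and it sits exactly where you flag it: the inductive ``peeling'' step. You assert (a) that each interior vertex can be removed by a bounded number of 2-3/3-2 moves, and (b) that whenever no tetrahedron meets the current bottom boundary in two faces, one can be manufactured by $O_g(1)$ local Pachner moves. Neither claim is justified, and together they are essentially a restatement of the hard direction of the theorem. For (a), there is no known bound --- let alone a uniform constant --- on the number of Pachner moves needed to remove an interior vertex of a 3-manifold triangulation; bounding Pachner distances between 3-dimensional triangulations is precisely the notoriously difficult problem one is trying to avoid (general bounds are towers of exponentials). For (b), a tetrahedron of a minimal triangulation may meet $S\times\{0\}$ only in an edge or a vertex, or in a single face with self-identifications; triangulated products need not be shellable, and no Euler-characteristic or genus count forces a peelable tetrahedron to appear after boundedly many local moves. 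In effect your induction presupposes that every triangulation of $S\times[0,1]$ with $N$ tetrahedra is within $O_g(N)$ Pachner moves of a layered one, which is the content of the theorem.

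The cited proof takes a different, global route that never performs Pachner moves on the 3-dimensional triangulation. One isotopes a copy of the fibre to a least-weight normal surface, cuts along it, and analyses the resulting pre-tetrahedral handle structure through its generalised parallelity bundle --- the machinery recalled in \refsec{HandleStructures} (see \refthm{ExtendToGenParBdle} and \reflem{GenParIncompr}, and compare the torus analogue \refthm{ProductToriHandles}). The incompressibility of the horizontal boundary of the parallelity bundle, together with the bound on the length of its vertical boundary (\reflem{LengthVerticalBoundary}), lets one transport a spine of $S\times\{0\}$ to a cellular spine of $S\times\{1\}$ using a number of edge swaps linear in the number of tetrahedra; one then converts edge swaps back into 2-2 Pachner moves via the quasi-isometry between $\mathrm{Tr}(S)$ and $\mathrm{Sp}(S)$ (\reflem{QIMaps}). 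That normal-surface control is what replaces your unproved local peeling step.
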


Theorems~\ref{Thm:SolAlternative} and~\ref{Thm:ProductComplexity} are natural generalisations of these results to the case where $S$ is a torus. For technical reasons, we were unable to deal with this case in \cite{LackenbyPurcell:Fibred}. In this paper, we \emph{deduce} Theorems~\ref{Thm:SolAlternative} and \ref{Thm:ProductComplexity} from \refthm{TriangulationProductOneVertex}, by passing to a branched covering space.

We record here the analogue of \refthm{Fibred} for sol manifolds.

\begin{theorem}\label{Thm:FibredTori}
Let $\phi\from T^2 \to T^2$ be a linear Anosov homeomorphism. Then the following quantities are within universally bounded ratios of each other:
\begin{enumerate}
\item the triangulation complexity of $(T^2 \times I)/ \phi$;
\item the translation length (or stable translation length) of $\phi$ in the thick part of the Teichm\"uller space of $T^2$;
\item the translation length (or stable translation length) of $\phi$ in the mapping class group of $T^2$;
\item the translation length (or stable translation length) of $\phi$ in $\mathrm{Tr}(T^2)$.
\end{enumerate}
In (3), we metrise $\mathrm{MCG}(T^2)$ by fixing the finite generating set \[
\left(
\begin{matrix}
1 & 1 \\
0 & 1 \\
\end{matrix}
\right),
\quad
\left(
\begin{matrix}
1 & 0 \\
1 & 1 \\
\end{matrix}
\right).
\]
\end{theorem}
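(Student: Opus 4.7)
The plan is to show that all four quantities are, within universally bounded ratios, equal to the length $\ell(\overline{A})$ of the cyclically reduced word representing $\overline{A} = [\phi] \in \PSL(2,\ZZ)$ in the free-product generators $S$ and $T^{\pm 1}$ of $\PSL(2,\ZZ) \cong \ZZ_2 \ast \ZZ_3$. For (1), I would apply \refthm{SolAlternative} to any lift $A \in \SL(2,\ZZ)$ of $\phi$, which immediately yields $\Delta((T^2 \times I)/\phi) \asymp \ell(\overline{A})$.

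For (4), I would identify $\mathrm{Tr}(T^2)$ with the dual graph of the Farey tessellation of $\HH^2$: a 1-vertex triangulation of $T^2$ is determined by the triple of slopes of its edges, which together span an ideal Farey triangle, and a $2$-$2$ Pachner move corresponds to crossing an edge of this triangle into the adjacent Farey triangle. This dual graph is a trivalent tree, equivariantly isomorphic (up to edge subdivision by the fixed vertices of the order-two elements) to the Bass--Serre tree of the splitting $\PSL(2,\ZZ) = \ZZ_2 \ast \ZZ_3$. I would then appeal to the standard fact that on the Bass--Serre tree of a free product, any hyperbolic element acts with translation length equal to the length of a cyclically reduced word representing it, and that on a tree the stable and ordinary translation lengths coincide. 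This gives (4) $\asymp \ell(\overline{A})$.

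For (3) and (2), I would use equivariant quasi-isometries to the Farey tree. The shear generators $U, V$ of $\MCG(T^2) = \SL(2,\ZZ)$ and the generators $S, T$ (together with the center $\{\pm I\}$) each express as bounded-length words in one another, so the Cayley graph of $\MCG(T^2)$ with $\{U^{\pm 1}, V^{\pm 1}\}$ is equivariantly quasi-isometric to the Bass--Serre tree. Similarly, Teichm\"uller space of $T^2$ with its Teichm\"uller metric is, up to a universal factor, $\HH^2$ with its hyperbolic metric and the standard $\PSL(2,\ZZ)$ action; for small $\epsilon$, the complement in $\HH^2$ of the $\epsilon$-horoballs at the rational points decomposes into truncated Farey triangles of uniformly bounded diameter, so its path metric is equivariantly quasi-isometric to the Farey tree. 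A direct check shows the Weil--Petersson metric on this two-dimensional Teichm\"uller space is also comparable to the hyperbolic metric, and so covered by the same argument. Since equivariant quasi-isometries preserve stable translation length up to bounded factors, and translation length and stable translation length of a hyperbolic isometry of a Gromov-hyperbolic space differ only by an additive constant, both (2) and (3) are within bounded ratios of $\ell(\overline{A})$.

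The hardest part will be justifying this quasi-isometry for the thick part of Teichm\"uller space: the axis of a linear Anosov $\phi$ in $\HH^2$ can dip arbitrarily deep into horoballs that are excised from the thick part, so the path metric differs from the hyperbolic metric by detours. A thick--thin argument should work: the length of a detour around a horoball is proportional to the number of Farey edges the hyperbolic axis crosses inside that horoball, so the accumulated thick-part length per period is comparable to the total number of Farey triangles the axis traverses, which is precisely $\ell(\overline{A})$.
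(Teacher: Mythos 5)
Your proposal is correct, and it reaches the same conclusion as the paper, but it routes through the invariant $\ell(\overline{A})$ and works out each comparison by hand, whereas the paper's proof is extremely compressed: it cites the Milnor--\u{S}varc lemma once to cover (2), (3), (4) simultaneously, and then invokes Lemmas \ref{Lem:UpperSol} and \ref{Lem:LowerSol} to link (1) with (4). The main divergence is in item (2). You treat the thick part of Teichm\"uller space directly, worrying (correctly) that the geodesic axis of $\phi$ dips into the excised horoballs and that one must bound the lengths of the detours; you then resolve this by counting Farey triangles crossed. This is a valid computation, but it is doing by hand what Milnor--\u{S}varc does automatically: $\mathrm{MCG}(T^2)$ acts properly discontinuously and cocompactly by isometries on the thick part (the quotient is the compact thick part of the modular orbifold), so the thick part is equivariantly quasi-isometric to $\mathrm{MCG}(T^2)$, and the detour bookkeeping is absorbed into the quasi-isometry constants. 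Similarly your reduction of $\mathrm{Tr}(T^2)$ to the Bass--Serre tree and your generator-comparison argument for the Cayley graph are exactly the explicit content that Milnor--\u{S}varc packages. Your appeal to \refthm{SolAlternative} for item (1) is slightly circular in presentation (that theorem is itself proved from Lemmas \ref{Lem:UpperSol} and \ref{Lem:LowerSol} plus the Farey-tree/Cayley-graph comparison you then redo for item (4)), but it is not logically incorrect. One small caution: in converting the additive error between translation length and stable translation length into a multiplicative one, you should note that for a linear Anosov the Farey-tree translation length is at least $1$, hence by quasi-isometry all four quantities are bounded away from zero universally, which is what lets the additive constants be absorbed.
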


The structure of the paper is as follows. In Section \ref{Sec:HandleStructures}, we recall some basic facts about handle structures, including the notion of a parallelity bundle. Section \ref{Sec:VerticalArcs} contains the first substantial new result, \refthm{VerticalArc}. This asserts that for any triangulation $\calT$ of $S \times I$, the product of a closed orientable surface $S$ and an interval, there is an arc isotopic to $\ast \times I$, for some point $\ast \in S$, that is simplicial in $\calT^{(23)}$, the 23rd iterated barycentric subdivision. 
This is technically important, because it allows us to transfer a triangulation of $S \times I$ to a triangulation of a suitable branched cover. This is used later in Section \ref{Sec:BrancherCover}, where \refthm{ProductComplexity} is proved using \refthm{TriangulationProductOneVertex}. A suitable finite branched cover of the torus $T^2$ over one point is used, which is a closed orientable surface $S$ of genus greater than one. In order to compare translation lengths in $\mathrm{Tr}(S)$ and $\mathrm{Tr}(T^2)$, we develop some background theory in Sections \ref{Sec:Spines}, \ref{Sec:TriangulationsTorus}, \ref{Sec:TrainTrack} and \ref{Sec:HomeoTorus}.
In Section  \ref{Sec:Spines}, we introduce $\mathrm{Sp}(S)$, which is the space of spines for a closed orientable surface $S$. There is a quasi-isometry between $\mathrm{Tr}(S)$ and $\mathrm{Sp}(S)$ that is equivariant under the action of the mapping class group, but it is useful to consider both spaces. In Section \ref{Sec:TriangulationsTorus}, we recall the relationship between $\mathrm{Tr}(T^2)$, the Farey graph and continued fractions. In Section \ref{Sec:TrainTrack}, we recall results of Masur, Mosher and Schleimer \cite{MasurMosherSchleimer}, which use train tracks to estimate distances in $\mathrm{Tr}(S)$. In Section \ref{Sec:HomeoTorus}, this is applied specifically in the case of the torus. Finally in Sections \ref{Sec:Products},  \ref{Sec:Sol}, \ref{Sec:Lens} and \ref{Sec:Prism}, we deal with products, sol manifolds, lens spaces, prism manifolds and Platonic manifolds.

%% \subsection{Acknowledgements}
%% Purcell was partially supported by the Australian Research Council, grant DP210103136. 

%%%%%%%%%%%%%%%%%%%%%%%%%%%%%%%%%%%%%%%%%%%%%%%%%%%%%%%%%%%%%%%%%
\section{Handle structures and parallelity bundles}
\label{Sec:HandleStructures}

Although the main results of this paper are on the complexity of triangulations of 3-manifolds, for our arguments it is often more convenient to work with handle structures, similarly to \cite{LackenbyPurcell:Fibred}. This section collects some of the definitions and results on handle structures that we will use. 

Recall that a handle structure on a 3-manifold is a decomposition into $i$-handles $D^i\times D^{3-i}$, $i=0,1,2,3$, where we require:
\begin{enumerate}
\item Each $i$-handle intersects the handles of lower index in $\bdy D^i \times D^{3-i}$.
\item Any two $i$-handles are disjoint.
\item The intersection of any 1-handle with any 2-handle is of the form:
  \begin{itemize}
  \item $D^1\times \alpha$ in the 1-handle $D^1\times D^2$, where $\alpha$ is a collection of arcs in $\bdy D^2$,
  \item $\beta\times D^1$ in the 2-handle $D^2\times D^1$, where $\beta$ is a collection of arcs in $\bdy D^2$.
  \end{itemize}
\item Any 2-handle runs over at least one 1-handle.
\end{enumerate}

For example, given a triangulation of a 3-manifold $M$, there is an associated handle structure for $M$ minus an open collar neighbourhood of $\partial M$, called the \emph{dual handle structure}. 
This has 0-handles obtained by removing a thin regular open neighbourhood of the boundary of each tetrahedron, 1-handles obtained by taking a neighbourhood of each face not in $\partial M$ and removing a thin regular open neighbourhood of each edge, 2-handles obtained by taking a neighbourhood of each edge not in $\partial M$ with neighbourhoods of endpoints removed, and 3-handles consisting of a regular neighbourhood of each vertex not in $\partial M$.

One feature of a handle structure that does not hold for a triangulation is that it can be sliced along a normal surface to yield a new handle structure, whereas cutting a tetrahedron along a normal surface does not yield pieces that are tetrahedra, in general. Slicing along normal surfaces in this manner is important for our arguments; the definition below will help us investigate the handle structures that arise.

\begin{definition} A handle structure of a 3-manifold $M$ is \emph{pre-tetrahedral} if the intersection between each 0-handle and the union of the 1-handles and 2-handles is one of the following possibilities:
\begin{enumerate}
\item \emph{tetrahedral}, as shown in the far left of Figure~\ref{Fig:PreTetrahedral};
\item \emph{semi-tetrahedral}, as shown in the middle left of Figure~\ref{Fig:PreTetrahedral};
\item \emph{a product annulus of length 3}, as shown in the middle right of Figure~\ref{Fig:PreTetrahedral};
\item \emph{a parallelity annulus of length 4}, as shown in the far right of Figure~\ref{Fig:PreTetrahedral}.
\end{enumerate}
In that figure, the shaded regions denote discs that can be components of intersection between the 0-handle and $\partial M$, or between the 0-handle and a 3-handle. The hashed regions denote components of intersection between the 0-handle and $\partial M$.
\end{definition}

\begin{figure}
  \includegraphics{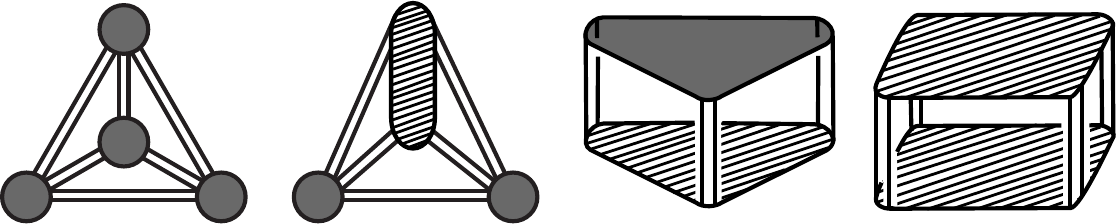}
  \caption{Far left: tetrahedral, Middle left: semi-tetrahedral, Middle right: product annulus of length 3, Far right: Parallelity annulus of length 4}
  \label{Fig:PreTetrahedral}
\end{figure}

\begin{lemma}\label{Lem:InheritedHandleStruct}
Let $\calT$ be a triangulation of a compact orientable 3-manifold $M$. Let $S$ be a normal surface properly embedded in $M$. Then $M \cut S$ inherits a handle structure. Moreover when $M$ is closed, the handle structure on $M \cut S$ is pre-tetrahedral.
\end{lemma}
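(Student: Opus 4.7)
The argument splits into two parts: first, constructing the inherited handle structure on $M \cut S$; second, verifying it is pre-tetrahedral when $M$ is closed.

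For the first part, I would start from the dual handle structure of $\calT$ on $M$ (minus a thin collar of $\partial M$ if $\partial M \neq \emptyset$). Because $S$ is normal, it misses the 1-skeleton of $\calT$, so is disjoint from every 2-handle (thickening of an edge) and 3-handle (thickening of a vertex). It meets each 0-handle in a disjoint collection of normal triangles and quads, and each 1-handle (thickening of a face) in a parallel family of product discs of the form $\text{arc} \times D^1$, one per normal arc in that face. Cutting $M$ along $S$ therefore cuts each 0-handle into a finite collection of sub-balls and each 1-handle into a finite collection of product sub-handles, while leaving the 2- and 3-handles intact. Declaring these pieces as the new 0- and 1-handles gives a handle structure on $M \cut S$; the axioms are preserved because the cuts are transverse to the 1-handle product structure, so the intersection of each new 1-handle with each 2-handle remains of the required form, and condition (4) persists since each old 2-handle already ran over a 1-handle that is now one of the sub-pieces.

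For the second part, assume $M$ is closed, and examine a single tetrahedron $\sigma$ together with the normal discs it contains. Normal discs in $\sigma$ come in parallel families: at most four families of triangles, one near each vertex, together with at most one family of quads of a single quad type. Each resulting piece is a combinatorial ball, and I would check piece-by-piece that the intersection with the 1- and 2-handles realises one of the four patterns in \reffig{PreTetrahedral}. The enumeration is: a tetrahedron with no cuts is tetrahedral; a corner piece cut off by the outermost triangle at a vertex, and a prismatic slab between two consecutive parallel normal triangles at a common vertex, each produce three face-sub-discs and three edge-sub-strips arranged in an annulus, giving a product annulus of length $3$; a prismatic slab between two consecutive parallel quads gives the analogous annulus with four face-sub-discs and four edge-sub-strips, the parallelity annulus of length $4$; and each wedge piece on the extremal side of the outermost quad produces four face-sub-discs together with five edge-sub-strips, realising the semi-tetrahedral pattern. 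If a central leftover region remains after cutting off all corners and slabs in the no-quad case, it is combinatorially a truncated tetrahedron whose 1- and 2-handle intersection recovers the tetrahedral pattern, with the innermost normal triangles now occupying regions formerly occupied by 3-handles.

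The main obstacle is the combinatorial bookkeeping in this last step: in a tetrahedron with both triangle families and a quad family, one must verify face-by-face how the 1-handle disc in each face is split into sub-discs by the normal arcs, and edge-by-edge which 2-handle sub-strips survive in each piece. Because only one quad type may appear, and because parallel discs of the same type do not interact combinatorially, the number of distinct cutting patterns is bounded, and a direct inspection reduces the verification to a finite case analysis. Nothing qualitatively new can occur: every piece is either a truncated corner, a slab between parallel discs of one family, a wedge flanking a quad, or a central remainder, and each is covered by one of the four pre-tetrahedral patterns.
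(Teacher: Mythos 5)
There is a genuine error in the first paragraph. You claim that because $S$ is normal, ``it misses the 1-skeleton of $\calT$, so is disjoint from every 2-handle (thickening of an edge) and 3-handle (thickening of a vertex),'' and then assert that cutting leaves ``the 2- and 3-handles intact.'' This is false for the 2-handles: a normal surface meets each \emph{edge} of $\calT$ transversely in finitely many points (indeed, the \emph{weight} of $S$ is by definition the number of such intersection points), so $S$ meets each 2-handle in a nonempty collection of $D^2$-fibres and slices it into several shorter 2-handles. What $S$ misses is only the $0$-skeleton, and only the 3-handles survive uncut. This is not a cosmetic slip: if the 2-handles were left intact while the adjacent 1-handles are subdivided, the intersections between the pieces would no longer have the form required by axiom (3) for a handle structure, since the $D^1$-factor of a 2-handle runs parallel to the cutting discs and must itself be subdivided; the check of axiom (4) would also fail as stated. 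Notice moreover that your own second paragraph silently abandons the claim: the ``edge-sub-strips'' that appear there are exactly pieces of cut 2-handles.

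With that correction, the rest of your argument is essentially the paper's: form the dual handle structure, observe that $S$ meets every $i$-handle for $i \leq 2$ in a disjoint collection of discs and misses the 3-handles, and cut each $i$-handle along those discs to obtain new $i$-handles. Your enumeration of pieces inside a single tetrahedron --- a corner piece, a slab between parallel triangles, a slab between parallel quads, a wedge flanking an outermost quad, and a central truncated-tetrahedron piece --- is the correct case list, and the counts of 1-handle and 2-handle sub-pieces in each case do match the four allowed pre-tetrahedral patterns. The paper delegates these details to the earlier reference \cite{LackenbyPurcell:Fibred}, so your write-up is more explicit, but it needs the first-paragraph repair to stand.
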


\begin{proof}
This is explained in \cite[Lemma~4.4]{LackenbyPurcell:Fibred} and so we just sketch the argument. We first form the handle structure $\calH$ dual to $\calT$. The normal surface $S$ intersects each $i$-handle in discs. When the $i$-handle is cut along these discs, the result is a collection of $i$-handles in the required handle structure for $M \cut S$.
\end{proof}

We will measure the size of a triangulation of a 3-manifold using the following quantity.

\begin{definition}
The \emph{complexity} of a triangulation $\calT$ of a compact 3-manifold is the number of tetrahedra of $\calT$ and is denoted $\Delta(\calT)$.
\end{definition}

The corresponding definition for a pre-tetrahedral handle structure is somewhat more complicated. 

\begin{definition}\label{Def:ComplexHS}
Let $H_0$ be a 0-handle of a pre-tetrahedral handle structure $\calH$. Let $\alpha$ be the number of components of intersection between $H_0$ and the 3-handles. Define $\beta$ as follows:
\begin{enumerate}
\item $\beta = 1/2$ if $H_0$ is tetrahedral;
\item $\beta = 1/4$ if $H_0$ is semi-tetrahedral;
\item $\beta = 0$ if $H_0$ is a product or parallelity annulus.
\end{enumerate}
Define the \emph{complexity} of $H_0$ to be $ (\alpha/8) + \beta$.
Define the \emph{complexity} $\Delta(\calH)$ of $\calH$ to be the sum of the complexities of its 0-handles.
\end{definition}

The motivation for this definition is from the following result \cite[Lemma~4.12]{LackenbyPurcell:Fibred}.

\begin{lemma} 
\label{Lem:ComplexityUnderCutting}
Let $\calT$ be a triangulation of a closed orientable 3-manifold $M$. Let $S$ be a normal surface embedded in $M$. Then the handle structure $\calH$ that $M \cut S$ inherits, as in \reflem{InheritedHandleStruct}, satisfies $\Delta(\calH) = \Delta(\calT)$.
\end{lemma}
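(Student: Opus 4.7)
The plan is to track the complexity through an induction on the number of normal discs of $S$, starting from $S = \emptyset$ and verifying that each new cut preserves the total complexity.

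For the base case, recall that $M$ is closed, so every vertex of $\calT$ gives rise to a 3-handle in the dual handle structure $\calH_0$. Consequently, every 0-handle of $\calH_0$ corresponds to a tetrahedron of $\calT$ with all four vertex corners meeting 3-handles, so it is tetrahedral with $\alpha = 4$ and complexity $4/8 + 1/2 = 1$. Summing over tetrahedra gives $\Delta(\calH_0) = \Delta(\calT)$, which agrees with $\Delta(\calH)$ when $S=\emptyset$.

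For the inductive step, I would order the normal discs of $S$ arbitrarily and introduce them one at a time. Each new disc cuts a single pre-tetrahedral 0-handle along a normal arc on its boundary, producing several new pieces (possibly with some pieces relabelled as 1- or 2-handles of the inherited structure). The main local claim is that for each of the four pre-tetrahedral input types (tetrahedral, semi-tetrahedral, product annulus of length 3, parallelity annulus of length 4) and each admissible cut, the resulting 0-handles are again pre-tetrahedral and their complexities sum to that of the input. The $\alpha/8$ contribution is automatically conserved, because 3-handles are untouched by cutting, so the check reduces to verifying that the $\beta$ values $\{1/2, 1/4, 0, 0\}$ redistribute correctly. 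For example, cutting a tetrahedral $\alpha=4$ 0-handle by a normal triangle produces two semi-tetrahedral pieces (with $\alpha_1=1$ and $\alpha_2=3$), giving total complexity $1/8 + 1/4 + 3/8 + 1/4 = 1$; and cutting the same 0-handle by a normal quadrilateral produces two symmetric pieces with $\alpha_1 = \alpha_2 = 2$ and $\beta_1 = \beta_2 = 1/4$, again summing to $1$.

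The main obstacle is carrying out the full case analysis: enumerating the possible arcs cutting each of the four types, identifying the types of the resulting pieces correctly (in particular recognising when a piece of the cut should be reclassified as a 1- or 2-handle to yield an honestly pre-tetrahedral structure), and checking the arithmetic in each case. The fractional values of $\beta$ are tuned precisely to make each case balance, so once the types are correctly identified the verification is routine.
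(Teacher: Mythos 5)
Your strategy --- checking per $0$-handle that each cut conserves the complexity contribution, with the $\alpha/8$ part conserved automatically because $3$-handles are untouched --- is the right one; it is essentially how Lemma~4.12 of \cite{LackenbyPurcell:Fibred}, which the paper cites here, is proved. But the worked example you present as representative is misclassified. When a tetrahedral $0$-handle is cut by a single vertex-linking normal triangle, the small piece has three $1$-handle regions, three $2$-handle regions, one $3$-handle region and one copy of the normal triangle; this is a product annulus of length $3$ (so $\beta=0$), not a semi-tetrahedral handle, which has four $1$-handle regions and five $2$-handle regions. The large piece has four $1$-handle regions, six $2$-handle regions, three $3$-handle regions and one copy of the normal triangle; this is tetrahedral with $\alpha=3$ and $\beta=1/2$. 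The correct tally is therefore $1/8+0+3/8+1/2=1$. Your tally $1/8+1/4+3/8+1/4=1$ agrees only because $1/4+1/4=0+1/2$; since the $\alpha$'s are forced, this is exactly the kind of coincidence that lets a misclassification of both pieces slip through, so the computation did not actually verify the case. (Your quadrilateral example, by contrast, is classified correctly: those two pieces really do each have four $1$-handle and five $2$-handle regions and are semi-tetrahedral with $\alpha=2$.)

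Two smaller points. A proper subset of the normal discs of $S$ is not in general a closed normal surface, so the intermediate global handle structures in a ``one disc at a time'' induction are not of the form the lemma speaks about; the cleaner framing is purely local: fix one tetrahedral $0$-handle of the dual handle structure, cut it along all the normal discs of $S$ it contains, and show directly that the resulting pieces have complexities summing to $1$. You may still introduce the discs one at a time inside that single $0$-handle, but then you must handle cuts of all four pre-tetrahedral types, not just the tetrahedral one, and the misclassification above shows the type bookkeeping needs to be done carefully. Finally, \reflem{InheritedHandleStruct} produces $i$-handles from $i$-handles, so pieces of a cut $0$-handle remain $0$-handles; the ``relabelling as $1$- or $2$-handles'' you anticipate does not arise.
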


\begin{lemma}
\label{Lem:ComplexityDualHandleStructure}
Let $\calT$ be a triangulation of a compact orientable 3-manifold $M$. Suppose that the intersection between any tetrahedron $T$ of $\calT$ and $\partial M$ is either empty, a vertex of $T$, an edge of $T$ or a face of $T$. Then dual to $\calT$ is a pre-tetrahedral handle structure $\calH$ of $M$ satisfying $\Delta(\calH) \leq \Delta(\calT)$.
\end{lemma}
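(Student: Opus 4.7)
The plan is to mimic the closed-case construction of the dual handle structure (as used in \reflem{InheritedHandleStruct}), but in the bounded case absorb the simplices that lie in $\partial M$ into the boundary rather than turning them into handles, and then classify each 0-handle according to how the corresponding tetrahedron meets $\partial M$.

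First I would construct $\calH$ explicitly. Using, say, the second barycentric subdivision $\calT''$, let the 0-handle $H_0(T)$ associated to a tetrahedron $T$ be a regular neighbourhood of the barycentre of $T$; for each face, edge, or vertex of $\calT$ \emph{not} contained in $\partial M$, take the corresponding 1-, 2-, or 3-handle to be a (suitably truncated) regular neighbourhood of that simplex. Faces, edges, and vertices lying in $\partial M$ contribute nothing; their neighbourhoods are absorbed into $\partial M$. The hypothesis that $T\cap\partial M$ is empty, a vertex, an edge, or a face ensures that these truncated neighbourhoods fit together across tetrahedra without overlap, and the standard verification gives conditions (1)--(4) of a handle structure.

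Next I would classify each 0-handle $H_0=H_0(T)$ by which simplices of $T$ lie in $\partial M$. Writing $\alpha$ for the number of vertices of $T$ not in $\partial M$ (equivalently the number of components of $H_0\cap(\text{3-handles})$), the four cases are:
\begin{enumerate}
\item $T\cap\partial M=\emptyset$: every face/edge/vertex is interior, so $H_0$ is \emph{tetrahedral}, $\alpha=4$, $\beta=1/2$, complexity $4/8+1/2=1$.
\item $T\cap\partial M$ is a vertex $v$: the three faces, three edges and three opposite vertices are interior and $H_0$ has one corner along $\partial M$, matching the \emph{semi-tetrahedral} picture with $\alpha=3$, $\beta=1/4$, complexity $5/8$.
\item $T\cap\partial M$ is an edge $e$: all four faces and five edges are interior (only $e$ and its two endpoints lie on $\partial M$), the boundary of $H_0$ away from the 3-handles and $\partial M$ forms a disc whose 1-handle/2-handle attaching pattern is the \emph{product annulus of length 3}, giving $\alpha=2$, $\beta=0$, complexity $1/4$.
\item $T\cap\partial M$ is a face $f$: three faces, three edges, and only one vertex are interior, and the same analysis yields the \emph{parallelity annulus of length 4}, giving $\alpha=1$, $\beta=0$, complexity $1/8$.
\end{enumerate}

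Since in each case the complexity of $H_0(T)$ is at most $1$, summing over the tetrahedra of $\calT$ gives
\[\Delta(\calH)=\sum_{T\in\calT}(\text{complexity of }H_0(T))\ \leq\ \sum_{T\in\calT}1\ =\ \Delta(\calT),\]
as required. The main obstacle will be cases (3) and (4): I need to verify that the combinatorics of the intersection $H_0\cap(\text{1-handles}\cup\text{2-handles})$ really do match the product and parallelity annulus pictures of \reffig{PreTetrahedral}, which means tracing around $\partial H_0$ carefully to see which sub-discs meet 1-handles, which meet 2-handles, which meet 3-handles, and which are absorbed into $\partial M$, in the correct cyclic order. The other cases are immediate, and the inequality then follows by a one-line count.
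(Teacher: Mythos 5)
Your overall strategy (build the dual handle structure, classify each 0-handle by how the dual tetrahedron meets $\partial M$, observe that each 0-handle contributes at most $1$ to $\Delta(\calH)$) is the same as the paper's, and your final inequality is correct. However, your classification of the 0-handle types is shifted by one step and is incorrect. The key point you are missing is the role of the \emph{shaded} regions in \reffig{PreTetrahedral}: the figure caption explains that the shaded discs can be components of intersection with $\partial M$ \emph{or} with a 3-handle, so a 0-handle does not change type merely because a 3-handle is replaced by a piece of $\partial M$.

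Concretely: if $T\cap\partial M$ is a \emph{vertex}, all four faces and all six edges of $T$ are interior, so the 0-handle still meets four 1-handles and six 2-handles; it is \emph{tetrahedral} (one shaded corner simply lies on $\partial M$ rather than on a 3-handle), not semi-tetrahedral. If $T\cap\partial M$ is an \emph{edge} $e$, all four faces are interior but one edge (namely $e$) is missing, so the 0-handle meets four 1-handles and five 2-handles, with a hashed rectangular strip along $e$; this is the \emph{semi-tetrahedral} case. If $T\cap\partial M$ is a \emph{face}, three faces, three edges and one vertex are interior, and the 0-handle meets three 1-handles and three 2-handles alternating around an annulus with the remaining triangular corner on a 3-handle; this is the \emph{product annulus of length 3}. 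The parallelity annulus of length 4 does not arise at all from dualizing a triangulation; it only appears after cutting along a normal surface, and indeed \refrem{NoParallelity} asserts precisely that the handle structure of this lemma has no parallelity handles, which your classification would contradict. With the corrected classification the complexities of the 0-handles are $1$, $7/8$, $1/2$ and $1/8$ respectively, each still $\leq 1$, so the inequality $\Delta(\calH)\leq\Delta(\calT)$ holds as you claim, but you should correct the type assignments (and hence the value of $\beta$ in each case) to match the actual pictures.
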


\begin{proof}
The dual handle structure $\calH$ has an $i$-handle for each $(3-i)$-simplex of $\calT$ that does not lie wholly in $\partial M$. When the intersection between a tetrahedron and $\partial M$ is empty or a vertex, the dual 0-handle is tetrahedral. When the intersection between a tetrahedron and $\partial M$ is an edge, the dual 0-handle is semi-tetrahedral. When the intersection between a tetrahedron and $\partial M$ is a face, the dual 0-handle is a product annulus of length 3. Since the number of 0-handles of $\calH$ is at most the number of tetrahedra of $\calT$, and each 0-handle contributes at most $1$ to $\Delta(\calH)$, we deduce that $\Delta(\calH) \leq \Delta(\calT)$.
\end{proof}

\begin{remark}
\label{Rem:AddBoundaryTimesI}
Given any triangulation $\calT'$ of a compact orientable 3-manifold $M$, we can form a triangulation $\calT$ of $M$ satisfying the hypotheses of Lemma \ref{Lem:ComplexityDualHandleStructure} with $\Delta(\calT) \leq 33 \Delta(\calT')$. To do this, we attach a triangulation of $\partial M \times I$ to $\calT'$. This triangulation is formed as follows. For each triangle of $\calT'$ in $\partial M$, form its product with $I$, which is a prism. Subdivide each of its square faces into two triangles. Then triangulate each prism by coning from a new vertex in its interior. Each prism is triangulated using $8$ tetrahedra. Since the number of triangles of $\calT'$ in $\partial M$ is at most $4 \Delta(\calT')$, the resulting triangulation $\calT$ satisfies $\Delta(\calT) \leq 33 \Delta(\calT')$.
\end{remark}

\begin{definition} 
Let $M$ be a compact 3-manifold with a handle structure $\calH$, and let $S$ be a subsurface of $\partial M$.
When $\bdy M$ meets 0-handles in discs, $\bdy M$ inherits a handle structure from $\calH$: an $i$-handle of $\bdy M$ is a component of intersection of $\bdy M$ with an $(i+1)$-handle of $\calH$.
We say that $\calH$ is a \emph{handle structure for the pair} $(M,S)$ if the following all hold:
\begin{enumerate}
\item $\partial S$ intersects each handle of $\bdy M$ in a collection of arcs;
\item $\partial S$ misses the 2-handles of $\bdy M$;
\item $\partial S$ respects the product structure of the 1-handles of $\bdy M$.
\end{enumerate}
\end{definition}

\begin{definition}
Let $\calH$ be a handle structure for the pair $(M,S)$.
A handle $H$ of $\calH$ is a \emph{parallelity handle} if it has a product structure $D^2 \times I$ such that
\begin{enumerate}
\item $D^2 \times \partial I = H \cap S$;
\item each component of intersection between $H$ and any other handle is of the form $\beta \times I$ for some subset $\beta$ of $\partial D^2$.
\end{enumerate}
\end{definition}

For example, a product annulus of length 3 meeting $S$ on its top and bottom, and a parallelity annulus of length 4 are both parallelity 0-handles. There will also be parallelity 1-handles and 2-handles. 

\begin{remark}
\label{Rem:NoParallelity}
The handle structure $\calH$ in Lemma \ref{Lem:ComplexityDualHandleStructure} has no parallelity handles when viewed as a handle structure for the pair $(M, \partial M)$. To see this, note that any parallelity handle for $(M, \partial M)$ is adjacent to a parallelity 2-handle.
However, a parallelity 2-handle of $\calH$ is dual to an edge of $\calT$ with both endpoints in $\partial M$ but with interior in the interior of $M$. However, if there were such an edge of $\calT$, then the intersection between any adjacent tetrahedron and $\partial M$ would violate the hypothesis of Lemma \ref{Lem:ComplexityDualHandleStructure}.
\end{remark}

\begin{definition}
The \emph{parallelity bundle} for $\calH$ is the union of the parallelity handles. 
\end{definition}

It was shown in \cite[Lemma~3.3]{Lackenby:CrossingNumberComposite} that the $I$-bundle structures on the parallelity handles can be chosen to patch together to form an $I$-bundle structure on the parallelity bundle. We therefore use the following standard terminology.

\begin{definition}
Let $B$ be an $I$-bundle over a surface $F$. Its \emph{horizontal boundary} $\partial_h B$ is the $(\partial I)$-bundle over $F$. Its \emph{vertical boundary} $\partial_vB$ is the $I$-bundle over $\partial F$.
\end{definition}

It is often very useful to enlarge the parallelity bundle, forming the following structure.

\begin{definition}\label{Def:GeneralisedParallelityBundle}
Let $M$ be a compact orientable 3-manifold and let $S$ be a subsurface of $\partial M$. Let $\mathcal{H}$ be a handle structure for $(M,S)$. 
A \emph{generalised parallelity bundle} $\mathcal{B}$ is a 3-dimensional submanifold of $M$ such that
\begin{enumerate}
\item $\mathcal{B}$ is an $I$-bundle over a compact surface;
\item the horizontal boundary $\bdy_h\mathcal{B}$ of $\mathcal{B}$ is the intersection between $\mathcal{B}$ and $S$;
\item $\mathcal{B}$ is a union of handles of $\mathcal{H}$;
\item any handle of $\mathcal{B}$ that intersects $\partial_v \mathcal{B}$ is a parallelity handle, where the $I$-bundle structure on the parallelity handle agrees with the $I$-bundle structure of $\mathcal{B}$;
\item whenever a handle of $\mathcal{H}$ lies in $\mathcal{B}$ then so do all incident handles of $\mathcal{H}$ with higher index;
\item the intersection between $\partial_h \mathcal{B}$ and the non-parallelity handles lies in a union of disjoint discs in the interior of $S$.
\end{enumerate}
\end{definition}

Note that condition (6) is included in the definition given in \cite{LackenbyPurcell:Fibred} but is not in some earlier work  \cite{Lackenby:CrossingNumberComposite}.

The main reason why this is such a useful notion is the fact that frequently we may ensure that the horizontal boundary of a generalised parallelity bundle is incompressible.

\begin{definition}\label{Def:AnnularSimplification}
Let $M$ be a compact orientable irreducible 3-manifold and let $S$ be a subsurface of $\partial M$. Let $\mathcal{H}$ be a handle structure for $(M,S)$.
Suppose $M$ contains the following:
\begin{enumerate}
\item an annulus $A'$ that is a vertical boundary component of a generalised parallelity bundle $\calB$;
\item an annulus $A$ contained in $S$ such that $\partial A=\partial A'$;
\item a 3-manifold $P$ with $\partial P = A \cup A'$ such that $P$ either lies in a 3-ball or is a product region between $A$ and $A'$.
\end{enumerate}
Suppose also that $P$ is a union of handles of $\calH$, that whenever a handle of $\calH$ lies in $P$, so do all incident handles with higher index, and that any parallelity handle of $\calH$ that intersects $P$ lies in $P$. Finally, suppose that apart from the component of the generalised parallelity bundle incident to $A'$, all other components of $\mathcal{B}$ in $P$ are $I$-bundles over discs.

An \emph{annular simplification} of the 3-manifold $M$ is the manifold obtained by removing the interiors of $P$ and $A$ from $M$; see \reffig{AnnularSimp}. 
\end{definition}

\begin{figure}
  \import{figures/}{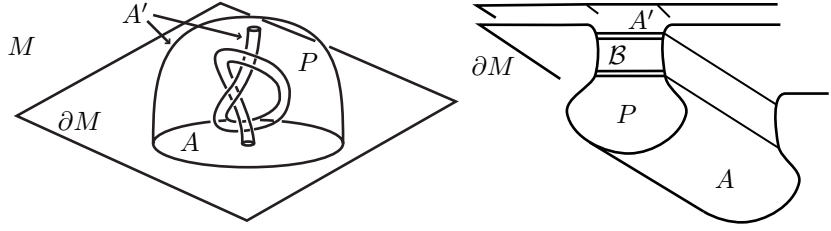}
  \caption{An annular simplification removes $P$ and replaces $A$ with $A'$. Left: an annular simplification where $P$ lies within a ball. Right: a cross-section of an annular simplification where $P$ is a product region between $A$ and $A'$.}
  \label{Fig:AnnularSimp}
\end{figure}

\begin{lemma}\label{Lem:GenParIncompr}
  Let $M$ be a compact orientable irreducible 3-manifold and let $S$ be an incompressible subsurface of $\partial M$ that is not a 2-sphere. Let $\mathcal{H}$ be a handle structure for $(M,S)$. Let $\calB$ be a generalised parallelity bundle that is maximal, in the sense that it is not a proper subset of another generalised parallelity bundle. Suppose that $\mathcal{H}$ admits no annular simplification. Then $\mathcal{B}$ contains every parallelity handle of $\mathcal{H}$, and moreover, each component of $\mathcal{B}$:
  \begin{enumerate}
  \item has incompressible horizontal boundary, and
  \item either has incompressible vertical boundary, or is an $I$-bundle over a disc. 
  \end{enumerate}
\end{lemma}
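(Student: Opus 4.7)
The plan is to prove the three assertions in sequence, each by contradiction, where the contradictions take the form of either enlarging $\mathcal{B}$ (violating maximality) or exhibiting an annular simplification (violating the hypothesis).

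First I would show that $\mathcal{B}$ contains every parallelity handle. Let $\mathcal{B}^{+}$ be the union of $\mathcal{B}$ with the parallelity bundle, enlarged to include any higher-index incident handles of $\mathcal{H}$ so as to satisfy condition~(5) of \refdef{GeneralisedParallelityBundle}. The $I$-bundle structures on the parallelity handles can be patched together as in \cite[Lemma~3.3]{Lackenby:CrossingNumberComposite}, and by condition~(4) applied to $\mathcal{B}$ they glue compatibly with the $I$-bundle structure on $\mathcal{B}$. One checks the six conditions of \refdef{GeneralisedParallelityBundle} directly, so $\mathcal{B}^{+}$ is itself a generalised parallelity bundle, and maximality forces $\mathcal{B}^{+}=\mathcal{B}$.

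For the horizontal incompressibility, suppose a component $\mathcal{B}_1$ has a compression disc $D\subset M$ for $\partial_h\mathcal{B}_1$, so $\partial D$ is an essential curve $\gamma$ on $\partial_h\mathcal{B}_1\subseteq S$. Incompressibility of $S$ gives a disc $D_S\subseteq S$ with $\partial D_S=\gamma$, and irreducibility of $M$ then produces a ball $B$ with $\partial B = D\cup D_S$. After an isotopy of $D$ relative to $\partial D$ making it in general position with respect to $\mathcal{H}$ and pushing across innermost discs of intersection, one may assume that $B$ is a union of handles of $\mathcal{H}$ and that every parallelity handle meeting $B$ lies inside $B$. By the first claim and maximality of $\mathcal{B}$, every component of $\mathcal{B}$ inside $B$ other than $\mathcal{B}_1$ must be an $I$-bundle over a disc. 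Setting $P=B$, $A=D_S$, and $A'$ equal to the vertical annulus in $\partial_v\mathcal{B}_1$ over a collar of $\gamma$, one verifies the hypotheses of \refdef{AnnularSimplification}, yielding an annular simplification and the desired contradiction.

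For the vertical incompressibility, suppose some vertical boundary annulus $A'\subseteq\partial_v\mathcal{B}_1$ is compressible in $M$ via a disc $D$. Then $\partial D$ is an essential core of $A'$, and compression produces parallel discs on either side of $A'$ whose boundaries lie on $S$; incompressibility of $S$ (which is not a sphere) shows these boundary circles bound discs in $S$, and irreducibility of $M$ lets us assemble an annulus $A\subseteq S$ with $\partial A=\partial A'$ together with a region $P$ bounded by $A\cup A'$ sitting either in a ball or in a product between $A$ and $A'$. After normalising $P$ to be a union of handles of $\mathcal{H}$ containing all parallelity handles it meets, the first claim and maximality of $\mathcal{B}$ force every component of $\mathcal{B}\cap P$ other than the one incident to $A'$ to be an $I$-bundle over a disc, so \refdef{AnnularSimplification} applies unless the component incident to $A'$, namely $\mathcal{B}_1$, is itself an $I$-bundle over a disc, which is the alternative conclusion permitted in~(2).

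The main obstacle will be the normalisation steps in the second and third parts: arranging that $B$ or $P$ is a union of handles of $\mathcal{H}$ that interacts with $\mathcal{B}$ exactly as \refdef{AnnularSimplification} requires. This amounts to an innermost-disc and outermost-arc analysis of the intersection of $D$ with the 2-handle cocores, combined with careful tracking of which parallelity handles meet the ball or product region, in the spirit of the arguments in \cite{Lackenby:CrossingNumberComposite} and \cite{LackenbyPurcell:Fibred}.
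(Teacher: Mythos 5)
Your proposal takes a genuinely different route from the paper's. The paper does not argue from scratch: it simply cites \cite[Corollary 5.7]{Lackenby:CrossingNumberComposite} (noting that the extra Condition~6 of \refdef{GeneralisedParallelityBundle} does not change the argument), and offers an alternative derivation from \cite[Theorem~6.18, Definition~6.16, Lemma~6.17]{LackenbyPurcell:Fibred}, which isolate the possible failure as a ``boundary-trivial'' component and show such a component always admits an annular simplification.

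Your direct argument is plausible in outline but has concrete gaps. The most serious is in the horizontal incompressibility step: you set $A = D_S$ and try to apply \refdef{AnnularSimplification}, but that definition requires $A$ to be an \emph{annulus} in $S$ with $\partial A = \partial A'$, while $D_S$ is a disc with a single boundary circle. Also, $\gamma$ is essential in $\partial_h\mathcal{B}_1$ and therefore lies in the interior of $\partial_h\mathcal{B}_1$, so there is no vertical boundary annulus of $\calB_1$ ``over a collar of $\gamma$''. The correct manoeuvre (as in the cited sources) is subtler: one looks at an innermost curve of $D_S \cap \partial(\partial_h\mathcal{B}_1)$, uses the vertical annulus attached along \emph{that} curve as $A'$, and produces a genuine annulus $A$ in $S$ from the disc it cuts off. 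Similarly, the vertical incompressibility step glosses over the case split between the two boundary discs being nested versus disjoint, and in the disjoint case you do not in fact obtain an annulus $A \subset S$ with $\partial A = \partial A'$. Finally, in the first claim you assert that $\mathcal{B}^+$ satisfies all six conditions of \refdef{GeneralisedParallelityBundle} ``directly'', but Condition~(6) is not automatic once you have absorbed higher-index incident handles to satisfy Condition~(5); this needs an argument. You acknowledge the normalisation steps as ``the main obstacle'', and they are indeed where the real work lies --- which is exactly why the paper defers to the earlier references where that work was carried out.
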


\begin{proof}
This is stated in \cite[Corollary 5.7]{Lackenby:CrossingNumberComposite}. However, a slightly different definition of generalised parallelity bundle is used there that omits Condition 6 in Definition~\ref{Def:GeneralisedParallelityBundle}. This extra condition does not affect the argument there.

Alternatively, one can argue as follows.  In \cite[Theorem~6.18]{LackenbyPurcell:Fibred}, we showed that $\mathcal{B}$ contains every parallelity handle of $\mathcal{H}$, and every component either satisfies the conclusion of the lemma, or is a special case called boundary-trivial. In the boundary-trivial case, the component of $\mathcal{B}$ lies within a 3-ball; the precise definition is \cite[Definition~6.16]{LackenbyPurcell:Fibred}. However, \cite[Lemma~6.17]{LackenbyPurcell:Fibred} implies that a boundary-trivial component admits an annular simplification. Thus we cannot have such components by hypothesis. 
\end{proof}

The \emph{weight} of a surface properly embedded in a manifold $M$, in general position with respect to a triangulation $\calT$, is defined to be the number of intersections between $S$ and the edges of $\calT$. 

The following is \cite[Lemma~6.15]{LackenbyPurcell:Fibred}.

\begin{theorem} 
\label{Thm:ExtendToGenParBdle}
Let $\calT$ be a triangulation of a compact orientable irreducible 3-manifold $M$. Let $S$ be an orientable incompressible normal surface properly embedded in $M$ that has least weight, up to isotopy supported in the interior of $M$. Let $\calH$ be the handle structure that $M' = M \cut S$ inherits, as in \reflem{InheritedHandleStruct}. Let $S' = \partial M' \cut \partial M$. Then $(M',S')$ admits no annular simplification. Hence, the parallelity bundle for $\calH$ extends to a maximal generalised parallelity bundle that has incompressible horizontal boundary.
\end{theorem}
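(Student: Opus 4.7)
The plan is to establish the two assertions in sequence: first that $(M',S')$ admits no annular simplification, and then that \reflem{GenParIncompr} applies to yield the desired maximal generalised parallelity bundle.

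For the first assertion, I would argue by contradiction. Suppose an annular simplification exists, witnessed by annuli $A \subset S'$ and $A' \subset \partial_v\calB$ together with a region $P$ with $\partial P = A \cup A'$ satisfying the conditions of \refdef{AnnularSimplification}. Since $A$ lies in one of the two copies of $S$ inside $\partial M'$, it corresponds to an annulus in $S \subset M$, and the aim is to produce an isotopy of $S$ supported in the interior of $M$ that strictly decreases its weight, contradicting the least-weight hypothesis. In the case where $P$ is a product region between $A$ and $A'$, this isotopy pushes the $A$-annulus of $S$ across $P$ to the position of $A'$; in the case where $P$ lies in an embedded 3-ball, we use irreducibility of $M$ together with incompressibility of $S$ to perform an analogous isotopy across the ball.

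The hard part is verifying strict weight reduction. The key observation is that an edge of $\calT$ is the core of its dual 2-handle, so each edge lies entirely in the interior of a single 2-handle of $\calH$; in particular, no edge of $\calT$ can cross the vertical boundary $\partial_v\calB$, and hence the weight contribution of $A'$ is zero. A case analysis of how arcs of edges of $\calT$ intersect $P$, together with the stipulation in \refdef{AnnularSimplification} that every component of $\calB \cap P$ other than the one incident to $A'$ is an $I$-bundle over a disc, should then show that the net weight change after the isotopy is strictly negative, possibly after a further renormalisation of the isotoped surface. This is the step where the detailed combinatorics of pre-tetrahedral 0-handles and parallelity handles must be unpacked carefully.

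For the second assertion, I would invoke \reflem{GenParIncompr}. Its hypotheses require $S'$ to be an incompressible subsurface of $\partial M'$ that is not a 2-sphere. Incompressibility follows from that of $S$ in $M$ by a standard cut-and-paste argument: any compressing disc for $S'$ in $M'$ can be reassembled across $S$ to produce a compressing disc for $S$ in $M$. The absence of 2-sphere components follows from $S$ being incompressible in the irreducible 3-manifold $M$. Combined with the first assertion, \reflem{GenParIncompr} then immediately produces the maximal generalised parallelity bundle that contains the parallelity bundle of $\calH$ and has incompressible horizontal boundary.
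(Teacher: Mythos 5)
The paper does not prove this theorem: it is simply cited as \cite[Lemma~6.15]{LackenbyPurcell:Fibred}, so there is no argument in the present source to compare against. What I can do is assess your proposal on its own merits.

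Your overall architecture is the standard and correct one: argue by contradiction that an annular simplification would contradict the least-weight hypothesis, then check the hypotheses of \reflem{GenParIncompr} and invoke it. Your observation that the edges of $\calT$, being cores of 2-handles, are interior to 2-handles and hence cannot meet $\partial_v\calB$ is correct, and it does give the weight contribution of $A'$ as zero. You also correctly identify that the compatibility between cutting along $S$ and incompressibility gives that $S'$ is incompressible in $\partial M'$.

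However, the core of the argument has a genuine gap, which you yourself flag but do not resolve. Replacing $A$ by $A'$ changes the weight by $-w(A) + w(A') = -w(A)$, so a contradiction requires $w(A) > 0$, i.e.\ that the annulus $A$ actually meets some edge of $\calT$. You do not explain why this must happen. Since $S$ misses the 3-handles and $w(A)=0$ would mean $A$ misses all 2-handles, the question becomes whether a cellular annulus $A \subset S'$ bounding a product region or ball region $P$ as in \refdef{AnnularSimplification} can be supported entirely in $0$- and $1$-handles. The conditions of \refdef{AnnularSimplification} --- in particular that $P$ is a union of handles that is upward-closed with respect to handle index, and that every component of $\calB$ in $P$ other than the one incident to $A'$ is an $I$-bundle over a disc --- are exactly the hypotheses one would need to deploy to rule this out, but ``a case analysis \ldots should then show'' is not a proof. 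Until this is pinned down, you have not ruled out the degenerate case $w(A)=0$, in which the isotopy does not strictly decrease weight and your contradiction evaporates. (A secondary issue: after pushing $A$ to $A'$, the new surface is not normal; to compare its weight with $w(S)$ under the ``least weight among normal surfaces'' hypothesis you must invoke \reflem{IsotopeNormal} to renormalise without increasing weight, and you should check its hypotheses, including incompressibility of $\partial M$.)

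There is also a small omission in the second half: \reflem{GenParIncompr} requires $M'$ to be irreducible, which you do not verify. This does follow from irreducibility of $M$ and incompressibility of $S$ by a standard innermost-disc argument on a putative reducing sphere, but it should be stated, since it is a hypothesis of the lemma you are invoking.
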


The following is \cite[Lemma~8.14]{LackenbyPurcell:Fibred}.

\begin{lemma}
\label{Lem:LengthVerticalBoundary}
Let $\calH$ be a pre-tetrahedral handle structure of a pair $(M,S)$, and let $\calB$ be its parallelity bundle. Then the length of $\partial_v \calB$, which is its number of 2-cells, is at most $56 \Delta(\calH)$.
Similarly if $\calB'$ is a maximal generalised parallelity bundle, then the length of $\bdy_v\calB'$ is at most $56\Delta(\calH)$. 
\end{lemma}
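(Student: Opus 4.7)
The plan is to count the 2-cells of $\partial_v\calB$ via a charging argument, assigning each 2-cell to a handle of $\calH$ whose total count is controlled by $\Delta(\calH)$.

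First I would identify each 2-cell of $\partial_v\calB$ concretely. Since $\calB$ is an $I$-bundle and each handle in $\calB$ is a parallelity handle with product structure $D^2\times I$ and $D^2\times\partial I$ on $S$, the vertical boundary lies in the union of the side annuli $(\partial D^2)\times I$ of the parallelity handles. Each 2-cell of $\partial_v\calB$ is a rectangle $\beta\times I$ with $\beta$ an arc in $\partial D^2$, lying on the interface between a parallelity handle of $\calB$ and either a non-parallelity handle of $\calH$ or the part of $\partial M$ disjoint from $S$. By the pre-tetrahedral hypothesis, each 0-handle of $\calH$ has at most four components of intersection with 1- and 2-handles on its side annulus, and so contributes at most four such rectangles.

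Next, I would use the complexity formula $\Delta(\calH)=\sum_{H_0}(\alpha_{H_0}/8+\beta_{H_0})$ to bound the relevant 0-handles. Tetrahedral and semi-tetrahedral 0-handles each contribute at least $1/4$ to $\Delta(\calH)$, so their total number is at most $4\Delta(\calH)$. The main obstacle is that the parallelity-type 0-handles (product annulus of length 3 and parallelity annulus of length 4) contribute only $\alpha/8$, which vanishes when no 3-handle is adjacent. To handle these, I observe that a side rectangle of a parallelity 0-handle lies in the \emph{interior} of $\calB$, not on $\partial_v\calB$, whenever it is matched with a side rectangle of an adjacent parallelity handle via the $I$-bundle structure. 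Thus each 2-cell of $\partial_v\calB$ corresponds to a \emph{boundary transition} of $\calB$: a side rectangle that fails to be matched, either because the adjacent handle is non-parallelity (or not in $\calB$), or because the rectangle abuts $\partial M\setminus S$ (which is detected by the $\alpha$-count).

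With the 2-cells of $\partial_v\calB$ identified as boundary transitions, I would charge each to a nearby non-parallelity 0-handle of $\calH$ or to a 3-handle interface. The number of charges a single non-parallelity 0-handle can receive is bounded by its four side rectangles plus the boundedly many adjacent parallelity 1- and 2-handles feeding into it, giving a constant bound per non-parallelity 0-handle. The $\partial M\setminus S$ contributions are absorbed by the $\alpha/8$ term. A careful but routine bookkeeping of these constants yields the factor $56$. The generalised parallelity bundle case follows along the same lines, and is in fact simpler: condition (5) of Definition~\ref{Def:GeneralisedParallelityBundle} forbids a non-$\calB'$ handle of index $\geq 1$ from being incident to a parallelity 0-handle of $\calB'$, so all boundary transitions occur through lower-index interfaces and the same charging scheme, with the same constant, delivers $|\partial_v\calB'|\leq 56\Delta(\calH)$.
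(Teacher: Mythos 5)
The overall strategy—counting the 2-cells of $\partial_v\calB$ by a charging argument and recognising that the dangerous handles are the product/parallelity-annulus 0-handles (which can contribute $0$ to $\Delta(\calH)$)—is the right one, and the same strategy underlies the cited Lemma~8.14 of \cite{LackenbyPurcell:Fibred}. But several links in your chain are broken or unstated, and they are not merely ``routine bookkeeping.''

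First, the claim that ``the rectangle abuts $\partial M\setminus S$ (which is detected by the $\alpha$-count)'' is wrong: by Definition~\ref{Def:ComplexHS}, $\alpha$ counts components of intersection with \emph{3-handles}, not with $\partial M$. So a parallelity $1$- or $2$-handle whose free face lies on $\partial M\setminus S$ contributes a $2$-cell to $\partial_v\calB$ that your $\alpha$-term does not see, and you give no alternative way to charge it. Second, and more centrally, the step ``charge each boundary transition to a nearby non-parallelity $0$-handle or 3-handle interface'' is not a proof until you specify which $0$-handle receives the charge and why the number of charges it receives is bounded. A $2$-cell of $\partial_v\calB$ whose non-parallelity side is a $1$- or $2$-handle is not directly incident to any privileged $0$-handle, and the $0$-handles at the ends of that $1$- or $2$-handle are not obviously tetrahedral or semi-tetrahedral. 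What actually makes the count work is an observation you never state and never prove: a $0$-handle of product/parallelity-annulus type that shares a vertical face with a parallelity handle of $\calB$ must itself be a parallelity handle (its caps are forced into $S$ by the $I$-bundle compatibility across the shared face), hence is already in $\calB$ and cannot sit on the far side of a $2$-cell of $\partial_v\calB$. Once you know this, the only non-parallelity $0$-handles visible from $\partial_v\calB$ are tetrahedral or semi-tetrahedral ones (each contributing at least $1/4$ to $\Delta(\calH)$) together with the $3$-handle contacts counted by $\alpha$; tracing around each vertical-boundary annulus one sees such a handle a bounded number of cells away, and the constant $56$ comes from bounding the number of $2$-cells per such handle. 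Your ``at most four components on its side annulus'' is also misleading as stated: tetrahedral and semi-tetrahedral $0$-handles do not have a side annulus in this sense, and it is exactly these handles, with their larger but still bounded number of faces, that carry the charges. As written, the proposal has the right idea but leaves the two load-bearing facts—why the cheap $0$-handles cannot border $\partial_v\calB$, and how to route the charge from a $1$- or $2$-handle interface to a paying $0$-handle—unjustified.
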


\begin{definition}
Let $\calH$ be a handle structure of a compact 3-manifold. Then the \emph{associated cell structure} is obtained as follows:
\begin{enumerate}
\item each handle is a 3-cell;
\item each component of intersection between two handles or between a handle and $\partial M$ is a 2-cell;
\item each component of intersection between three handles or between two handles and $\partial M$ is a 1-cell;
\item each component of intersection between four handles or between three handles and $\partial M$ is a 0-cell.
\end{enumerate}
\end{definition}

\begin{lemma}
\label{Lem:DualToPreTetHS}
Let $\calH$ be a pre-tetrahedral handle structure of a compact orientable 3-manifold $M$. Suppose that $\calH$ has no parallelity 0-handles. Let $\calC$ be the associated cell structure. Let $\calT$ be the triangulation obtained by placing a vertex in the interior of each 2-cell and coning off, and then placing a vertex in the interior of each 3-cell and coning off. Then $\Delta(\calT) \leq 2304 \Delta(\calH)$.
\end{lemma}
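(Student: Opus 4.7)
The plan is to express $\Delta(\calT)$ as a direct sum over the 3-cells of $\calC$. Coning a 2-cell $C$ from an interior vertex divides it into $e(C)$ triangles, where $e(C)$ denotes the number of 1-cells on $\partial C$; then coning a 3-cell $H$ from an interior vertex contributes one tetrahedron per triangle in the subdivided $\partial H$. Thus
\[
\Delta(\calT) \;=\; \sum_H \sum_{C \subset \partial H} e(C),
\]
summed over the handles of $\calH$. I would bound the contribution from each of the four handle types (0-, 1-, 2-, and 3-handles) by $576\Delta(\calH)$ separately.

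First I would show that, under the no-parallelity-0-handle hypothesis, every 0-handle has complexity at least $1/8$, so the number $h_0$ of 0-handles satisfies $h_0 \leq 8\Delta(\calH)$. The tetrahedral and semi-tetrahedral cases automatically contribute $\beta \geq 1/4$. A product annulus of length 3 or a parallelity annulus of length 4 has $\beta=0$; however, being non-parallelity forces at least one of its top or bottom face to abut a 3-handle rather than $\partial M$, so $\alpha\geq 1$ and the complexity is at least $1/8$. Standard incidence counting then yields $h_1 \leq 2 h_0$ (each 1-handle has two end discs on 0-handles, each 0-handle has at most four such discs), total $(1,2)$-incidences $\leq 3 h_1$, total $(0,2)$-incidences $\leq 6 h_0$, and total $(0,3)$-incidences $=\sum \alpha \leq 8\Delta(\calH)$.

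The main combinatorial step is to determine the cell pattern on $\partial H$ for each type of handle. For a tetrahedral 0-handle, $\partial H_0$ decomposes in a truncated-tetrahedron fashion: $4$ hexagonal face discs and $4$ hexagonal vertex discs (each hexagon has edges alternating between edge-band type and face- or vertex-disc type), joined by $6$ rectangular edge bands. The triangle count after subdivision is $4\cdot 6 + 4\cdot 6 + 6\cdot 4 = 72$, giving at most $72 h_0 \leq 576\Delta(\calH)$ tetrahedra in total; the other three 0-handle types have strictly smaller counts. A 1-handle contributes at most $36$ triangles to its subdivided boundary, giving $\leq 36 h_1 \leq 576\Delta(\calH)$. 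A 2-handle with attaching circle of length $L$ contributes exactly $6L$ triangles, and $\sum L$ equals the sum of $(0,2)$- and $(1,2)$-incidences, giving a total of $\leq 6\cdot 96\Delta(\calH) = 576\Delta(\calH)$. Finally, each 3-handle contribution is $6 m_0 + 4 m_1 + \sum L_i$, a linear combination of its $(0,3)$-, $(1,3)$-, and $(2,3)$-incidences, which when summed over all 3-handles is bounded by $(6\cdot 8 + 4\cdot 48 + 192)\Delta(\calH) = 432\Delta(\calH) \leq 576\Delta(\calH)$. Adding the four contributions gives $\Delta(\calT) \leq 2304\Delta(\calH)$. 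The main obstacle is pinning down the adjacency pattern on $\partial H_0$ (especially the somewhat subtle hexagon--hexagon--rectangle structure) in each of the four 0-handle types; the remainder is careful bookkeeping.
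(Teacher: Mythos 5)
Your argument is correct and arrives at the same constant. The underlying idea is the same as the paper's—count triangles in the 2-skeleton of $\calC$ and use that each triangle lies in at most two tetrahedra—and both proofs rely on the same local combinatorics: each 0-handle has complexity at least $1/8$ (so $h_0 \leq 8\Delta(\calH)$), each 0-handle meets at most six 2-handles, and the alternating strip structure on lateral boundaries of 1- and 2-handles. But you organize the count differently. You sum tetrahedra directly by the 3-cell (i.e.\ handle) they are coned from, one case per handle index, whereas the paper aggregates the triangle count by the six types of 2-cell location ($\calH^i\cap\calH^j$ etc.) and proves a chain of ``as many triangles in $X$ as in $Y$'' equalities reducing all six to the triangle count on $\calH^0\cap\calH^2$. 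Your version is more explicit about the polygon shapes (hexagons, rectangles, $L$-gons) appearing on each handle boundary, and your running subtotal ($576+576+576+432 = 2160\Delta(\calH)$) is actually slightly sharper before you round the final term up to match $4\cdot 576 = 2304$. The price is more case-checking: the counts ``72 triangles on a tetrahedral 0-handle boundary,'' ``36 on a 1-handle boundary,'' and the incidence bounds $h_1 \leq 2h_0$ and $(1,2)$-incidences $\leq 3h_1$ all rest on knowing the specific polygon shapes for all four 0-handle types in Figure~\ref{Fig:PreTetrahedral} (and you only verify the tetrahedral case in detail, asserting the others are smaller). The paper's chain of equalities is designed precisely to sidestep most of that case analysis, getting away with just two asserted facts: ``each component of $\calH^2\cap\calH^0$ is a square'' and ``each 0-handle meets at most six 2-handles.'' So your route is valid and in some ways more transparent, but would need the remaining three 0-handle types checked against the figure to be fully rigorous.
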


\begin{proof}
We first estimate how many triangles there are in the 2-skeleton of $\calC$.
Let $\calH^j$ denote the union of $j$ handles of $\calH$. Thus each 2-cell is a component of $\calH^i \cap \calH^j$ for $i\neq j\in\{0,1, 2\}$ or of $\calH^j\cap (\calH^3\cup\bdy M)$ for $j<3$.

There are as many triangles in $\calH^2 \cap \calH^1$ as in $\calH^2 \cap \calH^0$. Similarly, there are as many triangles in $\calH^2 \cap (\calH^3 \cup \partial M)$ as in $\calH^2 \cap \calH^0$. 
There are as many triangles in $\calH^1 \cap \calH^2$ as in $\calH^1 \cap (\calH^3 \cup \partial M)$. There are as many triangles in $\calH^1 \cap \calH^0$ as in $\calH^1 \cap \calH^2$. There are as many triangles in $\calH^0 \cap (\calH^3 \cup \partial M)$ as in $\calH^0 \cap \calH^2$. Each component of $\calH^2 \cap \calH^0$ is triangulated using 4 triangles. Hence, we see that the total number of triangles in the 2-skeleton of $\calC$ is $24 |\calH^2 \cap \calH^0|$.
Since $\calH$ is pre-tetrahedral, each 0-handle meets at most six 2-handles. Since $\calH$ has no parallelity 0-handles, each 0-handle contributes at least $1/8$ to $\Delta(\calH)$. Thus $24|\calH^2\cap\calH^0|$ is at most $1152\Delta(\calH)$.
Each tetrahedron of $\calT$ has a triangle in $\calC$ as a face, and each triangle in $\calC$ is a face of at most two tetrahedra. Hence, $\Delta(\calT) \leq 2304 \Delta(\calH)$.
\end{proof}

In \refsec{Products}, one of our arguments will replace some semi-tetrahedral 0-handles in a pre-tetrahedral handle structure by 0-handles modified as follows. The boundary of a semi-tetrahedral 0-handle has two 1-handles that are bordered by exactly two 2-handles. We replace the union of one of these 1-handles and the adjacent 2-handles by a single 2-handle. For any semi-tetrahedral 0-handle, this replacement may be done on either one or both of its relevant 1-handles. We call the result a \emph{clipped semi-tetrahedral 0-handle}. One clipped semi-tetrahedral 0-handle is shown in \reffig{ClippedSemiTet}. 

\begin{figure}
  \includegraphics{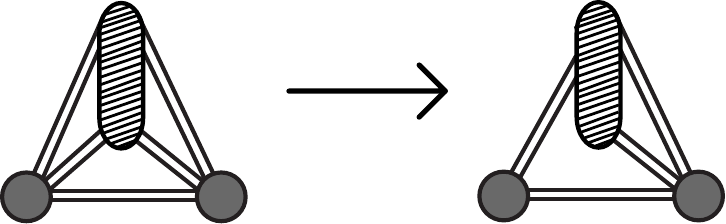}
  \caption{Clipping a semi-tetrahedral 0-handle.}
  \label{Fig:ClippedSemiTet}
\end{figure}

We may define the complexity of a handle structure that is pre-tetrahedral aside from a finite number of clipped semi-tetrahedral 0-handles just as in \refdef{ComplexHS} by setting $\beta$ to be $1/4$ for each clipped semi-tetrahedral 0-handle, and leaving the definition the same otherwise. Then we may modify \reflem{DualToPreTetHS} as follows.

\begin{lemma}
\label{Lem:DualToPreTetHS-Clipped}
Let $\calH$ be a handle structure of a compact orientable 3-manifold $M$. Suppose that $\calH$ is pre-tetrahedral, aside from a finite number of clipped semi-tetrahedral 0-handles. Suppose also that $\calH$ has no parallelity 0-handles. Let $\calC$ be the associated cell structure. Let $\calT$ be the triangulation obtained by placing a vertex in the interior of each 2-cell and coning off, and then placing a vertex in the interior of each 3-cell and coning off. Then $\Delta(\calT) \leq 2304 \Delta(\calH)$.
\end{lemma}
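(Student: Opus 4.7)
The plan is to reduce to \reflem{DualToPreTetHS} by reversing the clipping operation. Given $\calH$, I would construct a pre-tetrahedral handle structure $\calH'$ of $M$ by un-clipping each clipped semi-tetrahedral 0-handle: inside each merged 2-handle, which carries the product structure $D^2 \times I$, insert a product 1-handle along a chord of $D^2$, splitting the merged 2-handle back into a 1-handle flanked by two 2-handles. This reverses the clipping move and restores the adjacent 0-handle to an honestly semi-tetrahedral one.

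Since un-clipping modifies only 1-handles and 2-handles, the number $\alpha$ of intersections between any 0-handle and the 3-handles is preserved, and each clipped semi-tetrahedral 0-handle reverts to semi-tetrahedral while $\beta = 1/4$ remains unchanged. Hence $\Delta(\calH') = \Delta(\calH)$. Moreover, no new 0-handles are introduced, so $\calH'$ retains the hypothesis of having no parallelity 0-handles. Applying \reflem{DualToPreTetHS} to $\calH'$ then yields a triangulation $\calT'$ of $M$ satisfying $\Delta(\calT') \leq 2304 \Delta(\calH') = 2304 \Delta(\calH)$.

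It remains to show $\Delta(\calT) \leq \Delta(\calT')$. The cell structure $\calC'$ associated to $\calH'$ refines $\calC$: each merged 2-cell of $\calC$ is subdivided in $\calC'$ into smaller 2-cells separated by new interior 1-cells, and each merged 3-cell is likewise subdivided by new interior 2-cells. The coning construction is monotone under such refinement, since splitting a 2-cell with $b$ boundary edges via $k$ interior 1-cells produces $b + 2k$ triangles in place of $b$, and splitting a 3-cell by internal 2-cells correspondingly increases the tetrahedron count. Therefore $\Delta(\calT) \leq \Delta(\calT')$, completing the bound.

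The step I expect to require the most care is verifying that un-clipping is well-defined as a handle-structure move and produces a genuinely pre-tetrahedral $\calH'$. This amounts to running the clipping picture in reverse and confirming that the newly inserted 1-handle meets its neighbours in the product form prescribed by the handle axioms, and that the restored 0-handle matches the semi-tetrahedral model of \reffig{PreTetrahedral}. Once this local geometric check is in place, the complexity comparison and the appeal to \reflem{DualToPreTetHS} are immediate.
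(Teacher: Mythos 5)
Your approach is genuinely different from the paper's, and it has a gap you haven't closed. The paper's proof is a one-line observation: it re-runs the counting argument from \reflem{DualToPreTetHS} verbatim, noting that the only two facts used there --- each 0-handle meets at most six 2-handles, and each 0-handle contributes at least $1/8$ to $\Delta(\calH)$ --- both still hold for clipped semi-tetrahedral 0-handles (they meet at most four 2-handles and contribute at least $1/4$). No new handle structure is constructed and the constant $2304$ drops out unchanged.

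Your un-clipping reduction raises an issue that the paper's argument sidesteps entirely. When you re-insert a 1-handle to split the merged 2-handle, that 1-handle has \emph{two} ends. You carefully check that the clipped 0-handle $H_0$ reverts to the semi-tetrahedral model, but the other end of the re-inserted 1-handle lies on a second 0-handle $H_0'$, and its intersection pattern with the 1- and 2-handles also changes: a single component of $H_0' \cap \calH^2$ is split into a 2-handle/1-handle/2-handle sequence. If $H_0'$ is, say, tetrahedral in $\calH$, the resulting pattern on $H_0'$ matches none of the four pre-tetrahedral shapes, and $\calH'$ is not pre-tetrahedral, so \reflem{DualToPreTetHS} cannot be applied. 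The hypotheses of the lemma say only that every 0-handle of $\calH$ is pre-tetrahedral or clipped semi-tetrahedral; they do not say that clipped 0-handles come in matched pairs across the merged 2-handle, which is what you would need. (This pairing does hold in the one place the lemma is actually used, in the proof of \refthm{ProductToriHandles}, since there the clipping happens along the vertical boundary of a parallelity bundle component and both incident 0-handles are semi-tetrahedral --- but the lemma is stated in greater generality than that.) Your refinement argument for $\Delta(\calT) \leq \Delta(\calT')$ is sound, so the gap is confined to the claim that $\calH'$ is pre-tetrahedral; you flagged the un-clipping step as needing care, but aimed the care at the wrong end of the 1-handle.

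Beyond that gap, it is worth noting that even if repaired, this route is considerably more work than the paper's. The cleanest fix is to abandon the reduction and simply rerun the count: every inequality in the proof of \reflem{DualToPreTetHS} only needs a per-0-handle bound on 2-handle incidences and a per-0-handle lower bound on complexity, and clipped semi-tetrahedral 0-handles satisfy both.
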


\begin{proof}
  The proof is identical to that of \reflem{DualToPreTetHS}, since a clipped semi-tetrahedral 0-handle still meets at most six 2-handles, and contributes at least $1/4$ to $\Delta(\calH)$. 
\end{proof}

%%%%%%%%%%%%%%%%%%%%%%%%%%%%%%%%%%%%%%%%%%%%%%%%%%%%%%%%%%%%%%%%%
\section{Vertical arcs in products} \label{Sec:VerticalArcs}

As discussed in the introduction, a central part of the paper will be an analysis of triangulations of $S \times [0,1]$, where $S$ is a closed orientable surface. We will want to transfer results about $S \times [0,1]$ to results about $S' \times [0,1]$ where $S'$ is a branched cover of $S$. The branching locus will be an arc of the following form.

\begin{definition} Let $S$ be a closed surface. An arc properly embedded in $S \times [0,1]$ is \emph{vertical} if it is ambient isotopic to $\{ \ast \} \times [0,1]$ for some point $\ast$ in $S$.
\end{definition}

The main result of this section is as follows.

\begin{theorem} \label{Thm:VerticalArc}
Let $S$ be a closed connected orientable surface. Let $\calT$ be a triangulation of $S \times [0,1]$. Then the 23rd iterated barycentric subdivision $\mathcal{T}^{(23)}$ contains an arc in its 1-skeleton that is vertical. 
\end{theorem}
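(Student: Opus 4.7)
The plan is to begin with a PL vertical arc $\gamma = \{*\} \times [0,1]$ in general position with respect to $\calT$ and deform it, via an ambient isotopy of $S\times[0,1]$, into the 1-skeleton of $\calT^{(23)}$; the resulting arc is then automatically vertical. First I would choose $*\in S$ generically so that $\gamma$ misses the 1-skeleton of $\calT$ and crosses each 2-simplex transversely at interior points. This partitions $\gamma$ into finitely many straight segments $\gamma_1,\dots,\gamma_n$, where $\gamma_i$ lies in a tetrahedron $T_i$ of $\calT$ and meets $\partial T_i$ only at the interiors of two 2-faces, with consecutive segments sharing a 2-face.

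Inside each $T_i$ I would then produce a 1-skeletal path $\sigma_i$ in the barycentric subdivision $T_i^{(k)}$ whose endpoints lie on the entry and exit 2-faces of $\gamma_i$ and whose image is homotopic rel boundary to the straight segment $\gamma_i$. Already in $\calT^{(1)}$ there is an obvious such choice: the length-two ``flag'' path $b_{F_{i-1}} \to b_{T_i} \to b_{F_i}$ through the barycentre of $T_i$ and the barycentres of the entry and exit faces. Concatenating these gives a candidate simplicial walk $b_{F_0}, b_{T_1}, b_{F_1}, \dots, b_{T_n}, b_{F_n}$ in $\calT^{(1)}$ with endpoints on $S\times\{0\}$ and $S\times\{1\}$, and since each $\sigma_i$ lies in the convex simplex $T_i$ and is rel-endpoint isotopic to $\gamma_i$ inside $T_i$, the whole walk is properly homotopic to $\gamma$.

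The main obstacle is embeddedness of this concatenated walk. Because triangulations in this paper allow face identifications, distinct $T_i, T_j$ in the sequence may coincide as abstract tetrahedra of $\calT$ (a straight vertical line can re-enter an abstract tetrahedron across a pair of identified faces), and then the naive walk visits the common barycentre more than once. Resolving this requires spreading the several visits over pairwise disjoint 1-skeletal paths in the barycentric subdivision of the common tetrahedron, each contained in a thin tube around the corresponding segment of $\gamma$. The technical heart of the argument is to show that this can always be arranged in $T^{(23)}$, by a combinatorial bookkeeping of how many pairwise disjoint interior paths can be routed between prescribed boundary vertices of iterated barycentric subdivisions of a single tetrahedron while remaining in a small neighbourhood of the corresponding segments; the constant $23$ is where this bookkeeping becomes tight. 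Once the embedded simplicial arc has been produced, the individual tetrahedron-level isotopies glue along shared 2-face vertices to give an ambient isotopy of $S\times[0,1]$ carrying $\gamma$ onto it, verifying that the resulting simplicial arc is vertical.
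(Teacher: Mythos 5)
There is a genuine gap, and it is at the point you yourself flag as ``the technical heart.'' Your strategy needs a \emph{uniform} bound (independent of $\calT$) on how many times the vertical arc $\gamma = \{*\}\times[0,1]$ revisits a single tetrahedron of $\calT$, because the number of pairwise disjoint simplicial paths you can route through $T^{(k)}$ between prescribed boundary vertices is bounded in terms of $k$ alone. But no such bound on the revisit count exists. The triangulation $\calT$ is only assumed to be a triangulation of a manifold homeomorphic to $S\times[0,1]$; the product structure on the underlying space can be arbitrarily skewed relative to the combinatorics of $\calT$, so even after choosing $*$ generically and isotoping $\gamma$ to minimise its intersection with the $2$-skeleton, the number of intersections --- and in particular the multiplicity with which $\gamma$ passes through a single abstract tetrahedron --- grows without bound over all triangulations $\calT$ of a fixed $S\times[0,1]$. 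Consequently no fixed iteration count such as $23$ can provide enough room, and the proposed bookkeeping cannot ``become tight'' at any finite stage.

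The paper avoids this precisely by not working with a naive vertical arc at all. It instead takes a least-weight normal annulus $A$ isotopic to $C\times[0,1]$ for a short essential curve $C$ on $S\times\{0\}$ (handling $S=S^2$ separately via the lightbulb trick), passes to the parallelity bundle of $(S\times[0,1])\cut A$, and extends it to a maximal generalised parallelity bundle $\calB_+$. Least weight forces all components of $\calB_+$ to be $I$-bundles over discs (the Claims 1--4), so one can choose the vertical arc $\alpha$ inside $\tilde A$ avoiding $\partial_h\calB_+$ and crossing each normal triangle or square of $A$ at most once. That is the mechanism that produces a \emph{universal} combinatorial bound --- at most one crossing per normal disc type per tetrahedron, hence at most $10$ outermost normal discs per tetrahedron to make simplicial --- from which the fixed constant $23 = 6 + 16 + 1$ of subdivisions in Proposition~\ref{Prop:ArcAvoidParallelity} is derived. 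Your outline has no analogue of this normalisation/parallelity step, and without it the approach cannot give a constant independent of $\calT$. (Your closing assertion that the tetrahedron-level rel-endpoint isotopies automatically glue to a global ambient isotopy is also left unjustified, but that is a secondary issue compared to the unbounded revisit count.)
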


This will be proved using some normal surface theory. The following basic result in the theory is contained in \cite[Proposition~3.3.24, Corollary~3.3.25]{Matveev:Algorithmic}.

\begin{lemma} \label{Lem:IsotopeNormal}
Let $M$ be a compact orientable irreducible 3-manifold with incompressible boundary, and let $\calT$ be a triangulation of $M$. Let $S$ be an incompressible boundary-incompressible surface properly embedded in $M$, no component of which is a sphere or disc, and that is in general position with respect to $\mathcal{T}$.
Then there is an ambient isotopy taking $S$ to a normal surface with weight no greater than that of $S$. Moreover, if $C$ is any boundary curve of $S$ that is normal and intersects each edge of $\calT$ at most once, then the isotopy can be chosen to leave $C$ fixed.
\end{lemma}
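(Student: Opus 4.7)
The plan is to follow the classical Haken normalization procedure, in the form used in Matveev's textbook. I would measure progress by the lexicographic complexity $(w(S), c(S))$, where $w(S)$ is the weight and $c(S) = |S \cap \calT^{(2)}|$ is the number of components of intersection with the 2-skeleton, and exhibit a sequence of ambient isotopies that strictly decreases this complexity at every step, terminating at a normal surface.

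First, I would remove closed curves of $S \cap \calT^{(2)}$ that bound discs in a face. For an innermost such curve $\gamma$ bounding a disc $D$ in a face $F$, incompressibility of $S$ supplies a disc $D' \subset S$ with $\partial D' = \gamma$; irreducibility of $M$ then provides a ball bounded by $D \cup D'$, across which $D'$ can be isotoped off $F$. This leaves $w(S)$ unchanged (no edges are crossed) but strictly reduces $c(S)$. Second, I would eliminate returns: an arc of $S \cap F$ which, together with a subarc $\beta$ of an edge $e \subset \partial F$, bounds an outermost disc $E \subset F$. Boundary-incompressibility of $S$ provides a boundary-compressing disc $E' \subset S$ with $\partial E' \subset S \cup \partial M$ matching the boundary of $E$, and $E \cup E'$ is used to isotope $S$ off $\beta$. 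This move strictly decreases $w(S)$. Once no returns remain and every arc of $S \cap F$ joins distinct edges, each component of $S \cap T$ inside a tetrahedron $T$ is, by incompressibility applied inside the ball $T$, a disc whose boundary is a normal curve on $\partial T$; a standard isotopy supported in the interior of $T$ straightens this disc to a normal disc, again without changing the weight. Strict decrease of $(w(S), c(S))$ at each step guarantees termination at a normal surface of weight no greater than the original.

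For the moreover clause, I would observe that each isotopy above is supported in an arbitrarily small regular neighbourhood of the specific face or tetrahedron containing the non-normal feature being removed. If $C \subset \partial S$ is a boundary curve that is already normal and meets each edge of $\calT$ at most once, then $C$ itself presents no non-normality, and no non-normal arc or closed curve of $S \cap \calT^{(2)}$ meets $C$. By shrinking each local support away from $C$, every isotopy in the procedure can be made the identity on $C$, and $C$ is preserved throughout.

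The main obstacle is a bookkeeping issue: verifying that each compression and boundary-compression really can be realized by an ambient isotopy of small support that does not create new non-normal intersections in neighbouring faces or tetrahedra, so that the complexity $(w(S), c(S))$ genuinely decreases monotonically. This is where incompressibility, boundary-incompressibility, and irreducibility are used together to guarantee that the combinatorial discs produced in $S$ are the boundaries of embedded 3-balls along which one may isotope, and that these isotopies can be localized enough to leave a prescribed normal boundary curve untouched.
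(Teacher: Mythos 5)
Your overall strategy is the right one: the paper does not prove this lemma but simply cites Matveev (Proposition~3.3.24 and Corollary~3.3.25), and what you are reconstructing is the standard Haken--Matveev normalization by a monotone decrease of $(w(S), c(S))$. However, your termination step contains a genuine gap. After all circles of $S \cap \calT^{(2)}$ bounding discs in faces and all returns have been eliminated, each disc component of $S \cap T$ has boundary a normal curve on $\partial T$ --- but normal curves on the boundary of a tetrahedron are not only triangles and squares. There are normal curves of length $8$ (and longer) which meet every face in arcs joining distinct edges, yet meet some edge of $T$ more than once; such a curve bounds a disc in the ball $T$, and that disc is \emph{not} isotopic rel boundary to a normal disc, because no normal disc has that boundary. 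So your procedure can halt at a surface satisfying all your conditions that is still not normal. The missing move is the one handling a disc component of $S \cap T$ whose boundary meets an edge $e$ in two points adjacent along $e$: one isotopes the surface across the subarc of $e$ between them, strictly decreasing the weight. This move must be added to your list for the induction on $(w(S), c(S))$ to terminate at a normal surface. (Relatedly, you should also normalize $\partial S$ inside the triangulated surface $\partial M$, removing circles and returns of $\partial S$ in boundary faces; this is part of what ``normal'' means for a properly embedded surface.)

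A secondary point is that your justification of the return move is misattributed. When the edge $e$ lies in the interior of $M$, no boundary-incompressibility is involved and the disc $E' \subset S$ you describe need not exist: one simply pushes the sheet of $S$ containing $\alpha$ across the outermost disc $E$ and past $e$, dropping the weight by $2$. Boundary-incompressibility (together with incompressibility of $\partial M$ and irreducibility of $M$) is needed precisely when $e \subset \partial M$, so that the endpoints of $\alpha$ lie on $\partial S$ and $E$ is a potential boundary-compressing disc. That case is also where the ``moreover'' clause really gets proved: boundary-incompressibility forces $\alpha$ to cut off a disc from $S$ along an arc $\beta'$ of $\partial S$, and since $\beta'$ is connected it lies in a single component of $\partial S$ containing both endpoints of $\alpha$; if that component were $C$, then $C$ would meet $e$ twice, contrary to hypothesis. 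So no move displaces $C$. Your assertion that ``no non-normal arc meets $C$'' is correct, but it requires this argument rather than following from the normality of $C$ alone.
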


\begin{proposition}\label{Prop:ArcAvoidParallelity}
Let $\mathcal{T}$ be a triangulation of a compact 3-manifold $M$. Let $S$ be a 2-sided normal surface properly embedded in $M$. Let $S'$ be the copies of $S$ in $M \cut S$. Let $\mathcal{B}$ be the parallelity bundle for the pair $(M \cut S, S')$. Let $\alpha$ be an arc properly embedded in $M$ with the following properties.
\begin{enumerate}
\item It lies within a copy of $S$ in $M \cut S$.
\item It is disjoint from the horizontal boundary of $\calB$.
\item Its intersection with each normal triangle or square of $S$ is either empty or a single properly embedded arc with endpoints on distinct edges of the triangle or square.
\end{enumerate}
Then $\alpha$ is simplicial in $\mathcal{T}^{(23)}$.
\end{proposition}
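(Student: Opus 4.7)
The plan is to construct a simplicial representative of $\alpha$ in $\calT^{(23)}$ tetrahedron by tetrahedron and then verify that the local pieces glue consistently. The statement should be read as an isotopy claim: after a small isotopy of $\alpha$ preserving conditions~(1)--(3), the arc lies in the 1-skeleton of $\calT^{(23)}$.

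Within each tetrahedron $\tau$ of $\calT$ that $\alpha$ meets, the intersection $\alpha \cap \tau$ consists of arcs lying in normal discs of $S \cap \tau$, and by condition~(3) each such arc joins two distinct edges of its disc. Since a tetrahedron contains at most 4 normal triangle types and 3 normal square types, and since a normal triangle admits $\binom{3}{2}=3$ combinatorial arc types while a normal square admits $\binom{4}{2}=6$, the catalogue of local configurations is finite and explicit. For each such configuration I would exhibit an explicit simplicial arc in the 1-skeleton of $\calT^{(k)}$ for some $k\le 23$ that lies on the normal disc $D$ (after a small isotopy in $D$ fixing $\partial D$) and has the correct endpoints. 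The simplicial arc is constructed by chaining iterated barycentres of subsimplices of $\tau$, using the fact that high iterated subdivisions provide a dense enough 1-skeleton inside $D$ to realise any combinatorial arc type.

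The crucial compatibility condition is that the simplicial pieces in adjacent tetrahedra agree on their shared face $F$ of $\calT$: the exit vertex of one piece must equal the entry vertex of the next. To arrange this I would design the local construction so that the vertex chosen for each crossing on $F$ depends only on the combinatorial type of $\alpha \cap F$, not on which tetrahedron we approach from. Here hypothesis~(2), that $\alpha$ avoids $\partial_h\calB$, is essential: it eliminates parallelity configurations near $F$ where the normal-disc structure would be too degenerate to pick out consistent vertices, and it prevents $\alpha$ from accumulating along parallelity strata in an awkward manner. It also ensures that no two arcs of $\alpha \cap D$ become forced into identical positions, which would break the one-to-one correspondence between crossings and their assigned barycentric vertices.

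The main obstacle is the sharp numerical bound $23$. I expect this to emerge from a worst-case analysis over the finitely many configurations in the local catalogue: each type of arc through each type of normal disc requires a particular number of iterated subdivisions to realise a simplicial path with prescribed endpoints, and $23$ is chosen to exceed the maximum over all cases. The bulk of the technical work is therefore the explicit case-by-case construction of simplicial paths in small iterated subdivisions, together with the bookkeeping needed to verify the subdivision count. I anticipate no deep conceptual difficulty beyond these case analyses, but the combinatorial overhead of tracking barycentric vertices across all compatible configurations will require careful organization.
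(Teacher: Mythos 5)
There is a genuine gap, and it concerns the role of hypothesis~(2). You treat ``$\alpha$ avoids $\partial_h\calB$'' as a device for ruling out degenerate face configurations and ensuring consistency of vertex assignments, but the actual content is much more specific and quantitatively essential: it forces $\alpha$ to lie on the \emph{outermost} normal discs of $S$ in each tetrahedron. The normal discs of $S$ in a tetrahedron come in at most five types, but each type may occur with arbitrarily many parallel copies; the regions between parallel copies are exactly the parallelity 0-handles, and their horizontal boundary is in $\partial_h\calB$. Hypothesis~(2) therefore confines $\alpha$ to the at most two outermost discs of each type, i.e.\ to a collection $D$ of at most ten normal discs per tetrahedron. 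Without this reduction, there is no bounded ``catalogue of local configurations'' of the kind you describe: the arc $\alpha$ could traverse unboundedly many parallel discs in a single tetrahedron, packed arbitrarily closely, and no fixed number of iterated barycentric subdivisions could render them all (let alone the arcs on them) simplicial. Your phrase ``prevents $\alpha$ from accumulating along parallelity strata'' gestures at this but never extracts the key fact that it is $D$, not $S\cap\tau$, that has bounded complexity.

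The paper's argument is organized around this reduction and in the opposite order from yours: it first makes the bounded surface $D$ simplicial (using external results, one to make the at most $16$ normal arcs per face simplicial in $\calT^{(6)}$ --- note the doubling, since the eight arcs contributed by one adjacent tetrahedron need not coincide with the eight from the other, a point your face-compatibility scheme does not account for --- and another to make the at most ten discs per tetrahedron simplicial in sixteen further subdivisions), and only then, after one final subdivision, isotopes $\alpha$ to a simplicial arc \emph{inside} the now-triangulated discs of $D$. This is why the constant $23$ arises and why no case-by-case analysis of arc types is needed: once $D$ is a simplicial surface, any arc in a polygon of $D$ can be pushed into the 1-skeleton of a single barycentric subdivision, with endpoints fixed. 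Building $\alpha$ directly, tetrahedron by tetrahedron, both loses this conceptual economy and, absent the outermost-disc reduction, cannot be made to terminate with any uniform bound.
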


\begin{proof}
Within each tetrahedron of $\calT$, the normal discs of $S$ come in at most $5$ types. Let $D$ be the union of the outermost discs of each type. These discs within a single tetrahedron intersect each face of $\calT$ in at most $8$ arcs. However, each face of $\calT$ might be adjacent to two tetrahedra of $\calT$ and there is no reason for the $8$ arcs coming from the two adjacent tetrahedra to coincide. So, the intersection between $D$ and any face of $\calT$ consists of at most $16$ normal arcs. By \cite[Lemma~6.7]{LackenbySchleimer}, the union of the arcs is simplicial in $\calT^{(6)}$. Within each tetrahedron of $\calT$, $D$ consists of at most $10$ normal discs. Hence, by \cite[Lemma~6.11]{LackenbySchleimer}, we may use at most $16$ further subdivisions to make these discs simplicial. 

We now apply one further subdivision to the triangulation, forming $\calT^{(23)}$. We may assume that the intersection between $\alpha$ and the 2-skeleton of $\calT$ is a union of vertices of $\calT^{(23)}$. We may further isotope $\alpha$ so that it is simplicial. This follows from the general result that an arc in triangulated polygon may be isotoped to be simplicial in the barycentric subdivision. Moreover, if the endpoints of the arc are already vertices of this subdivision, then the isotopy can keep these endpoints fixed.
\end{proof}

\begin{proof}[Proof of \refthm{VerticalArc}] 
Suppose first that $S$ is a 2-sphere. Pick any properly embedded simplicial arc in $\calT^{(23)}$ joining $S \times \{0 \}$ to $S \times \{1 \}$. By the lightbulb trick, this is ambient isotopic to an arc of the form $\{ \ast \} \times [0,1]$, as required.

Thus, we may assume that $S$ is not a 2-sphere. Hence, $S \times \{ 0 \}$ contains an essential simple closed curve. Pick one, $C$, that is transverse to the 1-skeleton of $\calT$ on $S\times\{0\}$ and that intersects each edge of that 1-skeleton at most once.
Then $C$ is normal. It is not hard to prove that such a curve must exist; for example, we can take $C$ to be non-trivial in $H_1(S; \mathbb{Z}/2\mathbb{Z})$ and with fewest points of intersection with the edges. Let $A$ be the annulus $C \times [0,1]$. By \reflem{IsotopeNormal}, this can be isotoped, without moving $C \times \{ 0 \}$, to a normal surface. We pick $A$ to have least weight among all annuli with one boundary component equal to $C \times \{ 0 \}$ and the other boundary component on $S \times \{ 1 \}$. Let $C \times \{ 1 \}$ be the other boundary component of $A$, which is then a normal simple closed curve in $S \times \{ 1 \}$.

In the case where $S$ is a torus, we need to be more precise about the choice of annulus $A$, as follows.
Pick an oriented vertical arc in $S \times [0,1]$ disjoint from $A$. Then the \emph{winding number} of an oriented annulus, with boundary curves equal to $\partial A$, is the signed intersection number of the annulus with this vertical arc. For each winding number $t$, let $w(t)$ be the minimal weight of a normal annulus with boundary equal to $\partial A$ and winding number $t$. If there is no normal annulus with a given winding number $t$, then we define $w(t)$ to be infinite. Note that $w(t)$ tends to infinity as $t \rightarrow \pm \infty$. Now, $A$ has least weight among all normal annuli with the given boundary curves. Hence, $w(0)$ is a global minimum. However, there may be other values of $t$ such that $w(t) = w(0)$. Choose $t_0$ to be maximal with this property. We replace $A$ by a normal annulus, having the same weight and the same boundary curves, but with winding number $t_0$. Call this new annulus $A$.

Let $M$ be the 3-manifold $(S \times [0,1]) \cut A$. Let $\tilde A$ be the two copies of $A$ in $\partial M$. Let $\mathcal{B}$ be the parallelity bundle for the pair $(M, \tilde A)$. This consists of the union of the regions between parallel normal discs of $A$.
By choice of $C$, the curve $C\times \{0\}$ intersects each edge of $\calT$ at most once. Thus no normal disc of $A$ incident to $C \times \{ 0 \}$ is parallel to another normal disc of $A$. Hence, $\calB$ misses $S \times \{ 0 \}$.
By \refthm{ExtendToGenParBdle}, $\mathcal{B}$ extends to a maximal generalised parallelity bundle $\mathcal{B}_+$ that has incompressible horizontal boundary. Its vertical boundary is a union of vertical boundary components of $\calB$, and hence it also misses $S \times \{ 0 \}$.

Since $\partial_h \calB_+$ is an incompressible subsurface of the annuli $\tilde A$, it is a collection of annuli and discs. Hence, each component of $\calB_+$ is an $I$-bundle over a disc, annulus or M\"obius band.

\smallskip

\emph{Claim 1.} No component of $\calB_+$ is an $I$-bundle over a M\"obius band.

The $I$-bundle over a core curve of this M\"obius band would be a M\"obius band embedded in $S \times [0,1]$ with boundary in $\tilde A$. We could then attach an annulus to its boundary, to create a M\"obius band embedded in $S \times [0,1]$ with boundary in $S \times \{ 0 \}$. We could then double $S \times [0,1]$ along $S \times \{ 0 \}$ to create another copy of $S \times [0,1]$ containing a Klein bottle. We could then embed this in the 3-sphere, which is well known to be impossible.

\smallskip

\emph{Claim 2.} No component of $\calB_+$ is an $I$-bundle over an annulus that intersects both components of $\tilde A$.

Let $B$ be such a component. Since $\partial_h B$ consists of incompressible annuli, and because these annuli are disjoint from $S \times \{ 0 \}$, at least one boundary component of $\partial_h B$ is disjoint from $\partial \tilde A$. It is a core curve of a component of $\tilde A$. Let $V$ be the vertical boundary component of $\calB_+$ incident to this core curve; recall $V$ lies in the parallelity bundle $\calB$. Then $V$ is disjoint from $S \times \{ 0,1\}$ and $\partial V$ consists of core curves disjoint from $\partial \tilde A$. Note that $V$ specifies a free homotopy between the two boundary curves of $S \cut C$. Hence, we deduce in this case that $S$ is a torus.

Let $A_1$ and $A_2$ be the two components of $\tilde A$. Each $A_i$ is divided into smaller annuli $A'_i$ and $A''_i$ by $\partial V$, where $A'_i$ is the component intersecting $S \times \{ 0 \}$. We can construct two annuli $A'_1 \cup V \cup A''_2$ and $A'_2 \cup V \cup A''_1$. Using a small isotopy supported in the interior of $S \times [0,1]$, these annuli can be made normal.
Both of these have the same boundary curves as $A$. One has winding number one less than $A$, the other has winding number one more than $A$. The one with winding number greater than $A$ has, by our choice of $A$, weight strictly greater than $A$. But the sum of the weights of $A'_1 \cup V \cup A''_2$ and $A'_2 \cup V \cup A''_1$ is twice the weight of $A$. Hence, the other annulus has weight less than that of $A$. But $A$ was chosen to have minimal weight, which is a contradiction.

\smallskip

\emph{Claim 3.} Each annular component of $\partial_h \calB_+$ is disjoint from $\partial \tilde A$.

Let $B$ be any component of $\calB_+$ that is an $I$-bundle over an annulus. By Claim~2, its two horizontal boundary components both lie in the same component of $\tilde A$. Call this component $A_1$.
Note each component of $\bdy_h B$ contains a core curve of $A_1$, because $\bdy_h B$ is essential. 
Suppose that one component of $\partial_h B$ is an annulus $H$ intersecting $\partial \tilde A$. Because $\calB$ misses $S\times\{0\}$, $H$ must meet $S\times\{1\}$. Then $\partial_v B$, which lies in the parallelity bundle, intersects $S\times\{1\}$. It follows that the other component of $\partial_h B$ also intersects $S\times\{1\}$.
Since $H$ intersects $S \times \{ 1 \}$ by assumption, the other components of $A_1 \cut H$ are an annulus incident to $S \times \{ 0 \}$ and possibly discs incident to $S \times \{ 1 \}$. But the other component of $\partial_h B$ also intersects $S \times \{ 1\}$, and so it must lie in one of these discs. But it cannot then contain a core curve of $A_1$, which is a contradiction.

\smallskip

\emph{Claim 4.} There are no annular components of $\partial_h \calB_+$.

Let $B$ be any component of $\calB_+$ that is an $I$-bundle over an annulus. By Claim~3, $\partial_h B$ is disjoint from $S \times \{ 0,1 \}$ and by Claim~2, it lies in a single component of $\tilde A$. Let $V$ be any vertical boundary component of $B$. Then $\partial V$ cobounds an annulus $A'$ in $A$. If we remove $A'$ from $A$ and replace it by $V$, the result is an annulus with the same boundary as $A$ but with smaller weight. By \reflem{IsotopeNormal}, we may isotope this to a normal annulus without increasing its weight and without moving its intersection curve with $S \times \{ 0 \}$. This contradicts our choice of $A$.

\smallskip

We are now in a position to prove the theorem. Since $\calB_+$ consists only of $I$-bundles over discs, we may find an arc $\alpha$ in $\tilde A$ running from $S \times \{ 0 \}$ to $S \times \{ 1 \}$ and that avoids $\calB_+$. We can choose $\alpha$ with the property that it intersects each triangle or square of $\tilde A$ in a single properly embedded arc with endpoints on distinct edges of the triangle or square. Thus, $\alpha$ satisfies the hypotheses of Proposition \ref{Prop:ArcAvoidParallelity}. It therefore is simplicial in $\calT^{(23)}$. It is the required vertical arc.
\end{proof}

The following lemma will be useful when modifying a given triangulation of $S \times [0,1]$. 

\begin{lemma}
\label{Lem:MaintainVerticalAfterPachner}
Let $\calT$ be a triangulation of $S \times [0,1]$ and let $\alpha$ be a simplicial arc that is vertical in $S \times [0,1]$. Let $\calT'$ be a triangulation obtained from $\calT$ by attaching a tetrahedron to $S \times [0,1]$ to realise a Pachner move of the boundary triangulation. Then $\alpha$ extends to a simplicial arc $\alpha'$ in $\calT'$ that is also vertical.
\end{lemma}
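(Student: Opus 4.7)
I would prove this by a case analysis on the type of Pachner move and on whether the move removes the vertex of $\alpha$ on the affected boundary face. Assume without loss of generality that the tetrahedron $T$ is attached to $S \times \{0\}$ along a subcomplex $D \subset S \times \{0\}$: one triangle for a 1-3 move, two adjacent triangles for a 2-2 move, or a hexagonal disc comprising the three triangles around a vertex $v$ for a 3-1 move. In each case $D$ is a disc, so attaching the ball $T$ along $D$ leaves the underlying topological type unchanged, and $M' = (S \times [0,1]) \cup T \isom S \times [0,1]$.

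When the endpoint of $\alpha$ on $S \times \{0\}$ survives the move (which is automatic for 1-3 and 2-2 moves, and holds for 3-1 moves whose central absorbed vertex is not the endpoint of $\alpha$), I take $\alpha' = \alpha$. This is still simplicial because $\calT'$ extends $\calT$ on $M$, and its endpoints still lie on $\partial M'$. To see verticality, I construct a ``flattening'' homeomorphism $M' \to M$ that absorbs $T$ into a thin collar of $D$ and is the identity outside a small neighbourhood of $D \cup T$; choosing this neighbourhood to miss $\alpha$ except near its endpoint, and arranging the flattening to fix the endpoint itself, the homeomorphism carries $\alpha \subset M'$ identically back to $\alpha \subset M$, where it is vertical by hypothesis.

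The substantive case is a 3-1 Pachner move attaching $T = [v, v_1, v_2, v_3]$ along the three faces at $v$, where $v$ is the endpoint of $\alpha$. After the move, $v$ becomes an interior vertex of $\calT'$ and the new boundary triangle is $[v_1, v_2, v_3]$. I define $\alpha' = \alpha \cup [v, v_1]$, which is simplicial with endpoints $v_1 \in \partial M'$ and the unchanged endpoint on $S \times \{1\}$. To verify verticality, choose a cylindrical regular neighbourhood $N \isom D^2 \times [0,1]$ of $\alpha$ in $M$ with $\alpha = \{0\} \times [0,1]$, small enough that its bottom disc $D^2 \times \{0\}$ lies in the interior of $D$. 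Then $N \cup T$ is a 3-ball in $M'$ (the union of two balls along a disc), and $\alpha' \cap (N \cup T)$ is a simple arc inside this 3-ball joining two points of $\partial(N \cup T)$, namely the top endpoint of $\alpha$ and $v_1$. By the uniqueness of properly embedded arcs in a 3-ball up to ambient isotopy rel boundary endpoints, I can choose a product identification $N \cup T \isom D^2 \times [0,1]$ in which $\alpha' \cap (N \cup T)$ is the central chord $\{0\} \times [0,1]$, with the side annulus $\partial D^2 \times [0,1]$ matching the side annulus of $N$. Gluing this to the standard product structure on $M \setminus N$ yields a homeomorphism $\psi \from M' \to S \times [0,1]$ that realises $\alpha'$ as a vertical arc.

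The principal obstacle is Case 2: one must argue that concatenating $\alpha$ with an edge of the attached tetrahedron produces an arc still in the ambient isotopy class of $\{*\} \times [0,1]$. The key observation is that a cylindrical neighbourhood of $\alpha$ together with $T$ forms a single 3-ball in $M'$, so the combined arc has a unique isotopy class for its given endpoints; this local uniqueness, combined with a product identification of $N \cup T$ that extends the standard product structure already present on $M \setminus N$, yields the global product structure on $M'$ that realises $\alpha'$ as vertical.
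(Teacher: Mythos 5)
Your case analysis and the choice of extending arc (adding an edge $[v, v_i]$ in the 3-1 case when $\alpha$ ends at $v$) match the paper's proof exactly. You go further than the paper in trying to justify why the extended arc is still vertical, but the central step of that justification is wrong: it is \emph{not} true that properly embedded arcs in a 3-ball are unique up to ambient isotopy rel their boundary endpoints -- such an arc can be knotted (tie a trefoil in the middle of a chord), and knotted arcs are not isotopic to the core chord. So the uniqueness you invoke does not exist in general, and the argument as stated does not establish that $N \cup T$ admits a product structure carrying $\alpha'$ to a vertical arc.

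What actually saves the conclusion is that $\alpha'$ is visibly \emph{unknotted} in the ball $N \cup T$: $\alpha$ is the core $\{0\} \times [0,1]$ of the cylinder $N$, the segment $[v,v_1]$ is a straight edge of the convex tetrahedron $T$, and they meet at a single point interior to the common face $D_0 = N \cap T$; the concatenation of two unknotted arcs of two balls across a shared boundary disc, each meeting the disc once transversally, is unknotted in the union. You should argue this directly rather than appeal to a false uniqueness. A smaller but real issue is your claim that the product identification of $N \cup T$ can be chosen so that $\partial D^2 \times [0,1]$ ``matches the side annulus of $N$'': the annulus along which $N \cup T$ meets $M' \setminus \mathrm{int}(N \cup T)$ is $A \cup (D \setminus \mathrm{int}\,D_0)$ (the side of $N$ together with the outer annulus of the hexagon $D$), not just $A$, and the second piece is horizontal, not vertical, in the original product structure on $M$; one must isotope the product structure near $\partial D_0$ to make this whole annulus vertical before gluing. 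The paper simply asserts the verticality without this level of detail, and once your two lemmas are corrected the argument does complete the paper's sketch.
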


\begin{proof}
The attachment of the tetrahedron realises a Pachner move on the boundary that has type 1-3, 2-2 or 3-1. In the cases of a 1-3 move and a 2-2 move, the arc $\alpha$ remains properly embedded and vertical, and so in these cases, we set $\alpha'$ to be $\alpha$. In the case of a 3-1 Pachner move, the new tetrahedron is incident to three triangles that meet at a vertex. If $\alpha$ does not end at that vertex, then we again set $\alpha'$ to be $\alpha$. If $\alpha$ does end at that vertex, then we form $\alpha'$ by adding one of the edges that is incident to two of the triangles. This is vertical.
\end{proof}

%%%%%%%%%%%%%%%%%%%%%%%%%%%%%%%%%%%%%%%%%%%%%%%%%%%%%%%%%%%%%%%%%
\section{Spines, triangulations and mapping class groups}
\label{Sec:Spines}

In this section, we define a graph associated with a closed orientable surface $S$, the spine graph $\mathrm{Sp}(S)$ on $S$. We show $\mathrm{Sp}(S)$ is quasi-isometric to the triangulation graph $\mathrm{Tr}(S)$ defined in the introduction. We also obtain properties of spines and methods of modifying them that we will use in future arguments. 

Recall the triangulation graph $\mathrm{Tr}(S)$ defined in the introduction. Related to the triangulation graph is the spine graph, defined as follows. 

\begin{definition}
A \emph{spine} for a closed orientable surface $S$ is a graph $\Gamma$ embedded in $S$ that has no vertices of degree $1$ or $2$ and where $S \cut \Gamma$ is a disc.
\end{definition}

\begin{definition}
In an \emph{edge contraction} on a spine $\Gamma$, one collapses an edge that joins distinct vertices, thereby amalgamating these vertices into a single vertex. An \emph{edge expansion} is the reverse of this operation.
\end{definition}

\begin{definition} \label{Def:SpineGraph} 
The \emph{spine graph} $\mathrm{Sp}(S)$ for a closed orientable surface $S$ is a graph defined as follows. It has a vertex for each spine of $S$, up to isotopy of $S$. Two vertices are joined by an edge if and only if their spines differ by an edge contraction or expansion.
\end{definition}

We wish to compare the spine graph and triangulation graph.
Dual to each 1-vertex triangulation is a spine. 
Each 2-2 Pachner move on a 1-vertex triangulation has the following effect on the dual spines: contract an edge and then expand. Thus, each edge in $\mathrm{Tr}(S)$ maps to a concatenation of two edges in $\mathrm{Sp}(S)$. We therefore get a map $\mathrm{Tr}(S) \rightarrow \mathrm{Sp}(S)$.

It will also be useful to recall the following variant of the triangulation graph \cite[Definition~2.6]{LackenbyPurcell:Fibred}.

\begin{definition}\label{Def:TriangulationGraphVertices}
Let $S$ be a closed orientable surface and let $n$ be a positive integer. Then $\mathrm{Tr}(S;n)$ denotes the space of triangulations with at most $n$ vertices. This is a graph with a vertex for each isotopy class of such triangulations, and with an edge for each 2-2, 3-1, or 1-3 Pachner move between them.
\end{definition}

There is an obvious inclusion $\mathrm{Tr}(S) \rightarrow \mathrm{Tr}(S;n)$ for any positive integer $n$. Note also that the mapping class group of $S$ acts on $\mathrm{Tr}(S)$, $\mathrm{Tr}(S;n)$ and $\mathrm{Sp}(S)$ by isometries. Moreover, this action is properly discontinuous and cocompact. Hence, the mapping class group of $S$ is quasi-isometric to each of $\mathrm{Tr}(S)$, $\mathrm{Tr}(S;n)$ and $\mathrm{Sp}(S)$, via an application of the Milnor-\u{S}varc lemma (\cite[Proposition~8.19]{BridsonHaefliger}). In fact, we obtain the following result.

\begin{lemma}\label{Lem:QIMaps}
The maps  $\mathrm{Tr}(S) \rightarrow \mathrm{Tr}(S;n)$ and $\mathrm{Tr}(S) \rightarrow \mathrm{Sp}(S)$ are quasi-isometries.
\end{lemma}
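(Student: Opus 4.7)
The plan is to apply the equivariant form of the Milnor--\v{S}varc lemma. As the text preceding the lemma has already observed, $\MCG(S)$ acts on each of $\mathrm{Tr}(S)$, $\mathrm{Tr}(S;n)$, and $\mathrm{Sp}(S)$ by isometries, properly discontinuously and cocompactly, and hence each of these graphs is quasi-isometric to $\MCG(S)$ via an orbit map. The content of the present lemma is that the two specific maps realise such quasi-isometries, and I would deduce this from the standard principle that any equivariant Lipschitz map between two proper geodesic metric spaces carrying properly discontinuous cocompact actions of a common group is automatically a quasi-isometry.

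The first step is to verify that each map is $\MCG(S)$-equivariant and Lipschitz. The inclusion $\mathrm{Tr}(S) \to \mathrm{Tr}(S;n)$ is manifestly $1$-Lipschitz and equivariant. The duality map $\mathrm{Tr}(S) \to \mathrm{Sp}(S)$ sends a $1$-vertex triangulation to its dual trivalent spine and is equivariant because homeomorphisms preserve duality. A single $2$-$2$ Pachner move on a $1$-vertex triangulation corresponds, on the dual spine, to an edge contraction (producing a $4$-valent vertex) immediately followed by one of the two possible edge expansions at that vertex, so this map is $2$-Lipschitz.

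To conclude, fix a basepoint $\calT_0 \in \mathrm{Tr}(S)$ with dual spine $\Gamma_0 \in \mathrm{Sp}(S)$. By Milnor--\v{S}varc, the orbit maps $\phi \mapsto \phi(\calT_0)$ and $\phi \mapsto \phi(\Gamma_0)$ are quasi-isometries from $\MCG(S)$ to each of the three targets, and by equivariance our two maps intertwine these orbit maps. For any $\calT_1, \calT_2 \in \mathrm{Tr}(S)$, cocompactness of the $\MCG(S)$-action on $\mathrm{Tr}(S)$ lets me pick $\phi_i \in \MCG(S)$ with $\calT_i$ within universally bounded distance of $\phi_i(\calT_0)$. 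Both $d_{\mathrm{Tr}(S)}(\calT_1, \calT_2)$ and the corresponding distance between their images in the target are then, up to bounded multiplicative and additive error, equal to $d_{\MCG(S)}(\phi_1, \phi_2)$, which gives the quasi-isometric embedding; coarse surjectivity of each map is immediate from cocompactness of the target action together with the identity $\phi(f(\calT_0)) = f(\phi(\calT_0))$, which shows any point of the target lies within bounded distance of the image. The only substantive point requiring care is the $2$-Lipschitz claim for the duality map; everything else is routine bookkeeping in the standard Milnor--\v{S}varc argument.
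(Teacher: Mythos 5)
Your proposal is correct and uses essentially the same machinery as the paper: both arguments reduce to Milnor--\v{S}varc via the orbit maps $\MCG(S)\to\mathrm{Tr}(S)$, $\MCG(S)\to\mathrm{Tr}(S;n)$ and $\MCG(S)\to\mathrm{Sp}(S)$. The paper packages the argument slightly differently---it constructs an explicit quasi-inverse to the orbit map $\MCG(S)\to\mathrm{Tr}(S)$, composes it with the orbit map into the target, and then observes that this composition is at bounded distance from the given map---whereas you verify equivariance and Lipschitzness directly and invoke the standard corollary that an equivariant Lipschitz map between spaces with properly discontinuous cocompact actions is automatically a quasi-isometry; these are cosmetically different expositions of the same idea, and your explicit check of the $2$-Lipschitz bound for the duality map (which the paper records in the discussion preceding the lemma) is sound.
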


\begin{proof}
Pick a 1-vertex triangulation $T$ for $S$. By the Milnor-\u{S}varc lemma, the map $\mathrm{MCG}(S) \rightarrow \mathrm{Tr}(S)$ sending $g \in \mathrm{MCG}(S)$ to $gT$ is a quasi-isometry; see, for example \cite[Proposition~8.19]{BridsonHaefliger}. A quasi-inverse is given as follows. For $T$ fixed and any point $p$ in $\mathrm{Tr}(S)$, pick a triangulation of the form $gT$ that is closest to $p$. Then the quasi-inverse sends $p$ to $g$. The composition of this quasi-inverse with $\mathrm{MCG}(S) \rightarrow \mathrm{Tr}(S;n)$ is a quasi-isometry  $\mathrm{Tr}(S) \rightarrow \mathrm{Tr}(S;n)$. There is a uniform upper bound to the distance between the image of a triangulation under this map $\mathrm{Tr}(S) \rightarrow \mathrm{Tr}(S;n)$ and its image under the inclusion map. Hence, the inclusion map is also a quasi-isometry as required.

The argument for $\mathrm{Tr}(S) \rightarrow \mathrm{Sp}(S)$ is identical.
\end{proof}

A modification that one can make to a spine that is slightly more substantial than an edge contraction or expansion is as follows.

\begin{definition}\label{Def:EdgeSwap}
Let $\Gamma$ be a spine for a closed surface $S$. Let $e_1$ be an arc properly embedded in the disc $S \cut \Gamma$. Let $e_2$ be an edge of the graph $\Gamma \cup e_1$ that has distinct components of $S \cut (\Gamma \cup e_1)$ on either
side of it. Then the result of removing $e_2$ from $\Gamma$ and adding $e_1$ is a new spine $\Gamma'$ for $S$. We say that $\Gamma$ and $\Gamma'$ are related by an \emph{edge swap}.
\end{definition}

The following is \cite[Lemma~8.3]{LackenbyPurcell:Fibred}.

\begin{lemma}\label{Lem:EdgeSwapBound}
Let $S$ be a closed orientable surface. Let $\Gamma$ be a spine for $S$. Then an edge swap can be realised by a sequence of at most $24g(S)$ edge expansions and contractions.
\end{lemma}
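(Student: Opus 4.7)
I would realize the edge swap from $\Gamma$ to $\Gamma' = (\Gamma \cup e_1)\setminus e_2$ as a sequence of \emph{Whitehead moves} on spines, each consisting of one edge contraction followed by one edge expansion (so that each Whitehead move contributes $2$ operations to the spine graph distance). The first order of business is a combinatorial bound on $\Gamma$: since every vertex has degree at least $3$, the Euler relation $V - E + 1 = 2-2g(S)$ combined with $2E \geq 3V$ gives $E(\Gamma) \leq 6g(S)-3$, and so the boundary polygon of the disc $D = S \cut \Gamma$ has at most $12g(S)-6$ sides. This is the combinatorial budget that drives the count.

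My plan for the main argument is to ``slide'' a tracked edge of the current spine across $D$ until it becomes isotopic to $e_1$, while arranging that $e_2$ is removed along the way. First I would spend $O(1)$ operations on a reduction step that ensures the endpoints of $e_1$ lie at vertices of $\Gamma$, and that $e_2$ is a full edge of $\Gamma$ with distinct endpoints. Next I would iterate Whitehead moves: at each step, contract an edge sharing a vertex with the tracked edge, and then re-expand the resulting degree-$\geq 4$ vertex so that the tracked edge is now attached at the next polygon side. Each Whitehead move thus advances one endpoint of the tracked edge by one corner of $\partial D$, and the total number of such corner-crossings needed to steer the initial edge to $e_1$ is bounded by the number of sides of $\partial D$, namely at most $12g(S)-6$. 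Combined with the $O(1)$-cost setup, and a final contraction of $e_2$ (or a suppression of a degree-$2$ vertex if $e_2$ was a subdivided piece rather than a full edge), this totals at most $2(12g(S)-6)+O(1) \leq 24g(S)$ operations.

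The main obstacle I expect is verifying that each Whitehead move in the sequence is a legal operation on spines, so that each intermediate graph still has disc complement and all vertices of degree at least $3$. This forces careful choice of which edge to contract and how to re-expand at each step, and in particular one must rule out contracting an edge whose endpoints coincide and rule out expansions that would leave a vertex of degree $2$. Separate treatment is also needed for the degenerate cases where an endpoint of $e_1$ is interior to an edge of $\Gamma$ (absorbed into the sliding step by a local pre-modification) and where the terminal graph has a degree-$2$ vertex awaiting suppression (absorbed into the cleanup). Once these technicalities are handled, the operation count fits within the claimed $24g(S)$ bound.
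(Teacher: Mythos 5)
The paper states this result by citing Lemma~8.3 of \cite{LackenbyPurcell:Fibred} and does not reproduce the proof, so there is no proof in the present paper to compare against; I am judging your outline on its own terms. The combinatorial setup is correct: $V-E+1=2-2g$ together with $2E\geq 3V$ does give $E\leq 6g-3$ (and $V\leq 4g-2$), so $\partial D$ has at most $12g-6$ sides, and realising the edge swap by a chain of Whitehead moves that slide an edge across $D$ is the natural mechanism. You should, though, make the tracked edge explicit: it ought to be $e_2$ itself. The hypothesis that $e_2$ has distinct components of $S\cut(\Gamma\cup e_1)$ on either side says exactly that $e_2$ contributes one side to each of the two arcs $\alpha_1=\partial D_1\cap\partial D$ and $\alpha_2=\partial D_2\cap\partial D$ into which $e_1$ splits $\partial D$, so sliding the $\alpha_1$-appearance of $e_2$ across the smaller sub-disc $D_1$ (with $|\alpha_1|\leq 6g-3$) both terminates at $e_1$ and gives a cleaner count than your $12g-6$.

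The genuine gap is legality of the moves, which you flag but do not resolve, and which is where the content of the lemma actually lies. Your basic step contracts an edge $g$ adjacent to the tracked edge; contraction requires $g$ to join \emph{distinct} vertices, yet spines of positive genus routinely have loop edges, and a loop appearing as the next side of $\partial D_1$ would block the slide. You need either an argument that loops never obstruct the chosen path, or an explicit substitute local move (for instance an expansion at the offending vertex followed by a contraction that re-routes around the loop) together with a verification that it fits the budget. The budget is itself tight and must be made explicit: $2(12g-6)=24g-12$ leaves only $12$ operations for the preparation (if an endpoint of $e_1$ lies in the interior of an edge, forming $\Gamma\cup e_1$ creates a new vertex, so the vertex count of $\Gamma'$ may differ from that of $\Gamma$ and your sequence must account for the net change) and for suppressing the degree-$2$ vertex that can arise when $e_2$ is removed. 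Without pinning down these two points --- the loop obstruction and the exact overhead --- the claimed bound of $24g(S)$ is not yet established.
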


\begin{definition}\label{Def:CellularSpine}
Let $S$ be a closed orientable surface with a cell structure. A spine for $S$ is \emph{cellular} if it is a subcomplex of the 1-skeleton of the cell complex. The \emph{length} of this spine is the number of 1-cells that it contains.
\end{definition}

The following is \cite[Corollary~8.8]{LackenbyPurcell:Fibred}.

\begin{lemma}\label{Lem:SlidingOffDiscs}
Let $S$ be a closed orientable surface with a cell structure $\mathcal{C}$, and with a cellular spine $\Gamma$. Let $D_1, \dots, D_m$ be cellular subsets of $S$, each of which is an embedded disc, and with disjoint interiors.
Let $\ell$ be the sum of the lengths of $\partial D_1, \dots, \partial D_m$. Then there is a sequence of at most $6m g(S) + 2\ell$ edge swaps taking $\Gamma$ to a cellular spine $\Gamma'$ that is disjoint from the interior of $D_1, \dots, D_m$.
\end{lemma}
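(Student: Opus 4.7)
The plan is to induct on the number $m$ of discs. The base case $m = 0$ is vacuous, and for the inductive step it suffices to show that if $\Gamma$ is a cellular spine disjoint from $\interior(D_1) \cup \cdots \cup \interior(D_{j-1})$, then $\Gamma$ can be pushed off $\interior(D_j)$ using at most $6g(S) + 2\len(\partial D_j)$ edge swaps while preserving disjointness from the earlier interiors. Summing the per-disc bounds over $j = 1, \dots, m$ then yields the total $6mg(S) + 2\ell$.

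For the single-disc step, write $D = D_j$ and $d = \len(\partial D)$. The complement $F = S \cut \Gamma$ is a disc whose boundary $\partial F$ traverses each edge of $\Gamma$ twice, once per side. To remove an edge $e$ of $\Gamma$ lying in $\interior(D)$ by an edge swap, I would insert an arc $e_1 \subseteq F$ joining the two sides of $e$ on $\partial F$; then $e$ borders both components of $F \cut e_1$, so the swap deletes $e$. To maintain disjointness from the forbidden disc interiors, $e_1$ must be routed through $F \setminus \bigcup_{i \leq j} \interior(D_i)$, which is always possible because $S$ minus a disjoint union of open discs is connected. I would classify each swap as \emph{peripheral} if $e_1$ traverses a 1-cell of $\partial D$, and \emph{essential} otherwise. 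Since each 1-cell of $\partial D$ can be used by at most two peripheral swaps (one per adjacent side), the number of peripheral swaps is at most $2d$.

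The main obstacle is bounding the essential swaps by $6g(S)$, which I would derive via Euler characteristic. The spine $\Gamma$ has $\chi(\Gamma) = 1 - 2g(S)$, and an essential swap, by definition, uses an arc that cuts across handles of a regular neighbourhood of $\Gamma$ rather than merely rerouting along $\partial D$. A careful count of vertices, edges, and faces of $\Gamma \cap D$, combined with the constraint that each swap splits $F$ into exactly two sub-discs, yields the linear bound $6g(S)$; the constant reflects the fact that each genus contribution is counted with a small bounded multiplicity. A further subtlety is the need to ensure that an appropriate edge $e$ (together with a valid arc $e_1$) can always be selected so that the resulting spine remains cellular and disjoint from the previously cleared discs. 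This is arranged by choosing, at each stage, an outermost edge of $\Gamma \cap \interior(D)$, i.e. one whose two sides on $\partial F$ admit the shortest available connecting arc in $F$ avoiding the forbidden interiors. Such a choice strictly decreases the number of edges of $\Gamma$ in $\interior(D)$ and guarantees termination.
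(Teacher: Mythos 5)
The per-disc induction (reduce to pushing a spine off one disc using at most $6g(S) + 2\len(\partial D)$ edge swaps, then sum over $j$) has the right shape and matches the form of the bound, but the proposal does not actually establish the key per-disc estimate, and it glosses over the cellularity constraint.

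The central gap is the bound of $6g(S)$ on the number of ``essential'' swaps. You identify this as ``the main obstacle'' and then substitute a hand-wave --- ``a careful count of vertices, edges and faces \ldots\ yields the linear bound'' --- for an actual argument. The peripheral/essential dichotomy itself is never made precise: the arc $e_1$ may traverse several 1-cells on $\partial D$ and several off it, so it is unclear how a single swap is to be classified; and the accounting for peripheral swaps (``each 1-cell of $\partial D$ can be used by at most two peripheral swaps'') is asserted without justification and is doubtful once later swaps replace earlier arcs with new ones. The Euler-characteristic estimate giving $\Gamma$ at most $6g(S)-3$ edges is correct, but it does not directly bound the number of swaps, because a swap preserves the number of edges, and there is no monotone quantity identified that both decreases at each essential swap and is bounded by $6g(S)$.

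A second gap concerns cellularity. The target spine $\Gamma'$ must be a subcomplex of the 1-skeleton of $\mathcal{C}$, so each arc $e_1$ that survives into $\Gamma'$ must be a cellular path, properly embedded in the disc $S \cut \Gamma$ and disjoint from every $\interior(D_i)$. The observation that ``$S$ minus a disjoint union of open discs is connected'' concerns the topology of the complement, not the existence of an embedded cellular arc in the 1-skeleton with the required endpoints; connectivity alone does not deliver this. Likewise the closing ``outermost edge, shortest available arc'' selection rule is offered as a termination device without a precise definition of the length being minimised, without an argument that the chosen $e_1$ stays out of the interiors of the previously cleared discs, and without any link back to the $6g(S)+2d$ budget. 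In short, the proposal sketches a plausible strategy but leaves the quantitative core unproved.
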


\begin{remark}
\label{Rem:StrengthenSlidingOffDiscs}
A slight strengthening of the lemma remains true, with the same proof. Instead of $D_1, \dots, D_m$ being embedded discs, we can allow them to be the images of immersed discs in $S$, where the restriction of the immersion to $\mathrm{int}(D_1) \cup \dots \cup \mathrm{int}(D_m)$ is an embedding. In other words, we allow the boundaries of the discs to self-intersect and to intersect each other.
\end{remark}

The following is a version of \reflem{SlidingOffDiscs} dealing with both discs and annuli.

\begin{lemma}\label{Lem:SlidingOffAnnuli}
Let $S$ be a closed orientable surface with a cell structure $\mathcal{C}$, and with a cellular spine $\Gamma$. Let $A_1, \dots, A_m$ be cellular subsets of $S$, each of which is the image of an immersed disc or annulus, and where the restriction of the immersion to the interior of these discs and annuli is an embedding.
Let $\ell$ be the sum of the lengths of $\partial A_1, \dots, \partial A_m$. Then there is a sequence of at most $6m g(S) + 16g(S) + 2m+ 2\ell$ edge swaps taking $\Gamma$ to a cellular spine $\Gamma'$ that is disjoint from the interior of the disc components of $A_1, \dots, A_m$ and that intersects the interior of each annular component in at most one essential embedded arc. Moreover, this arc is a subset of the original spine $\Gamma$.
\end{lemma}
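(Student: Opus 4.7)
The plan is to reduce to \reflem{SlidingOffDiscs}, in the strengthened form given by \refrem{StrengthenSlidingOffDiscs}, by cutting each annular component along a carefully chosen essential arc to produce an immersed disc on which the disc lemma may be applied.

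For each annular component $A_i$, examine $\Gamma \cap \interior(A_i)$ and split into two cases. In \emph{Case A}, where this intersection contains an essential arc of $A_i$, pick one such arc $\alpha_i \subseteq \Gamma$ and form the immersed disc $D_i$ by cutting $A_i$ along $\alpha_i$; its interior equals $\interior(A_i)\setminus\alpha_i$ and is embedded in $S$, while the immersed boundary $\partial D_i$ has length $\len(\partial A_i)+2\len(\alpha_i)$. In \emph{Case B}, where every arc of $\Gamma \cap \interior(A_i)$ is inessential in $A_i$, innermost such arcs cobound sub-discs of $A_i$ with arcs of $\partial A_i$ that are disjoint from the rest of $\Gamma$; a bounded number of edge swaps per such annulus (contributing the $2m$ term) then pushes $\Gamma$ off $\interior(A_i)$, after which we may set $D_i$ to be $A_i$ cut along any cellular essential arc.

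Now apply the strengthened disc lemma to the family consisting of the original disc components of $\{A_i\}$ together with the immersed discs $D_i$ just produced, obtaining a spine $\Gamma'$ disjoint from the interiors of all these discs. The key point is that the edge swaps produced by \reflem{SlidingOffDiscs} can be arranged to act only on edges interior to the target discs, leaving the boundary edges of each $D_i$, and in particular each arc $\alpha_i \subseteq \partial D_i$, untouched. Combined with the inclusion $\Gamma'\cap\interior(A_i)\subseteq\alpha_i$ (which follows from $\Gamma'\cap\interior(D_i)=\emptyset$), this yields $\Gamma'\cap\interior(A_i)=\alpha_i$ in Case A and $\emptyset$ in Case B; so the intersection, when nonempty, is a single essential embedded arc that is a sub-arc of the original spine, as required.

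For the count, the family fed into the disc lemma has at most $m$ discs. The $\alpha_i$ are pairwise disjoint sub-arcs of $\Gamma$, and any cellular spine of $S$ has at most $6g(S)-3$ edges, so $\sum_i\len(\alpha_i)\leq 6g(S)-3$. The disc-lemma cost is therefore at most $6mg(S)+2\ell+4\sum_i\len(\alpha_i)=6mg(S)+2\ell+O(g(S))$; together with the $2m$ preprocessing cost from Case B, the total fits into the claimed bound $6mg(S)+16g(S)+2m+2\ell$ after tightening the $O(g(S))$ constant. The main technical obstacle is the verification that the edge swaps coming from \reflem{SlidingOffDiscs} can indeed be chosen to preserve the arcs $\alpha_i$ sitting on the boundaries of the $D_i$'s, together with the careful handling of Case B annuli so that the $2m$ budget really does suffice.
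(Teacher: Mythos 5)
Your overall plan — cut each essential annulus along an arc $\alpha_i\subseteq\Gamma$ to produce an immersed disc and then invoke \reflem{SlidingOffDiscs} in the strengthened form of \refrem{StrengthenSlidingOffDiscs} — matches the paper. However, there are two genuine gaps.

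The crucial one is your length count. You write that $\sum_i\len(\alpha_i)\leq 6g(S)-3$ because the $\alpha_i$ are disjoint sub-arcs of $\Gamma$ and a spine has at most $6g(S)-3$ edges. But here $\len(\alpha_i)$ is the cellular length with respect to $\mathcal{C}$ (the number of 1-cells of $\mathcal{C}$ that $\alpha_i$ traverses), whereas $6g(S)-3$ bounds the number of edges of $\Gamma$ viewed as an abstract graph. A single edge of the spine can pass through arbitrarily many 1-cells of $\mathcal{C}$, so the cellular length of $\alpha$ has no a priori bound in terms of $g(S)$ alone. This is exactly the obstruction the paper confronts head-on: it defines a \emph{new} cell structure $\mathcal{C}'$ that agrees with $\mathcal{C}$ outside $A_1\cup\dots\cup A_m$ but whose 2-cells inside each $A_i$ are the components of $A_i\cut\Gamma$. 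With respect to $\mathcal{C}'$, the length of $\alpha$ is at most the number of vertices of $\Gamma$ (bounded by $4g(S)-2$ via Euler characteristic) plus the number of essential annular components (at most $m$), and only then does the disc-lemma cost give the stated bound. Without the re-cellulation step, the edge-swap count cannot be made to depend only on $\ell$, $g(S)$ and $m$.

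The second issue is your claim that the edge swaps from \reflem{SlidingOffDiscs} "can be arranged to act only on edges interior to the target discs, leaving the boundary edges of each $D_i$ untouched." This is an extra, unjustified assertion that the lemma does not supply, and the paper does not need it: once $\Gamma'$ is disjoint from $\interior D_i=\interior A_i\setminus\alpha_i$, it can only meet $\interior A_i$ along $\alpha_i$, and if the intersection were anything less than the full arc then a core curve of the essential annulus $A_i$ would miss $\Gamma'$, contradicting that $S\cut\Gamma'$ is a disc. Relatedly, your Case B (where $\Gamma\cap\interior A_i$ has no essential arc) cannot be cleared with a bounded-per-annulus number of edge swaps, and in fact cannot occur for an essential $A_i$ at all — the paper notes that an essential arc of $\Gamma$ must already lie in each essential $A_i$, again because otherwise $S\cut\Gamma$ would contain a core curve. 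Inessential annuli are handled in the paper not by cutting along an arc but by replacing $A_i$ with the larger cellular disc it bounds, which sidesteps your Case B entirely.
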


\begin{proof}
In each essential annular component, there must be an essential properly embedded arc that is a subset of $\Gamma$, as otherwise the disc $S \cut \Gamma$ would contain a core curve of this annulus. Pick one such arc in each essential annular component. Let $\alpha$ be the union of these arcs. If $A_i$ is an essential annulus, then define $D_i$ to be $A_i \cut \alpha$. If $A_i$ is an inessential annulus, then let $D_i$ be the disc in $S$ containing $A_i$ that has boundary equal to a component of $\partial A_i$. If $A_i$ is a disc, let $D_i$ be $A_i$. Some of these discs may be nested, in which case discard the smaller disc. Thus, $D_1, \dots, D_m$ is a collection of discs as in Remark~\ref{Rem:StrengthenSlidingOffDiscs}. So, there is a sequence of edge swaps taking $\Gamma$ to a cellular spine that is disjoint from the interior of $D_1, \dots, D_m$. It must intersect the interior of each essential annulus $A_i$ in a single arc.

Unfortunately, the number of these edge swaps is bounded above by a linear function of the total length of the boundary of $D_1, \dots, D_m$, which depends not just on $\ell$ but also on the length of the arcs $\alpha$. To deal with this, we define a new cell structure $\calC'$ on $S$, as follows. Away from $A_1 \cup \dots \cup A_m$, this agrees with $\calC$, but within each $A_i$, the 2-cells are the components of $A_i \cut \Gamma$. Note that $\Gamma$ is still cellular with respect to $\calC'$. We can now bound the length of $\partial D_1, \dots, \partial D_m$ in $\calC'$ in terms of $\ell$. This length is at most the length of $\partial A_1, \dots, \partial A_m$ plus twice the length of $\alpha$. The length of $\alpha$ with respect to $\calC'$ is at most the number of vertices of $\Gamma$
plus the number of essential annular components of $A_1, \dots, A_m$. By an Euler characteristic argument, using the fact that each vertex has degree at least three, the number of vertices of $\Gamma$ is at most $4g(S)-2$. The number of essential annular components of $A_1, \dots, A_m$ is at most $m$. So, the length of $\partial D_1, \dots, \partial D_m$ in $\calC'$ is at most $\ell+8g(S)+2m$. Now apply \reflem{SlidingOffDiscs} and Remark~\ref{Rem:StrengthenSlidingOffDiscs} to turn $\Gamma$ into a spine that is cellular in $\calC'$ and that intersects the interior of $A_1 \cup \dots \cup A_m$ in the arcs $\alpha$. It is then cellular with respect to $\calC$.
\end{proof}

\begin{lemma}
\label{Lem:PachnerMoveSwap}
Let $\calT$ and $\calT'$ be triangulations of a closed surface $S$ that differ by a sequence of $n$ Pachner moves. Let $\Gamma$ be a subcomplex of $\calT$ that is a spine of $S$. Then there is a sequence of at most $n$ edge swaps and some isotopies taking $\Gamma$ to a spine that is a subcomplex of $\calT'$.
\end{lemma}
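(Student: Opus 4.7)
The plan is to induct on $n$. The base case $n=0$ is immediate, and for the inductive step it suffices to handle a single Pachner move $\calT \to \calT_*$ using at most one edge swap and some isotopies, then apply the hypothesis to the remaining $n-1$ moves. Each Pachner move is local, modifying $\calT$ only inside a small region $R$ (two adjacent triangles for a 2-2 move, the star of a vertex for 1-3 or 3-1). Outside $R$ the triangulations agree, so the part of $\Gamma$ outside is automatically a subcomplex of $\calT_*$.

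The trivial subcases require no edge swap: a 1-3 move only adds cells, so $\Gamma \subseteq \calT_*$; a 2-2 move flipping an edge $e \notin \Gamma$, or a 3-1 move removing a vertex $v \notin \Gamma$ (in which case the three edges at $v$ also lie outside $\Gamma$, having $v$ as an endpoint), leaves $\Gamma$ unaffected. The nontrivial cases are a 2-2 move flipping $e \in \Gamma$ and a 3-1 move at $v \in \Gamma$; in the latter, since $\Gamma$ has minimum degree $3$ and $v$ has valence $3$ in $\calT$, all three edges of the tripod at $v$ lie in $\Gamma$.

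For the 2-2 case the key picture is that $S \cut (\Gamma \setminus e)$ is an annulus in which the endpoints of the new edge $e'$ lie on the two distinct boundary components. I would perform an edge swap with $e_2 = e$ and $e_1$ a cross-arc of this annulus joining those endpoints; cutting along a cross-arc produces a disc, so $(\Gamma \setminus e) \cup e_1$ is a spine by the edge-swap construction. A subsequent ambient isotopy of $S$ transports $e_1$ onto $e'$ inside the quadrilateral to give the desired subcomplex of $\calT_*$. The 3-1 case is handled analogously by swapping one tripod edge for an arc along the boundary of the merged triangle and consolidating the rest by isotopy.

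The main obstacle is that the natural candidate $e'$ itself does not lie in $S \cut \Gamma$ (it crosses $e$), so $e_1$ must be chosen in a different isotopy class within $S \cut \Gamma$ and then transported onto $e'$ by ambient isotopy. Degenerate subcases, such as the endpoints of $e'$ not being vertices of $\Gamma$ or $e$ being a bridge of $\Gamma$, require separate attention but follow from the flexibility of the edge-swap definition, which permits the endpoints of $e_1$ to lie in the interiors of $\Gamma$-edges.
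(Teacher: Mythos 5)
Your plan for the 2-2 move has a genuine gap at the isotopy step, and it reflects a misconception about what the edge swap can achieve. You aim for the spine $(\Gamma\setminus e)\cup e'$, where $e'$ is the new diagonal, by first swapping $e$ for a cross-arc $e_1$ of the annulus $A=S\cut(\Gamma\setminus e)$ disjoint from $e$, and then isotoping $e_1$ onto $e'$. This cannot work. First, the isotopy cannot be ``inside the quadrilateral'': $e$ separates the two endpoints of $e'$ within the square $R$, so any arc $e_1$ joining them that avoids $e$ must exit $R$. Second, and more fundamentally, no ambient isotopy of $S$ fixing $\Gamma\setminus e$ takes $e_1$ to $e'$: the old diagonal $e$ is a cross-arc of $A$ (cutting $A$ along $e$ recovers the disc $S\cut\Gamma$), and $e'$ meets $e$ once while $e_1$ is disjoint from it, so the loop formed by $e_1$ and $e'$ has odd intersection number with a cross-arc and is therefore homotopic to the core of $A$. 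Thus $e_1$ and $e'$ differ by a nontrivial Dehn twist around that core, and the spines $(\Gamma\setminus e)\cup e_1$ and $(\Gamma\setminus e)\cup e'$ are not isotopic. A concrete instance: on the torus with the one-vertex triangulation having edge slopes $\{1,0,\infty\}$, take $\Gamma$ to be the slope-$1$ and slope-$0$ edges and flip the slope-$1$ edge to slope $-1$; any edge swap removing the slope-$1$ edge produces a spine with edge slopes $\{0,\infty\}$ or $\{0,1/2\}$, never $\{0,-1\}$, while your target $(\Gamma\setminus e)\cup e'$ has slopes $\{0,-1\}$.

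The paper's proof sidesteps this by never aiming at the new diagonal $e'$. Instead it runs an auxiliary arc $\alpha$ from one side of $e$ to the other inside the disc $S\cut\Gamma$; $\alpha$ must hit a \emph{side} $1$-cell $e''$ of the square, and one swaps the edge of $\Gamma$ through $e$ for $e''$ (or for $e''$ together with one more side $1$-cell, when an endpoint of $e''$ is not a vertex of $\Gamma$). Since the side $1$-cells of the square belong to both $\calT$ and $\calT'$, the resulting spine is already a subcomplex of $\calT'$ with no isotopy required. Your hand-waved remark that the degenerate subcases ``follow from the flexibility of the edge-swap definition'' does not repair this: when an endpoint of $e'$ is not a vertex of $\Gamma$, the graph $(\Gamma\setminus e)\cup e'$ is not a spine at all, and allowing $e_1$ to end in the interior of an edge of $\Gamma$ produces a vertex of the spine that is not a vertex of $\calT'$, which again blocks it from being a subcomplex. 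The correct fix is the paper's: target a boundary $1$-cell of the square, not $e'$. Your treatment of the $3$-$1$ move has a similar issue, though there the necessary isotopy across the small triangles is genuinely available; the paper spells out the casework (zero, two, or three of the $1$-cells at the removed vertex in $\Gamma$) which you would need to do as well.
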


\begin{proof}
It suffices to consider the case where $n=1$, and so $\calT$ and $\calT'$ differ by a single Pachner move.
Some terminology: throughout this proof, edges will refer to edges in a spine, with endpoints on vertices of the spine of valence at least 3. We refer to edges of the triangulation $\calT$, which are not necessarily edges of $\Gamma$ even when they lie in $\Gamma$, by 1-cells. 

If the Pachner move is a 1-3 move, then there is nothing to prove as the 1-skeleton of $\calT$ is then a subcomplex of $\calT'$.

Suppose it is a 2-2 move, removing a 1-cell $e$ and inserting a new 1-cell $e'$. If $e$ is not part of $\Gamma$, then $\Gamma$ is a subcomplex of $\calT'$ and so no edge swaps are required. So suppose that $e$ is contained in $\Gamma$. Since $S \cut \Gamma$ is a disc, there is an arc $\alpha$ running from the midpoint of $e$ back to the midpoint of $e$ but on the other side of $e$ and that is otherwise disjoint from $\Gamma$. We may assume that $\alpha$ is disjoint from the vertices of $\calT$ and intersects each 1-cell of $\calT$ at most once. It must intersect at least one 1-cell $e''$ in the boundary of the square that is the union of the two triangles involved in the Pachner move. If both endpoints of $e''$ lie in $\Gamma$, let $e''' = e''$. Otherwise, let $e'''$ be the union of $e''$ and the third 1-cell of the triangle formed by $e''$ and $e$.
We perform the edge swap that adds $e'''$ to $\Gamma$ and removes the edge of $\Gamma$ containing $e$.

We now consider a 3-1 move. 
Let $e_1$, $e_2$ and $e_3$ be the three 1-cells of $\calT$ that are removed. If none of these are part of $\Gamma$, then we leave the spine unchanged. There cannot be just one of these 1-cells in $\Gamma$, since no vertex of $\Gamma$ has degree $1$. Suppose $\Gamma$ runs over exactly two 1-cells in $\{ e_1, e_2, e_3 \}$, say $e_1$ and $e_2$. These are two 1-cells of a triangle of $\calT$. The third 1-cell $e'$ of this triangle cannot lie in $\Gamma$, as $S \cut \Gamma$ is a single disc.
Hence, we may isotope $e_1 \cup e_2$ across the triangle to $e'$.
Suppose finally that all three of $e_1$, $e_2$ and $e_3$ are part of $\Gamma$. Let $e$ be the other 1-cell of the triangle formed by $e_1$ and $e_2$. Adding $e$ to $\Gamma$ and removing $e_1$ is an edge swap. We then remove $e_2$ and $e_3$ and add the third 1-cell of the triangle that they span. This is realised by an isotopy of the spine and so no edge swap is required.
\end{proof}

\begin{lemma}
\label{Lem:ToOneVertexTriangulation}
Let $\calT$ be a triangulation of a torus with $v$ vertices. Then there is a sequence of at most $4v$ Pachner moves taking $\calT$ to a 1-vertex triangulation.
\end{lemma}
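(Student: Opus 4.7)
The plan is to proceed by induction on $v$. The base case $v=1$ is vacuous. For $v \geq 2$, I will exhibit a vertex eliminable in at most four Pachner moves, reducing the problem to a $(v-1)$-vertex triangulation; combined with the inductive bound of $4(v-1)$, this gives a total of at most $4 + 4(v-1) = 4v$ moves, matching the stated bound exactly.

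The key step is an Euler-characteristic count. Since $\chi(T^2)=0$ and $2e=3f$ for any triangulation of a closed surface, a triangulation of $T^2$ with $v$ vertices has $e=3v$ edges and $f=2v$ faces. Hence the sum of vertex degrees is $2e = 6v$, so some vertex $w$ has degree $d \leq 6$. I will then perform $d-3 \leq 3$ successive 2-2 Pachner moves at edges incident to $w$: each such edge-flip removes one edge of $\calT$ incident to $w$ and replaces it with a new edge disjoint from $w$ (namely the edge joining the two opposite vertices of the two triangles bordering the flipped edge), thus decreasing $\deg(w)$ by one. Once $\deg(w) = 3$, the three triangles around $w$ form the boundary of a 3-1 configuration, and a single 3-1 Pachner move removes $w$ together with its three incident edges and three incident faces. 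This uses at most $(d-3) + 1 \leq 4$ moves to eliminate $w$, and the inductive hypothesis finishes the count.

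The main technical obstacle is verifying that each degree-reducing 2-2 flip at an edge incident to $w$ is a genuine Pachner move---that is, that the two triangles sharing the chosen edge are distinct and the resulting local configuration is a valid triangulation. In the \emph{general} notion of triangulation used in this paper (in which distinct faces of simplices may be identified via affine homeomorphisms), degenerate local pictures around $w$ are conceivable in principle. I expect to handle this by arguing that among the $d \geq 4$ edges incident to $w$ there must always exist at least one whose flip is valid, using that the link of $w$ is a cycle of length $d$ in the triangulation and that the ambient surface is the orientable torus; in the (unlikely) event that every incident edge is obstructed, a single preparatory 2-2 move in a neighbouring region first breaks the degenerate configuration before flipping at $w$, which only costs the slack that the counting $4 + 4(v-1) = 4v$ leaves us at each stage of the induction.
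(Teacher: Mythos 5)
Your approach follows the same broad strategy as the paper: induct on $v$, find a low-degree vertex via the Euler characteristic count $e=3v$, reduce it to degree $3$ by 2-2 flips, and remove it with a 3-1 move, at a cost of at most $4$ moves. However, you correctly identify the essential difficulty and then do not resolve it. In the notion of triangulation used in this paper, with faces identified by affine homeomorphisms, there can be an edge whose two sides are the \emph{same} triangle. Such an edge cannot be the site of a 2-2 Pachner move (the two triangles are not distinct), and its existence forces a vertex of valence $1$. For a vertex $w$ of degree $d \leq 2$, your plan to ``perform $d-3 \leq 3$ degree-reducing flips'' is meaningless ($d-3$ is negative), and the link of $w$ need not be a genuine $d$-cycle that you can argue about as in a simplicial complex. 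Your fallback --- a ``preparatory 2-2 move in a neighbouring region'' --- costs one extra move, but your counting $4+4(v-1)=4v$ leaves zero slack when $d=6$, so this cannot be absorbed; moreover you give no argument that a single preparatory move always suffices.

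The paper's proof (sketching the argument of \cite{Lackenby:Knottedness}) explicitly splits into two cases that together cover all configurations. \textbf{Case 1:} if some edge has the same triangle on both sides, one of its endpoints has valence $1$; two 2-2 moves on \emph{other} edges raise this valence to $3$, and a single 3-1 move then removes the vertex (total: $3$ moves). \textbf{Case 2:} if every edge has distinct triangles on its two sides, the Euler characteristic argument produces a vertex of valence at most $6$, and since every incident edge is flippable, the reduce-then-remove scheme applies cleanly (total: at most $4$ moves). Your argument covers Case 2 but omits Case 1, which is exactly the degenerate situation you flag as a potential obstacle. To complete the proof you need to treat Case 1 head-on, e.g.\ by showing that when there is a valence-$1$ vertex $w$ with incident edge $e$ having triangle $T$ on both sides, the third edge of $T$ is a loop adjacent to a \emph{different} triangle, and two flips on that side build valence at $w$ up to $3$.
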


\begin{proof} This is contained in the proof of \cite[Proposition~10.3]{Lackenby:Knottedness}, and so we only sketch the argument. Suppose $v > 1$, as otherwise we are done. The strategy is to apply at most $4$ Pachner moves to the triangulation, after which the number of vertices is reduced. 

If there is an edge of the triangulation with the same triangle on both sides, then one endpoint of the edge is a vertex with valence 1. It is possible to apply two 2-2 Pachner moves to increase this valence to $3$. Then one can apply a 3-1 Pachner move to remove this vertex.

So we may suppose that every edge of the triangulation has distinct triangles on both sides. Using the fact that the Euler characteristic of the torus is zero, there is a vertex with valence at most $6$. Suitably chosen 2-2 Pachner moves then reduce this to $3$. A 3-1 Pachner move can then be used to remove the vertex.
\end{proof}

\begin{lemma}
\label{Lem:BarycentricPachner}
Let $\calT$ be a triangulation of a compact surface $S$ with $t$ triangles. Then the barycentric subdivision $\calT^{(1)}$ is obtained from $\calT$ by $4t$ Pachner moves and an isotopy.
\end{lemma}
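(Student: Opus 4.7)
The plan is to realise $\calT^{(1)}$ from $\calT$ by a two-stage sequence of Pachner moves totalling $4t$, followed by a single ambient isotopy that repositions the newly created vertices to their true centroid and midpoint locations.

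In the first stage I perform a 1-3 Pachner move in each of the $t$ triangles of $\calT$, inserting a new interior vertex $c_T$. This uses $t$ moves and replaces each original triangle $T=v_1v_2v_3$ by three subtriangles sharing $c_T$, one per edge of $T$.

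In the second stage I process the edges of $\calT$ one at a time. For an interior edge $e=v_1v_2$ shared by original triangles $A$ and $B$, after the first stage $e$ is shared by the subtriangles $v_1v_2c_A$ and $v_1v_2c_B$. I apply a 1-3 move to $v_1v_2c_A$ inserting a new interior vertex $m$, then a 2-2 move flipping $v_1v_2$ (now between $v_1v_2m$ and $v_1v_2c_B$) to the new edge $mc_B$. The resulting four triangles $v_1mc_A$, $v_2mc_A$, $v_1mc_B$, $v_2mc_B$ around $m$ form precisely the combinatorial star of the midpoint of $e$ in $\calT^{(1)}$. Each such edge procedure uses $2$ Pachner moves; for a closed surface there are $3t/2$ edges, contributing $3t$ moves and bringing the total to $t+3t=4t$. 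For a surface with boundary the count balances analogously via the identity $2e_i + e_b = 3t$, with each boundary edge subdivided by a single boundary move.

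The main potential obstacle is verifying that distinct edge procedures truly commute and do not disturb the data needed by one another. This is immediate by inspection: each combination of a 1-3 and a 2-2 for an edge $e$ only modifies the two subtriangles immediately adjacent to $e$, and in particular leaves the subtriangles adjacent to the other edges of $A$ and $B$ untouched. After all moves have been executed the triangulation is combinatorially identical to $\calT^{(1)}$, and a single ambient isotopy of $S$ moves each new vertex into its prescribed centroid or midpoint position, completing the argument.
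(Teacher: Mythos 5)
Your proof follows the same two-stage scheme as the paper's: first a 1--3 move in each of the $t$ triangles, then for each edge a 1--3 move in one adjacent subtriangle followed by the 2--2 flip that removes the edge, giving $t + 2\cdot(3t/2) = 4t$ moves; you simply spell out the local combinatorics and the commutativity of the per-edge modifications that the paper leaves implicit. One small caution: the ``boundary move'' you invoke for surfaces with boundary is not one of the Pachner moves the paper defines (and for a boundary edge a 1--3 move in the adjacent triangle does not subdivide the edge, while a 2--2 flip is unavailable), but since the paper's own proof likewise only addresses the closed case and the lemma is only ever applied to the torus, this has no effect on the argument.
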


\begin{proof}
First perform a 1-3 Pachner move to each triangle of $\calT$. Each original edge of $\calT$ is then adjacent to two new triangles. Choose one of the two, and perform a 1-3 Pachner move in that triangle. Then perform the 2-2 move that removes the edge. The resulting triangulation is isotopic to $\calT^{(1)}$. In total, we have performed $4t$ Pachner moves.
\end{proof}

\begin{lemma}\label{Lem:SpineAndShortCurve}
Let $T$ be a torus equipped with a cell structure. Let $\Gamma$ be a cellular spine for $T$. Let $C$ be a cellular essential simple closed curve with length $\ell$. Then there exists a spine for $T$ that is obtained from $\Gamma$ by at most $24 + 4\ell$ edge swaps and that contains $C$.
\end{lemma}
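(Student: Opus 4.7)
The plan is to apply \reflem{SlidingOffAnnuli} to a thin cellular annular neighborhood of $C$ and then to perform one additional edge swap that installs $C$ into the spine.

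First, I choose a cellular annular neighborhood $A$ of $C$ whose boundary has total length at most $2\ell$; since the two components of $\partial A$ are parallel copies of $C$, this can be arranged after refining the cell structure locally in a regular neighborhood of $C$ that is disjoint from $\Gamma$, so that $\Gamma$ and $C$ both remain cellular. Applying \reflem{SlidingOffAnnuli} with $m = 1$, $g(T) = 1$ and $\ell' = |\partial A| \le 2\ell$ produces, via at most $6 + 16 + 2 + 2 \cdot 2\ell = 24 + 4\ell$ edge swaps, a cellular spine $\Gamma'$ that meets $\mathrm{int}(A)$ in a single essential embedded arc $\alpha \subset \Gamma$. Since $\alpha$ is essential in $A$, it meets $C$ transversely at a single point $p$. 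Cutting $C$ at $p$ yields a properly embedded arc $\beta$ in the disc $T \cut \Gamma'$ whose two endpoints lie on the two copies of $\alpha$ in $\partial(T \cut \Gamma')$.

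Now perform the edge swap that adds $\beta$ and removes one of the two sub-arcs $\alpha_1, \alpha_2$ into which $p$ divides $\alpha$. The result $\Gamma''$ contains $C = \beta \cup \{p\}$ together with the retained sub-arc as its third edge, making it a theta-spine. The removal is a legitimate edge swap: writing $T \cut (\Gamma' \cup \beta) = D_1 \sqcup D_2$, the local picture around $p$ in $A$ shows that the four quadrants alternate between $D_1$ and $D_2$ around the transverse crossing of $\alpha$ and $\beta$, so each $\alpha_i$ has $D_1$ on one side and $D_2$ on the other, and by connectedness of the two sides of $\alpha_i$ along its length, the same holds globally.

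The main obstacle is to bound the total count by $24 + 4\ell$ rather than $25 + 4\ell$. I expect this is handled by ensuring $|\partial A| \le 2\ell - 1$, for instance by arranging via refinement that one boundary component of $A$ is a cellular curve isotopic to $C$ but realised with one fewer edge, or equivalently by exploiting slack in the proof of \reflem{SlidingOffAnnuli} in our setting of a single embedded annulus with $m = 1$ and $g = 1$. A subsidiary obstacle is verifying, globally along the removed sub-arc, that the swap indeed produces a spine, which follows from the separation property established above.
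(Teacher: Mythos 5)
Your proposal takes a genuinely different route from the paper's, and it has a gap that you yourself flag but cannot close. The paper applies \reflem{SlidingOffAnnuli} not to a thin annular neighbourhood of $C$ but to the complementary annulus $T \cut C$, whose boundary is precisely two copies of $C$ and hence has length exactly $2\ell$. With $m=1$ and $g(T)=1$, the lemma then gives at most $6+16+2+2(2\ell)=24+4\ell$ edge swaps, producing a spine $\Gamma'$ that meets the open annulus $T\setminus C$ in at most one essential embedded arc $\alpha$. That forces $\Gamma' \subseteq C\cup\alpha$; since a spine of the torus is connected with first Betti number $2$, and the only such subgraph of the theta graph $C\cup\alpha$ is the whole thing, we get $\Gamma' = C\cup\alpha \supseteq C$. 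No extra move is required — $C$ is swept into the spine automatically because it is the \emph{boundary} of the chosen annulus rather than sitting in its interior.

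Your choice of annulus is exactly what causes the trouble: placing $C$ in the interior of $A$ means the slide leaves you with $\Gamma'\cap\mathrm{int}(A)$ a single arc transverse to $C$, and you then have to spend a further edge swap to install $C$, pushing the count to $25+4\ell$. The ``slack'' you hope to find in \reflem{SlidingOffAnnuli} or in shortening $\partial A$ to $2\ell-1$ is not established, and there is no reason to expect it; the bound in that lemma is tight enough that conjuring an extra unit is not routine. Separately, the subdivision step has an error: you ask to refine ``in a regular neighbourhood of $C$ that is disjoint from $\Gamma$,'' but no such neighbourhood exists — since $\Gamma$ is a spine, $T\setminus\Gamma$ is an open disc, so the essential curve $C$ must intersect $\Gamma$, and every neighbourhood of $C$ meets $\Gamma$. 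Moreover any refinement would alter the cell structure in which lengths (and hence the constant $24+4\ell$) are being measured, so you would need to track how $\ell$ and the hypotheses of \reflem{SlidingOffAnnuli} transform under the refinement. All of these issues evaporate if you simply take $A = T\cut C$.
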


\begin{proof} The annulus $T \cut C$ has boundary length $2 \ell$. Apply \reflem{SlidingOffAnnuli} to turn $\Gamma$ into a spine $\Gamma'$ that intersects the interior of $T \cut C$ in a single arc, using at most $24+4\ell$ edge swaps. The spine $\Gamma'$ must contain all of $C$, as otherwise $S \cut \Gamma'$ would contain an essential simple closed curve.
\end{proof}

%%%%%%%%%%%%%%%%%%%%%%%%%%%%%%%%%%%%%%%%%%%%%%%%%%%%%%%%%%%%%%%%%
\section{Triangulations of a torus}
\label{Sec:TriangulationsTorus}

In this section, we recall a description of the space $\mathrm{Tr}(T^2)$ of all 1-vertex triangulations of a torus.

The \emph{Farey graph} is a graph with vertex set $\mathbb{Q} \cup \{ \infty \}$, and where two vertices $p/q$ and $r/s$ are joined by an edge if and only if $|ps - qr| = 1$. Here, we assume that the fractions are in their lowest terms and that $\infty = 1/0$. Now, $\mathbb{Q} \cup \{ \infty \}$ is a subset of $\mathbb{R} \cup \{ \infty \}$, which is the circle at infinity of the upper-half plane. We can realise each edge of the Farey graph as an infinite geodesic in the hyperbolic plane; see Figure~\ref{Fig:Farey}. The edges of the Farey graph form the edges in a tessellation of $\mathbb{H}^2$ by ideal triangles. We call this the \emph{Farey tesselation}. Each triangle has three points on the circle at infinity, and these correspond to three slopes on the torus, with the property that any two of these slopes intersect once. Given three such slopes, we can realise them as Euclidean geodesics in the torus, which we think of as $\mathbb{R}^2  / \mathbb{Z}^2$. We can arrange that these geodesics each go through the image of the origin, and hence all intersect at this point. Thus, this forms a 1-vertex triangulation of the torus. Conversely, given any 1-vertex triangulation of the torus, we may isotope the vertex to the origin, and then isotope each of the edges to Euclidean geodesics. Thus, we see that there is a 1-1 correspondence between 1-vertex triangulations of the torus, up to isotopy, and ideal triangles in the Farey tessellation. 

\begin{figure}
  \includegraphics[width=3.5in]{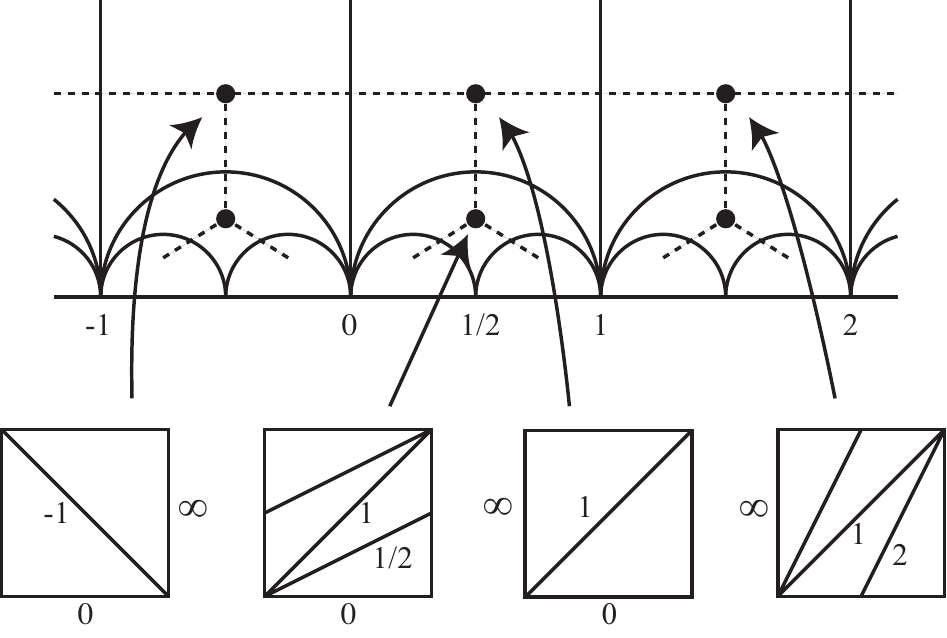}
  \caption{The Farey graph and the dual Farey tree}
  \label{Fig:Farey}
\end{figure}

When a 2-2 Pachner move is performed, this removes one of the edges of the triangulation, forming a square, and then inserts the other diagonal of the square. The remaining two edges of the triangulation are preserved, and these correspond to an edge of the Farey graph. Thus we see that two triangulations differ by a 2-2 Pachner move if and only if their corresponding ideal triangles in the Farey tessellation share an edge of the Farey graph.

It is natural to form the dual of the Farey tessellation, which is the \emph{Farey tree}. This has a vertex for each ideal triangle of the Farey tessellation, and two vertices of the Farey tree are joined by an edge if and only if the dual triangles share an edge. As the name suggests, this is a tree. The above discussion has the following immediate consequence.

\begin{theorem}\label{Thm:TrIsFarey}
The graph $\mathrm{Tr}(T^2)$ is isomorphic to the Farey tree. \qed
\end{theorem}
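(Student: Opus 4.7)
The plan is to turn the discussion preceding the theorem into a precise bijection, on both vertices and edges, between $\mathrm{Tr}(T^2)$ and the Farey tree. First I would observe that a 1-vertex triangulation of $T^2$ has exactly three edges and two triangles: with $v=1$ and $3t=2e$, the Euler relation $1 - e + t = 0$ forces $e = 3$ and $t = 2$. I would then identify $T^2$ with $\mathbb{R}^2/\mathbb{Z}^2$, isotope the unique vertex to the image of the origin, and straighten each edge to a Euclidean geodesic loop. Each such loop has a well-defined slope in $\mathbb{Q} \cup \{\infty\}$, and so this assigns to every 1-vertex triangulation an unordered triple of slopes.

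Next I would check that the map from isotopy classes of 1-vertex triangulations to triples of Farey-adjacent slopes is a bijection. Injectivity is immediate: after pinning the vertex, an edge is determined up to isotopy rel endpoints by its slope, so the whole triangulation is recovered from its slope triple. To describe the image, the key point is that two simple loops on $T^2$ through the basepoint, of slopes $p/q$ and $r/s$, meet only at that point iff their geometric intersection number is one, i.e.\ $|ps - qr| = 1$. Thus the three edges of a 1-vertex triangulation give a pairwise Farey-adjacent triple, that is, the vertices of an ideal triangle of the Farey tessellation. For surjectivity I would take any three pairwise Farey-adjacent slopes and verify directly that the corresponding Euclidean loops through the origin cut $T^2$ into two triangles, producing the desired 1-vertex triangulation.

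Finally I would check that the bijection intertwines the two adjacency relations. A $2$-$2$ Pachner move on a 1-vertex triangulation removes one edge and inserts the other diagonal of the resulting square, so it preserves two of the three slopes of the triangulation and changes only the third. In Farey terms, this is exactly the move that fixes an edge of a Farey triangle and flips to the unique other Farey triangle on the opposite side of that edge, which is precisely a single edge in the dual Farey tree. Conversely, adjacent Farey triangles share an edge, and the corresponding triangulations differ by the $2$-$2$ Pachner move that flips the common diagonal. Hence the vertex bijection is an isomorphism of graphs.

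The main subtlety I expect is justifying the geometric-intersection-equals-unimodular-determinant step, and, closely related, verifying that the notion of triangulation used here (simplices with faces identified in pairs) really does reduce, after isotopy, to the straightened picture above. In principle an edge of the triangulation need not be embedded as a loop until we straighten it; one has to check that this straightening can always be carried out without changing the isotopy class of the triangulation, using that $T^2$ has a flat metric in which every free homotopy class of essential loop has a unique geodesic representative through the origin. Once this is in place, the remaining verifications are bookkeeping that matches the narrative in the paragraphs preceding the statement.
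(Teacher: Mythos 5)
Your argument is a careful fleshing-out of exactly the discussion the paper gives before stating this theorem: realize each 1-vertex triangulation as three Euclidean geodesic loops through the origin, match slope triples with ideal triangles of the Farey tessellation, and observe that a 2-2 Pachner move corresponds to crossing an edge of the Farey graph. This is the paper's approach, and your added attention to the straightening and embedding issues is a sound way to make the sketch rigorous.
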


\subsection{A Cayley graph for $\mathrm{PSL}(2, \mathbb{Z})$}
\label{SubSec:PLS2Z}

The mapping class group of the torus is isomorphic to $\mathrm{SL}(2, \mathbb{Z})$. 
It was shown by Serre \cite{Serre:Trees} that $\mathrm{SL}(2, \mathbb{Z})$ is isomorphic to the amalgamated free product of $\mathbb{Z}_4$ and $\mathbb{Z}_6$, amalgamated over the subgroups of order 2. The non-trivial element in the amalgamating subgroup is the matrix $-I$, which is central. If we quotient by this subgroup, the result is $\mathrm{PSL}(2, \mathbb{Z})$, which is isomorphic to $\mathbb{Z}_2 \ast \mathbb{Z}_3$. The factors are generated by 
\[
S = 
\left(
\begin{matrix}
0 & -1 \\
1 & 0 \\
\end{matrix}
\right)
\qquad
T = 
\left(
\begin{matrix}
0 & -1 \\
1 & -1 \\
\end{matrix}
\right).
\]
The Farey tree is closely related to the Cayley graph for $\mathrm{PSL}(2, \mathbb{Z})$ with respect to these generators. This group acts on upper half space by isometries. It preserves $\mathbb{Q} \cup \{ \infty \}$ in the circle at infinity, and hence it preserves the Farey tesselation and the dual Farey tree.
The Cayley graph for $\mathrm{PSL}(2, \mathbb{Z})$ with respect to these generators embeds in upper half space as follows. We set the vertex $v$ corresponding to the identity element of $\mathrm{PSL}(2, \mathbb{Z})$ to lie at $\epsilon + i$ for some small real $\epsilon > 0$. The images of this point $v$ under the action of $\mathrm{PSL}(2, \mathbb{Z})$ form the vertices of the Cayley graph. Emanating from the vertex $v$ there are oriented edges, joining $v$ to $Sv$ and $Tv$. The images of these edges under the action of $\mathrm{PSL}(2, \mathbb{Z})$ form the edges of the Cayley graph. The graph is shown in Figure \ref{Fig:CayleyGraph}. Note that $v$ lies in the triangle with corners $0$, $1$ and $\infty$. The stabiliser of this triangle in $\mathrm{PSL}(2, \mathbb{Z})$ is $\{ T^i: i = 0,1,2 \}$. Hence, there are three vertices of the Cayley graph in this triangle that are connected by edges labelled by $T$. The edge joining $v$ to $Sv$ intersects the geodesic joining $0$ and $\infty$ and is disjoint from the remaining edges of the Farey graph. Hence, each $S$-labelled edge of the Cayley graph is dual to an edge of the Farey graph. Each edge of the Farey graph is associated with two such $S$-labelled edges, which join the same pair of the vertices.

Thus, in summary, the Cayley graph is obtained from the Farey tree as follows. Replace each vertex of the Farey tree by a little triangle, with edges labelled by $T$. Replace each edge of the Farey tree by two edges of the Cayley graph labelled by $S$.

\begin{figure}
  \includegraphics[width=3.5in]{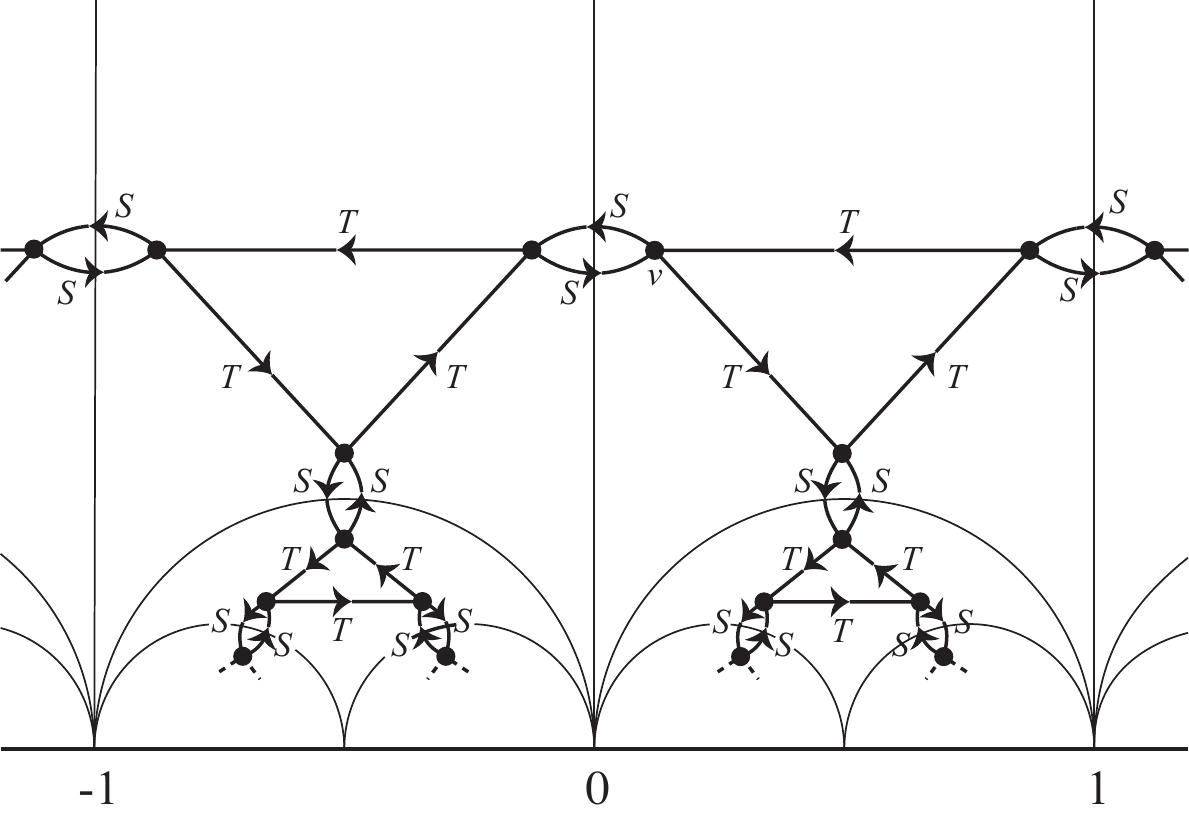}
  \caption{The Cayley graph of $\mathrm{PSL}(2, \mathbb{Z})$ with respect to the generators $S$ and $T$}
  \label{Fig:CayleyGraph}
\end{figure}

\subsection{Continued fractions}

There is a well-known connection between continued fractions and the Farey tessellation.

A \emph{continued fraction} for a rational number $r$ is an expression
\[ r = a_0+\cfrac{1}{a_1+\cfrac{1}{a_2+\cfrac{1}{\cdots+\cfrac{1}{a_n}}}} \]
where each $a_i$ is an integer.
This is written $r = [a_0, a_1, \dots, a_n]$.

One can also consider an irrational number $r$, which also has a \emph{continued fraction expansion} $[a_0, a_1, \dots ]$. This means that if $r_n = [a_0, a_1, \dots, a_n]$, then $r_n \rightarrow r$ as $n \rightarrow \infty$. We will focus on the case where $a_i$ is positive for each $i>0$. Subject to this condition, every real number $r$ has a unique continued fraction expansion, which we shall call \emph{the} continued fraction expansion for $r$.

The continued fraction expansion of $r$ is \emph{periodic} if there is a non-negative integer $k$ and an even positive integer $t$ such that $a_{i+t} = a_i$ for every $i \geq k$. The smallest such $t$ is the \emph{length} of the periodic part. The following is a well-known result of Lagrange; see, for example \cite{Davenport}.

\begin{lemma}[Lagrange]\label{Lem:Lagrange}
  The continued fraction expansion of a real number $r$ is periodic exactly when $\mathbb{Q}(r)$ is a quadratic extension of $\mathbb{Q}$.
  This happens exactly when $r = p + q \sqrt{d}$ for some square free integer $d>1$ and some rational numbers $p$ and $q$ where $q \not=0$. Moreover, for fixed $d$, two real numbers $p + q \sqrt{d}$ and $p' + q' \sqrt{d}$, for $p, p' \in \mathbb{Q}$ and $q,q' \in \mathbb{Q} \setminus \{ 0 \}$ have the same periodic part. That is, if $[a_0, a_1, a_2, \dots]$ and $[a'_0, a'_1, a'_2, \dots]$ are their continued fraction expansions, then there are integers $l$ and $m$ such that $a'_i = a_{i+m}$ for all $i \geq l$.
\end{lemma}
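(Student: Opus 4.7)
The plan is to prove the three assertions of the lemma by the standard route, with Lagrange's theorem on periodic continued fractions (as in Davenport) as the main engine.

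The first equivalence, between $\mathbb{Q}(r)/\mathbb{Q}$ being quadratic and $r$ having the form $p + q\sqrt{d}$ with $q \neq 0$ and $d > 1$ square-free, is a standard algebraic characterisation of irrationals in a degree-$2$ extension, and I would take it for granted. For the easier direction of the main biconditional, suppose $a_{i+t} = a_i$ for all $i \geq k$ and set $r_k = [a_k, a_{k+1}, \ldots]$. Composing the $t$ integer Möbius transformations $x \mapsto a_i + 1/x$ yields a matrix $\begin{pmatrix} A & B \\ C & D \end{pmatrix} \in \mathrm{GL}(2, \mathbb{Z})$ satisfying $r_k = (A r_k + B)/(C r_k + D)$. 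Rearranging gives a quadratic equation over $\mathbb{Z}$ with irrational root $r_k$, and then $r$, being an integer Möbius image of $r_k$, lies in the same quadratic field.

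For the harder direction, given irrational $r \in \mathbb{Q}(\sqrt{d})$, I would clear denominators to write $r = (P_0 + \sqrt{D})/Q_0$ with $D = m^2 d$ a positive non-square integer and $P_0, Q_0 \in \mathbb{Z}$ satisfying $Q_0 \mid D - P_0^2$. A direct induction shows each complete quotient has the form $r_n = (P_n + \sqrt{D})/Q_n$ with the same $D$ and $Q_n \mid D - P_n^2$. The classical bounding step then applies: once $n$ is large enough that $r_n > 1$ and its Galois conjugate $r_n^*$ lies in $(-1, 0)$, one obtains $0 < Q_n < 2\sqrt{D}$ and $0 < P_n < \sqrt{D}$, so only finitely many pairs $(P_n, Q_n)$ occur, forcing $(r_n)$, and hence $(a_n)$, to be eventually periodic. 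The fact that $r_n^*$ eventually falls into $(-1,0)$ follows from the convergence of the convergents $p_n/q_n \to r$ applied to the standard recursion satisfied by $r_n^*$.

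For the \emph{moreover} clause, I expect the main obstacle to lie in controlling how the complete quotients of $r$ and $r'$ synchronise. My plan is: after choosing a common $D$ that is a square multiple of $d$ and rescaling both $r$ and $r'$ accordingly, the complete quotients of both numbers eventually land in the finite set of reduced quadratic irrationals in $\mathbb{Q}(\sqrt{D})$ (those $\alpha > 1$ with $-1 < \alpha^* < 0$), on which the continued-fraction shift acts bijectively with orbits the purely periodic expansions. A Serret-style $\mathrm{GL}(2, \mathbb{Z})$-equivalence argument between reduced irrationals in a common quadratic field then matches these orbits, producing integers $l$ and $m$ with $a'_i = a_{i+m}$ for all $i \geq l$.
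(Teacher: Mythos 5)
The paper does not prove this lemma; it states ``The following is a well-known result of Lagrange'' and simply cites Davenport, so there is no in-paper argument to compare against. Your sketch of the main biconditional---periodicity of the continued fraction if and only if $\mathbb{Q}(r)$ is quadratic over $\mathbb{Q}$---is the standard argument (matrix representative of a periodic block gives a quadratic relation; conversely the bound on $(P_n,Q_n)$ via reduction forces repetition) and is correct.

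However, your argument for the \emph{moreover} clause has a genuine gap, and this gap cannot be filled because the clause as stated is in fact false. The step ``a Serret-style $\mathrm{GL}(2,\mathbb{Z})$-equivalence argument between reduced irrationals in a common quadratic field then matches these orbits'' silently assumes that any two quadratic irrationals lying in the same field $\mathbb{Q}(\sqrt{d})$ are $\mathrm{GL}(2,\mathbb{Z})$-equivalent. This is not so: Serret's theorem says eventually-coinciding tails is \emph{equivalent} to $\mathrm{GL}(2,\mathbb{Z})$-equivalence, but the field is a strictly coarser invariant than the $\mathrm{GL}(2,\mathbb{Z})$-orbit, since the discriminant of the primitive integer minimal polynomial is a $\mathrm{GL}(2,\mathbb{Z})$-invariant refining the field. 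Concretely, $\sqrt{2}=[1;\overline{2}]$ (discriminant $8$) and $3\sqrt{2}=\sqrt{18}=[4;\overline{4,8}]$ (discriminant $72$) both lie in $\mathbb{Q}(\sqrt{2})$, yet their tails $2,2,2,\dots$ and $4,8,4,8,\dots$ never coincide after any shift, so the conclusion $a'_i = a_{i+m}$ for all large $i$ fails. So the shared-field hypothesis does not, on its own, produce the matching of orbits you need; what is true is only that the tails agree for numbers in the \emph{same} $\mathrm{GL}(2,\mathbb{Z})$-orbit (equivalently, with the same discriminant). (This also indicates that the paper's appeal to this clause inside Proposition~\ref{Prop:AnosovDistance} needs to be revisited: there the fixed point $u$ of $A$ and the number $\sqrt{\tr(A)^2-4}$ lie in the same field but are not generally $\mathrm{GL}(2,\mathbb{Z})$-equivalent, e.g.\ for $\tr(A)=3$ one has $u=(1+\sqrt5)/2=[\overline{1}]$ but $\sqrt5=[2;\overline{4}]$.)
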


One can read off the continued fraction expansion of a positive real number $r$ from the Farey tessellation, as follows. Consider any hyperbolic geodesic $\gamma$ starting in the hyperbolic plane on the imaginary axis, and ending at $r$ on the circle at infinity.  It intersects each triangle of the Farey tessellation in at most one arc; see \reffig{ContinuedFraction}. As one travels along $\gamma$ and one enters such a triangle, it either goes to the right or the left in this triangle, except when $r$ is rational and $\gamma$ lands on $r$. So, when $r$ is irrational, one reads off the \emph{cutting sequence} of $\gamma$, which is a sequence of lefts and rights, written as $L^{a_0} R^{a_1} L^{a_2} \dots$. Then the continued fraction expansion of $r$ is $[a_0, a_1, a_2, \dots]$. When $r$ is rational, we also get a cutting sequence, but we must be careful about the final triangle that $\gamma$ runs through. Here, $\gamma$ goes neither left nor right, but instead straight on towards $r$. We view this final triangle as giving a final $L$ or $R$ to the cutting sequence, where $L$ or $R$ is chosen to be the same as the previous letter. Thus, we obtain a sequence $L^{a_0} R^{a_1} L^{a_2} \dots L^{a_n}$ or $L^{a_0} R^{a_1} L^{a_2} \dots R^{a_n}$ with $a_n \geq 2$. Then $[a_0, a_1, a_2, \dots, a_n]$ is the continued fraction expansion of $r$.

\begin{figure}
  \includegraphics[width=3.5in]{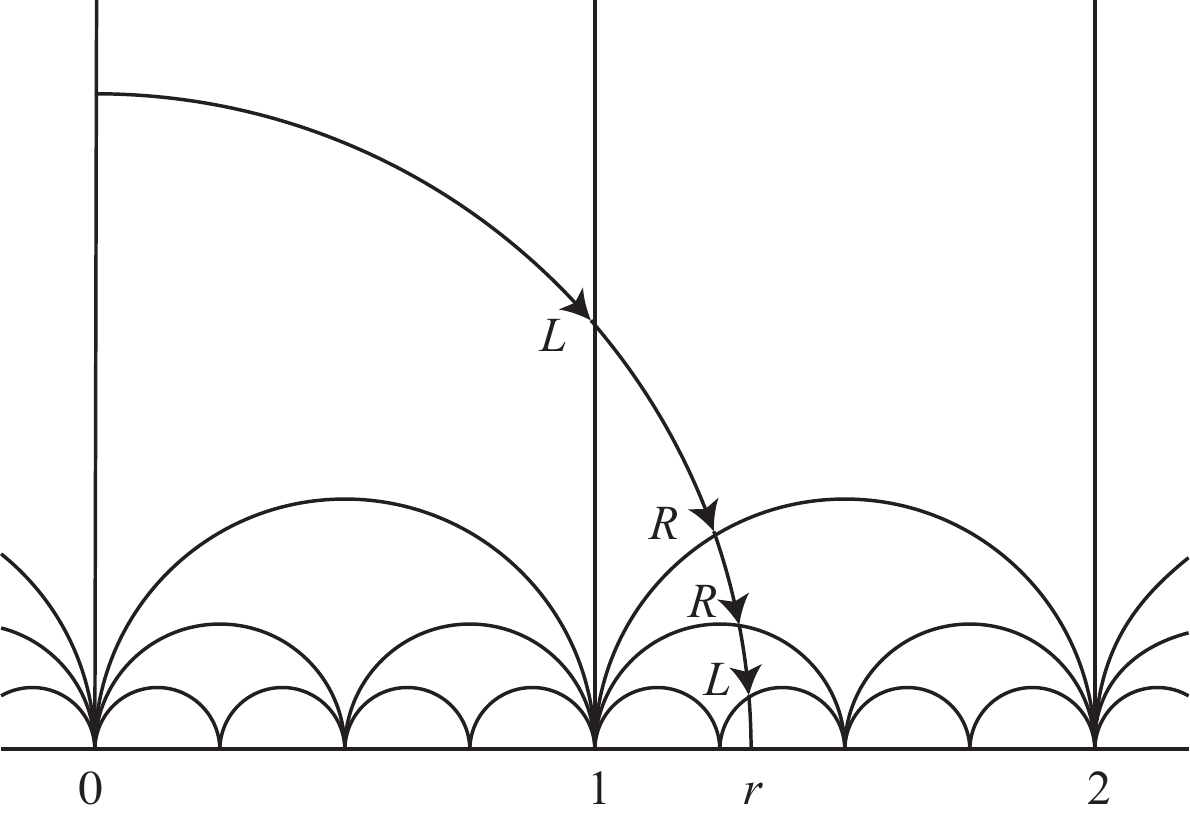}
  \caption{The cutting sequence determined by a geodesic starting on the imaginary axis and ending at $r$}
  \label{Fig:ContinuedFraction}
\end{figure}

The following lines in the Farey tree will play an important role in our analysis of lens spaces.

\begin{definition}\label{Def:FareyLine}
For $p/q \in \mathbb{Q} \cup \{ \infty \}$, the line $L(p/q)$ in the Farey tree is the union of edges that are dual to an edge of the Farey graph emanating from $p/q$.
\end{definition}

\begin{figure}
  \includegraphics[width=3.5in]{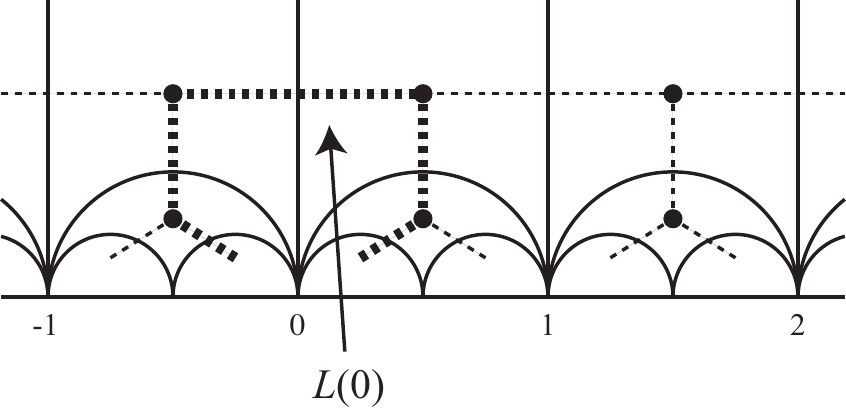}
  \caption{The line $L(0)$ in the Farey tree}
  \label{Fig:FareyHorocycle}
\end{figure}

\begin{lemma}
\label{Lem:LinesInFareyTree}
When $0 < q < p$, the distance in the Farey tree between the lines $L(q/p)$ and $L(\infty)$ is $(\sum_{i=0}^n a_i) - 1$ where $[a_0, \dots, a_n]$ is the continued fraction of expansion of $p/q$ with each $a_i$ positive.
\end{lemma}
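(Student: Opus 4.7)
The plan is to realise the shortest path in the Farey tree between $L(\infty)$ and $L(q/p)$ as the path dual to the vertical hyperbolic geodesic $\gamma$ from $\infty$ to $q/p$ in the upper half-plane, and then to count its length using the cutting-sequence description of continued fractions. Using \refthm{TrIsFarey}, I identify each vertex of $\mathrm{Tr}(T^2)$ with an ideal Farey triangle and each edge with a shared Farey edge. Then $L(\infty)$ is the bi-infinite path through the triangles $(n, n+1, \infty)$ for $n \in \mathbb{Z}$, and $L(q/p)$ is the analogous line through the triangles having $q/p$ as a vertex. Because $|1 \cdot p - 0 \cdot q| = p > 1$, there is no Farey edge between $\infty$ and $q/p$, so no Farey triangle contains both as vertices, and the two subtrees are disjoint. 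In a tree, the distance between two disjoint subtrees is realised by the unique bridge path meeting each only at one endpoint, so it suffices to produce such a bridge and count its edges.

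I take $\gamma$ to be the vertical geodesic from $\infty$ down to $q/p$. Since $0 < q/p < 1$, the first triangle it enters is $(0, 1, \infty)$, which is a vertex of $L(\infty)$, and the last triangle it enters contains $q/p$ as a vertex, hence is a vertex of $L(q/p)$. The key claim is that no intermediate triangle visited by $\gamma$ lies in either line. On the $L(\infty)$ side, the triangles $(n, n+1, \infty)$ fill the region of the upper half-plane above the Farey semicircles $(n, n+1)$; once $\gamma$ crosses the arc $(0, 1)$ it descends monotonically and never re-enters this region. On the $L(q/p)$ side, any triangle having $q/p$ as a vertex has two Farey edges ending at $q/p$ and one Farey edge ending elsewhere. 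Distinct hyperbolic geodesics sharing an ideal endpoint meet only there, so $\gamma$ cannot cross either of the two edges ending at $q/p$; and after $\gamma$ has entered such a triangle, crossing the third edge would carry it into a triangle not containing $q/p$, contradicting the fact that $\gamma$ terminates at $q/p$. Thus the dual path meets $L(\infty) \cup L(q/p)$ exactly at its two endpoints and hence realises the distance between the two lines.

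The length of the bridge equals the number of Farey edges crossed by $\gamma$, which is one less than the number of Farey triangles traversed. The cutting-sequence dictionary recalled in the text preceding the lemma assigns one letter per triangle visited, so this count equals the length of the cutting sequence of $\gamma$. Since $p/q = [a_0, \dots, a_n]$ with $a_0 \geq 1$, inverting gives $q/p = [0, a_0, a_1, \dots, a_n]$, and therefore the cutting sequence has $0 + a_0 + a_1 + \dots + a_n = \sum_{i=0}^n a_i$ letters. The distance is accordingly $\bigl(\sum_{i=0}^n a_i\bigr) - 1$, as claimed. I expect the main obstacle to be verifying the "meets only at endpoints" claim, since a failure there would allow a shortcut through $L(\infty) \cup L(q/p)$; however, the monotone descent of the vertical geodesic and the elementary hyperbolic fact that coterminal ideal geodesics are disjoint make this step routine.
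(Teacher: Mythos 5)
Your argument is correct and follows essentially the same route as the paper's: both realise the minimal path via the vertical geodesic $\gamma$ from $\infty$ to $q/p$ and count Farey edges using the cutting-sequence description of the continued fraction $q/p = [0, a_0, \dots, a_n]$; the paper phrases minimality by noting that each crossed Farey edge separates the two lines, while you phrase it as the bridge path between two disjoint subtrees, which is the same tree-theoretic fact.

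One loose step worth tightening: you say that if $\gamma$, having entered a triangle $T$ of $L(q/p)$, were to exit through the third edge (the one not ending at $q/p$), this would ``contradict the fact that $\gamma$ terminates at $q/p$.'' That does not follow as stated, since in principle $\gamma$ could leave the fan of triangles around $q/p$ and later re-enter it. The correct reason $\gamma$ cannot exit $T$ is that $\gamma$ must have entered $T$ through that third edge (the other two edges of $T$ share the ideal endpoint $q/p$ with $\gamma$ and so cannot be crossed), and a geodesic crosses a given Farey edge at most once; hence $\gamma$ is trapped in $T$, so $T$ is both the terminal triangle and the only vertex of $L(q/p)$ on the dual path. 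With this fix the bridge-path claim is fully justified.
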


\begin{proof} The line $L(\infty)$ runs parallel to the horocycle $\{ (x,y): y = 1 \}$ in upper half-space. The line $L(q/p)$ forms a loop starting and ending at $q/p$. Let $\gamma$ be the vertical geodesic in the half plane running from $\infty$ to the point $q/p$. As it comes from infinity, it hits $L(\infty)$, then it intersects various edges in the Farey graph, and then it hits $L(q/p)$. This determines a path in the Farey tree from $L(\infty)$ to $L(q/p)$. There is no shorter path, because each edge $e$ of the Farey tessellation crossed by $\gamma$ separates $L(\infty)$ from $L(q/p)$, and so any path in the Farey tree from $L(\infty)$ to $L(q/p)$ must run along the edge dual to $e$.

A closely related geodesic $\gamma'$ determines the continued fraction expansion for $q/p$. This starts on the imaginary axis and ends at $q/p$. But because $0 < q/p < 1$, $\gamma$ and $\gamma'$ hit the same edges of the Farey graph (except the edge that forms the imaginary axis). Note that the continued fraction expansion of $q/p$ is $[0,a_0, \dots, a_n]$. Hence, $(\sum_{i=0}^n a_i) - 1$ is exactly the length of the path in the Farey tree joining $L(\infty)$ to $L(q/p)$. 
\end{proof}

%%%%%%%%%%%%%%%%%%%%%%%%%%%%%%%%%%%%%%%%%%%%%%%%%%%%%%%%%%%%%%%%%

\section{Train track splitting sequences}
\label{Sec:TrainTrack}

We will estimate distances in triangulation graphs using train tracks. We start by recalling some terminology.

A \emph{pre-track} is a graph $\tau$ smoothly embedded in the interior of a surface $S$ such that at each vertex $v$, the following hold:
\begin{enumerate}
\item there are three edges coming into $v$;
\item these edges all have non-zero derivative at $v$, all of which lie in the same tangent line;
\item one edge approaches $v$ along this line from one direction, and the other two edges approach from the other direction.
\end{enumerate}
The vertices of $\tau$ are called \emph{switches} and the edges are called \emph{branches}.

Each component $R$ of $S \cut \tau$ is a surface, but its boundary is not necessarily smooth. Its boundary is composed of a union of arcs, one for each edge of $\tau$. When two of these arcs cannot be combined into a single smooth arc, their point of intersection is a \emph{cusp}. The \emph{index} of $R$ is equal to $\chi(R)$ minus half the number of cusps of $\partial R$. We say that $\tau$ is a \emph{train track} if each component of $S \cut \tau$ has negative index. If we add up the index of the components of $S \cut \tau$, the result is $\chi(S)$. Hence, we deduce that the number of complementary regions of a train track is at most $-2 \chi(S)$.

A train track is \emph{filling} if each component of $S \cut \tau$ is either a disc or an annular neighbourhood of a component of $\partial S$. When $\tau$ is filling, it is dual to a triangulation of $S$, possibly with some ideal vertices.

Two train tracks differ by a \emph{split} or a \emph{slide} if one is obtained from the other by one of the modifications shown in Figure \ref{Fig:SplitSlide}.

\begin{figure}
  %% Creator: Inkscape inkscape 0.92.3, www.inkscape.org
%% PDF/EPS/PS + LaTeX output extension by Johan Engelen, 2010
%% Accompanies image file 'SplitSliderTrainTrack.pdf' (pdf, eps, ps)
%%
%% To include the image in your LaTeX document, write
%%   \input{<filename>.pdf_tex}
%%  instead of
%%   \includegraphics{<filename>.pdf}
%% To scale the image, write
%%   \def\svgwidth{<desired width>}
%%   \input{<filename>.pdf_tex}
%%  instead of
%%   \includegraphics[width=<desired width>]{<filename>.pdf}
%%
%% Images with a different path to the parent latex file can
%% be accessed with the `import' package (which may need to be
%% installed) using
%%   \usepackage{import}
%% in the preamble, and then including the image with
%%   \import{<path to file>}{<filename>.pdf_tex}
%% Alternatively, one can specify
%%   \graphicspath{{<path to file>/}}
%% 
%% For more information, please see info/svg-inkscape on CTAN:
%%   http://tug.ctan.org/tex-archive/info/svg-inkscape
%%
\begingroup%
  \makeatletter%
  \providecommand\color[2][]{%
    \errmessage{(Inkscape) Color is used for the text in Inkscape, but the package 'color.sty' is not loaded}%
    \renewcommand\color[2][]{}%
  }%
  \providecommand\transparent[1]{%
    \errmessage{(Inkscape) Transparency is used (non-zero) for the text in Inkscape, but the package 'transparent.sty' is not loaded}%
    \renewcommand\transparent[1]{}%
  }%
  \providecommand\rotatebox[2]{#2}%
  \newcommand*\fsize{\dimexpr\f@size pt\relax}%
  \newcommand*\lineheight[1]{\fontsize{\fsize}{#1\fsize}\selectfont}%
  \ifx\svgwidth\undefined%
    \setlength{\unitlength}{210.53871346bp}%
    \ifx\svgscale\undefined%
      \relax%
    \else%
      \setlength{\unitlength}{\unitlength * \real{\svgscale}}%
    \fi%
  \else%
    \setlength{\unitlength}{\svgwidth}%
  \fi%
  \global\let\svgwidth\undefined%
  \global\let\svgscale\undefined%
  \makeatother%
  \begin{picture}(1,0.72668617)%
    \lineheight{1}%
    \setlength\tabcolsep{0pt}%
    \put(0,0){\includegraphics[width=\unitlength,page=1]{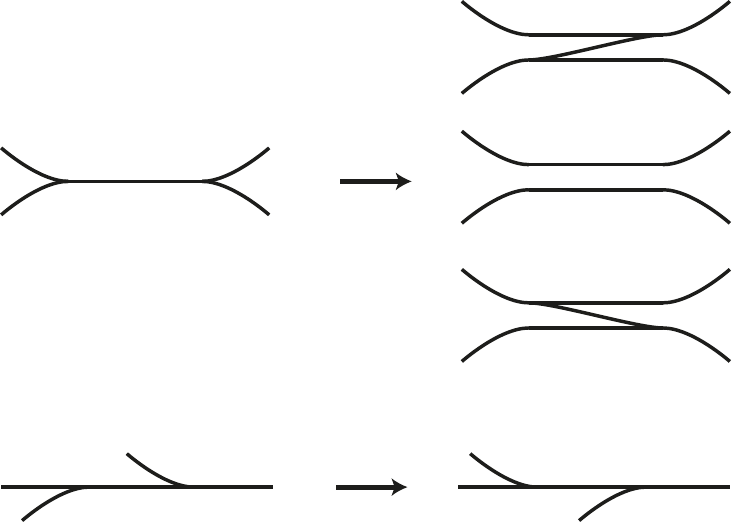}}%
    \put(0.46284952,0.41901961){\color[rgb]{0,0,0}\makebox(0,0)[lt]{\lineheight{1.25}\smash{\begin{tabular}[t]{l}split\end{tabular}}}}%
    \put(0.4554366,0.00071895){\color[rgb]{0,0,0}\makebox(0,0)[lt]{\lineheight{1.25}\smash{\begin{tabular}[t]{l}slide\end{tabular}}}}%
  \end{picture}%
\endgroup%

  \caption{Splits and slide applied to a train track}
  \label{Fig:SplitSlide}
\end{figure}

Pick a point in the interior of each branch. Cutting the branch at such a point creates two intervals, which are called \emph{half-branches}. A half-branch is called \emph{small} if at the cusp at its endpoint, there is another half-branch coming in from the same direction. If a half-branch is not small, it is \emph{large}.

For any train track $\tau$, its regular neighbourhood $N(\tau)$ is naturally a union of intervals called \emph{fibres}, and there is a collapsing map $N(\tau) \rightarrow \tau$ that collapses each fibre to a point. A curve $C$ is said to \emph{carried} by $\tau$ if $C$ is embedded in $N(\tau)$ and is transverse to all the fibres.

A train track $\tau$ is \emph{transversely recurrent} if for each branch of $\tau$, there is a simple closed curve $C$ transverse to $\tau$ that intersects this branch and such that $\tau \cup C$ does not have a complementary region that is a bigon. By a \emph{bigon}, we mean a component of $S \cut (C \cup \tau)$ that is a disc with boundary consisting of the union of two arcs, one lying in $C$, the other lying in $\tau$ and having no cusps. The train track $\tau$ is \emph{recurrent} if for each branch of $\tau$, there is a simple closed curve carried by $\tau$ that runs over the branch. It is \emph{birecurrent} if it is both recurrent and transversely recurrent. We say that \emph{the set of curves carried by $\tau$ fills $S$} if, for every essential simple closed curve $C$ in $S$, there is a curve carried by $\tau$ that cannot be isotoped off $C$. The train track is then said to be \emph{filling}.

The following is essentially due to Masur, Mosher and Schleimer \cite{MasurMosherSchleimer}.

\begin{theorem} 
\label{Thm:QuasiGeodesicSplittingSequence}
Let $\tau$ and $\tau'$ be filling birecurrent train tracks in a closed orientable surface $S$ of genus at least 2.
Suppose that there is a sequence of splits and slides taking $\tau$ to $\tau'$. Let $\mathcal{T}$ and $\mathcal{T}'$ be the triangulations dual to $\tau$ and $\tau'$. Let $n = -2\chi(S)$. Then, the distance in $\mathrm{Tr}(S;n)$ between $\mathcal{T}$ and $\mathcal{T}'$ is, up to a bounded multiplicative error, equal to the number of splits. This bound only depends on the Euler characteristic of $S$.
\end{theorem}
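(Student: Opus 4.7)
The plan is to prove the two inequalities separately: the upper bound is essentially combinatorial, while the lower bound is where the Masur--Mosher--Schleimer machinery is needed.

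For the upper bound, I would observe that a slide does not change the dual triangulation at all up to isotopy, and a split at a large branch changes it by at most a bounded number of Pachner moves. In the dual picture, a split corresponds to a 2-2 flip on the two triangles meeting along the edge dual to the large branch; any auxiliary trivial moves needed to accommodate the complementary regions are bounded in number by the number of complementary regions of $\tau$, which is at most $-2\chi(S)$ by the index estimate given before the theorem. Thus a splitting sequence of length $k$ is realised by a path in $\mathrm{Tr}(S;n)$ of length at most $C_1(\chi(S)) \cdot k$.

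For the lower bound, I would invoke the Masur--Mosher--Schleimer theorem that splitting sequences of filling birecurrent train tracks are unparametrised quasi-geodesics in the marking complex $\mathcal{M}(S)$, with constants depending only on $\chi(S)$. Associate to each filling birecurrent train track $\tau$ a complete clean marking $\mu(\tau)$ (for instance via vertex cycles of $\tau$ and short transversals). The splitting sequence $\tau = \tau_0, \dots, \tau_k = \tau'$ then yields a sequence $\mu(\tau_0), \dots, \mu(\tau_k)$ in $\mathcal{M}(S)$, giving $k \leq C_2(\chi(S)) \cdot d_{\mathcal{M}(S)}(\mu(\tau), \mu(\tau'))$. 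The marking complex is quasi-isometric to $\mathrm{MCG}(S)$ with its word metric, and by a Milnor--\u{S}varc argument essentially identical to \reflem{QIMaps}, $\mathrm{MCG}(S)$ is quasi-isometric to $\mathrm{Tr}(S;n)$. Since the dual triangulation $\mathcal{T}(\tau)$ and the associated marking $\mu(\tau)$ are coarsely $\mathrm{MCG}(S)$-equivariantly comparable, chasing the quasi-isometries yields $k \leq C_3(\chi(S)) \cdot d_{\mathrm{Tr}(S;n)}(\mathcal{T}, \mathcal{T}')$.

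The main obstacle is the careful translation between the formulation of the Masur--Mosher--Schleimer result (typically stated in terms of the marking complex or a train track complex) and distances in $\mathrm{Tr}(S;n)$. In particular, one must verify that the assignment $\tau \mapsto \mathcal{T}(\tau)$ descends to a coarsely well-defined, coarsely equivariant map into $\mathrm{Tr}(S;n)$ whose distortion depends only on $\chi(S)$, so that the quasi-geodesic property transfers cleanly. The restriction to genus at least $2$ is used both to invoke the Masur--Mosher--Schleimer subsurface projection machinery in an essential way and to ensure the marking complex and $\mathrm{MCG}(S)$ have the expected large-scale geometry.
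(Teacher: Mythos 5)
Your architecture is close to the paper's: both rely on MMS Theorem 6.1 together with the chain of quasi-isometries $\mathcal{M}(S) \to \mathrm{MCG}(S) \to \mathrm{Tr}(S;n)$ and the observation that the map ``train track $\mapsto$ dual triangulation'' is within bounded distance of the map ``train track $\mapsto$ marking'' postcomposed with these quasi-isometries. However, there are two points where your version does not actually close the argument.

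First, you invoke Masur--Mosher--Schleimer as giving \emph{unparametrised} quasi-geodesics in $\mathcal{M}(S)$. The unparametrised statement is the \emph{curve-complex} result; what is needed here, and what the paper cites from \cite[Theorem~6.1]{MasurMosherSchleimer}, is the stronger fact that a sliding-and-splitting sequence, \emph{parametrised by the number of splits}, is a genuine $(K,C)$-quasi-geodesic in the marking graph with constants depending only on $S$. This distinction is essential to your lower bound: an unparametrised quasi-geodesic carries no constraint on the length of the parametrisation relative to the net displacement of its endpoints (a path may stall arbitrarily long and still be an unparametrised quasi-geodesic after reparametrisation), so the inference $k \leq C_2\, d_{\mathcal{M}(S)}(\mu(\tau),\mu(\tau'))$ does not follow from what you have invoked. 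The parametrised version gives both $d \leq Kk + C$ and $k \leq K d + KC$ at once, which is exactly what the paper uses and also renders your separate combinatorial upper bound unnecessary.

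Second, you index the sequence as $\tau = \tau_0, \dots, \tau_k = \tau'$ and treat $k$ as both the length of the sequence and ``the number of splits,'' but slides also appear in the sequence and do not contribute to the MMS index. The quasi-geodesic is parametrised by split count, not by total move count; between consecutive splits there may be slides under which the associated marking is unchanged or moves by a bounded amount. If you bound the total move count $k$ rather than the split count, you are proving a stronger statement than you have tools for, and if you mean split count you should say so explicitly when applying the quasi-geodesic inequality.

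As a side remark, your upper-bound argument also asserts that a slide leaves the dual triangulation unchanged up to isotopy. This is not obviously true: a slide changes the cyclic order of branches at the two switches involved, so the dual cell structure can change (by a flip). This would need justification if you wanted to keep the combinatorial upper bound; but since the parametrised quasi-geodesic property gives both inequalities, it is cleaner to drop the combinatorial argument entirely, as the paper does.
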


\begin{proof}
In \cite[Section 6.1]{MasurMosherSchleimer}, the \emph{marking graph} $\mathcal{M}(S)$ is defined. This is quasi-isometric to the mapping class group of $S$. In \cite[Section 6.1]{MasurMosherSchleimer}, a map from filling birecurrent train tracks to $\mathcal{M}(S)$ is defined. Composing this with the quasi-isometries $\mathcal{M}(S) \rightarrow \mathrm{MCG}(S) \rightarrow \mathrm{Tr}(S;n)$ we obtain a map from filling birecurrent train tracks to $\mathrm{Tr}(S;n)$. This is a bounded distance from the map that sends each filling birecurrent train track to its dual triangulation.

We are supposing that there is a sequence of splits and slides taking $\tau$ to $\tau'$. Then by \cite[Theorem~6.1]{MasurMosherSchleimer},
a sequence of such splits and slides is sent to a quasi-geodesic in $\mathcal{M}(S)$, with quasi-geodesic constants depending only on $S$. The length of this quasi-geodesic is the number of splits in the sequence. We compose this with the quasi-isometries $\mathcal{M}(S) \rightarrow \mathrm{MCG}(S) \rightarrow \mathrm{Tr}(S;n)$, and we obtain a quasi-geodesic in $\mathrm{Tr}(S;n)$. Its start and end vertices are a bounded distance from $\mathcal{T}$ and $\mathcal{T}'$, the bound depending only on $S$. Hence, the distance in $\mathrm{Tr}(S;n)$ between $\mathcal{T}$ and $\mathcal{T}'$ is, up to a bounded multiplicative error, equal to the number of splits.
\end{proof}

Now suppose that the train track $\tau$ has a transverse measure $\mu$. (We refer to \cite{FLP} for the definition of transverse measures and their relationship with pseudo-Anosov homeomorphisms.) At any large branch of $\tau$, one may split $\tau$ in three possible ways, but only one of these ways is compatible with $\mu$. The result is a measured train track $(\tau', \mu')$. 

A \emph{maximal split} on a measured train track $(\tau, \mu)$ is obtained by performing a measured split at each large branch of $\tau$. 
The following was proved by Agol~\cite[Theorem~3.5]{Agol:IdealTriangulations}.

\begin{theorem}
\label{Thm:Agol}
Let $S$ be a compact orientable surface.
Let $\phi \colon S \rightarrow S$ be a pseudo-Anosov homeomorphism, and let $(\tau, \mu)$ be a measured train track that carries its stable measured lamination. Let $\lambda$ be its dilatation. For each positive integer $i$, let $(\tau_i, \mu_i)$ be the result of performing a sequence of $i$ maximal splits to $(\tau, \mu)$. Then there are integers $n>0$ and $m \geq 0$ such that, for each $i \geq m$, $\tau_{n+i} = \phi(\tau_i)$ and $\mu_{n+i} = \lambda^{-1} \mu_i$.
\end{theorem}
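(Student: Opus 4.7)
The plan is to show that the maximal splitting sequence $(\tau_i,\mu_i)$ is an intrinsic invariant of the stable measured lamination $\mathcal{L}$ of $\phi$ (together with the initial carrying datum), and then combine that canonicity with the finiteness of combinatorial types of train tracks on $S$, exploiting the fact that $\mathcal{L}$ is preserved by $\phi$ with transverse measure rescaled by $\lambda^{-1}$.

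First I would verify canonicity. At each large branch of $\tau_i$, the measure inherited from $\mathcal{L}$ on the two incoming small half-branches forces a unique splitting direction; after the maximal split, the new track $\tau_{i+1}$ again carries $\mathcal{L}$ and $\mu_{i+1}$ is again exactly the transverse measure of $\mathcal{L}$ pushed onto it. Hence inductively the whole sequence $(\tau_i,\mu_i)$ is a canonical function of $\mathcal{L}$ and $(\tau,\mu)$.

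Next I would set up the $\phi$-equivariance. Because $\phi(\mathcal{L})=\mathcal{L}$ as an unmeasured lamination and the transverse measure scales by $\lambda^{-1}$, the pair $(\phi(\tau_i),\lambda^{-1}\phi_*\mu_i)$ also carries $\mathcal{L}$ with \emph{inherited} measure. By naturality of maximal splitting under homeomorphisms of $S$, the splitting sequence starting at $(\phi(\tau_i),\lambda^{-1}\phi_*\mu_i)$ is precisely $\{(\phi(\tau_{i+k}),\lambda^{-1}\phi_*\mu_{i+k})\}_{k\geq 0}$. Consequently, once one produces a single identity $\tau_{m+n}=\phi(\tau_m)$ as subsets of $S$, the canonicity established in the previous step immediately propagates it to $\tau_{i+n}=\phi(\tau_i)$ for all $i\geq m$, and, identifying branches of $\tau_{i+n}$ with those of $\tau_i$ via $\phi$, yields $\mu_{n+i}=\lambda^{-1}\mu_i$.

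The remaining work is to produce such a pair $(m,n)$. I would observe that any train track in $S$ has at most $-3\chi(S)$ branches, hence only finitely many isotopy classes of tracks can occur. Passing to the $\mathrm{MCG}(S)$-orbit, the infinite sequence $\tau_0,\tau_1,\ldots$ must therefore satisfy $\tau_{m+n}=\psi(\tau_m)$ for some mapping class $\psi$ that preserves the projective class of $\mathcal{L}$. By Thurston's classification, this stabiliser is virtually cyclic, generated up to finite-order elements by a primitive root of $\phi$. Tracking the total mass $\sum_b\mu_i(b)$, which decreases by a strictly positive definite amount at each maximal split, and comparing it with the fact that $\phi$ scales total transverse mass by $\lambda^{-1}$, pins $\psi$ down to a positive power of $\phi$; taking $n$ minimal makes $\psi=\phi$.

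The main obstacle is precisely this last step: the qualitative pigeonhole only yields \emph{some} element of the $\mathcal{L}$-stabiliser, and one needs a quantitative argument, based on the monotone decay of total mass under maximal splits and uniqueness (up to scale) of the stable transverse measure, to identify the recurring mapping class with $\phi$ itself and to guarantee the shift $n$ is a positive integer.
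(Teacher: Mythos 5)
The paper does not prove this theorem: it is quoted verbatim from Agol and used as a black box (``The following was proved by Agol~\cite[Theorem~3.5]{Agol:IdealTriangulations}''). So there is no in-paper proof for your argument to be measured against; I will compare it against what Agol's proof actually has to accomplish.

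Your first two steps are sound. The maximal splitting is forced by the inherited transverse measure, and the sequence is therefore a function of $(\tau_0,\mu_0)$ alone; and since $\phi_*\mathcal{L}=\mathcal{L}$ with the measure scaled by $\lambda^{-1}$, naturality of the construction gives that the sequence starting at $\bigl(\phi(\tau_0),\lambda^{-1}\phi_*\mu_0\bigr)$ is $\bigl(\phi(\tau_i),\lambda^{-1}\phi_*\mu_i\bigr)_{i\geq 0}$. These observations are correct and are indeed the two ingredients that make the theorem plausible.

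The genuine gap is in step three, and you were right to flag it, but it is larger than your closing paragraph suggests. Pigeonholing on combinatorial types of train tracks only produces $m<m+n$ and \emph{some} homeomorphism $\psi$ with $\psi(\tau_m)=\tau_{m+n}$ as embedded graphs. A priori $\psi$ is merely a realisation of a combinatorial isomorphism, and there is no reason for it to preserve $\mathcal{L}$: the track $\tau_{m+n}$ carries both $\mathcal{L}$ and $\psi_*\mathcal{L}$, but a train track carries an open cone of measured laminations, so this does not force $\psi_*\mathcal{L}=\mathcal{L}$ (even projectively). Without that, you cannot invoke Thurston's description of $\mathrm{Stab}([\mathcal{L}])$ as virtually cyclic, and the mass-decay argument (which otherwise \emph{would} single out a positive power of $\phi$ once $\psi$ is known to lie in the stabiliser) has nothing to grab onto. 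Moreover, the isotopy class of $\tau_i$ depends on how it sits in $S$, not just its combinatorics, and the inherited measures $\mu_i\downarrow 0$, so a naive pigeonhole on the pairs $(\tau_i,[\mu_i])$ does not terminate either. Agol's actual proof handles this by a finer argument: he shows the maximal splitting sequence converges to the stable lamination, normalises by powers of $\phi$ so that the total transverse mass stays in a fixed fundamental domain $[1,\lambda)$, and establishes a finiteness statement specifically for train tracks fully carrying $\mathcal{L}$ in that mass range. That finiteness, not the crude count of combinatorial types, is what pins the recurring element down to a positive power of $\phi$. In short: the architecture you describe is the right one, but the bridge from ``some combinatorial repetition'' to ``repetition by $\phi$'' is the theorem's real content, and your sketch does not cross it.
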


%%%%%%%%%%%%%%%%%%%%%%%%%%%%%%%%%%%%%%%%%%%%%%%%%%%%%%%%%%%%%%%%%
\section{Homeomorphisms of the torus}
\label{Sec:HomeoTorus}

The famous classification of orientation-preserving homeomorphisms of closed orientable surfaces into periodic, reducible and pseudo-Anosov \cite{Thurston:DiffeomorphismsSurfaces} is a generalisation of the special case of the torus. In this case, the third category is known as \emph{linear Anosov}, which we can define to be isotopic to a linear map with determinant $ 1$, and with irrational real eigenvalues.

An orientation-preserving homeomorphism of the torus induces an action on the homology of the torus and hence gives an element of $\mathrm{SL}(2, \mathbb{Z})$. There is a homomorphism $\mathrm{SL}(2, \mathbb{Z})$ to the isometry group of the hyperbolic plane. Thus, our homeomorphism of the torus induces an isometry of the hyperbolic plane that preserves the Farey tessellation, and hence is an isometry of the Farey tree. An alternative way of viewing this action is to note that the Farey tree is $\mathrm{Tr}(T^2)$ and any homeomorphism of the torus naturally induces an isometry of $\mathrm{Tr}(T^2)$.

Recall that any isometry of a tree either has a fixed point or has an \emph{invariant axis}. This is a subset of the tree isometric to the real line, such that the isometry acts as non-trivial translation upon this line. Any subset of the Farey tree isometric to the real line has two well-defined endpoints on the circle at infinity, although these endpoints need not be distinct. We can now see the classification of orientation-preserving homeomorphisms of the torus in terms of the action on the Farey tree:
\begin{enumerate}
\item A homeomorphism is periodic if and only if its action on the Farey tree has  a fixed point in the interior.
\item A homeomorphism is reducible and not periodic if and only if its action on the Farey tree has an invariant axis, but the endpoints of this axis are the same point on the circle at infinity.
\item A homeomorphism is linear Anosov if and only if its action on the Farey tree has an invariant axis, and the endpoints of the axis are distinct points on the circle at infinity.
\end{enumerate}

The following is well-known (see for example \cite[Section 0]{CassonBleiler}).

\begin{lemma}\label{Lem:AnosovTrace}
A matrix $A\in \SL(2,\ZZ)$ acts on the torus as a linear Anosov homeomorphism if and only if its trace $\mathrm{tr}(A)$ satisfies $|\tr(A)|>2$.
\end{lemma}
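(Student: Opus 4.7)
The plan is to analyse the characteristic polynomial of $A$ and use the definition of linear Anosov as being isotopic to a linear map with determinant $1$ whose eigenvalues are real and irrational. Since $A \in \SL(2,\ZZ)$ already has determinant $1$, the issue reduces to understanding when its eigenvalues are real and irrational.

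First I would write down the characteristic polynomial $p(\lambda) = \lambda^2 - \tr(A)\lambda + 1$ and observe that its discriminant is $\tr(A)^2 - 4$. This immediately splits the argument into three cases according to the sign of $\tr(A)^2 - 4$. If $|\tr(A)| < 2$, the discriminant is negative, so the eigenvalues form a complex-conjugate pair on the unit circle; such an $A$ is periodic (a finite-order rotation in $\SL(2,\ZZ)$) and hence cannot be linear Anosov. If $|\tr(A)| = 2$, the polynomial factors as $(\lambda \mp 1)^2$, so both eigenvalues are $\pm 1$, which are rational, and so again $A$ fails to be linear Anosov (it is parabolic/reducible).

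The remaining case is $|\tr(A)| > 2$, where the discriminant is positive and the two eigenvalues are real, distinct, and multiply to $1$. I would then argue they are irrational: $p(\lambda)$ is a monic polynomial with integer coefficients, so by the rational root theorem any rational root would have to be $\pm 1$, which would force $\tr(A) = \pm 2$ and contradict $|\tr(A)| > 2$. Thus the eigenvalues of $A$ are real and irrational, so $A$ itself (being linear and orientation-preserving with determinant $1$) fulfils the definition of linear Anosov given at the start of the section.

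There is no real obstacle here; this is a short case analysis once the characteristic polynomial is written down. The only small subtlety worth stating explicitly is the appeal to the rational root theorem to exclude rational (equivalently, the only rational possibilities $\pm 1$) eigenvalues in the $|\tr(A)| > 2$ case, which is what rules out pathological behaviour and pins down the dichotomy between the linear Anosov regime and the periodic/reducible regimes.
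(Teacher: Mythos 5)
Your proof is correct, and it takes a genuinely different and more elementary route than the paper's. You argue directly from the characteristic polynomial $\lambda^2 - \tr(A)\lambda + 1$ and the definition of linear Anosov given at the top of Section 7 (isotopic to a linear map with determinant $1$ and irrational real eigenvalues), running the trichotomy on the discriminant $\tr(A)^2 - 4$ and using the rational root theorem to rule out rational eigenvalues when $|\tr(A)|>2$. The paper instead works through the action of $A$ on the Farey tree (equivalently, the induced M\"obius transformation on $\HH^2$): it checks that a fixed point with positive imaginary part exists iff $|\tr(A)|<2$, identifies the endpoints at infinity with eigenvectors, and then invokes the classification (stated just before the lemma) of torus homeomorphisms as periodic/reducible/linear Anosov according to whether the Farey-tree action has an interior fixed point, an axis with coincident endpoints at infinity, or an axis with distinct endpoints. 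The trichotomy via the discriminant is present in both arguments, but the paper's version delivers the geometric picture (fixed points in $\HH^2$, invariant axes) that is actively used in \refprop{AnosovDistance} and the rest of the section, while yours is self-contained and requires none of that framework. Both are sound; yours is shorter, the paper's buys more machinery for later.

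One small tidy-up you could make: in the $|\tr(A)|<2$ case you don't actually need to observe that $A$ is periodic -- the eigenvalues fail to be real, which already contradicts the definition of linear Anosov, so you can drop the side remark and keep the three cases perfectly parallel (complex eigenvalues, rational eigenvalues, irrational real eigenvalues).
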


\begin{proof}
The matrix $A$ projects to an element of $\PSL(2,\ZZ)$, which is a subgroup of orientation-preserving isometries of the hyperbolic plane. This isometry induces the action of $A$ on the Farey tessellation and hence on the Farey tree.

Consider first the case that the action has a fixed point in the interior. If $A$ has rows $(a,b)$ and $(c,d)$, a fixed point is an element $x$ with positive imaginary part such that $(ax+b)/(cx+d)=x$. Solving for $x$, this gives a quadratic polynomial with discriminant $\sqrt{\tr(A)^2-4}$ and with highest order term $cx^2$. Thus there is a fixed point with positive imaginary part if and only if $|\tr(A)|<2$ and $c \not=0$. 
Note however that if $c =0$, then the condition that $A$ has determinant $1$ forces $\tr(A)$ to be equal to $\pm 2$. Thus there is a fixed point with positive imaginary part if and only if $|\tr(A)|<2$.
 
Consider next the eigenvectors of $A$. Specifically, $(u, 1)^T$ is an eigenvector of $A$ if and only if $u/1$ is an endpoint of an invariant axis of the action of $A$ on the Farey tree. Now, the characteristic polynomial for $A$ is $x^2 - \mathrm{tr}(A) x + 1$, with roots
$\left(\tr(A) \pm \sqrt{\tr(A)^2 - 4}\right)/2$.
Thus $A$ has distinct real eigenvalues if and only if $|\tr(A)|>2$; this is the case $A$ induces a linear Anosov. The remaining case, $|\tr(A)|=2$, corresponds to the case $A$ is reducible and not periodic.
\end{proof}

When an isometry $\phi$ of a tree has an invariant axis, then this is also the invariant axis for any non-zero power of $\phi$. Hence, we have the following result.

\begin{lemma} \label{Lem:StableTransTorus}
Let $A$ be a homeomorphism of the torus. Then for any integer $n$, $\ell_{\mathrm{Tr}(T^2)}(A^n) = n \, \ell_{\mathrm{Tr}(T^2)}(A)$. Hence, the stable translation length satisfies $\overline{\ell}_{\mathrm{Tr}(T^2)}(A) = \ell_{\mathrm{Tr}(T^2)}(A)$.
\end{lemma}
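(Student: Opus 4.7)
The plan is to exploit the fact that $\mathrm{Tr}(T^2)$ is a tree (the Farey tree, by \refthm{TrIsFarey}) on which $A$ acts as an isometry, together with the standard dichotomy that any isometry of a tree either fixes a point or translates along a unique invariant axis. The lemma then reduces to tracking how these two types of isometries behave under iteration, which I would argue case by case.

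First, suppose $A$ fixes some point $y \in \mathrm{Tr}(T^2)$. Every power $A^n$ also fixes $y$, so both $\ell_{\mathrm{Tr}(T^2)}(A)$ and $\ell_{\mathrm{Tr}(T^2)}(A^n)$ vanish, and the desired identity is trivial. For the stable translation length with any basepoint $x$, the triangle inequality together with $A^n(y) = y$ and $d(A^n(x), y) = d(x, y)$ gives $d(A^n(x), x) \leq 2\, d(x, y)$, so $d(A^n(x), x)/n \to 0$ as $n \to \infty$, and hence $\overline{\ell}_{\mathrm{Tr}(T^2)}(A) = 0$ as required.

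In the remaining case, $A$ has an invariant axis $L$ along which it translates by $t := \ell_{\mathrm{Tr}(T^2)}(A)$. By the remark immediately preceding the lemma, every non-zero power $A^n$ has the same invariant axis $L$, along which it translates by $|n|t$; since the translation length of an axial isometry of a tree is realised on the axis, this yields $\ell_{\mathrm{Tr}(T^2)}(A^n) = |n|t$. For the stable translation length, I would fix a basepoint $x$, let $y$ be its (unique) closest point on $L$, and exploit the tree geometry: since $L$ disconnects $\mathrm{Tr}(T^2)$ and $y \neq A^n(y)$ whenever $n \neq 0$, the unique geodesic from $x$ to $A^n(x)$ must pass through both $y$ and $A^n(y)$, giving
\[ d(A^n(x), x) = d(x, y) + d(y, A^n(y)) + d(A^n(y), A^n(x)) = 2\, d(x, y) + nt \]
for $n \geq 1$. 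Dividing by $n$ and taking the infimum yields $\overline{\ell}_{\mathrm{Tr}(T^2)}(A) = t$.

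I do not anticipate any serious obstacle: the argument is essentially an elementary application of the theory of tree isometries to the action already set up in the preceding paragraphs. The only small care needed is the observation that the axis $L$ itself (not merely its pair of endpoints on the circle at infinity) is preserved by every non-zero power $A^n$, which is exactly the content of the remark preceding the lemma.
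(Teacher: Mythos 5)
Your proof is correct and takes the same approach the paper implicitly relies on: the paper supplies no proof environment at all, only the one-sentence remark preceding the lemma that an isometry of a tree with an invariant axis shares that axis with all its non-zero powers. You have fleshed this out carefully — the elliptic case (fixed point, giving $\ell = \overline{\ell} = 0$), the fact that $\ell(A^n) = |n|\,t$ by translation along the common axis, and the explicit tree-geometry calculation $d(A^n x, x) = 2\,d(x,y) + n t$ via projection onto the axis — none of which would differ from what the authors had in mind, and your $|n|$ is in fact slightly more careful than the paper's $n$.
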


\subsection{Translation length of a linear Anosov}

The following well known proposition gives the length of the translation in the Farey tree of an Anosov in terms of continued fractions. 

\begin{proposition}\label{Prop:AnosovDistance}
Let $A \in \mathrm{SL}(2, \mathbb{Z})$ act as a linear Anosov homeomorphism on the torus. Let $\overline{A}$ be the image of $A$ in $\mathrm{PSL}(2, \mathbb{Z})$. Suppose that $\overline{A}$ is $B^n$ for some positive integer $n$ and some matrix $B \in  \mathrm{PSL}(2, \mathbb{Z})$ that is not a proper power.  Let $(a_r, \dots, a_s)$ denote the periodic part of the continued fraction expansion of $\sqrt{\mathrm{tr}(A)^2 - 4}$. Then the translation distance of $A$ in the Farey tree is $n \sum_{i=r}^s a_i$.
\end{proposition}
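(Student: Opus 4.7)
The plan is to realize the translation length of $\overline{A}$ on the Farey tree as the length of one fundamental period of the cutting sequence of its invariant axis through the Farey tessellation, and then to use Lagrange's theorem to identify this period with $\sum_{i=r}^s a_i$. By \reflem{AnosovTrace}, $\overline{A}$ is a hyperbolic isometry of the Farey tree, with an invariant axis $\gamma$ realized in $\HH^2$ as the geodesic whose endpoints are the two (distinct, real) eigenvector slopes of $A$; both endpoints lie in $\QQ(\sqrt{\tr(A)^2-4})$. Since each edge of the Farey tree is dual to an edge of the Farey graph, each crossing of a Farey edge by $\gamma$ corresponds to advancing exactly one edge in the tree. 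Therefore $\ell_{\mathrm{Tr}(T^2)}(\overline{A})$ equals the number of Farey edges that $\gamma$ crosses in one fundamental domain for the action of $\langle \overline{A} \rangle$ on $\gamma$.

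Next I would pin down the minimal period of the cutting sequence. Reading the cutting sequence of $\gamma$ toward its forward endpoint $u_+$ in runs $L^{a_0'} R^{a_1'} L^{a_2'} \cdots$, as in \refsec{TriangulationsTorus}, recovers the continued fraction expansion of $u_+$. By Lagrange's theorem (\reflem{Lagrange}), the periodic part of this expansion agrees, up to cyclic permutation, with that of $\sqrt{\tr(A)^2-4}$, namely $(a_r,\ldots,a_s)$. Hence the minimal period of the cutting sequence, measured in individual $L$ and $R$ symbols, equals $p := \sum_{i=r}^s a_i$.

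I would then identify the primitive translation length along $\gamma$. Let $B'$ be the primitive generator of the cyclic subgroup of $\PSL(2,\ZZ)$ that preserves $\gamma$ and translates it in the same direction as $\overline{A}$. Any element of this subgroup shifts the bi-infinite cutting sequence by some period, so $\ell_{\mathrm{Tr}(T^2)}(B')$ is a positive multiple of $p$. Conversely, using transitivity of $\PSL(2,\ZZ)$ on flags (pairs of a Farey triangle and an oriented edge of its boundary), I would produce $g \in \PSL(2,\ZZ)$ taking the oriented entry edge of $\gamma$ into a triangle $T_0$ to the oriented entry edge of $\gamma$ into the triangle $T_p$ (reached after $p$ crossings). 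By periodicity with period $p$, the bi-infinite cutting sequence of $g(\gamma)$ agrees with that of $\gamma$ in both directions, forcing $g(\gamma) = \gamma$, so $g$ translates $\gamma$ by exactly $p$ and $\ell_{\mathrm{Tr}(T^2)}(B') = p$. Writing $B = (B')^a$ inside $\langle B' \rangle$, the hypothesis that $B$ is not a proper power in $\PSL(2,\ZZ)$ forces $a = 1$, so $B = B'$ and hence $\ell_{\mathrm{Tr}(T^2)}(\overline{A}) = n\, \ell_{\mathrm{Tr}(T^2)}(B) = n \sum_{i=r}^s a_i$.

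The main obstacle is producing an honest $\PSL(2,\ZZ)$ element realizing exactly one minimal period of translation, rather than some proper multiple of $p$, and then matching this primitive translation of $\gamma$ with the given $B$. Both steps rest on the combinatorial rigidity of the Farey tessellation: a bi-infinite cutting sequence determines its underlying geodesic (because the cutting sequence recovers both endpoints via continued fractions), and $\PSL(2,\ZZ)$ acts transitively on oriented boundary edges of Farey triangles.
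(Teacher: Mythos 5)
Your argument follows the same route as the paper's proof: identify the endpoints of the invariant axis as slopes lying in $\mathbb{Q}(\sqrt{\tr(A)^2-4})$, invoke Lagrange's theorem (\reflem{Lagrange}) to match the periodic part of the continued fraction, read off the minimal period $p=\sum_{i=r}^s a_i$ from the cutting sequence of the axis, and then account for the factor $n$ via the decomposition $\overline{A}=B^n$. Your flag-transitivity argument, showing that an honest element of $\PSL(2,\ZZ)$ realizes translation by exactly one minimal period and therefore that the primitive $B$ has translation length exactly $p$, makes precise a step that the paper's proof states only in compressed form.
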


\begin{proof}
As in the proof of \reflem{AnosovTrace}, the matrix $A$ corresponds to a linear Anosov homeomorphism when $|\tr(A)|>2$, with eigenvalues
\[ \frac{\mathrm{tr}(A) \pm \sqrt{\mathrm{tr}(A)^2 - 4}}{2}. \]
Let $\lambda$ be either of these eigenvalues. Then the determinant of the matrix $A - \lambda I$ is zero, and hence the two rows are multiples of each other. Let $(a, b)$ be one of its rows. Suppose $(u,1)^T$ is an eigenvector for $A$, so $u/1$ is an endpoint of the invariant axis $\gamma$ for $A$. Then $au + b =0$ and so $u = - b/a$. Thus, we deduce that $u$ lies in $\mathbb{Q}(\lambda) = \mathbb{Q}(\sqrt{\mathrm{tr}(A)^2 - 4})$. So, the periodic part of the continued fraction of $u$ is equal to the periodic part of the continued fraction expansion of $\sqrt{\mathrm{tr}(A)^2 - 4}$ by \reflem{Lagrange}. 

Let $\gamma'$ be a geodesic starting at a point in the hyperbolic plane on the imaginary axis and ending at $u$ on the circle at infinity. The edges of the Farey graph that it crosses determines the cutting sequence $L^{a_0} R^{a_1} L^{a_2} \dots$ for $\gamma'$ and hence the continued fraction expansion $[a_0, a_1, \dots]$ for $u$. This cutting sequence is eventually the same as that of the invariant axis $\gamma$. In particular, they have the same periodic parts. Now, as $\gamma$ is the axis of $A$, its cutting sequence is periodic. However, the length of the corresponding path in the Farey tree may be a multiple of this period. This happens precisely when $\overline{A}$ is $B^n$ for some integer $n$ and some matrix $B \in  \mathrm{PSL}(2, \mathbb{Z})$ that is not a proper power. Hence, translation distance of $A$ in the Farey tree is $n \sum_{i=r}^s a_i$.
\end{proof}

\subsection{Moving between vertices in the Farey tree}

\begin{lemma} \label{Lem:ConstructTorusHomeo}
Given any two ideal triangles $a_1b_1c_1$ and $a_2b_2c_2$ of the Farey tessellation, there is a homeomorphism $\phi$ of the torus such that $\phi(a_1) = a_2$, $\phi(b_1) = b_2$, $\phi(c_1) = c_2$. This is unique up to isotopy and composition by the map $-\mathrm{id}$. It is orientation-preserving if and only if $\phi$ preserves the cyclic ordering of the vertices around the circle at infinity.
\end{lemma}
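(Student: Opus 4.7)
The plan is to exploit the classical identification $\MCG(T^2) \cong \mathrm{GL}(2,\mathbb{Z})$, with orientation-preserving subgroup $\mathrm{SL}(2,\mathbb{Z})$, together with the fact that this group acts on the slopes $\mathbb{Q} \cup \{\infty\}$, and hence on the Farey tessellation, through its quotient $\mathrm{PGL}(2,\mathbb{Z})$ with kernel $\{\pm I\}$. Every element of $\mathrm{GL}(2,\mathbb{Z})$ is realised by a linear homeomorphism of $T^2 = \mathbb{R}^2/\mathbb{Z}^2$, and this homeomorphism is orientation-preserving on $T^2$ exactly when the determinant is $+1$.

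For existence, I would first handle the case where $a_1b_1c_1$ and $a_2b_2c_2$ induce the same cyclic order on the circle at infinity. The group $\mathrm{PSL}(2,\mathbb{Z})$ acts on the upper half plane by orientation-preserving isometries preserving the Farey tessellation, and the stabiliser of the standard triangle $\{0,1,\infty\}$ is the cyclic group of order $3$ generated by the matrix $T$ of \refsec{TriangulationsTorus}, as the computation $T\colon 0 \mapsto 1 \mapsto \infty \mapsto 0$ shows. Since $\mathrm{PSL}(2,\mathbb{Z})$ acts transitively on ideal triangles of the Farey tessellation, it therefore acts simply transitively on ordered Farey triangles with a prescribed cyclic order, producing a unique element carrying $a_1b_1c_1$ to $a_2b_2c_2$. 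This element lifts to $\mathrm{SL}(2,\mathbb{Z})$ uniquely up to $\pm I$ and supplies the required orientation-preserving homeomorphism. When the two ordered triangles induce opposite cyclic orders, I would precompose with the orientation-reversing linear involution $(x,y) \mapsto (-x,y)$, whose matrix acts on slopes by $r \mapsto -r$ and reverses the cyclic order on $\mathbb{R} \cup \{\infty\}$; this reduces to the orientation-preserving case. The final ``orientation-preserving iff cyclic order is preserved'' claim then follows from the elementary observation that an element of $\mathrm{GL}(2,\mathbb{R})$ preserves the cyclic order on $\mathbb{R} \cup \{\infty\}$ precisely when its determinant is positive.

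For uniqueness, if $\phi_1$ and $\phi_2$ both satisfy the conclusion then $\phi_2 \circ \phi_1^{-1}$ induces an element of $\mathrm{PGL}(2,\mathbb{Z})$ fixing the three distinct points $a_2, b_2, c_2$ of the circle at infinity. A Möbius transformation of $\mathbb{R} \cup \{\infty\}$ with three distinct fixed points is trivial, so the corresponding element of $\mathrm{GL}(2,\mathbb{Z})$ is $\pm I$, and therefore $\phi_2 \circ \phi_1^{-1}$ is isotopic to $\pm \mathrm{id}$. The whole argument is classical, so I do not anticipate a serious obstacle; the only mildly subtle point is keeping careful track of the correspondence between the sign of the determinant and the preservation or reversal of cyclic order on the circle at infinity.
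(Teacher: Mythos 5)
Your argument is correct, and it reaches the same destination as the paper's but by a noticeably more group-theoretic route. The paper's proof is hands-on: it chooses a basis with $a_1 = (1,0)$, $b_1 = (0,1)$, writes down a matrix in $\mathrm{GL}(2,\mathbb{Z})$ sending $a_1 \mapsto a_2$, $b_1 \mapsto b_2$, adjusts by the reflection $(1,0)\mapsto(-1,0)$ if the image of $c_1$ comes out wrong, and then checks uniqueness by the explicit computation that a linear map fixing $(1,0)$, $(0,1)$, $(1,1)$ up to sign must be $\pm\mathrm{id}$. You instead invoke the simple transitivity of the $\mathrm{PSL}(2,\mathbb{Z})$-action on cyclically ordered Farey triangles (via transitivity on triangles plus the order-3 stabiliser generated by $T$), which gives existence and uniqueness in the orientation-preserving case simultaneously, and you dispose of uniqueness in general by the clean observation that a fractional linear transformation of $\mathbb{R}\cup\{\infty\}$ with three fixed points is trivial. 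Both arguments use the determinant sign to settle the orientation/cyclic-order equivalence. Your version is slightly slicker on uniqueness; the paper's is more explicit and self-contained, avoiding any appeal to transitivity of the group action. Both are correct and of comparable length.

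One small point worth keeping in mind when you invoke simple transitivity: you should be explicit that the stabiliser $\langle T\rangle$ acts by cyclically permuting the vertices of the fixed triangle and that all of $\mathrm{PSL}(2,\mathbb{Z})$ preserves the cyclic order on the circle at infinity, since both facts are needed to conclude that the action is simply transitive on ordered triples with a prescribed cyclic orientation (rather than merely free and transitive on unordered triangles). You do state these facts, so the argument is complete, but they are the load-bearing steps.
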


\begin{proof}
Since the slopes $a_1$ and $b_1$ have intersection number $1$, we may choose a basis for the first homology of the torus so that $a_1 = (1,0)$ and $b_1 = (0,1)$, when these slopes are oriented in some way. The function $a_1 \mapsto a_2$ and $b_1 \mapsto b_2$ may be realised by an element of $\mathrm{GL}(2, \mathbb{Z})$. Note that the determinant of this map is indeed $\pm 1$, since $a_2$ and $b_2$ are Farey neighbours. This linear map sends $c_1$ to a slope that has intersection number one with both $a_2$ and $b_2$. If this slope is not $c_2$, then pre-compose the linear map by $(1,0) \mapsto (-1,0)$ and $(0,1) \mapsto (0,1)$. This gives the required homeomorphism $\phi$.

To establish uniqueness, it suffices to check that if $\phi(a_1) = a_1$, $\phi(b_1) = b_1$ and $\phi(c_1) = c_1$ then $\phi$ is isotopic to $\pm \mathrm{id}$. But if the linear map $\phi$ sends $(1,0)$ to $\pm(1,0)$, sends $(0,1)$ to $\pm (0,1)$ and sends $(1,1)$ to $\pm (1,1)$, then $\phi$ is $\pm \mathrm{id}$.

We now show that $\phi$ is orientation-preserving if and only if $\phi$ preserves the cyclic ordering of the vertices around the circle at infinity. We established above that $\phi$ is either an element of $\mathrm{SL}(2, \mathbb{Z})$ or a composition of an element of $\mathrm{SL}(2, \mathbb{Z})$ with a reflection. In the former case, $\phi$ is orientation-preserving and realised by a M\"obius transformation of upper half-space, which therefore preserves the cyclic ordering of triples in the circle at infinity. In the latter case, $\phi$ is orientation-reversing and reverses the cyclic ordering of the vertices.
\end{proof}

\begin{lemma}\label{Lem:Anosov}
Suppose $a_1b_1c_1$ and $a_2b_2c_2$ are the vertices of distinct ideal triangles of the Farey tessellation. Let $\alpha$ be the unique embedded path in the Farey tree joining the centre of $a_1b_1c_1$ to the centre of $a_2b_2c_2$. Let $e$ be the first edge of the Farey graph that $\alpha$ crosses. Let $\phi$ be as in \reflem{ConstructTorusHomeo}. Then $\phi$ acts on the Farey tree, and so sends $\alpha$ to an arc $\phi(\alpha)$. Suppose that $\phi(\alpha)$ leaves the triangle $a_2b_2c_2$ by a different edge from the one $\alpha$ came in through. Suppose also that $\phi(e)$ and $e$ do not share a vertex. Then $\phi$ is linear Anosov. Moreover, the axis of $\phi$ in the Farey tree runs through the vertices dual to $a_1b_1c_1$ and $a_2b_2c_2$. 
\end{lemma}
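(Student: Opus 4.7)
The plan is to construct the axis of $\phi$ explicitly by concatenating the translates $\phi^n(\alpha)$ and to use the two hypotheses to rule out the elliptic and parabolic cases in the trichotomy preceding \reflem{AnosovTrace}.

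Let $v_1$ and $v_2$ denote the vertices of the Farey tree dual to the triangles $a_1b_1c_1$ and $a_2b_2c_2$; by construction $\phi(v_1)=v_2$ and $\alpha$ is the geodesic in the tree from $v_1$ to $v_2$. First I would argue that $\alpha\cdot\phi(\alpha)$ is a geodesic of length $2|\alpha|$ from $v_1$ to $\phi(v_2)$. The three edges of the Farey tree incident to $v_2$ correspond to the three sides of the triangle $a_2b_2c_2$; the tree-edge by which $\alpha$ arrives at $v_2$ is dual to the last Farey edge crossed by $\alpha$, while the tree-edge by which $\phi(\alpha)$ leaves $v_2$ is dual to the first Farey edge crossed by $\phi(\alpha)$, namely $\phi(e)$. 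Hypothesis (1) says that these two Farey edges of $a_2b_2c_2$ are distinct, so the two tree-edges at $v_2$ are distinct and no backtracking occurs.

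By $\phi$-equivariance, the same no-backtracking statement holds at every concatenation point $\phi^{n+1}(v_1)=\phi^n(v_2)$. Hence
\[
\gamma\ :=\ \bigcup_{n\in\ZZ}\phi^n(\alpha)
\]
is a bi-infinite geodesic in the Farey tree, $\phi$-invariant, and $\phi$ acts on it by translation of length $|\alpha|>0$ (positive because the two triangles are distinct). In particular $\phi$ has positive translation length, so it cannot fix a point of the tree, ruling out the periodic case. Therefore $\phi$ has an invariant axis in the Farey tree, namely $\gamma$, and by construction $\gamma$ passes through $v_1$ and $v_2$.

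It remains to show $\phi$ is not reducible-non-periodic, i.e., not parabolic as an element of $\PSL(2,\RR)$. Suppose for contradiction $\phi$ were parabolic with unique fixed point $p\in\partial\HH^2$. Then both ends of $\gamma$ converge to $p$. For any Farey edge $e_n$ crossed by $\gamma$, the edge $e_n$ (a complete geodesic in $\HH^2$) separates $\HH^2$ into two half-planes, and $\gamma$ passes from one half-plane to the other, so its two ends lie on opposite sides of $e_n$; since both ends converge to the same boundary point $p$, the point $p$ must lie in the closure of both half-planes, i.e., $p$ is an endpoint of $e_n$. Taking $e_0=e$ and $e_{|\alpha|}=\phi(e)$, both would then have $p$ as a common vertex, contradicting hypothesis (2). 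Thus $\phi$ is linear Anosov with axis $\gamma$, as required.

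The main obstacle I expect is the last step: cleanly articulating why, in the parabolic case, every Farey edge met by the axis must share the parabolic fixed point as a vertex. The argument above is elementary once one views the tree axis as a piecewise path in $\HH^2$ and uses the separation property of a complete geodesic, but it does require being careful that $\gamma$ is a path in the Farey tree (not a geodesic of $\HH^2$) and crossing each $e_n$ exactly once.
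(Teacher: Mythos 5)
Your proof is correct and follows essentially the same approach as the paper's (which is just two sentences): form $\gamma=\bigcup_n\phi^n(\alpha)$ as the invariant axis and show its two ideal endpoints are distinct because $e$ and $\phi(e)$ share no vertex. You have simply spelled out the two implicit steps — using the first hypothesis to confirm that the concatenation has no backtracking (so $\gamma$ really is a bi-infinite geodesic, ruling out a fixed point) and using the second hypothesis to rule out the parabolic case by the separation property of the Farey edges — both of which are exactly what the paper's phrasing compresses.
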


\begin{proof} The infinite line $\bigcup_{n = -\infty}^\infty \phi^n(\alpha)$ forms an invariant axis. The endpoints of this axis on the circle at infinity are distinct, because they are separated by the endpoints of $e$ and $\phi(e)$. Hence, as discussed above, $\phi$ is linear Anosov.
\end{proof}

\begin{proposition} \label{Prop:PseudoAnosovExists}
Let $\mathcal{T}_1$ and $\mathcal{T}_2$ be distinct 1-vertex triangulations of the torus that do not differ by a 2-2 Pachner move. Then there is a linear Anosov homeomorphism $\phi$ such that $\phi(\mathcal{T}_1) = \mathcal{T}_2$. Moreover, the axis of $\phi$ in the Farey tree runs through the vertices dual to $\mathcal{T}_1$ and $\mathcal{T}_2$.
\end{proposition}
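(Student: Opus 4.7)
The plan is to translate the problem to the Farey tessellation via \refthm{TrIsFarey}, construct a candidate homeomorphism using \reflem{ConstructTorusHomeo}, and then verify that it is linear Anosov by checking the hypotheses of \reflem{Anosov}.

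Let $T_1$ and $T_2$ be the ideal triangles of the Farey tessellation dual to $\calT_1$ and $\calT_2$. Since $\calT_1 \neq \calT_2$, the triangles $T_1$ and $T_2$ are distinct; since the triangulations do not differ by a 2-2 Pachner move, $T_1$ and $T_2$ do not share an edge of the Farey graph, and hence they share at most one vertex (two shared vertices would span a Farey edge). Let $\alpha$ be the unique embedded path in the Farey tree from the centre of $T_1$ to the centre of $T_2$, let $e$ be the first Farey edge crossed by $\alpha$ (an edge of $T_1$, with opposite vertex $v_1$), and let $f$ be the last Farey edge it crosses (an edge of $T_2$). By \reflem{ConstructTorusHomeo}, each of the three cyclic-order-preserving bijections of the vertex set of $T_1$ onto that of $T_2$, parametrised by the choice of image of $v_1$, is realised by an orientation-preserving homeomorphism $\phi$ of the torus, and the three resulting edges $\phi(e)$ (each opposite $\phi(v_1)$ in $T_2$) cycle through the three edges of $T_2$. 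The goal is therefore to pick $\phi$ so that $\phi(e) \neq f$ and $\phi(e)$ shares no vertex with $e$.

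If $T_1$ and $T_2$ share no vertex, the second condition is automatic and at most one of the three choices of $\phi(e)$ equals $f$, leaving at least two admissible $\phi$. If instead $T_1$ and $T_2$ share a single vertex $v$, both triangles lie on the line $L(v)$ in the Farey tree, and hence $\alpha \subseteq L(v)$, so every Farey edge that $\alpha$ crosses has $v$ as an endpoint; in particular $v \in e$ and $v \in f$. The two edges of $T_2$ incident to $v$ then both share the vertex $v$ with $e$, and one of them is $f$, which forces $\phi(e)$ to be the unique edge of $T_2$ not incident to $v$. The remaining vertex of this edge lies in $T_2 \setminus \{v\}$, which is disjoint from the vertex set of $T_1$, so this $\phi(e)$ shares no vertex with $e$. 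In either case an admissible $\phi$ exists, and \reflem{Anosov} then shows that $\phi$ is linear Anosov with axis passing through the Farey-tree vertices dual to $T_1$ and $T_2$. The main obstacle is the shared-vertex case, where the candidate $\phi$ is pinned down uniquely; here the observation that $T_1$ and $T_2$ share at most one vertex --- itself a consequence of the no-2-2-move hypothesis --- is essential for the argument to close.
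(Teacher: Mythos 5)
Your proof is correct and follows essentially the same route as the paper's: both build $\phi$ via Lemma~\ref{Lem:ConstructTorusHomeo} and verify the two hypotheses of Lemma~\ref{Lem:Anosov}. The packaging differs a bit. The paper fixes a labelling in which $b_1c_1$ and $b_2c_2$ are the nearest edges, uses the no-Pachner-move hypothesis to arrange $b_2 \neq c_1$ (up to relabelling), takes the single cyclic shift $\phi(a_1,b_1,c_1) = (c_2,a_2,b_2)$, and asserts that $e$ and $\phi(e)$ share no vertex, leaving the reader to see why from the cyclic-order configuration. You instead consider all three cyclic-order-preserving bijections simultaneously and split on whether $T_1$ and $T_2$ share a vertex (at most one, by the same hypothesis), showing that at least one admissible $\phi$ exists in each case; in the shared-vertex case this pins $\phi$ down uniquely via the line $L(v)$. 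Your version makes the ``no shared vertex'' verification more explicit at the cost of a case split, but there is no new idea or gap on either side. One small wording slip: ``The remaining vertex of this edge'' should read ``The two vertices of this edge,'' since $\phi(e)$ is the edge opposite $v$ and both of its endpoints need to be checked against $e$ --- the argument you give does in fact cover both.
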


\begin{proof}
Let $a_1$, $b_1$, $c_1$ and $a_2, b_2, c_2$ in $\QQ\cup \{1/0\}$ correspond to the slopes of $\mathcal{T}_1$ and $\mathcal{T}_2$ in the Farey tesselation, where the edge $b_1c_1$ and $b_2 c_2$ are closest to each other. Choose the labelling so that both $a_1$, $b_1$, $c_1$ and $a_2, b_2, c_2$ appear in a clockwise fashion around the circle at infinity. Since $\mathcal{T}_1$ and $\mathcal{T}_2$ do not differ by a 2-2 Pachner move, $b_2 \not= c_1$ or $c_2 \not= b_1$, say $b_2 \not= c_1$. By \reflem{ConstructTorusHomeo}, there is a homeomorphism $\phi$ such that $\phi(a_1) = c_2$, $\phi(b_1) = a_2$ and $\phi(c_1) = b_2$. It is orientation-preserving, since it preserves the cyclic ordering of the vertices. 

Let $\alpha$ be the arc in the Farey tree joining the centre of $a_1b_1c_1$ to the centre of $a_2b_2c_2$. The first edge $e$ of the Farey graph that it crosses is $b_1 c_1$. Then $\phi(e)$ is $a_2 b_2$. This is different from the edge $b_2 c_2$ that $\alpha$ crosses. Note also that $e$ and $\phi(e)$ do not share a vertex. So by \reflem{Anosov}, it is linear Anosov, with axis as claimed.
\end{proof}

\subsection{Splitting sequences between two ideal triangulations}

\begin{theorem} \label{Thm:DualTrainTracks}
Let $\mathcal{T}$ and $\mathcal{T}'$ be distinct ideal triangulations of the once-punctured torus.
Then there are (filling) train tracks $\tau$ and $\tau'$ dual to $\mathcal{T}$ and $\mathcal{T}'$ and a sequence of splits taking $\tau$ to $\tau'$ of length $d_{\Tr(T^2)}(\calT, \calT')$. 
These train tracks are birecurrent, and the set of curves carried by $\tau$ fill the once-punctured torus, as do the set of curves carried by $\tau'$.
\end{theorem}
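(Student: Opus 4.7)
The strategy is to use the identification of $\Tr(T^2)$ with the Farey tree from \refthm{TrIsFarey}: the geodesic in $\Tr(T^2)$ from $\calT$ to $\calT'$ is unique, of length $d := d_{\Tr(T^2)}(\calT, \calT')$, and visits triangulations $\calT = \calT_0, \calT_1, \dots, \calT_d = \calT'$ where $\calT_{i+1}$ is obtained from $\calT_i$ by a 2-2 Pachner move at an edge $e_i$. I plan to construct dual train tracks $\tau_i$ for each $\calT_i$ so that each $\tau_{i+1}$ is a single split of $\tau_i$, and then take $\tau := \tau_0$ and $\tau' := \tau_d$.

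For each ideal triangulation $\calT_i$ of the once-punctured torus (with two triangles, three edges, and one ideal vertex), a dual train track is determined by a choice of smoothing at each of its two switches---equivalently, a choice of distinguished vertex in each triangle. For a dual train track to have a large branch $e^*$, the distinguished vertex in each of the two triangles adjacent to $e$ must be the vertex opposite $e$. Thus there is a unique dual train track of $\calT_i$ with large branch $e^*$ for each of the three edges $e$ of $\calT_i$. I will take $\tau_i$ to be the dual train track of $\calT_i$ with large branch $e_i^*$ for $0 \leq i \leq d-1$, and let $\tau_d$ be whatever results from the final split.

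The core of the argument is a local claim: a generic split at the large branch $e_i^*$ of $\tau_i$ yields a dual train track of $\calT_{i+1}$, and the two possible generic (left and right) splits produce precisely the two dual train tracks of $\calT_{i+1}$ whose large branches are the two edges of $\calT_i$ that survive the Pachner move (that is, not the newly introduced diagonal $e_i'$). Since the geodesic in the Farey tree does not backtrack, $e_{i+1} \neq e_i'$, so $e_{i+1}$ is one of the two surviving edges; choosing the split direction accordingly makes $\tau_{i+1}$ the dual train track with large branch $e_{i+1}^*$. Verifying this local claim is the main technical step, and amounts to a direct case analysis of the local pictures of the Pachner move and of the dual split.

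Finally, the required properties of $\tau$ and $\tau'$ must be checked. Filling (in the train track sense): such a dual train track has Euler characteristic $-1$ and leaves a single complementary region in the once-punctured torus, namely an annular neighbourhood of the puncture with two cusps (one per switch), hence of index $-1 < 0$. Recurrence follows because the switch condition $x_{e^*} = x_{f^*} + x_{g^*}$ admits positive solutions on every branch, giving carried simple closed curves running over every branch. Transverse recurrence follows by exhibiting for each branch a transverse simple closed curve that avoids complementary bigons, which is straightforward given only three branches and two switches. Finally, $\tau$ carries simple closed curves of many distinct slopes (including the three slopes of the edges of $\calT_i$), so every essential simple closed curve in the once-punctured torus intersects at least one of them; hence the carried curves fill. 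The main obstacle throughout is the third paragraph---verifying that the generic split direction can be chosen to produce the desired new large branch---which is direct but requires careful bookkeeping of the local configuration of half-branches at each switch.
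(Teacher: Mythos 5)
Your proof is correct, but it takes a genuinely different and more elementary route than the paper. The paper's argument proceeds dynamically: by Proposition~\ref{Prop:PseudoAnosovExists} it finds a linear Anosov $\phi$ with axis through the Farey vertices of $\calT$ and $\calT'$, uses the stable lamination of $\phi$ to induce edge weights on $\calT$ (and hence a dual measured train track carrying the lamination), and then invokes Agol's theorem (\refthm{Agol}) to produce a maximal splitting sequence; since that sequence is eventually $\phi$-periodic and starts on the axis, it simply traverses the axis and so passes through the dual of $\calT'$. You instead bypass the lamination and Agol's theorem entirely: you walk along the geodesic in the Farey tree, at each step choosing the dual train track of $\calT_i$ whose large branch sits over the edge $e_i$ to be flipped next, and verify by hand that the left or right split at that branch realizes the Pachner move and leaves the large branch over one of the two surviving edges, which by non-backtracking of the geodesic is exactly $e_{i+1}$. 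What the paper's approach buys is that the correct split direction and the persistence of the sequence are furnished automatically by the carried lamination, with no local case analysis; what your approach buys is that it is self-contained, uses only elementary Farey-tree combinatorics, and makes the one-to-one correspondence between splits and Pachner moves completely explicit (it also yields exactly $d$ splits, whereas the paper's axis-following argument passes through $v'$ after $d$ splits as a consequence of the non-backtracking of the splitting path). Your verification of birecurrence and filling is also direct (using the switch equation $x_{e^*}=x_{f^*}+x_{g^*}$ and the fact that the slopes $f$ and $g$ are carried and already fill), whereas the paper reduces all three train tracks to a single model $\tau''$ via homeomorphisms; both are fine. The one step you flag but do not write out---the local case analysis showing the left/right splits correspond to promoting $f^*$ or $g^*$ to the large branch---is indeed the heart of the argument and would need to be done carefully for a complete write-up, but the reasoning via weights (weight on $e^*$ is $a+b$, after the flip the new diagonal carries $|a-b|$ while $\max(a,b)$ stays on the surviving edge) confirms it is correct.
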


\begin{proof}
First observe that if $\calT$ and $\calT'$ differ by a 2-2 Pachner move, then it is straightforward to realise their dual trees as train tracks $\tau$ and $\tau'$ in the once-punctured torus that differ by a single split.

So we assume that $\mathcal{T}$ and $\mathcal{T}'$ do not differ by a 2-2 Pachner move.
The triangulations $\mathcal{T}$ and $\mathcal{T}'$ correspond to vertices $v$ and $v'$ of the Farey tree. By Proposition \ref{Prop:PseudoAnosovExists},
there is a linear Anosov homeomorphism $\phi$ of the torus taking $v$ to $v'$. Moreover, the axis of $\phi$ goes through $v$ and $v'$. This has a stable lamination
$\mathcal{L}$ with a transverse measure. We may isotope $\mathcal{L}$ so that it intersects each triangle of $\mathcal{T}$ in normal arcs. The edges of $\mathcal{T}$ then inherit a transverse measure. There are three possible normal arc types in each triangle, and some triangle must be missing an arc type, as otherwise $\mathcal{L}$ would contain a simple closed curve encircling the puncture. Thus, in that triangle, the three edges have measures $a$, $b$ and $a+b$ for some non-negative real numbers $a$ and $b$. As this is the torus, these are the three edges of the other triangle of $\mathcal{T}$, and hence this triangle is also missing an arc type. Now in fact, $a$ and $b$ must both be positive, as otherwise $\mathcal{L}$ would be a thickened simple closed curve. So, the weights on the edges determine a train track $\tau = \tau_0$ that is dual to $\mathcal{T}$ and that carries $\mathcal{L}$. See Figure~\ref{Fig:TrainTrackTorus}. Let $\mu_0$ be the transverse measure on $\tau_0$.

\begin{figure}
  \import{figures/}{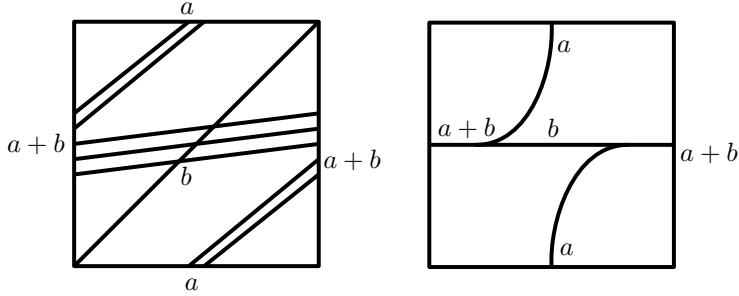}
  \caption{Left: A simple closed curve on a triangulated torus. Right: The associated measured train track}
  \label{Fig:TrainTrackTorus}
\end{figure}

We can view $\phi$ as specifying a pseudo-Anosov homeomorphism of the once punctured torus, where its stable lamination is again $\mathcal{L}$.
We now apply Agol's result, \refthm{Agol},
which provides a splitting sequence, giving a sequence of transversely measured train tracks $(\tau_i, \mu_i)$ starting at $(\tau_0, \mu_0)$. A split does not increase the number of complementary regions of a train track. Hence, each train track $\tau_i$ has a single complementary region that is an annular neighbourhood of the puncture. It is therefore dual to an ideal triangulation $\mathcal{T}_i$. Thus, this sequence of train tracks produces an injective path in the Farey tree starting at $v$. Since it is eventually periodic, at some point, this path must land on the axis of $\phi$ and follow this axis from then onwards. However, $v$ is already on the axis of $\phi$. Thus, this path just follows the axis. The axis goes through $v'$, and so when the path reaches $v'$, the result is a train track dual $\tau'$ to $\mathcal{T}'$. Thus, the required splitting sequence has been produced.

We now show that $\tau$ and $\tau'$ are birecurrent. Let $\tau''$ be the train track that is dual to the ideal triangulation with edges having slopes $1/0$, $0/1$ and $1/1$, and where the latter is dual to the large branch. There is a homeomorphism of the once-punctured torus taking the ideal triangulation dual to $\tau$ to the one with edges $1/0$, $0/1$ and $1/1$, and taking the edge dual to the large branch of $\tau$ to $1/1$. Thus, there is a homeomorphism taking $\tau$ to $\tau''$. Similarly, there is a homeomorphism taking $\tau'$ to $\tau''$. So it suffices to show that $\tau''$ is birecurrent. But the simple closed curves with slopes $1/0$, $0/1$ and $1/1$ can be arranged to intersect the branches of $\tau''$ in the required way, thereby establishing that $\tau''$ is transversely recurrent. Also, $\tau''$ is recurrent, since for each of its three branches, there is an obvious simple closed curve carried by $\tau''$ that runs over this branch. Hence, $\tau''$ is birecurrent, as required.

Finally, the curves carried by $\tau''$ fill the once-punctured torus, since they include $1/0$ and $0/1$. Hence, the curves carried by $\tau$ also fill the once-punctured torus, as do the curves carried by $\tau'$.
\end{proof}

%%%%%%%%%%%%%%%%%%%%%%%%%%%%%%%%%%%%%%%%%%%%%%%%%%%%%%%%%%%%%%%%%
\section{Branched covers of the torus}
\label{Sec:BrancherCover}

We will consider branched covering maps $p \colon S \rightarrow T$, where $T$ is the torus and $S$ is a closed orientable surface. We will require that there is a single branch point $b$ in $T$. Our first result says that any 1-vertex triangulation $\mathcal{T}$ of $T$ has a well-defined lift to $S$.

\begin{lemma}
Let $p \colon S \rightarrow T$ be a branched cover of the torus $T$, branched over a single point $b$ in $T$. Let $\mathcal{T}$ and $\mathcal{T}'$ be isotopic 1-vertex triangulations of the torus $T$, with their vertices both equal to $b$. Then their inverse images $\widetilde{\mathcal{T}}$ and $\widetilde{\mathcal{T}}'$ in $S$ are isotopic.
\end{lemma}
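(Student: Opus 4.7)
The plan is to turn the given ambient isotopy between $\calT$ and $\calT'$ on $T$ into an ambient isotopy between $\widetilde\calT$ and $\widetilde\calT'$ on $S$, by lifting through the unbranched covering $p \from S\setminus p^{-1}(b) \to T\setminus\{b\}$. Fix an ambient isotopy $H \from T \times [0,1] \to T$ with $H_0 = \id$ and $H_1(\calT) = \calT'$. Since vertex sets must map to vertex sets, $H_1(b)=b$, but the path $\gamma(t) := H_t(b)$ is in general a nontrivial loop at $b$, which is the one thing that can obstruct a direct lift.

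The first step is to correct $H$ so that it fixes $b$ at every time. Viewing $T$ as $\RR^2/\ZZ^2$, lift $\gamma$ to a path $\tilde\gamma \from [0,1] \to \RR^2$; then $\tilde\gamma(1) - \tilde\gamma(0) \in \ZZ^2$. Let $\phi_t \from T \to T$ be translation by $\tilde\gamma(t)-\tilde\gamma(0) \pmod{\ZZ^2}$, so that $\phi_0 = \phi_1 = \id$ and $\phi_t(b) = \gamma(t)$. Then $H'_t := \phi_t^{-1} \circ H_t$ is an ambient isotopy from $\id$ to $H_1$ with $H'_t(b)=b$ for every $t \in [0,1]$.

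Next I would lift $H'$ to $S$ using the covering map $p \from S\setminus p^{-1}(b) \to T\setminus\{b\}$. Define $F \from (S \setminus p^{-1}(b)) \times [0,1] \to T \setminus \{b\}$ by $F(\tilde x, t) = H'_t(p(\tilde x))$. Since $F(\cdot, 0) = p$ and every loop in $(S \setminus p^{-1}(b)) \times [0,1]$ is homotopic rel basepoint to one in $(S \setminus p^{-1}(b)) \times \{0\}$, the induced map $F_*$ on fundamental groups coincides with $p_*$, so the covering-space lifting criterion is met. Hence $F$ lifts uniquely to $\widetilde F \from (S \setminus p^{-1}(b)) \times [0,1] \to S \setminus p^{-1}(b)$ with $\widetilde F(\tilde x, 0) = \tilde x$. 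Applying the same construction to the inverse isotopy produces an inverse family, so $\widetilde H'_t := \widetilde F(\cdot, t)$ is a continuous isotopy of $S \setminus p^{-1}(b)$ beginning at the identity and covering $H'$.

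The final step is to extend $\widetilde H'$ across the branch locus by setting $\widetilde H'_t(\tilde b) := \tilde b$ for each $\tilde b \in p^{-1}(b)$. In local coordinates $z \mapsto z^k$ around $\tilde b$, the relation $p \circ \widetilde H'_t = H'_t \circ p$ together with $H'_t(b)=b$ and uniform continuity of $H'$ on $T \times [0,1]$ forces $\widetilde H'_t(z) \to \tilde b$ as $z \to \tilde b$, uniformly in $t$, so the extension is continuous. The result is an ambient isotopy of $S$ starting at $\id$ whose time-$1$ map lifts $H_1$, and therefore carries $\widetilde\calT = p^{-1}(\calT)$ to $p^{-1}(\calT') = \widetilde\calT'$. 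The main obstacle is the first step: the loop $\gamma$ could genuinely prevent a direct lift, and the abelian-group structure on the torus is precisely what is needed to produce a family $\phi_t$ of global homeomorphisms realising the loop as a null-isotopy of $(T,b)$.
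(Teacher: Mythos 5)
Your proof is correct and reaches the same conclusion as the paper's, but by a more elementary, explicit route. The paper observes that the time-one map $h=H_1$ fixes $b$, and then cites the Birman exact sequence for the torus to conclude that the forgetful map $\MCG(T,b)\to\MCG(T)$ is an isomorphism, so $h$ is isotopic to the identity through homeomorphisms fixing $b$; the remainder (lifting the pointed isotopy through the branched covering) matches yours. You instead produce the pointed isotopy directly by correcting $H$ with the translation family $\phi_t$. These are really the same mechanism made concrete: the Birman sequence degenerates for the torus precisely because the evaluation fibration $\mathrm{Homeo}^+(T^2)\to T^2$ admits a section given by translations, and your $\phi_t$ is that section applied along $\gamma(t)=H_t(b)$. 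Your version is self-contained and avoids the citation; the paper's is terser and signals the general principle. One minor point worth making explicit in your last step: to conclude $\widetilde H'_t(z)\to\tilde b$ rather than to some other point of $p^{-1}(b)$, you should combine the estimate you give (placing $\widetilde H'_t(z)$ in $p^{-1}$ of a small punctured disc about $b$) with connectedness of $[0,1]$ and the fact that $p^{-1}$ of that disc is a disjoint union of punctured discs, so the continuous path $t\mapsto\widetilde H'_t(z)$, starting near $\tilde b$, cannot jump components. This is easy but \emph{forces} is a little too quick.
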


\begin{proof} The triangulations $\mathcal{T}$ and $\mathcal{T}'$ are isotopic, but the isotopy is not assumed to preserve basepoints. The isotopy is a 1-parameter family of homeomorphisms, the final one being a homeomorphism $h \colon (T,b) \rightarrow (T,b)$. The Birman exact sequence \cite[Section 4.2.3]{FarbMargalit} for the torus gives that the natural map $\mathrm{MCG}(T, \hbox{1 point}) \rightarrow \mathrm{MCG}(T)$ is an isomorphism. Here, $\mathrm{MCG}(T, \hbox{1 point})$ denotes the group of orientation-preserving homeomorphisms of $T$ that fix a specific point, up to isotopies that fix this point throughout. Hence, the homeomorphism $h$ is isotopic to the identity, via an isotopy that keeps $b$ fixed throughout. This isotopy lifts to an isotopy of $S$ that keeps $p^{-1}(b)$ fixed throughout. This isotopy takes $\widetilde{\mathcal{T}}$ to $\widetilde{ \mathcal{T}}'$.
\end{proof}

As a consequence of the above lemma, it makes sense to compare distances in $\mathrm{Tr}(T)$ with distances in suitable triangulation graphs for $S$.

\begin{theorem} 
\label{Thm:BranchedCoverQI}
Let $p \colon S \rightarrow T$ be a branched cover of the torus, branched over a single point $b$ in $T$, with finite degree $\mathrm{deg}(p)$.
Suppose that the branching index around each point in $p^{-1}(b)$ is greater than $1$. Let $\mathcal{T}_1$ and $\mathcal{T}_2$ be 1-vertex triangulations of the torus $T$ with vertex at $b$, and let $\widetilde{\mathcal{T}}_1$ and $\widetilde{\mathcal{T}}_2$ be their inverse images in $S$. Then there are constants $k_1, k_2 > 0$, depending only on $S$, such that
\[ k_1 \, d_{\mathrm{Tr}(T)} (\mathcal{T}_1, \mathcal{T}_2) - k_2
\leq d_{\mathrm{Tr}(S; -\chi(S))} (\widetilde{\mathcal{T}}_1, \widetilde{\mathcal{T}}_2)
\leq \mathrm{deg}(p) d_{\mathrm{Tr}(T)} (\mathcal{T}_1, \mathcal{T}_2). \]
\end{theorem}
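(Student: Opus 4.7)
For the upper bound, the plan is to lift an optimal Pachner sequence on $T^2$ one move at a time. A single 2-2 move flips an edge $e$ in some intermediate 1-vertex triangulation; since $e$ together with the interiors of its two adjacent triangles lies in $T^2 \setminus \{b\}$, each of the $\deg(p)$ lifts of $e$ in $S$ has its own pair of adjacent triangles, and these $\deg(p)$ quadrilaterals have pairwise disjoint interiors. Thus the move on $T^2$ lifts to $\deg(p)$ independent 2-2 moves on $S$, performable in sequence within $\mathrm{Tr}(S;-\chi(S))$. As a preliminary, note that because each branching index at $p^{-1}(b)$ is at least $2$, Riemann-Hurwitz gives $|p^{-1}(b)| \leq \deg(p)/2$ and $\chi(S) = |p^{-1}(b)| - \deg(p)$, so $|p^{-1}(b)| \leq -\chi(S)$, whence $\widetilde{\mathcal{T}}_i \in \mathrm{Tr}(S;-\chi(S))$.

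For the lower bound, the plan is to lift a splitting sequence. By \refthm{DualTrainTracks} applied to $\mathcal{T}_1, \mathcal{T}_2$ viewed as ideal triangulations of $T^2 \setminus \{b\}$, there are filling birecurrent train tracks $\tau_i$ dual to $\mathcal{T}_i$ together with a splitting sequence from $\tau_1$ to $\tau_2$ of length $L = d_{\mathrm{Tr}(T^2)}(\mathcal{T}_1, \mathcal{T}_2)$. Lifting through the unbranched cover $p\colon S \setminus p^{-1}(b) \to T^2 \setminus \{b\}$ yields smooth graphs $\tilde\tau_i \subset S \setminus p^{-1}(b)$ in which each split at a large branch of $\tau$ lifts to $\deg(p)$ simultaneous splits at the $\deg(p)$ large branches above it, giving a splitting sequence from $\tilde\tau_1$ to $\tilde\tau_2$ of length $\deg(p) \cdot L$. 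Capping off the punctures converts each annular complementary region of $\tilde\tau_i$ around a preimage of $b$ into a disc, so $\tilde\tau_i$ becomes a filling train track in closed $S$ dual to $\widetilde{\mathcal{T}}_i$. Applying \refthm{QuasiGeodesicSplittingSequence}, and using that $\mathrm{Tr}(S;-\chi(S))$ is a subgraph of $\mathrm{Tr}(S;-2\chi(S))$ and hence that distances can only decrease in passing from the former to the latter, produces a constant $K = K(S) > 0$ with
\[
d_{\mathrm{Tr}(S;-\chi(S))}(\widetilde{\mathcal{T}}_1, \widetilde{\mathcal{T}}_2) \;\geq\; K^{-1} \deg(p) \cdot L - K \;\geq\; K^{-1} d_{\mathrm{Tr}(T^2)}(\mathcal{T}_1,\mathcal{T}_2) - K.
\]
The degenerate cases $\mathcal{T}_1 = \mathcal{T}_2$ and $L = 1$, where \refthm{DualTrainTracks} does not apply or yields a splitting sequence too short for the quasi-isometry constants, are absorbed into the additive constant $k_2$.

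The main obstacle is that \refthm{QuasiGeodesicSplittingSequence} demands birecurrence of $\tilde\tau_i$ in the closed surface $S$. Recurrence lifts cleanly: for a branch $\tilde\beta$ of $\tilde\tau_i$, the downstairs branch $p(\tilde\beta)$ is traversed by a simple closed curve $C$ carried by the recurrent $\tau_i$, and since $C$ avoids $b$, the preimage $p^{-1}(C)$ is a disjoint union of simple closed curves carried by $\tilde\tau_i$, one of which runs over $\tilde\beta$. Transverse recurrence is trickier, since a naive lift of a transverse curve $D$ may produce bigons in $\tilde\tau_i \cup p^{-1}(D)$ that do not descend cleanly to bigons in $\tau_i \cup D$ downstairs; I would either choose the downstairs transverse curve carefully, exploiting that $p$ is unbranched away from $p^{-1}(b)$, or invoke the standard fact that birecurrence of a filling train track is preserved under finite unbranched covers.
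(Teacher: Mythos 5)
Your proposal is correct and takes essentially the same approach as the paper: lift Pachner moves for the upper bound, and for the lower bound lift the splitting sequence from Theorem~\ref{Thm:DualTrainTracks} through the punctured cover, cap off, and invoke Theorem~\ref{Thm:QuasiGeodesicSplittingSequence}, checking (bi)recurrence and filling by lifting the witnessing curves. The minor differences are cosmetic: you bound $|p^{-1}(b)|$ by $-\chi(S)$ via Riemann--Hurwitz, while the paper does it by counting cusps of the lifted train track's complementary regions, and you are more explicit than the paper both about the $\mathrm{Tr}(S;-\chi(S))\subseteq\mathrm{Tr}(S;-2\chi(S))$ monotonicity step and about the care needed for transverse recurrence under lifting (which the paper states without the bigon-projection argument).
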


\begin{proof}
The upper bound follows immediately from the fact that a 2-2 Pachner move on a triangulation of $T$ with vertex at $b$ induces $\mathrm{deg}(p)$ 2-2 Pachner moves on the corresponding triangulation of $S$.

So, we focus on the other inequality. We may assume that $\mathcal{T}_1$ and $\mathcal{T}_2$ do not differ by a 2-2 Pachner move, for otherwise
$ d_{\mathrm{Tr}(T)} (\mathcal{T}_1, \mathcal{T}_2) = 1$, and the inequality is trivial.

We view $\mathcal{T}_1$ and $\mathcal{T}_2$ as ideal triangulations of the once-punctured torus. By \refthm{DualTrainTracks}, they are dual to filling birecurrent train tracks $\tau_1$ and $\tau_2$, and there is a splitting sequence taking $\tau_1$ to $\tau_2$ such that the length of this sequence is $d_{\mathrm{Tr}(T)} (\mathcal{T}_1, \mathcal{T}_2)$.
View these as pre-tracks in the torus disjoint from the branch point $b$. Their inverse images $\tilde{\tau}_1$ and $\tilde{\tau}_2$ in $S$ are pre-tracks. In fact, they are train tracks, because each complementary region is a regular neighbourhood of a point of $p^{-1}(b)$. Since the branching index around this point is greater than $1$, the inverse image of the two cusps around $b$ is at least four cusps.
Then the index of the complementary region is at most $\chi(\mbox{disc})-\half(4) \leq -1$. 
Hence the number of compementary regions of $\tilde{\tau}_1$ and $\tilde{\tau}_2$ is at most $-\chi(S)$. So their duals
$\widetilde\calT_1$ and $\widetilde\calT_2$ each have at most $-\chi(S)$ vertices.

The splitting sequence from $\tau_1$ to $\tau_2$ lifts to a splitting sequence from $\tilde{\tau}_1$ to $\tilde{\tau}_2$, of length $\deg(p) d_{\Tr(T^2)}(\calT_1, \calT_2)$. 
We wish to use \refthm{QuasiGeodesicSplittingSequence} to show that the number of splits gives a lower bound on $d_{\Tr(S;-\chi(S))}(\widetilde{\calT}_1,\widetilde{\calT}_2)$, up to multiplicative error depending only on the Euler characteristic of $S$. 
We need to check the hypotheses of \refthm{QuasiGeodesicSplittingSequence}.

We first show that $\tilde{\tau}_1$ is transversely recurrent. Consider any branch $\tilde{e}$ of $\tilde{\tau}_1$. It projects to a branch $e$ of $\tau_1$. Since $\tau_1$ is transversely recurrent, there is a simple closed curve $C$ through $e$ that intersects $\tau_1$ transversely and where $\tau \cup C$ has no bigon complementary region. Let $\tilde C$ be the inverse image of $C$ in $S$. The component of $\tilde C$ going through $\tilde{e}$ establishes the transverse recurrence of $\tilde{\tau}_1$. The same argument establishes that $\tilde{\tau}_2$ is transversely recurrent.

We now show that $\tilde{\tau}_1$ is recurrent. For each branch $\tilde{e}$ of $\tilde{\tau}_1$, let $e$ be its image branch in $\tau_1$. There is a curve $C$ carried by $\tau_1$ running over $e$. Its inverse image in $S$ is a collection $\tilde C$ of curves carried by $\tilde{\tau}_1$. One of these runs over $\tilde e$ as required.

We now show that the curves carried by $\tilde{\tau}_1$ fill $S$.
Let $C$ be a finite collection of curves carried by $\tau_1$ that fill the punctured torus. Place the curves of $C$ in minimal position with respect to each other, in the sense that no two of them have a bigon complementary region. Then the complement of these curves in $T$ is a union of discs. Let $\tilde C$ be the inverse image of these curves in $S$. These are carried by $\tilde{\tau}_1$. They are in minimal position. Their complement in $S$ is a union of discs. Hence, $\tilde C$ fills $S$.

Thus, the hypotheses of \refthm{QuasiGeodesicSplittingSequence} hold and so $d_{\mathrm{Tr}(S; -\chi(S))} (\widetilde{\mathcal{T}}_1, \widetilde{\mathcal{T}}_2)$ is at least the number of splits in a sequence taking $\tilde{\tau}_1$ to $\tilde{\tau}_2$, up to bounded multiplicative error with bound depending only on $S$. We know from above that this number of splits is $\deg(p) d_{\mathrm{Tr}(T)} (\mathcal{T}_1, \mathcal{T}_2)$. It follows that $d_{\mathrm{Tr}(S; -\chi(S))} (\widetilde{\mathcal{T}}_1, \widetilde{\mathcal{T}}_2)$ is at least
$k_1\, d_{\mathrm{Tr}(T)} (\mathcal{T}_1, \mathcal{T}_2) - k_2$, for some constants $k_1, k_2 > 0$ depending only on $S$.
\end{proof}

Suppose $p\from S\to T$ is a branched cover of the torus, branched over a single point $b$ with degree $\deg(p)$. Suppose $\Gamma$ is a spine for $T$ disjoint from $b$. Then observe that the inverse image of $\Gamma$ in $S$ might not be a spine, as its complement consists of at most $\deg(p)$ discs. However, a spine can be formed by removing at most $\deg(p)-1$ edges.

\begin{corollary}\label{Cor:SpineDistanceBranchedCover}
Let $p \colon S \rightarrow T$ be a branched cover of the torus, branched over a single point $b$, with finite degree $\mathrm{deg}(p)$. Suppose that the branching index around each point in $p^{-1}(b)$ is greater than $1$. Let $\Gamma_1$ and $\Gamma_2$ be spines for $T$ that are disjoint from $b$, and let $\widetilde \Gamma_1$ and $\widetilde \Gamma_2$ be their inverse images in $S$. Remove at most $\mathrm{deg}(p) - 1$ edges from each of $\widetilde \Gamma_1$ and $\widetilde{\Gamma}_2$ to form spines $\widetilde{\Gamma}_1'$ and $\widetilde{\Gamma}_2'$ for $S$. Then there are constants $c_1,C_1,c_2, C_2 > 0$, depending only on $p$, such that
\[ c_1 \, d_{\mathrm{Sp}(T)} (\Gamma_1, \Gamma_2) - c_2
\leq d_{\mathrm{Sp}(S)} (\widetilde{\Gamma}_1', \widetilde{\Gamma}_2')
\leq C_1 \, d_{\mathrm{Sp}(T)} (\Gamma_1, \Gamma_2) + C_2. \]
\end{corollary}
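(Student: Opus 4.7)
My plan is to translate the spine problem into a triangulation problem, apply \refthm{BranchedCoverQI}, and then translate back.

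Let $\mathcal{T}_i$ be the 1-vertex triangulation of $T$ with vertex at $b$ that is dual to $\Gamma_i$. Since a 2-2 Pachner move between 1-vertex triangulations dualises (as explained between Definitions~\ref{Def:SpineGraph} and~\ref{Def:TriangulationGraphVertices}) to a contract/expand pair of spine moves and vice versa, \reflem{QIMaps} shows that $d_{\mathrm{Sp}(T)}(\Gamma_1, \Gamma_2)$ and $d_{\mathrm{Tr}(T)}(\mathcal{T}_1, \mathcal{T}_2)$ are within a bounded ratio. Let $\widetilde{\mathcal{T}}_i = p^{-1}(\mathcal{T}_i)$; this is a triangulation of $S$ whose vertices lie in $p^{-1}(b)$, and by Riemann--Hurwitz it has at most $-\chi(S)$ vertices. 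Now \refthm{BranchedCoverQI} gives an affine bi-Lipschitz comparison between $d_{\mathrm{Tr}(S;-\chi(S))}(\widetilde{\mathcal{T}}_1, \widetilde{\mathcal{T}}_2)$ and $d_{\mathrm{Tr}(T)}(\mathcal{T}_1, \mathcal{T}_2)$, with constants depending only on $p$.

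It remains to relate $d_{\mathrm{Sp}(S)}(\widetilde{\Gamma}_1', \widetilde{\Gamma}_2')$ to $d_{\mathrm{Tr}(S;-\chi(S))}(\widetilde{\mathcal{T}}_1, \widetilde{\mathcal{T}}_2)$. By \reflem{QIMaps} both graphs are quasi-isometric to $\mathrm{MCG}(S)$, hence to each other; under such a quasi-isometry, it suffices to show that each $\widetilde{\Gamma}_i'$ lies within bounded distance of the image of $\widetilde{\mathcal{T}}_i$. For this, observe that $\widetilde{\Gamma}_i$ (before any edge deletions) is precisely the 1-skeleton of the dual cell complex to $\widetilde{\mathcal{T}}_i$, so its complementary discs are in bijection with the vertices of $\widetilde{\mathcal{T}}_i$. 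Deleting at most $\mathrm{deg}(p) - 1$ edges of $\widetilde{\Gamma}_i$ to produce $\widetilde{\Gamma}_i'$ corresponds dually to contracting a spanning tree in the 1-skeleton of $\widetilde{\mathcal{T}}_i$, yielding a one-vertex pseudo-triangulation of $S$. Following the local strategy of \reflem{ToOneVertexTriangulation} (reduce vertex valences by 2-2 moves, then apply a 3-1 move), the edge contractions can be realised by a bounded number of 2-2 and 3-1 Pachner moves, the bound depending only on $\mathrm{deg}(p)$ and $\chi(S)$. The resulting 1-vertex triangulation of $S$ is dual to $\widetilde{\Gamma}_i'$ and is at bounded distance from $\widetilde{\mathcal{T}}_i$ in $\mathrm{Tr}(S;-\chi(S))$. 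Combining the three steps yields the desired inequalities.

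The main obstacle I anticipate is this last step: controlling, uniformly in terms of $p$, both the effect of different choices of which edges to delete (different choices give different spines $\widetilde{\Gamma}_i'$, which should be within bounded distance in $\mathrm{Sp}(S)$ of each other, with \reflem{EdgeSwapBound} providing the basic estimate) and the number of Pachner moves needed to pass from the multi-vertex triangulation $\widetilde{\mathcal{T}}_i$ to a 1-vertex triangulation dual to $\widetilde{\Gamma}_i'$. The Pachner move count is bounded because the number of vertices of $\widetilde{\mathcal{T}}_i$ is at most $-\chi(S)$, and the local valences at these vertices can be controlled via successive 2-2 reductions before a 3-1 move, just as in the torus case handled by \reflem{ToOneVertexTriangulation}.
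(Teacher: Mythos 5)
Your proposal follows essentially the same path as the paper: translate spines to triangulations via \reflem{QIMaps}, apply \refthm{BranchedCoverQI} as the key ingredient, and translate back, with \reflem{EdgeSwapBound} absorbing the ambiguity in which edges of $\widetilde\Gamma_i$ are removed. The paper packages the translation more abstractly as a composition of quasi-isometries $\mathrm{Sp}(T) \to \mathrm{Tr}(T) \to \mathrm{Tr}(S;-\chi(S)) \to \mathrm{Tr}(S) \to \mathrm{Sp}(S)$, including an explicit mapping-class-group-invariant quasi-inverse $\mathrm{Tr}(S;n) \to \mathrm{Tr}(S)$ (dualise, remove edges to get a spine, dualise again), whereas you unpack the final passage more concretely via edge contractions and Pachner moves — but the underlying argument is the same.
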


\begin{proof}
Note first that it does not matter which edges of $\widetilde \Gamma_1$ and $\widetilde \Gamma_2$ that we remove. For suppose that $\widetilde \Gamma_1''$ and $\widetilde \Gamma_2''$ are other spines also obtained from $\widetilde \Gamma_1$ and $\widetilde{\Gamma}_2$ by removing at most $\mathrm{deg}(p) - 1$ edges from each. Then, $\widetilde \Gamma_1'$ and $\widetilde \Gamma_1''$ differ by at most $24(\mathrm{deg}(p) - 1)$ edge contractions and expansions by \reflem{EdgeSwapBound}, and similarly so do  $\widetilde \Gamma_2'$ and $\widetilde \Gamma_2''$. Thus the difference can be picked up by the constants. 

Define a map $\mathrm{Tr}(T) \rightarrow \mathrm{Tr}(S; -\chi(S))$ first on the vertices: This sends a vertex in $\mathrm{Tr}(T)$, corresponding to a 1-vertex triangulation of $T$ with vertex at $b$, to the vertex in $\mathrm{Tr}(S; -\chi(S))$ corresponding to the triangulation that is the inverse image of $S$. Each edge in $T$ corresponds to a 2-2 Pachner move and this lifts to $\mathrm{deg}(p)$ 2-2 Pachner moves in $S$. Hence, the map $\mathrm{Tr}(T) \rightarrow \mathrm{Tr}(S; -\chi(S))$ can also be defined on edges and is continuous. \refthm{BranchedCoverQI} implies that this map is a quasi-isometry. By \reflem{QIMaps}, the maps $\mathrm{Tr}(T) \rightarrow \mathrm{Sp}(T)$ and $\mathrm{Tr}(S) \rightarrow \mathrm{Sp}(S)$ are quasi-isometries, and the inclusion $\mathrm{Tr}(S) \rightarrow \mathrm{Tr}(S;n)$ is a quasi-isometry for any positive integer $n$.
A quasi-inverse is given by taking any triangulation with at most $n$ vertices, dualising it to form a trivalent graph, then removing edges to form a spine, and then dualising to form a 1-vertex triangulation. This construction can be chosen to be invariant under the mapping class group, and hence forms a quasi-isometry $\mathrm{Tr}(S;n) \rightarrow \mathrm{Tr}(S)$.
The composition
\[ \mathrm{Sp}(T) \rightarrow \mathrm{Tr}(T) \rightarrow  \mathrm{Tr}(S; -\chi(S)) \rightarrow \mathrm{Tr}(S) \rightarrow \mathrm{Sp}(S) \]
is a quasi-isometry. Thus, we obtain the required inequalities.
\end{proof}

\begin{remark}\label{Rmk:BranchedCover}
Theorem~\ref{Thm:BranchedCoverQI} and Corollary~\ref{Cor:SpineDistanceBranchedCover} require a branched cover of the torus with a single branch point $b$ and where each point of $p^{-1}(b)$ has branching index at least two. One such branched cover is obtained by the following process. First, take the $\mathbb{Z}/2 \times \mathbb{Z}/2$ cover of $T^2$ arising from the natural homomorphism $\pi_1(T^2) \rightarrow H_1(T^2; \mathbb{Z}/2)$. Then restrict this to a cover $F \rightarrow T^2 - \{ b \}$, where $F$ is a four-times punctured torus. Next form the cover of $F$ arising from $\pi_1(F) \rightarrow H_1(F; \mathbb{Z}/2)$. Finally complete this to form the required branched cover $S$ of $T^2$.
\end{remark}

%%%%%%%%%%%%%%%%%%%%%%%%%%%%%%%%%%%%%%%%%%%%%%%%%%%%%%%%%%%%%%%%%
\section{Triangulations and handle structures of products} 
\label{Sec:Products}

\begin{named}{\refthm{ProductComplexity}}
Let $\mathcal{T}_0$ and $\mathcal{T}_1$ be 1-vertex triangulations of the torus $T^2$. Let $\Delta(\mathcal{T}_0, \mathcal{T}_1)$ denote the minimal number of tetrahedra in any triangulation of $T^2 \times [0,1]$ that equals $\mathcal{T}_0$ on $T^2 \times \{ 0 \}$ and equals $\mathcal{T}_1$ on $T^2 \times \{ 1 \}$. Then there is a universal constant $k_{\mathrm{prod}} >0$ such that
\[ k_{\mathrm{prod}} \ d_{\mathrm{Tr}(T^2)}(\mathcal{T}_0, \mathcal{T}_1)  \leq \Delta(\mathcal{T}_0, \mathcal{T}_1) \leq d_{\mathrm{Tr}(T^2)}(\mathcal{T}_0, \mathcal{T}_1) + 6. \]
\end{named}

\begin{proof}
The upper bound is straightforward: Let $\alpha$ be a shortest path in $\mathrm{Tr}(T^2)$ from $\mathcal{T}_0$ to $\mathcal{T}_1$. This determines a sequence of 1-vertex triangulations, starting at $\mathcal{T}_0$ and ending at $\mathcal{T}_1$. We use this to build a triangulation of $T^2 \times [0,1]$, as follows. Start with $T^2 \times \{ 0 \}$ triangulated using $\mathcal{T}_0$. Then take the product of this with $[0,1]$ and triangulate each of the resulting prisms using 3 tetrahedra. If chosen correctly, these patch together to form a triangulation of $T^2 \times [0,1]$ with $6$ tetrahedra, where $T^2 \times \{ 0 \}$ and $T^2 \times \{ 1 \}$ are both triangulated using $\calT_0$. Then layer onto $T^2 \times \{ 1\}$ a sequence of tetrahedra, specified by the sequence of Pachner moves, until we reach $\calT_1$. This gives the upper bound.

As for the lower bound, let $p \colon S \rightarrow T^2$ be a branched cover, with single branch point $b$ and where each point of $p^{-1}(b)$ has branching index at least two.
For example, take the branched cover of Remark~\ref{Rmk:BranchedCover}.

Now let $\mathcal{T}$ be a triangulation of $T^2 \times [0,1]$ that equals $\mathcal{T}_0$ on $T^2 \times \{ 0 \}$ and equals $\mathcal{T}_1$ on $T^2 \times \{ 1 \}$ and that realises $\Delta(\mathcal{T}_0, \mathcal{T}_1)$. By \refthm{VerticalArc}, the 23rd iterated barycentric subdivision $\mathcal{T}^{(23)}$ contains an arc in its 1-skeleton that is vertical.
Note $\calT^{(23)}$ consists of $(24)^{23}\Delta(\calT_0,\calT_1)$ tetrahedra.
By Lemma~\ref{Lem:BarycentricPachner}, there is a sequence of at most $4 (1 + 6 + \dots + 6^{22}) < 6^{23}$ Pachner moves taking $\calT_0$ to $\calT_0^{(23)}$. Perform the reverse of this sequence, and realise each Pachner move on the boundary of $S \times [0,1]$ by attaching a tetrahedron to $S \times \{ 0 \}$. Do the same for $S \times \{ 1 \}$. Let $\calT_+$ be the resulting triangulation of $S \times [0,1]$. It has at most $(24)^{23} \Delta(\calT_0, \calT_1) + 2 \cdot 6^{23}$
tetrahedra and it equals $\calT_0$ and $\calT_1$ on its boundary. By Lemma \ref{Lem:MaintainVerticalAfterPachner}, it also contains an arc in its 1-skeleton that is vertical.
Hence, the inverse image of $\mathcal{T}_+$ under the branched covering map is a triangulation of $S \times [0,1]$. It has at most $(24)^{23}\,\deg(p)\Delta(\mathcal{T}_0, \mathcal{T}_1) + 2\cdot 6^{23} \, \deg(p)$ tetrahedra.

Let $\widetilde{\mathcal{T}}_0$ denote the restriction of the triangulation $p^{-1}(\calT_+)$ to $S \times \{ 0 \}$, and let $\widetilde{\mathcal{T}}_1$ denote the triangulation on $S \times \{ 1 \}$.
By \refthm{BranchedCoverQI}, there are constants $k_1,k_2 > 0$, depending only on $p$, such that
\begin{equation}\label{Eqn:TrBound}
  k_1 \, d_{\mathrm{Tr}(T)} (\mathcal{T}_0, \mathcal{T}_1) - k_2
  \leq d_{\mathrm{Tr}(S; -\chi(S))} (\widetilde{\mathcal{T}}_0, \widetilde{\mathcal{T}}_1).
\end{equation}

By \reflem{ToOneVertexTriangulation}, there is a sequence of at most $4 \, \mathrm{deg}(p)$ Pachner moves taking $\widetilde{\mathcal{T}}_0$ to a 1-vertex triangulation $\widetilde{\mathcal{T}}_0'$, and a sequence of at most $4 \, \mathrm{deg}(p)$ Pachner moves taking $\widetilde{\mathcal{T}}_1$ to a 1-vertex triangulation $\widetilde{\mathcal{T}}_1'$. Considering the reverse moves, we obtain
\begin{equation}\label{Eqn:TrSTildeTBound}
d_{\Tr(S;-\chi(S))}(\widetilde{\mathcal{T}}_0, \widetilde{\mathcal{T}}_1) \leq
d_{\Tr(S;-\chi(S))}(\widetilde{\mathcal{T}}_0', \widetilde{\mathcal{T}}_1')+ 8 \,\deg(p).
\end{equation}

Each Pachner move corresponds to the addition of a 3-simplex to the triangulation of $S \times [0,1]$. So, we obtain a triangulation $\mathcal{T}'$ of $S \times [0,1]$ that equals $\widetilde{\mathcal{T}}_0'$ on $S \times \{ 0 \}$, equals $\widetilde{\mathcal{T}}_1'$ on $S \times \{ 1 \}$ and has at most $(24)^{23} \mathrm{deg}(p) \Delta(\calT_0, \calT_1) + (2 \cdot 6^{23} + 8) \, \mathrm{deg}(p)$ tetrahedra.

Let $\Delta(\calT')$ denote the minimal number of tetrahedra in any triangulation of $S\times[0,1]$ that equals $\calT_0'$ on $S\times \{0\}$ and $\calT_1'$ on $S\times\{1\}$. By the above observation,
\begin{equation}\label{Eqn:DeltaBound}
  \Delta(\calT') \leq (24)^{23}\deg(p)\Delta(\calT_0,\calT_1) + (2 \cdot 6^{23} + 8) \,\deg(p).
\end{equation}
By \refthm{TriangulationProductOneVertex}, there is a constant $k_3 > 0$ depending only on $S$ such that 
\begin{equation}\label{Eqn:DeltaLowerBound}
  \Delta(\calT') \geq k_3 \ d_{\mathrm{Tr}(S)} (\widetilde{\mathcal{T}}'_0, \widetilde{\mathcal{T}}'_1).
\end{equation}

By \reflem{QIMaps}, there are constants $k_4, k_5 > 0$ depending only on $S$ and $\mathrm{deg}(p)$ such that
\begin{equation}\label{Eqn:QIBound}
  d_{\mathrm{Tr}(S)} (\widetilde{\mathcal{T}}'_0, \widetilde{\mathcal{T}}'_1) \geq k_4 \, d_{\mathrm{Tr}(S;-\chi(S))} (\widetilde{\mathcal{T}}'_0, \widetilde{\mathcal{T}}'_1) - k_5.
\end{equation}
Putting this all together, we obtain
\begin{align*}
(24)^{23} \mathrm{deg}(p) & \Delta(\calT_0, \calT_1) + (2\cdot 6^{23} + 8) \, \mathrm{deg}(p) \geq \Delta(\calT') & \mbox{by \eqref{Eqn:DeltaBound}}\\
& \geq k_3 \ d_{\mathrm{Tr}(S)} (\widetilde{\mathcal{T}}'_0, \widetilde{\mathcal{T}}'_1) & \mbox{by \eqref{Eqn:DeltaLowerBound}}\\
& \geq k_3 (k_4 \, d_{\mathrm{Tr}(S; -\chi(S))} (\widetilde{\mathcal{T}}'_0, \widetilde{\mathcal{T}}'_1) - k_5) & \mbox{by \eqref{Eqn:QIBound}} \\
& \geq k_3 k_4 (d_{\mathrm{Tr}(S; -\chi(S))} (\widetilde{\mathcal{T}}_0, \widetilde{\mathcal{T}}_1) - 8 \mathrm{deg}(p)) - k_3 k_5 & \mbox{by \eqref{Eqn:TrSTildeTBound}}\\
& \geq k_3 k_4 k_1 d_{\mathrm{Tr}(T)} (\mathcal{T}_0, \mathcal{T}_1) - k_3 k_4 k_2 - k_3 k_5 - 8 k_3 k_4 \mathrm{deg}(p) & \mbox{by \eqref{Eqn:TrBound}}
\end{align*}
This gives a linear lower bound on $\Delta(\calT_0, \calT_1)$ in terms of $d_{\mathrm{Tr}(T)} (\mathcal{T}_0, \mathcal{T}_1)$.
For all but at most finitely many positive values of $d_{\Tr(T)}(\calT_0, \calT_1)$, this lower bound will be positive and implies that there exists $k_{\mathrm{prod}} >0$ such that
\[ \Delta(\calT_0, \calT_1) \geq k_{\mathrm{prod}}  \, d_{\mathrm{Tr}(T)} (\mathcal{T}_0, \mathcal{T}_1).\]
For the remaining values, $\Delta(\calT_0, \calT_1) $ is positive, and so a universal $k_{\mathrm{prod}}  > 0$ can be chosen appropriately.
\end{proof}

In \cite{LackenbyPurcell:Fibred}, given a pre-tetrahedral handle structure $\calH$ on $S\times[0,1]$, we considered the number of edge swaps required to transfer a cellular spine on $S\times\{0\}$ to a cellular spine on $S\times\{1\}$. The main technical theorem of that paper, \cite[Theorem~9.11]{LackenbyPurcell:Fibred}, gives a linear lower bound on the number of edge swaps in terms of $\Delta(\calH)$ under appropriate hypotheses.
We will prove an analogous theorem for tori.

\begin{theorem}
\label{Thm:ProductToriHandles}
Let $\calH$ be a pre-tetrahedral handle structure for $T^2 \times [0,1]$ that admits no annular simplification. Let $\Gamma_0$ be a cellular spine in $T^2 \times \{ 0 \}$. Then there is a sequence of at most $k_{\mathrm{hand}} \, \Delta(\mathcal{H})$ edge swaps taking $\Gamma_0$ to a spine $\Gamma_1$ that is cellular with respect to $T^2 \times \{ 1 \}$. Here, $k_{\mathrm{hand}}$ is a universal constant.
\end{theorem}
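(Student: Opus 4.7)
The plan is to reduce to the higher-genus analogue, \cite[Theorem~9.11]{LackenbyPurcell:Fibred}, by passing to the branched cover $p\colon S\to T^2$ of Remark~\ref{Rmk:BranchedCover}. The reduction mirrors the branched-cover argument used in the proof of \refthm{ProductComplexity} above.

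First, I would use Lemma~\ref{Lem:DualToPreTetHS-Clipped} together with Remark~\ref{Rem:AddBoundaryTimesI} to convert $\calH$ into a triangulation $\calT$ of $T^2\times[0,1]$ with $\Delta(\calT)=O(\Delta(\calH))$. By \refthm{VerticalArc}, $\calT^{(23)}$ contains a simplicial vertical arc $\alpha$; after a bounded preliminary adjustment of $\Gamma_0$ (by a constant number of edge swaps) I would arrange for its endpoint $b=\alpha\cap(T^2\times\{0\})$ to lie off $\Gamma_0$, say in the interior of some boundary 2-cell. Using $\alpha$ as the branching locus, lift $\calT^{(23)}$ to a triangulation of $S\times[0,1]$ of complexity at most $\deg(p)\cdot 24^{23}\cdot\Delta(\calT)=O(\Delta(\calH))$. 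Dualising via Lemma~\ref{Lem:ComplexityDualHandleStructure} yields a pre-tetrahedral handle structure $\widetilde\calH$ on $S\times[0,1]$ with $\Delta(\widetilde\calH)=O(\Delta(\calH))$.

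Since $b\notin\Gamma_0$, the preimage $p^{-1}(\Gamma_0)$ in $S\times\{0\}$ has at most $\deg(p)$ complementary discs; removing at most $\deg(p)-1$ edges produces a cellular spine $\widetilde\Gamma_0'$ in the boundary cell structure of $\widetilde\calH$. I would then apply \cite[Theorem~9.11]{LackenbyPurcell:Fibred} to $\widetilde\calH$ and $\widetilde\Gamma_0'$ to obtain $O(\Delta(\widetilde\calH))=O(\Delta(\calH))$ edge swaps taking $\widetilde\Gamma_0'$ to some cellular spine $\widetilde\Gamma_1''$ on $S\times\{1\}$. Separately, extract any cellular spine $\Gamma_1$ on $T^2\times\{1\}$ from the boundary cell structure of $\calH$ there (of length $O(\Delta(\calH))$), and let $\widetilde\Gamma_1'$ be its lifted-and-trimmed counterpart. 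Since $\widetilde\Gamma_1'$ and $\widetilde\Gamma_1''$ are both cellular in a common cell structure on $S\times\{1\}$ of size $O(\Delta(\calH))$, Lemmas~\ref{Lem:SlidingOffDiscs} and~\ref{Lem:SlidingOffAnnuli} connect them by $O(\Delta(\calH))$ further edge swaps. Concatenating and invoking Corollary~\ref{Cor:SpineDistanceBranchedCover} descends to $d_{\mathrm{Sp}(T^2)}(\Gamma_0,\Gamma_1)=O(\Delta(\calH))$, which by \reflem{EdgeSwapBound} (with $g(T^2)=1$) is realised by linearly many edge swaps downstairs.

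The principal obstacle will be verifying the hypothesis of \cite[Theorem~9.11]{LackenbyPurcell:Fibred} that $\widetilde\calH$ admits no annular simplification. This does not descend automatically from the no-simplification hypothesis on $\calH$: an annular simplification in $S\times[0,1]$ need not project to one in $T^2\times[0,1]$, even accounting for the deck action. The expected remedy is to perform any available annular simplifications on $\widetilde\calH$ as a preliminary step, each strictly decreasing $\Delta(\widetilde\calH)$, while tracking $\widetilde\Gamma_0'$ across each simplification at an edge-swap cost controlled by the length of the relevant annuli, which in turn is bounded by $\Delta(\widetilde\calH)$ via \reflem{LengthVerticalBoundary}. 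A secondary technicality is arranging $b$ genuinely off $\Gamma_0$ while keeping it an endpoint of a vertical simplicial arc; this should follow from a careful choice in \refthm{VerticalArc} together with a bounded initial local adjustment.
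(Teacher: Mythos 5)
Your outline is a different route from the paper's, but there is a fatal gap at the very first step, and a second gap you correctly identify but do not close.

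\textbf{The first step fails outright.} Lemma~\ref{Lem:DualToPreTetHS-Clipped} (like Lemma~\ref{Lem:DualToPreTetHS}) requires that $\calH$ have \emph{no parallelity 0-handles}. The $\calH$ in the present theorem is an arbitrary pre-tetrahedral handle structure of $T^2\times[0,1]$ admitting no annular simplification, and it can have a large number of parallelity 0-handles. These contribute $\beta=0$ and typically $\alpha=0$ to the complexity of Definition~\ref{Def:ComplexHS}, so they add nothing to $\Delta(\calH)$ while each produces a bounded number of tetrahedra under the dualisation. Thus the claimed estimate $\Delta(\calT)=O(\Delta(\calH))$ is false in general --- indeed $\Delta(\calH)$ can be~$0$ while the number of handles is unbounded (e.g.\ when the parallelity bundle is all of $T^2\times[0,1]$). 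This is exactly why the paper's own proof must first analyse the maximal generalised parallelity bundle $\calB$: it treats the degenerate Case~1 ($\calB$ equals $T^2\times[0,1]$, where the spine is transferred using the product structure with no triangulation at all), and in Case~2 it replaces each $I$-bundle component of $\calB$ by one or two handles to form $\calH'$, which \emph{does} have no parallelity 0-handles and satisfies $\Delta(\calH')\le\Delta(\calH)$. Only after this replacement does dualisation give a linear bound. Your proposal skips this reduction, so the estimate at its foundation does not hold.

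\textbf{The annular simplification problem is not solved by the proposed remedy.} You are right that no-annular-simplification for $\calH$ does not automatically yield it for the lifted $\widetilde\calH$, but ``perform all available annular simplifications, tracking $\widetilde\Gamma_0'$'' is not a proof. There is no lemma in the paper controlling the edge-swap cost of tracking a cellular spine across an annular simplification, the number of simplifications needed is not \emph{a priori} bounded (so even a per-step linear cost could become quadratic), annular simplifications need not strictly decrease $\Delta(\widetilde\calH)$ (parallelity handles can have complexity~$0$), and once you simplify $\widetilde\calH$ it is no longer the preimage of anything under $p$, so the descent you invoke via Corollary~\ref{Cor:SpineDistanceBranchedCover} no longer directly applies. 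The paper sidesteps all of this: in Case~2 it invokes Theorem~\ref{Thm:ProductComplexity} (already proved via the branched cover and \cite[Theorem~1.4]{LackenbyPurcell:Fibred}) to bound the Pachner distance between the boundary triangulations of $\calH'$, and converts Pachner moves to edge swaps via Lemma~\ref{Lem:PachnerMoveSwap}. In other words, the branched-cover technology is already packaged inside Theorem~\ref{Thm:ProductComplexity}; re-running it inline for the handle structure is both unnecessary and, as written, broken. The minor technicality you flag about placing the branch point $b$ off $\Gamma_0$ is manageable but moot given the two gaps above.

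If you want to salvage the plan, the necessary first move is the paper's: analyse the maximal generalised parallelity bundle, deal with the all-of-$T^2\times I$ case separately, and collapse the remaining parallelity components before dualising. After that, citing Theorem~\ref{Thm:ProductComplexity} is cleaner than re-lifting.
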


\begin{proof}
For ease of notation, set $\Gamma_0 = \Gamma^{(0)}$.
In the course of the proof, we will obtain spines $\Gamma^{(0)}, \Gamma^{(1)} ,\dots, \Gamma^{(n)}$, with each $\Gamma^{(j)}$ obtained from $\Gamma^{(j-1)}$ by at most $k^{(j)}\Delta(\calH)$ edge swaps, where $k^{(j)}$ is a universal constant. The number of spines in the sequence will be universally bounded (by $n=6$), so the result follows.
  
First, let $\calB$ be a maximal generalised parallelity bundle for $\calH$. By \reflem{GenParIncompr}, each component of $\calB$ has incompressible horizontal boundary, and is either an $I$-bundle over a disc or has incompressible vertical boundary. Since $T^2 \times I$ does not contain a properly embedded M\"obius band or Klein bottle,
the only possibilities are that $\calB$ consists of a union of $I$-bundles over discs and annuli, or $\calB$ is all of $T^2\times I$.
We also claim that each component of $\calB$ that is an $I$-bundle over an annulus has one horizontal boundary component in $T^2 \times \{ 0 \}$ and one horizontal boundary component in $T^2 \times \{ 1 \}$. Suppose that on the contrary, there is an $I$-bundle over an annulus with both horizontal boundary components in the same component of $T^2 \times \partial I$. Then its two vertical boundary components are boundary-parallel. Pick such a component of $\calB$ that is outermost in $T^2 \times I$. Let $A'$ be its vertical boundary component that is not outermost. Then $\partial A'$ bounds an annulus $A$ in $T^2 \times \partial I$, and $A \cup A'$ bounds a product region $P$. Hence, $\calH$ admits an
annular simplification, contrary to hypothesis.

Let $\calB'\subset \calB$ denote the parallelity bundle for $\calH$. 
Recall from \refdef{GeneralisedParallelityBundle}~(6), the definition of a generalised parallelity bundle, that the intersection between $\partial_h \calB'$ and each essential component $A''$ of $\partial_h \calB$ contains a component that is equal to $A''$ with some discs $D$ removed from its interior. As a first step, we will adjust $\Gamma_0= \Gamma^{(0)}$ by sliding it off all such discs, using \reflem{SlidingOffDiscs}. 

By \reflem{LengthVerticalBoundary}, the length of $\bdy_v\calB'$ is at most $56\Delta(\calH)$.
Hence, the length of $\partial D$ is at most $112 \Delta(\calH)$. In particular, the number of components of $D$ is at most $112\Delta(\calH)$. Then \reflem{SlidingOffDiscs} implies that after at most $6\cdot112\Delta(\calH) + 2\cdot 112\Delta(\calH) = 896\Delta(\calH)$ edge swaps, we obtain a spine $\Gamma^{(1)}$ whose intersection with $\calB$ lies entirely within the parallelity bundle $\calB'$: $\Gamma^{(1)}\cap \calB \subset \calB'$. 

\medskip

\emph{Case 1.} $\calB$ is all of $T^2 \times [0,1]$.

Then some component of the parallelity bundle $\calB'$ is of the form $(T^2\setminus D)\times [0,1]$, where $D$ is a union of disjoint discs in $T^2$. In particular, $\Gamma^{(1)}$ lies in parallelity handles that run from $T^2\times\{0\}$ to $T^2\times\{1\}$ and that respect the product structure of $T^2 \times [0,1]$. Transfer $\Gamma^{(1)}$ to $T^2\times \{1\}$ using the product structure on $\calB'$, obtaining a spine $\Gamma^{(2)}$ in $T^2\times\{1\}$ without any additional edge swaps. Observe this is cellular in the cell structure associated with $\calH$, so set $\Gamma_1=\Gamma^{(2)}$. The proof is complete in this case. 

\medskip

\emph{Case 2.} $\calB$ consists of $I$-bundles over discs and annuli.

In this case, we cannot ensure that all of $\Gamma^{(1)}$ lies only in the parallelity bundle, and so we cannot transfer as simply as in the previous case. Instead, we will obtain a triangulation from a simplified handle structure and apply \refthm{ProductComplexity}.

First we adjust $\Gamma^{(1)}$ further. Again \reflem{LengthVerticalBoundary} implies that the length of $\bdy_v\calB$ is at most $56\Delta(\calH)$, and there are at most $112\Delta(\calH)$ components of $\bdy_h\calB$. 
We will now apply \reflem{SlidingOffAnnuli}. Adjust $\Gamma^{(1)}$ to a new spine $\Gamma^{(2)}$ that is disjoint from the interior of the disc components of $\bdy_h\calB$, and intersects the interior of each annular component of $\bdy_h\calB$ in at most one arc. Moreover, this arc is a subset of $\Gamma^{(1)}$ and so the arc lies in the parallelity bundle $\calB'$. By \reflem{SlidingOffAnnuli}, $\Gamma^{(2)}$ is obtained from $\Gamma^{(1)}$ from a number of edge swaps bounded by
\[ 6\cdot 112\Delta(\calH) + 16 + 2\cdot 112\Delta(\calH) + 2\cdot 112\Delta(\calH) \leq 10\cdot 112\Delta(\calH) + 16 \leq 1248\Delta(\calH). \]
Here we used that $\Delta(\calH)\geq1/8$, since otherwise $\Delta(\calH) = 0$ and the parallelity bundle for $\calH$ is then all of $\calH$, which is dealt with in Case 1.

Form a new 
handle structure $\calH'$ as follows. Replace each component of $\calB$ that is an $I$-bundle over a disc by a single 2-handle. Replace each component of $\calB$ that is an $I$-bundle over an annulus by a 1-handle and a 2-handle, arranged such that the intersection of these two handles contains an arc of $\Gamma^{(2)}$ within the annulus, if there is one. This is possible because each component of $\calB$ that is an $I$-bundle over an annulus intersects $T^2 \times \{ 0 \}$ in a single component.
Then $\Gamma^{(2)}$ remains a cellular spine in this new handle structure $\calH'$. Note $\calH'$ has no parallelity 0-handles.

However, note that $\calH'$ may no longer be pre-tetrahedral. For each component of $\calB$ that is not a 2-handle, its vertical boundary consists of alternating components of intersection with 1-handles and 2-handles, and these must be adjacent to semi-tetrahedral 0-handles, attached where a 1-handle is bounded by exactly two 2-handles. The process of replacing a $D^2\times I$ component of $\calB$ replaces one or both instance of such a 1-handle and its adjacent 2-handles with a single 2-handle. This gives a clipped semi-tetrahedral 0-handle as in \reffig{ClippedSemiTet}. Similarly clipped semi-tetrahedral 0-handles may arise when replacing components that are $I$-bundles over an annulus, but these are the only adjustments that need to be made.

Recall that we may still define the complexity of a handle structure that is pre-tetrahedral aside from a finite number of clipped semi-tetrahedral 0-handles. Since clipped semi-tetrahedral handles contribute the same as semi-tetrahedral handles to complexity, we have $\Delta(\calH') \leq \Delta(\calH)$. 

Let $\calT$ be the triangulation obtained from $\calH'$ as in \reflem{DualToPreTetHS-Clipped}. This satisfies $\Delta(\calT) \leq 1152 \Delta(\calH') \leq 1152\Delta(\calH)$. Let $\calT_0$ and $\calT_1$ be the induced triangulations of $T^2\times\{0\}$ and $T^2\times\{1\}$. Observe that our spine $\Gamma^{(2)}$ is a subcomplex of $\calT_0$. 

By \reflem{ToOneVertexTriangulation}, there is a sequence of at most $16 \Delta(\calT)$ Pachner moves taking $\calT_0$ and $\calT_1$ to 1-vertex triangulations $\calT_0'$ and $\calT_1'$. 
By \reflem{PachnerMoveSwap}, there is a sequence of at most $16\Delta(\calT) \leq 16\cdot 1152\Delta(\calH)$ edge swaps taking $\Gamma^{(2)}$ to a subcomplex $\Gamma^{(3)}$ of $\calT'_0$.

Realise the Pachner moves as tetrahedra, to form a triangulation $\calT'$ of $T^2 \times [0,1]$ with triangulations $\calT_0'$ and $\calT_1'$ on $T^2 \times \{ 0 \} $ and $T^2 \times \{ 1 \}$, respectively, and with $\Delta(\calT') \leq 17 \Delta(\calT)$.

By \refthm{ProductComplexity}, $d_{\mathrm{Tr}(T^2)}(\calT'_0, \calT'_1)$ is at most $k_{\mathrm{prod}} \, \Delta(\calT')$ for some universal constant $k_{\mathrm{prod}} >0$.
That is, there is a sequence of at most $k_{\mathrm{prod}} \Delta(\calT')\leq 17k_{\mathrm{prod}} \Delta(\calT)$ Pachner moves taking $\calT_0'$ to $\calT_1'$. Follow this by at most $16\Delta(\calT)$ Pachner moves taking $\calT_1'$ to $\calT_1$, for a total of $(17k_{\mathrm{prod}}+16)\Delta(\calT)$ Pachner moves taking $\calT_0'$ to $\calT_1$. 
Again \reflem{PachnerMoveSwap} implies there is a sequence of at most $(17k_{\mathrm{prod}} + 16) \Delta(\calT') \leq k \Delta(\calH)$ edge swaps taking the spine $\Gamma^{(3)}$ to a subcomplex $\Gamma^{(4)}$ of $\calT_1$, where $k>0$ is a universal constant.

The 1-skeleton of the cell structure associated with $\calH'$ on $T^2\times\{1\}$ is a subcomplex of $\calT_1$, so the next step is to adjust $\Gamma^{(4)}$ to lie only in this subcomplex. The 2-cells of the cell structure are discs. The total length of their boundary is at most the total length of the 1-skeleton of the tetrahedra of $\calT$, which is at most $6\Delta(\calT)$. The number of discs is at most the number of triangles in $\calT$, which is at most $4\Delta(\calT)$. Hence by \reflem{SlidingOffDiscs}, we may modify $\Gamma^{(4)}$ to a spine $\Gamma^{(5)}$ that is cellular with respect to $\calH'$ using at most $36\Delta(\calT)$ edge swaps.

The cell structure that $T^2\times\{1\}$ inherits from $\calH'$ agrees with that of $\calH$ away from $\bdy_h\calB$. Within the interior of each essential annular component of $\partial_h \calB$, the structure of $\calH'$ consists of two 1-cells and two 2-cells. One of these 1-cells is the arc of intersection with $\Gamma^{(2)}$, which we arranged to be part of the parallelity bundle.
Hence, it is also cellular in the cell structure that
inherits from $\calH$. If necessary, a single edge swap takes $\Gamma^{(5)}$ to a spine intersecting the interior of this annulus just in this arc. As there are at most $56 \Delta(\calH)$ annular components of $\partial_h \calB$ in $T^2 \times \{ 1 \}$, this can be done for all annuli with at most $56 \Delta(\calH)$ additional edge swaps, obtaining a spine $\Gamma^{(6)}$. Now $\Gamma^{(6)}$ is cellular with respect to $\calH$, so we set this equal to $\Gamma_1$.

In summary, starting with the spine $\Gamma_0=\Gamma^{(0)}$, we have found a sequence of edge swaps taking the spine $\Gamma^{(i)}$ to a spine $\Gamma^{(i+1)}$, for $i=0, \dots, 5$, where $\Gamma^{(6)}=\Gamma_1$ is the desired cellular spine on $T^2\times\{1\}$, such that the number of edge swaps required to take $\Gamma^{(i)}$ to $\Gamma^{(i+1)}$ is bounded by a uniform constant times $\Delta(\calH)$, for each $i=1, \dots, 5$. 
\end{proof}

%%%%%%%%%%%%%%%%%%%%%%%%%%%%%%%%%%%%%%%%%%%%%%%%%%%%%%%%%%%%%%%%%
\section{Triangulations of sol manifolds}
\label{Sec:Sol}

In this section, we prove Theorems~\ref{Thm:SolAlternative},~\ref{Thm:Sol} and~\ref{Thm:FibredTori}.

\begin{lemma}\label{Lem:UpperSol}
  Let $\phi\from T^2\to T^2$ be a linear Anosov homeomorphism. Then the triangulation complexity satisfies
  \[ \Delta( (T^2\times I)/\phi ) \leq 6 + \ell_{\Tr(T^2)}(\phi). \]
\end{lemma}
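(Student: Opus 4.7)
The plan is to exhibit an explicit triangulation of $(T^2 \times I)/\phi$ with at most $6 + \ell_{\mathrm{Tr}(T^2)}(\phi)$ tetrahedra by applying \refthm{ProductComplexity} to a well-chosen pair of 1-vertex triangulations and then performing the gluing via $\phi$.

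First I would verify that the infimum defining $\ell_{\mathrm{Tr}(T^2)}(\phi)$ is actually achieved at a vertex of $\mathrm{Tr}(T^2)$. Since $\phi$ is linear Anosov, the discussion at the beginning of \refsec{HomeoTorus} shows that $\phi$ acts on the Farey tree $\mathrm{Tr}(T^2)$ (via \refthm{TrIsFarey}) with a genuine invariant axis whose endpoints on the circle at infinity are distinct. Any vertex $\calT$ of $\mathrm{Tr}(T^2)$ on this axis satisfies $d_{\mathrm{Tr}(T^2)}(\calT, \phi(\calT)) = \ell_{\mathrm{Tr}(T^2)}(\phi)$, since on a tree translation length is realised on the axis and the action of $\phi$ permutes vertices. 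Pick such a 1-vertex triangulation $\calT$ of $T^2$.

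Next I would apply \refthm{ProductComplexity} with $\calT_0 = \calT$ and $\calT_1 = \phi(\calT)$. This produces a triangulation $\calS$ of $T^2 \times [0,1]$ that restricts to $\calT$ on $T^2 \times \{0\}$ and to $\phi(\calT)$ on $T^2 \times \{1\}$, with
\[ \Delta(\calS) \leq d_{\mathrm{Tr}(T^2)}(\calT, \phi(\calT)) + 6 = \ell_{\mathrm{Tr}(T^2)}(\phi) + 6. \]

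Finally I would form the quotient of $\calS$ by the gluing $\phi(x,1) \sim (x,0)$. Under this identification the triangulation $\phi(\calT)$ on $T^2 \times \{1\}$ is sent onto the triangulation $\calT$ on $T^2 \times \{0\}$: each triangle $\phi(\sigma)$ of $\phi(\calT)$ on the top is matched with the triangle $\sigma$ of $\calT$ on the bottom. Therefore the 3-simplices of $\calS$ descend to a triangulation of the mapping torus $(T^2 \times I)/\phi$ with the same number of tetrahedra, yielding the claimed bound. No substantial obstacle is expected here; the only mild subtlety is confirming that the infimum in the definition of translation length is realised at a vertex of the Farey tree, which follows from $\phi$ being a non-trivial translation of a tree by graph automorphisms.
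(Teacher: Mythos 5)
Your proposal is correct and matches the paper's argument: pick a 1-vertex triangulation $\calT$ realising $\ell_{\mathrm{Tr}(T^2)}(\phi)$ on the axis of $\phi$ in the Farey tree, build a triangulation of $T^2\times[0,1]$ interpolating between $\calT$ and $\phi(\calT)$ with at most $6+\ell_{\mathrm{Tr}(T^2)}(\phi)$ tetrahedra (the paper re-invokes the explicit layering construction from \refthm{ProductComplexity}, you invoke the theorem's upper bound directly — same thing), then glue top to bottom via $\phi$. The extra paragraph checking that the translation length is attained at a vertex is a reasonable detail the paper leaves implicit.
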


\begin{proof}
There is a 1-vertex triangulation $t$ of the torus such that the distance in the Farey tree between $t$ and $\phi(t)$ realises the translation length $\ell_{\Tr(T^2)}(\phi)$. As in the proof of \refthm{ProductComplexity}, we triangulate $T^2 \times [0,1]$ using $6 + \ell_{\Tr(T^2)}(\phi)$ tetrahedra, with $T^2\times\{0\}$ triangulated using $t$ and $T^2\times\{1\}$ triangulated using $\phi(t)$. Then glue top to bottom using $\phi$. 
The result is a triangulation of  $(T^2\times I)/\phi$ with $6 + \ell_{\Tr(T^2)}(\phi)$ tetrahedra.
\end{proof}

\begin{lemma}\label{Lem:LowerSol}
  Let $\phi\from T^2\to T^2$ be a linear Anosov homeomorphism. Then there exists a universal constant $k'_{\mathrm{sol}}>0$ such that
  \[ k'_{\mathrm{sol}} \, \ell_{\Tr(T^2)}(\phi) \leq \Delta( (T^2\times I)/\phi ).\]
\end{lemma}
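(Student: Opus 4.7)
The plan is to cut $M=(T^2\times I)/\phi$ along a torus fibre to obtain $T^2\times I$ with a pre-tetrahedral handle structure, apply \refthm{ProductToriHandles} to bound the number of edge swaps relating the cell structures inherited on the two boundary components, and then translate this bound into an upper bound on the translation length of $\phi$ acting on $\mathrm{Sp}(T^2)$. A final application of \reflem{QIMaps} transfers the estimate to $\mathrm{Tr}(T^2)$.

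Let $\calT$ be a minimal triangulation of $M$, so $\Delta(\calT)=\Delta(M)$. The sol manifold $M$ is closed, orientable and irreducible, and the torus fibre $T\subset M$ is incompressible. By \reflem{IsotopeNormal} we may isotope $T$ to a least-weight normal representative. Cutting $M$ along $T$ yields $T^2\times I$ equipped with a pre-tetrahedral handle structure $\calH$ (by \reflem{InheritedHandleStruct}) of complexity $\Delta(\calH)=\Delta(M)$ (by \reflem{ComplexityUnderCutting}), and \refthm{ExtendToGenParBdle} ensures $\calH$ admits no annular simplification. Let $\calC_0$ and $\calC_1$ be the cell structures that $\calH$ induces on $T^2\times\{0\}$ and $T^2\times\{1\}$; using the product structure to identify both tori with the abstract torus $T^2$, the gluing map reconstructing $M$ identifies $T^2\times\{1\}$ with $T^2\times\{0\}$ via $\phi$, yielding the identification $\calC_0=\phi(\calC_1)$ of cell structures on $T^2$.

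Now pick a cellular spine $\Gamma_0$ on $T^2\times\{0\}$ with respect to $\calC_0$. By \refthm{ProductToriHandles}, a sequence of at most $k_{\mathrm{hand}}\Delta(\calH)=k_{\mathrm{hand}}\Delta(M)$ edge swaps carries $\Gamma_0$ to a spine $\Gamma_1$ cellular with respect to $\calC_1$; consequently $d_{\mathrm{Sp}(T^2)}(\Gamma_0,\Gamma_1)\leq k_{\mathrm{hand}}\Delta(M)$. Since $\calC_0=\phi(\calC_1)$, the spine $\phi(\Gamma_1)$ is cellular for $\calC_0$, as is $\Gamma_0$. A direct edge-swap argument --- iteratively exchanging edges in the symmetric difference of two cellular spines within the $1$-skeleton of $\calC_0$, or equivalently applying the sliding lemmas \reflem{SlidingOffDiscs} and \reflem{SlidingOffAnnuli} to the $2$-cells of $\calC_0$ --- bounds the distance between any two cellular spines of $\calC_0$ linearly in the number of cells of $\calC_0$, which is $O(\Delta(\calH))=O(\Delta(M))$. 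Combining these two bounds with the fact that $\phi$ acts as an isometry on $\mathrm{Sp}(T^2)$ and the triangle inequality,
\[ \ell_{\mathrm{Sp}(T^2)}(\phi)\leq d_{\mathrm{Sp}(T^2)}(\Gamma_0,\phi(\Gamma_0))\leq d_{\mathrm{Sp}(T^2)}(\Gamma_0,\phi(\Gamma_1))+d_{\mathrm{Sp}(T^2)}(\phi(\Gamma_1),\phi(\Gamma_0))=O(\Delta(M)). \]
The quasi-isometry $\mathrm{Sp}(T^2)\to\mathrm{Tr}(T^2)$ from \reflem{QIMaps} then gives $\ell_{\mathrm{Tr}(T^2)}(\phi)\leq O(\Delta(M))$. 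Absorbing additive errors for the finitely many small values of $\Delta(M)$, as in the final step of the proof of \refthm{ProductComplexity}, produces the universal $k'_{\mathrm{sol}}>0$ with the claimed inequality.

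The main delicate points I expect are: (i) carefully establishing the identification $\calC_0=\phi(\calC_1)$ on the abstract torus, which requires disentangling the product structure on $T^2\times I$ from the gluing map reconstructing $M$; and (ii) verifying the linear bound on the distance between two cellular spines of the same cell structure, which is plausible in principle via edge-swap arguments but needs to be made explicit, e.g.\ by applying \reflem{SlidingOffDiscs} with the $2$-cells of $\calC_0$ as the discs.
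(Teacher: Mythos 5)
Your setup is the same as the paper's: cut $M$ along a least-weight normal fibre, use \reflem{ComplexityUnderCutting} and \refthm{ExtendToGenParBdle} to get a pre-tetrahedral handle structure $\calH$ on $T^2\times I$ with $\Delta(\calH)=\Delta(M)$ admitting no annular simplification, and apply \refthm{ProductToriHandles} to transfer a cellular spine across the product. Your observation that $\phi(\Gamma_1)$ is cellular with respect to $\calC_0$ is also correct. However, there is a genuine gap in the step where you try to close the argument.

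You assert that the distance between two cellular spines of $\calC_0$ is bounded linearly in the number of cells of $\calC_0$, and that this number is $O(\Delta(\calH))$. The second claim is false. The handle structure $\calH$ arises from cutting the triangulation along a normal surface whose weight is in general unbounded in terms of $\Delta(\calT)$; the complexity $\Delta(\calH)$ equals $\Delta(\calT)$ precisely because parallelity handles contribute zero to the complexity (\refdef{ComplexHS}), yet each such handle still contributes cells to $\calC_0$. Thus the number of cells of $\calC_0$ can be arbitrarily large for fixed $\Delta(\calH)$, and the bound you claim on $d_{\Sp(T^2)}(\Gamma_0, \phi(\Gamma_1))$ does not follow. (Even setting aside the cell count, the ``direct edge-swap argument'' you sketch for bounding the distance between two cellular spines of the same cell structure is not actually proved, and is not an immediate consequence of \reflem{SlidingOffDiscs} or \reflem{SlidingOffAnnuli}.)

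The paper avoids any such comparison between $\Gamma_0$ and $\phi(\Gamma_1)$. Instead it iterates: from $\Gamma_0$ produce $\Gamma_1$ via \refthm{ProductToriHandles}, apply the gluing map to obtain $\phi(\Gamma_1)$ in $T^2\times\{0\}$, apply \refthm{ProductToriHandles} again to obtain $\Gamma_2$, and so on. Each step gives only $d_{\Sp(T^2)}(\phi\Gamma_i, \Gamma_{i+1}) \leq k_{\mathrm{hand}}\Delta(\calH)$, and one never needs to estimate $d_{\Sp(T^2)}(\Gamma_i, \phi\Gamma_i)$. Since there are finitely many cellular spines, some $\Gamma_r=\Gamma_s$; relabelling so $\Gamma_0=\Gamma_n$, the triangle inequality together with $\phi$ being an isometry gives $d_{\Sp(T^2)}(\phi^n\Gamma_0, \Gamma_0)\leq n\,k_{\mathrm{hand}}\Delta(\calH)$, hence $\ell_{\Sp(T^2)}(\phi^n)\leq n\,k_{\mathrm{hand}}\Delta(\calH)$. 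Since $\phi$ translates along an axis of the tree $\Sp(T^2)$, \reflem{StableTransTorus} gives $\ell_{\Sp(T^2)}(\phi^n)=n\,\ell_{\Sp(T^2)}(\phi)$, and the factor of $n$ cancels; \reflem{QIMaps} then transfers the bound to $\Tr(T^2)$ as you indicated. You should replace your direct two-term triangle inequality with this iteration-and-averaging argument.
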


\begin{proof}
Let $M=(T^2\times I)/\phi$. We will show that $\Delta(M)$ is at least a constant times the translation distance of $\phi$ in the spine graph $\mathrm{Sp}(T^2)$. As $\mathrm{Sp}(T^2)$ and ${\mathrm{Tr}(T^2)}$ are quasi-isometric, by \reflem{QIMaps}, this will prove the result.

Consider a triangulation $\calT$ for $M$ with $\Delta(\calT) = \Delta(M)$. Let $S$ be a normal fibre in $M$ with least weight. This corresponds to a surface,
also called $S$, that intersects each handle in the dual handle structure in a collection of properly embedded discs. This surface $S$ inherits a cell structure in which each of these discs is a 2-cell.
Pick some spine $\Gamma$ for $S$ that is cellular. Let $\calH$ be the handle structure that results from cutting $M$ along $S$. Then by \reflem{ComplexityUnderCutting}, $\Delta(\calH) = \Delta(\calT)$.

Since $S$ has least weight in its isotopy class, $\calH$ does not admit any annular simplifications, by \refthm{ExtendToGenParBdle}.
So, by \refthm{ProductToriHandles}, there is a sequence of at most $k_{\mathrm{hand}} \Delta(\calH)$ edge contractions and expansions taking $\Gamma$ in $T^2 \times \{ 0 \}$ to a spine $\Gamma_1$ in $T^2 \times \{ 1 \}$ that is cellular. Now apply the gluing map $\phi$ between $T^2 \times \{ 1 \}$ and $T^2 \times \{ 0 \}$ to get the spine $\phi(\Gamma_1)$ in $T^2 \times \{ 0 \}$.
Next apply \refthm{ProductToriHandles} again, to obtain a sequence of at most $k_{\mathrm{hand}} \Delta(\calH)$ edge contractions and expansions taking $\phi(\Gamma_1)$ in $T^2 \times \{ 0 \}$ to a cellular spine $\Gamma_2$ in $T^2 \times \{ 1 \}$. Keep repeating this process, giving a sequence of spines $\Gamma_i$ that are cellular in $T^2 \times \{ 1 \}$. Thus, the distance in $\mathrm{Sp}(T^2)$ between $\phi(\Gamma_i)$ and $\Gamma_{i+1}$ is at most $k_{\mathrm{hand}} \Delta(\calH)$. There are only finitely many cellular spines in  $T^2 \times \{ 1 \}$ and so there are integers $r < s$ such that $\Gamma_r = \Gamma_s$. By relabelling, we may assume that $r=0$ and $s=n$, say. Thus, with respect to the metric $d$ on $\mathrm{Sp}(T^2)$, we have the following inequalities:
\begin{align*}
d(\phi^n\Gamma_0,\Gamma_0) &= d(\phi^n\Gamma_0, \Gamma_n) \\
&\leq d(\phi^n \Gamma_0, \phi^{n-1}\Gamma_1) + d(\phi^{n-1}\Gamma_1,\phi^{n-2}\Gamma_2) + \dots + d(\phi\Gamma_{n-1},\Gamma_n) \\
&= d(\phi \Gamma_0, \Gamma_1) + d(\phi\Gamma_1, \Gamma_2) + \dots + d(\phi\Gamma_{n-1},\Gamma_n) \\
&\leq k_{\mathrm{hand}} n \Delta(\calH).
\end{align*}
So, the translation length $\ell_{\Sp(T^2)}(\phi^n)$ of $\phi^n$ is at most $k_{\mathrm{hand}} n \Delta(\calH)$. But $\ell_{\Sp(T^2)}(\phi^n)$ is $n$ times the translation length $\ell_{\Sp(T^2)}(\phi)$, since $\phi$ acts on the tree $\mathrm{Sp}(T^2)$ by translation along an axis; see Lemma~\ref{Lem:StableTransTorus}.
Therefore, the translation length of $\phi$ is at most $k_{\mathrm{hand}} \Delta(\calH) = k_{\mathrm{hand}} \Delta(M)$.
\end{proof}

Recall from Section \ref{SubSec:PLS2Z} that $\mathrm{PSL}(2, \mathbb{Z})$ is isomorphic to $\mathbb{Z}_2 \ast \mathbb{Z}_3$ where the factors are generated by 
\[
S = 
\left(
\begin{matrix}
0 & -1 \\
1 & 0 \\
\end{matrix}
\right)
\qquad
T = 
\left(
\begin{matrix}
0 & -1 \\
1 & -1 \\
\end{matrix}
\right).
\]

\begin{named}{\refthm{SolAlternative}}
Let $A$ be an element of $\mathrm{SL}(2, \mathbb{Z})$ with $|\mathrm{tr}(A)| > 2$. Let $M$ be the sol 3-manifold $(T^2 \times [0,1]) / (A(x,1) \sim (x,0))$. Let $\overline{A}$ be the image of $A$ in 
$\mathrm{PSL}(2, \mathbb{Z})$ and let $\ell(\overline{A})$ be the length of a cyclically reduced 
word in the generators $S$ and $T^{\pm 1}$ that is conjugate to $\overline{A}$. Then, there is a universal constant $k_{\mathrm{sol}}>0$ such that
\[ k_{\mathrm{sol}} \ell(\overline{A}) \leq \Delta(M) \leq (\ell(\overline{A})/2) + 6.\]
\end{named}

\begin{proof}
As explained in Section \ref{SubSec:PLS2Z}, the Farey tree is closely related to the Cayley graph of $\mathbb{Z}_2 \ast \mathbb{Z}_3$ with respect to the generators $S$ and $T$. Specifically this Cayley graph is obtained from the Farey tree as follows: replace each vertex of the tree by a triangle, with each edge oriented and labelled by $T$; replace each edge of the Farey tree by two edges, both labelled by $S$ and pointing in opposite directions. The element $A$ in $\mathrm{SL}(2, \mathbb{Z})$ acts on this Cayley graph as it does on the Farey tree, and  the translation lengths of these two actions differ by a factor of $2$. Indeed, the invariant axis in the Farey tree can be used to produce an invariant geodesic in the Cayley graph, and the translation length along this geodesic is twice that of  the translation length along the axis in the Farey tree. As one travels along this geodesic, one reads off a word in $S$, $T$ and $T^{-1}$ which is a cyclically reduced representative for a conjugate of 
$\overline{A}$. Thus its length $\ell(\overline{A})$ is twice the translation length of the action of $A$ on the Farey tree. By Lemmas \ref{Lem:LowerSol} and \ref{Lem:UpperSol}, this is, up to a bounded multiplicative factor, the triangulation complexity of $M$.
\end{proof}

\begin{named}{\refthm{Sol}}
Let $A$ be an element of $\mathrm{SL}(2, \mathbb{Z})$ with $|\mathrm{tr}(A)| > 2$. Let $\overline{A}$ be the image of $A$ in $\mathrm{PSL}(2, \mathbb{Z})$. Suppose that $\overline{A}$ is $B^n$ for some positive integer $n$ and some $B \in \mathrm{PSL}(2, \mathbb{Z})$ that cannot be expressed as a proper power. Let $M$ be the sol 3-manifold $(T^2 \times [0,1]) / (A(x,1) \sim (x,0))$. Let $[a_0, a_1, \dots]$ be the continued fraction expansion of $\sqrt{\tr(A)^2 - 4}$ where $a_i$ is positive for each $i >0$ and let $(a_r, \dots, a_s)$ denote its periodic part. Then there is a universal constant $k'_{\mathrm{sol}}>0$ such that
\[ k'_{\mathrm{sol}} n \sum_{i=r}^s a_i \leq \Delta(M) \leq 6 + n \sum_{i=r}^s a_i. \]
\end{named}

\begin{proof}
Let $\phi\from T^2\to T^2$ be the homeomorphism determined by $A$.
By \reflem{AnosovTrace}, $\phi$ is linear Anosov, so by Lemmas~\ref{Lem:UpperSol} and~\ref{Lem:LowerSol}, we have
\[ k'_{\mathrm{sol}} \ell_{\Tr(T^2)}(\phi) \leq \Delta(M) \leq 6 + \ell_{\Tr(T^2)}(\phi).\]
Now by \refthm{TrIsFarey}, $\ell_{\Tr(T^2)}(\phi)$ is the translation length of $\phi$ in the Farey tree. 
By \refprop{AnosovDistance}, this translation length is $n\sum_{i=r}^s a_i$.
\end{proof}

\begin{named}{\refthm{FibredTori}}
Let $\phi\from T^2 \to T^2$ be a linear Anosov homeomorphism. Then the following quantities are within bounded ratios of each other:
\begin{enumerate}
\item the triangulation complexity of $(T^2 \times I)/ \phi$;
\item the translation distance (or stable translation distance) of $\phi$ in the thick part of the Teichm\"uller space of $T^2$;
\item the translation distance (or stable translation distance) of $\phi$ in the mapping class group of $T^2$;
\item the translation distance (or stable translation distance) of $\phi$ in $\mathrm{Tr}(T^2)$.
\end{enumerate}
In (3), we metrise $\mathrm{MCG}(T^2)$ by fixing the finite generating set \[
\left(
\begin{matrix}
1 & 1 \\
0 & 1 \\
\end{matrix}
\right),
\quad
\left(
\begin{matrix}
1 & 0 \\
1 & 1 \\
\end{matrix}
\right).
\]
\end{named}

\begin{proof}
The fact that the quantities (2), (3) and (4) are within a bounded ratio of each other is a rapid consequence of the Milnor-\u{S}varc lemma, as explained in \cite[Section~1.2]{LackenbyPurcell:Fibred}.
The relationship between (1) and (4) now follows immediately from Lemmas~\ref{Lem:UpperSol} and~\ref{Lem:LowerSol}.
\end{proof}

%%%%%%%%%%%%%%%%%%%%%%%%%%%%%%%%%%%%%%%%%%%%%%%%%%%%%%%%%%%%%%%%%
\section{Triangulations of lens spaces}
\label{Sec:Lens}

The following was one of the main theorems of \cite{LackenbySchleimer}.

\begin{theorem}
\label{Thm:LensSpaceCore}
Let $M$ be a lens space other than a prism manifold $L(4p, 2p\pm1)$ or $\mathbb{RP}^3$.
Let $\mathcal{T}$ be any triangulation of $M$. Then the iterated barycentric subdivision $\mathcal{T}^{(139)}$ contains in its 1-skeleton the union of the two core curves.
\end{theorem}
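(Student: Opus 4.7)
My plan is to adapt the strategy used to prove \refthm{VerticalArc}: take a standard surface, make it normal of least weight, cut along it, extend the parallelity bundle to a maximal generalised parallelity bundle with incompressible horizontal boundary, rule out all components except $I$-bundles over discs, and invoke \refprop{ArcAvoidParallelity} to produce the desired simplicial curves.

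First, I would produce a Heegaard torus $T$ for $M$ that is of least weight in its isotopy class, arranging it to be normal or almost normal with respect to $\calT$; this is possible because the genus-one Heegaard surface of a lens space other than $\mathbb{RP}^3$ is strongly irreducible, so Rubinstein's machinery applies. Cutting along $T$ yields two solid tori $V_1,V_2$, each inheriting a handle structure via \reflem{InheritedHandleStruct}. Within each $V_i$ I would then take a normal meridian disc $D_i$ of least weight and cut again, obtaining a $3$-ball $B_i$ whose boundary contains two parallel copies $D_i^\pm$ of $D_i$. By \refthm{ExtendToGenParBdle}, the parallelity bundle for the pair $(B_i, D_i^+\cup D_i^-)$ extends to a maximal generalised parallelity bundle $\calB_i^+$ with incompressible horizontal boundary.

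The technical heart of the proof will be a case analysis of the components of $\calB_i^+$, parallel to Claims 1--4 in the proof of \refthm{VerticalArc}. Since $\partial_h \calB_i^+$ is an incompressible subsurface of the discs $D_i^\pm$, each component of $\calB_i^+$ must be an $I$-bundle over a disc, annulus, or M\"obius band. M\"obius-band components should be excluded because the associated M\"obius band in $V_i$, capped off by an annulus in $\partial V_i$, would give a Klein bottle embedded in $M$; this is impossible in a lens space other than $\mathbb{RP}^3$. Annular components with one horizontal boundary on $D_i^+$ and one on $D_i^-$ should be excluded by a weight-reduction argument on $D_i$, analogous to Claim~2. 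The remaining annular components should be ruled out as in Claims~3 and~4. The excluded prism lens spaces $L(4p,2p\pm1)$ are expected to be precisely the cases where these M\"obius-band or annular components cannot be eliminated, since they double-cover prism manifolds that naturally carry essential Klein bottles.

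Once each $\calB_i^+$ consists only of $I$-bundles over discs, I would find an arc $\alpha_i$ in $B_i$ from $D_i^-$ to $D_i^+$ that avoids $\calB_i^+$ and meets each normal disc of $D_i$ in a single properly embedded arc with endpoints on distinct edges. A double application of \refprop{ArcAvoidParallelity} --- one for the cut along $T$ and one for the cut along $D_i$ --- then makes $\alpha_i$ simplicial in a fixed iterated barycentric subdivision of $\calT$. Capping $\alpha_i$ off with a simplicial arc inside $D_i$ joining its two endpoints produces a simple closed curve in $V_i$ intersecting the meridian disc $D_i$ exactly once, hence isotopic to the core of $V_i$. Tracking the subdivisions required for the two nested cuts and the capping arcs yields the universal exponent $139$. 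The main obstacle will be the case analysis of $\calB_i^+$, and in particular the precise isolation of $\mathbb{RP}^3$ and $L(4p,2p\pm1)$ as the exceptional cases where components of $\calB_i^+$ cannot be controlled.
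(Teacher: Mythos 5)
The paper you're working from does not actually prove this statement: it is quoted verbatim and credited to the reference \cite{LackenbySchleimer} (``The following was one of the main theorems of \cite{LackenbySchleimer}'').  So there is no ``paper's own proof'' here to compare against, and a reader of this paper is not expected to reconstruct it.  That said, I can comment on the internal coherence of your proposal.

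The central structural gap is that \refprop{ArcAvoidParallelity} requires the arc $\alpha$ to lie \emph{within a copy of the cutting surface} $S$ in $M\cut S$ --- its hypothesis (1) says ``$\alpha$ lies within a copy of $S$'', and hypothesis (3) then makes sense because $\alpha$ crosses normal discs \emph{of $S$}.  This is exactly the situation in the proof of \refthm{VerticalArc}, where $\alpha$ sits in the normal annulus $\tilde A \subset \partial(M\cut A)$.  Your arc $\alpha_i$, however, is meant to run \emph{through the interior} of the $3$-ball $B_i$ from $D_i^-$ to $D_i^+$; once re-glued it should pierce the meridian disc once and become the core of $V_i$.  Such an arc is not contained in any copy of $T$ or of $D_i$, so \refprop{ArcAvoidParallelity} does not apply to it --- and your phrase ``meets each normal disc of $D_i$ in a single properly embedded arc'' is in tension with $\alpha_i$ going \emph{from} $D_i^-$ \emph{to} $D_i^+$ transversally.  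The ``double application'' you describe therefore needs a genuinely new mechanism to push an interior arc (rather than an arc on a normal surface) into the $1$-skeleton; this is precisely where the cited Lackenby--Schleimer argument does real work beyond the template of \refthm{VerticalArc}, and your sketch does not supply it.  A secondary issue: $L(4p,2p\pm 1)$ \emph{are} themselves prism manifolds (equivalently, the lens spaces containing an embedded Klein bottle); they do not ``double-cover prism manifolds'' in the way your last paragraph of the case analysis suggests, so the role of the exceptional cases would need to be re-derived along the correct lines (a Klein bottle or $\mathbb{RP}^2$ in $M$ obstructing the exclusion of $I$-bundle components over non-disc surfaces).  Finally, the exponent $139$ is not something you can wave at by ``tracking subdivisions'': each cut produces a handle structure rather than a triangulation, so it is not even clear in your setup that a second barycentric-subdivision count is well-posed.
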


We will use this to prove our main result about lens spaces.

\begin{named}{\refthm{LensSpaces}}
Let $L(p,q)$ be a lens space, where $p$ and $q$ are coprime integers satisfying $0< q < p$. Let $[a_0, \dots, a_n]$ be the continued fraction expansion of $p/q$ where each $a_i $ is positive. Then there is a universal constant $k_{\mathrm{lens}} >0$ such that
\[ k_{\mathrm{lens}} \sum_{i=0}^n a_i \leq \Delta(L(p,q)) \leq \sum_{i=0}^n a_i. \]
\end{named}

\begin{proof}
The upper bound on $\Delta(L(p,q))$ is fairly straightforward. Indeed, Jaco and Rubinstein \cite{JacoRubinstein:Layered} provide a triangulation with $(\sum_{i=0}^n a_i) - 3$ tetrahedra for $p > 3$ and they conjecture that this is equal to $\Delta(L(p,q))$. For $p =2$ or $3$, the lens space is $\mathbb{RP}^3$ or $L(3,1)$, which both satisfy $\Delta(L(p,q))=2$. Note $\sum_{i=0}^n a_i\geq 2$, so the inequality holds in these cases.

We now focus on the lower bound for $\Delta(L(p,q))$.
The triangulation complexity of $L(4p', 2p' \pm 1)$ was shown by Jaco, Rubinstein and Tillmann \cite{JacoRubinsteinTillmann:Coverings} to be $p'$ for $p' \geq 2$.
So we now assume that the lens space is not of this form and also is not $\mathbb{RP}^3$ or $L(4,1)$.

Let $\mathcal{T}$ be a triangulation of $L(p,q)$.
Our goal is to show that $\Delta(\mathcal{T}) \geq k_{\mathrm{lens}}  \sum_{i=0}^n a_i$ for some universal constant $k_{\mathrm{lens}} > 0$. The approach that we will take is as follows.
We will drill from (a subdivision of) the triangulation $\calT$ two core curves $C$ and $C'$ for the solid tori making up $L(p,q)$, with meridians $\mu$ and $\mu'$. This gives a manifold homeomorphic to $T^2\times [0,1]$. Using the triangulation $\calT$, we will obtain spines for $T^2\times\{0\}$ and $T^2\times\{1\}$ containing $\mu$ and $\mu'$, respectively; these correspond to points on $L(\mu)$ and $L(\mu')$ in the Farey tree. Theorem~\ref{Thm:ProductToriHandles} and Lemma~\ref{Lem:SpineAndShortCurve} then build a path in the Farey tree from $L(\mu)$ to $L(\mu')$, with length bounded by a constant times $\Delta(\calT)$. This must be at least as long as the shortest path from $L(\mu)$ to $L(\mu')$, which is $\sum a_i$. Thus our constructed path will give a bound of the form $\sum a_i \leq (1/k_{\mathrm{lens}})\Delta(\calT)$ for some constant $k_{\mathrm{lens}}$. This completes the outline of the proof of the theorem.

By \refthm{LensSpaceCore}, $\mathcal{T}^{(139)}$ contains in its 1-skeleton the union of the two core curves $C$ and $C'$.
To build our spines on $T^2\times\{0\}$ and $T^2\times\{1\}$, we not only need $C$ and $C'$ to be simplicial, but also a regular neighbourhood of these curves must be simplicial, and a meridian of the simplicial neighbourhood must also be simplicial. This can be obtained by further subdivision. However, as we subdivide, we will need a bound on the length of the simplicial meridian to apply \reflem{SpineAndShortCurve}. Again this can be obtained by careful subdivision, as follows. Beginning with $\calT^{(139)}$,
take a further two barycentric subdivisions, so that a regular neighbourhood $N(C \cup C')$ for $C\cup C'$ is simplicial in $\calT^{(141)}$.
Observe that $\bdy N(C\cup C')$ consists of faces, edges, and vertices that belong to $\calT^{(141)}$ but do not lie in faces, edges and vertices (respectively) of $\calT^{(139)}$. Observe that barycentric subdivision adds a new central vertex to each tetrahedron of $\calT^{(140)}$, and this vertex meets exactly $24$ tetrahedra in $\calT^{(141)}$. Observe also that an edge running from this central vertex to one of the new vertices on a face will meet $30$ tetrahedra: $24$ at one endpoint, and an additional six at the other endpoint.

We may find two curves $\overline C$ and $\overline C'$ that are simplicial on $\bdy N(C\cup C')$, that are parallel copies of $C$ and $C'$, respectively, and that are made up of edges each meeting at most $30$ tetrahedra. We may take a further two barycentric subdivisions, creating $\calT^{(143)}$, so that $N(\overline{C} \cup \overline{C}')$ is simplicial.
Then $N(\overline{C} \cup \overline{C}')$
consists of those simplices in $\calT^{(143)}$ that have non-empty intersection with $\overline C$ and $\overline C'$. We remove the interior of this regular neighbourhood from $L(p,q)$, and thereby obtain a triangulation of $T^2 \times [0,1]$. Because each barycentric subdivision increased the number of tetrahedra by a factor of $24$, this triangulation has complexity at most $(24)^{143} \Delta(\mathcal{T})$.

Consider any vertex $v$ of $\calT^{(141)}$ lying on $\overline C$. The union of the simplices in $\calT^{(143)}$ incident to this vertex is a simplicial 3-ball $B$. The intersection between $\partial B$ and $\overline C$ consists of two points. The union of the simplices in $\partial B$ incident to one of these points is a disc. The boundary of this disc is a meridian curve $\mu$ for $N(\overline C)$. Since at most 30 tetrahedra of $\calT^{(141)}$ are incident to $v$, we deduce that there is a universal upper bound (120 in fact) for the length of $\mu$ in $\calT^{(143)}$. Each vertex of $\mu$ lies in the interior of a 3-simplex or 2-simplex of $\calT^{(141)}$. Hence, it is incident to at most two 3-simplices of $\calT^{(141)}$. These 3-simplices contain at most $2 \times (24)^2$ tetrahedra of $\calT^{(143)}$ and hence at most $12 \times (24)^2 = 6912$ edges of $\calT^{(143)}$. We deduce that each vertex of $\mu$ is incident to at most 6912 edges in $\bdy N(\overline C)$. Similarly, on $\partial N(\overline C')$, there is a meridian curve $\mu'$ with length at most 120 and again with the property that each vertex that it runs through is incident to at most 6912 edges in $\bdy N(\overline C')$.

As in Remark \ref{Rem:AddBoundaryTimesI}, we attach a triangulation of $\bdy N(C\cup C')$ onto the triangulation of $T^2 \times [0,1]$, to form a new triangulation $\calT'$ of $T^2 \times [0,1]$. This satisfies $\Delta(\calT') \leq 
33 (24)^{143} \Delta(\mathcal{T})$. By Lemma \ref{Lem:ComplexityDualHandleStructure}, the dual handle structure $\calH$ is pre-tetrahedral and satisfies $\Delta(\calH) \leq 
33 (24)^{143} \Delta(\mathcal{T})$. As explained in Remark \ref{Rem:NoParallelity}, it has no parallelity handles. In particular, it admits no annular simplifications.

There is a copy of the meridian curve $\mu$ in $\calT'$. This is a sequence of vertices and edges, and hence it corresponds to a sequence of 2-handles and 1-handles in the handle structure of $T^2 \times \{0,1\}$. The boundary of each of these 2-handles has length at most $6912 \times 2$ in the cell structure, and the boundary of each 1-handle has length $4$. The union of these 1-handles and 2-handles is an annulus. Each of the boundary components of the annulus is cellular and has length at most $120 \times ((6912 \times 2) + 4) = 1659360$ in $\calH$. Pick one of these boundary components and extend it to a cellular spine $\Gamma$ in $T^2 \times \{ 0 \}$. Similarly, there is a cellular curve in $T^2 \times \{ 1 \}$ that is parallel to $\mu'$ and that has length at most $1659360$. 

By \refthm{ProductToriHandles} and \reflem{EdgeSwapBound}, there is a universal constant $k_{\mathrm{hand}} > 0$ and a sequence of at most $24k_{\mathrm{hand}}\,\Delta(\mathcal{H})$ edge contractions and expansions taking $\Gamma$ to a cellular spine $\Gamma'$ in $T^2\times\{1\}$. By \reflem{SpineAndShortCurve}, there is a further sequence of at most $24 + 4\cdot 1659360 = 6637464$ edge swaps taking $\Gamma'$ to a spine $\Gamma''$ that contains $\mu'$. By \reflem{EdgeSwapBound}, this is realised by at most $24\cdot 6637464 < 10^8$ edge contractions and expansions. Since $\Gamma$ and $\Gamma''$ contain meridians as subsets, they correspond to vertices in the Farey tree that lie on the lines $L(\mu)$ and $L(\mu')$, as in \refdef{FareyLine}. The distance in the spine graph is exactly twice the distance in the Farey tree, since a 2-2 Pachner move in a triangulation is realised by an edge contraction then expansion.
Hence, the distance between these lines is at most
$24k_{\mathrm{hand}} \, \Delta(\mathcal{H}) + 10^8 \leq 33 (24)^{144}k_{\mathrm{hand}} \, \Delta(\calT) + 10^8$
in the Farey tree. 
By \reflem{LinesInFareyTree},
this distance is $(\sum_{i=0}^n a_i)-1$, where $[a_0, \dots, a_n]$ is the continued fraction expansion of $p/q$.
Hence, $\Delta(L(p,q))$ is at least a linear function of $\sum_{i=0}^n a_i$. Since $\sum_{i=0}^n a_i \geq 1$, the additive part of this linear function can be eliminated, at the possible cost of changing the multiplicative constant. So, $\Delta(L(p,q))$ is at least $k_{\mathrm{lens}} \sum_{i=0}^n a_i$ for some universal $k_{\mathrm{lens}} > 0$.
\end{proof}

%%%%%%%%%%%%%%%%%%%%%%%%%%%%%%%%%%%%%%%%%%%%%%%%%%%%%%%%%%%%%%%%%
\section{Prism manifolds and Platonic manifolds}
\label{Sec:Prism}

We start by considering the prism manifold $P(p,q)$.
This is obtained by gluing together the solid torus and $K^2 \twist I$, the orientable $I$-bundle over the Klein bottle, via a homeomorphism between their boundaries. The resulting manifold is determined, up to homeomorphism, by the slope on the boundary of $K^2 \twist I$ to which a meridian disc of the solid torus is attached. Now, the boundary of $K^2 \twist I$ has a canonical framing, as follows. There are only two non-separating 
simple closed curves $\lambda$ and $\mu$ on the Klein bottle, where $\lambda$ is orientation-reversing and $\mu$ is orientation-preserving. The inverse images of these in the boundary of $K^2 \twist I$ are curves with slopes $\tilde \lambda$ and $\tilde \mu$. The prism manifold $P(p,q)$ is obtained by attaching the meridian disc of the solid torus along a curve with slope $p \tilde \lambda + q \tilde \mu$, when these slopes are given some choice of orientation.

\begin{named}{\refthm{Prism}}
Let $p$ and $q$ be non-zero coprime integers and let $[a_0, \dots, a_n]$ denote the continued fraction expansion of $p/q$ where $a_i$ is positive for each $i > 0$. Then, $\Delta(P(p,q))$ is, to within a universally bounded multiplicative error, equal to $\sum_{i=0}^n a_i$.
\end{named}

\begin{proof} As usual, the upper bound on $\Delta(P(p,q))$ is fairly straightforward. Start with a 1-vertex triangulation of the Klein bottle in which $\lambda$ and $\mu$ are edges. The inverse image in $K^2 \twist I$ of the three edges in $K^2$  is three squares, which we can triangulate using two triangles each. If we cut $K^2 \twist I$ along these three squares, the result is two prisms, which can be triangulated using eight tetrahedra. Now attach onto the boundary of $K^2 \twist I$ some tetrahedra, so that the resulting boundary has a 1-vertex triangulation, and where two of its edges have slopes $\tilde \lambda$ and $\tilde \mu$. Now apply 2-2 Pachner moves taking this triangulation to one that includes $p/q$ as an edge. Then glue on a triangulation of the solid torus with boundary triangulation containing a meridian as an edge. Thus the resulting number of tetrahedra in the triangulation of $P(p,q)$ is at most $\sum_{i=0}^n a_i$ plus a constant.

For the lower bound, note that $P(p,q)$ is double covered by a lens space $L$. Hence, $\Delta(P(p,q)) \geq \Delta(L)/2$. The inverse image of $K^2 \twist I$ in this double cover is a copy of $T^2 \times [0,1]$. The inverse image of the solid torus is two solid tori, one attached along the slope $p/q$ and the other attached along the slope $-p/q$. According to \refthm{LensSpaces} and \reflem{LinesInFareyTree}, $\Delta(L)$ is at least a constant times the distance in the Farey graph between the lines $L(p/q)$ and $L(-p/q)$. To compute this distance, consider the hyperbolic geodesic joining $p/q$ to $-p/q$. This is divided into two half-infinite geodesics by the imaginary axis. As we travel along one of these geodesics, starting at the imaginary axis and ending at $p/q$, we recover the splitting sequence for $p/q$. Hence, this corresponds to a path in the Farey tree with length $\sum_{i=0}^n a_i$. The path running from the imaginary axis to $L(-p/q)$ has the same length. Hence, $\Delta(L)$ is at least a constant times  $\sum_{i=0}^n a_i$.
\end{proof}

A Platonic manifold is an elliptic manifold $M$ that admits a Seifert fibration with base space that is the quotient of the 2-sphere by the orientation-preserving symmetry group of a Platonic solid. In other words, the base space $\Sigma$ is a 2-sphere with three cone points of orders $(2,3,3)$, $(2,3,4)$ or $(2,3,5)$. 

If we remove the three singular fibres from $M$, the result is a circle bundle over the three-holed sphere. Thus, $M$ is obtained from this circle bundle by attaching three solid tori. For convenience, we also remove one regular fibre, and the resulting manifold $M'$ is a circle bundle over the four-holed sphere $S$. Now orientable circle bundles over orientable surfaces with non-empty boundary are trivial. Thus, $M'$ is just a copy of $S \times S^1$. We fix a meridian and longitude for each boundary component of $S \times S^1$, by declaring the longitude to be of the form $C \times \{\ast\}$ for the relevant boundary component $C$ of $S$, and declaring the meridian to be $\{\ast\} \times S^1$. Thus, the Dehn filling slopes are given by four fractions $p_0/q_0$, $p_1/q_1$, $p_2/q_2$ and $p_3/q_3$, where $q_0  = 1$, $q_1 = 2$, $q_2 = 3$ and $q_3 = 3$, $4$ or $5$. The Euler number is just the sum $p_0/q_0 + p_1/q_1 + p_2/q_2 + p_3/q_3$. Without changing the manifold or its Seifert fibration, we can adjust these slopes by adding an integer to one and subtracting an integer from another. In this way, we can arrange $p_1/q_1$, $p_2/q_2$ and $p_3/q_3$ all to lie strictly between $0$ and $1$. 

\begin{named}{\refthm{Platonic}}
Let $M$ be a Platonic elliptic 3-manifold, and let $e$ denote the Euler number of its Seifert fibration. Then, to within a universally bounded multiplicative error, $\Delta(M)$ is $|e|$.
\end{named}

\begin{proof}
The upper bound is straightforward. We can form a triangulation of $S \times S^1$ with a fixed number of tetrahedra, and where the longitudes and meridians are all simplicial. We can also arrange that the triangulation of each boundary component has a single vertex. Since $p_1/q_1$, $p_2/q_2$ and $p_3/q_3$ take only finitely many possible values, we can attach triangulated solid tori so that the meridian disc is attached to these slopes, using a universally bounded number of tetrahedra. The final slope $p_0/1$ is integral.  Hence, using at most $|p_0|$ many 2-2 Pachner moves, we many arrange that this slope $p_0/1$ is simplicial. We can then attach a triangulated solid torus to form a triangulation of $M$. The difference between $p_0$ and the Euler number $e$ is bounded above by 3, since $p_1/q_1$, $p_2/q_2$ and $p_3/q_3$ all lie between $0$ and $1$. So, the number of tetrahedra is at most $|e| + c$ for some universal constant $c$. This is at most a multiple of $|e|$ as $e$ cannot be zero, since $M$ would then contain an embedded non-separating orientable surface, which is impossible in a rational homology 3-sphere.

We now establish the lower bound on $\Delta(M)$.  The Seifert fibration $M \rightarrow \Sigma$ induces a surjective homomorphism $\pi_1(M) \rightarrow \pi_1(\Sigma)$, where the latter group is the orbifold fundamental group of the base space $\Sigma$. The kernel of this homomorphism has index at most $60$. Let $\tilde M$ be the corresponding finite cover of $M$. Then $\Delta(\tilde M) \leq 60 \Delta(M)$. The Seifert fibration on $M$ lifts to a Seifert fibration on $\tilde M$. The Euler number $\tilde e$ of $\tilde M$ is related to the Euler number of $M$ as follows, using \cite[Theorem 3.6]{Scott}. If $d_1$ is the degree of the covering between the base orbifolds, and $d_2$ is the degree of the coverings between regular fibres, then $\tilde e = ed_1/d_2$. In particular, $|\tilde e| \geq |e|/60$. The Seifert fibration of $\tilde M$ has base space a 2-sphere and has no singular fibres, and therefore $\tilde M$ is the lens space $L(\tilde e, 1)$. Therefore, by \refthm{LensSpaces},
\[ \Delta(M) \geq \Delta(\tilde M)/60 \geq (k_{\mathrm{lens}}/60) |\tilde e| \geq (k_{\mathrm{lens}}/3600) |e|. \qedhere\]
\end{proof}

\bibliographystyle{amsplain}
\bibliography{references-complexitylens}

\end{document}